\newtheorem{thm}{Theorem}[section]
\newtheorem{lem}[thm]{Lemma}
\newtheorem{defn}[thm]{Definition}
\newcommand{\R}{\mathbb{R}}
\newcommand{\Z}{\mathbb{Z}}
\newcommand{\F}{\mathbb{F}}
\newcommand{\C}{\mathbb{C}}
\newcommand{\T}{\mathbb{T}}
\newcommand{\wh}{\widehat}
\newcommand{\del}{\partial}
\newcommand{\wt}{\widetilde}
\newcommand{\mc}{\mathcal}
\newcommand{\mb}{\mathbb}
\newcommand{\mf}{\mathfrak}
\begin{document}

\title{Topics in Heegaard Floer homology}
\author{Sucharit Sarkar}
\address{Department of Mathematics, Princeton University, Princeton, NJ 08544, USA}
\email{sucharit@math.princeton.edu}

\subjclass{57M27}

\date{}

\begin{abstract}
Heegaard Floer homology is an extremely powerful invariant for closed
oriented three-manifolds, introduced by Peter Ozsv\'{a}th and
Zolt\'{a}n Szab\'{o}. This invariant was later generalized by them and
independently by Jacob Rasmussen to an invariant for knots inside
three-manifolds called knot Floer homology, which was later even
further generalized to include the case of links. However
the boundary maps in the Heegaard Floer chain complexes
were defined by counting the number of points in certain moduli
spaces, and there was no algorithm to compute the invariants in general.

The primary aim of this thesis is to address this concern. We begin by
surveying various areas of this theory and providing the background
material to familiarize the reader with the Heegaard Floer homology
world. We then describe the algorithm which was discovered by Jiajun
Wang and me, that computes the hat version of the three-manifold
invariant with coefficients in $\F_2$. For the remainder of the
thesis, we concentrate on the case of knots and links inside the
three-sphere. Based on a grid diagram for a knot and following a
paper by Ciprian Manolescu, Peter Ozsv\'{a}th and me, we give a
another algorithm for computing the knot Floer homology. We conclude
by generalizing the construction to a theory of knot Floer homotopy.

\end{abstract}

\maketitle

\newpage

\section*{Acknowledgement}
\noindent My adviser Zolt\'{a}n Szab\'{o} for introducing me to the
fascinating world of Heegaard Floer homology and for guiding me
throughout the entire course of my graduate studies.

\noindent My collaborators Matthew Hedden, Andr\'{a}s Juh\'{a}sz, Ciprian
Manolescu, Peter Ozsv\'{a}th and Jiajun Wang for all the discoveries
that we made together, which constitute a significant portion of this
thesis.

\noindent The FPO committee members William Browder, David Gabai and
Zolt\'{a}n Szab\'{o} and the thesis readers Peter Ozsv\'{a}th and
Zolt\'{a}n Szab\'{o}.

\noindent Boris Bukh, William Cavendish, David Gabai, Matthew Hedden,
Andr\'{a}s Juh\'{a}sz, Robert Lipshitz, Ciprian Manolescu, Peter
Ozsv\'{a}th, Jacob Rasmussen, Sarah Rasmussen, Zolt\'{a}n Szab\'{o}
and Dylan Thurston for many enjoyable conversations and lots of
interesting remarks.

\noindent The Fine Hall common room for providing the perfect ambience to do
Mathematics.

\noindent My parents, my brother and my sister for everything.

Thank you.

\chapter{Beginning of days}  
Our story starts on a summer day in $2001$, when two Hungarian
mathematicians sat together for a few hours, and came up with one of
the most amazing theories in modern low dimensional topology.

\section{Low dimensional topology}

Low dimensional topology is the branch of differential topology that
deals with three-dimensional and four-dimensional manifolds. It seems
strange at first to concentrate on just these two dimensions, when
there are (countably) infinite number of other dimensions we could
have worked with. The justification of this restricted choice lies in
Smale's h-cobordism theorem. When stated in simple (and incorrect)
terms, it basically says that in high enough dimensions, homotopy
restrictions give information about smooth structures, and hence
differential topology follows from algebraic topology. Stated in a
more mathematical form, it says

\begin{thm}\cite{Smale} If $n\geq 5$ and $W^{n+1}$ is a cobordism
between two simply connected manifolds $M_1^n$ and $M_2^n$, and each
of the inclusions $M_i^n\hookrightarrow W^{n+1}$ induces a homotopy
equivalence, then $W^{n+1}$ is diffeomorphic to $M_1^n\times I$.
\end{thm}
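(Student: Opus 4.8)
The plan is to prove this through Morse theory and handle calculus, in the style of Smale and as later organized by Milnor. The starting point is a Morse function $f\colon W\to[0,n+1]$ with $f^{-1}(0)=M_1$, $f^{-1}(n+1)=M_2$, no critical points on $\partial W$, and --- after rearranging the critical values so that $f$ is self-indexing --- every index-$k$ critical point lying on the level $f^{-1}(k)$. Such an $f$ equips $W$ with a handle decomposition relative to $M_1$, and the whole proof is the project of cancelling every handle in pairs until nothing is left but the collar $M_1\times I$.

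First I would carry out the standard low-index reductions. Since $W$ is connected, each $0$-handle can be traded against a $1$-handle and then removed; since $M_1$ and $M_2$ are simply connected and the inclusions are homotopy equivalences, $\pi_1(W)=1$, which lets one trade each $1$-handle against a newly introduced $3$-handle and cancel it --- the first point where $n+1\geq 6$ is used, since this move needs room for an embedded $2$-disk together with a normal pushoff. Turning $f$ upside down and repeating disposes of the $n$- and $(n-1)$-handles, so that $W$ rel $M_1$ is presented using only handles of index $2,3,\dots,n-1$.

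The algebraic core comes next. The cellular chain complex $C_*=C_*(W,M_1)$ read off from this handle decomposition consists of finitely generated free abelian groups concentrated in degrees $2$ through $n-1$, and the homotopy-equivalence hypothesis gives $H_*(W,M_1)=0$, so the complex is acyclic. Sliding one $k$-handle over another changes the preferred basis of $C_k$ by an elementary matrix, and introducing a cancelling $(k,k+1)$-handle pair adjoins to $C_k$ and $C_{k+1}$ a free summand on which $\partial$ is the identity. Using these two kinds of moves --- and the fact that, with $\pi_1=1$, the relevant group ring is $\mathbb{Z}$ and the reduction meets no Whitehead-torsion obstruction --- one brings the handle decomposition to a normal form in which the handles are organized into pairs whose attaching and belt spheres have algebraic intersection number exactly $\pm1$.

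The last and hardest step is to upgrade each algebraic $\pm1$ into a genuine geometric cancellation. Given a $k$-handle and a $(k+1)$-handle whose attaching sphere meets the belt sphere algebraically once inside a middle level set of dimension $n\geq 5$, the Whitney trick removes the superfluous intersection points in oppositely signed pairs: the two arcs joining such a pair bound an embedded Whitney disk because the level set (like $W$) is simply connected, and the disk can be taken embedded with interior disjoint from both spheres precisely because the ambient dimension is at least $5$. After the resulting isotopy the two spheres meet transversally in a single point, so the handle-cancellation lemma removes the pair. Iterating over all the middle handles empties the decomposition and yields a diffeomorphism $W\cong M_1\times I$. The main obstacle is exactly this Whitney-trick step: it is the one place where the dimension hypothesis $n\geq 5$ and simple connectivity are both genuinely indispensable, and the general-position arguments that make the Whitney disk embedded are the technical heart of the proof.
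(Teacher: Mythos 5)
The paper does not prove this theorem; it merely states it and cites Smale's original paper as background for the discussion of why dimensions three and four are special. There is therefore no in-paper proof to compare against, and your sketch must be judged on its own terms.

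As a high-level outline, what you wrote is a faithful account of the standard Morse-theoretic proof as organized in Milnor's \emph{Lectures on the h-cobordism theorem}: pass to a self-indexing Morse function, trade away the $0$-, $1$-, $n$-, and $(n+1)$-handles using connectedness, simple connectivity, and duality; use acyclicity of $C_*(W,M_1)$ together with handle slides (elementary basis changes) and creation of cancelling pairs to reach a normal form where geometrically adjacent handles pair with algebraic intersection number $\pm1$; then invoke the Whitney trick in the middle level sets to upgrade algebraic cancellation to geometric cancellation. Your emphasis that $\pi_1=1$ trivializes the Whitehead group and that the dimension hypothesis is used precisely to make the Whitney disk embedded and to provide room for the low-index handle trading is exactly right. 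One point you compress is that the Whitney trick needs the complement of the attaching and belt spheres in the middle level to be simply connected, not merely $W$; this follows from general position once $2\leq k\leq n-2$, but it is the hypothesis one actually checks. Overall this is a correct sketch of the argument, filling in considerably more than the paper, which treats the theorem as a black box.
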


The condition that $n\geq 5$ is very crucial in the statement, and
appears in a very subtle way in the proof. The fact that it is
necessary was established by Donaldson, when he disproved the
h-cobordism statement for $n=4$. The status of the statement in other
smaller dimensions may be of independent interest. For $n=0$, it is
trivial and for $n=1$ it is vacuous. The case $n=2$ was proved
recently by Perelman during his proof of the Poincar\'{e} conjecture.
The case $n=3$ stays unconquered (and as a consequence of Perelman's
work, is now equivalent to the smooth four-dimensional Poincar\'{e}
conjecture).

As mentioned at the beginning of this section, this leaves the story
in dimensions three and four wide open. From the void of uncertainty
to the pristine beauty of an unexplored world, sprang forth low
dimensional topology.

\section{Knot Theory}

One of the greatest treasures in the galleries of low dimensional
topology is the fascinating world of knots. To appreciate fully the
wonders of this new world, we need to familiarize ourselves with a few
basic definitions first. However to avoid pathologies, we always work
in either the smooth category or the piecewise-linear category, and to
remain intentionally vague, we mention this fact only once and never
allude to it again.

\begin{defn} A knot $K$ is an embedding of the circle $S^1$ into the
three-sphere $S^3$.  Two knots $K_1$ and $K_2$ are said to be
equivalent if there is an isotopy of $S^3$ (i.e. an one-parameter
family of diffeomorphisms of $S^3$ to itself) that takes $K_1$ to
$K_2$.
\end{defn}

\begin{defn} A knot diagram is an immersion of the circle $S^1$ into
the two-plane $\R^2$, such that there are no triple points, and at
every double point one of the participating arcs is declared the
overpass (the other one the underpass).
\end{defn}

Knot theory started long before low dimensional topology came into
fashion. Historically knots were always described by knot diagrams.
Given a knot diagram, it is easy to recover a knot from it, by
embedding $\R^2$ into $\R^3$ in a standard way, and then obtaining an
embedded $S^1$ in $\R^3$ from the immersed $S^1$ in $\R^2$ using the
crossing information, and finally one-point compactifying $\R^3$ to
get $S^3$. It is not difficult to see that given a knot, there is
always a knot diagram representing it. Figure \ref{fig:trefoil} shows
a knot diagram representing a right-handed trefoil knot.

\begin{figure}[ht]
\begin{center} \includegraphics[width=100pt]{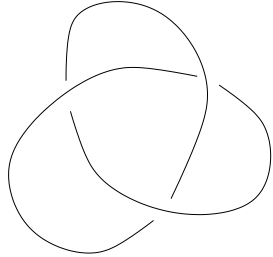}
\end{center}
\caption{The right-handed trefoil}\label{fig:trefoil}
\end{figure}

However it is always the case with these sorts of knot presentations
that while such a presentation exists, it is far from canonical. In
other words, even though every knot can be represented by a knot
diagram, two different knot diagrams can correspond to the same knot.
(Here, by two different knot diagrams, we mean two knot diagrams that
cannot be related by an isotopy of $\R^2$.) Figure \ref{fig:unknot}
illustrates two such knot diagrams, either of which represents the
trivial knot, or the unknot.

\begin{figure}[ht] \psfrag{a}{$(a)$} \psfrag{b}{$(b)$}
\begin{center} \includegraphics[width=200pt]{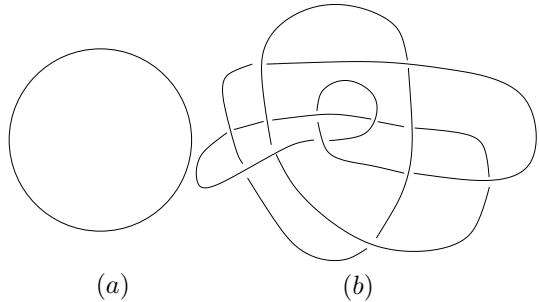}
\end{center}
\caption{Two knot diagrams for the unknot}\label{fig:unknot}
\end{figure}

In 1927, Alexander and Briggs, and independently Reidemeister came up
with essentially three local moves on knot diagrams, such that two
knot diagrams represent the same knot if and only if one can be taken
to the other using only these moves.

\begin{thm}\cite{JAGB, KR} Two knot diagrams represent the same knot
if and only if they can be related by a sequence of Reidemeister
moves (Figure \ref{fig:reidemeister}).
\end{thm}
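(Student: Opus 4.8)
\emph{Proof outline.} The ``if'' direction is easy: each of the three Reidemeister moves is a purely local modification of a diagram, and one checks directly that the induced change of the knot in $\R^3$, hence in $S^3$, is realized by an isotopy supported in a small ball; composing such isotopies shows that Reidemeister-equivalent diagrams represent equivalent knots. The substance of the theorem is the converse, which I would prove as follows.

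First I would pass to the piecewise-linear model. Every knot is equivalent to a polygonal knot, and two polygonal knots are equivalent if and only if they are related by a finite sequence of elementary triangle moves: replacing an edge $AB$ of the polygon by the two edges $AC$ and $CB$, or the reverse, where the solid triangle $ABC$ meets the knot only along $AB$. Granting this standard reduction, it suffices to understand the effect of a single triangle move on a knot diagram. After a generic perturbation of the projection direction I may assume that both the ``before'' and the ``after'' polygon project to honest knot diagrams, that no vertex of the knot lies over another point of the knot, and that the triangle $ABC$ is itself in general position with respect to the projection and to the rest of the knot.

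The heart of the argument is a subdivision and a finite case analysis. I would triangulate $ABC$ into small triangles $T_1,\dots,T_N$ arranged so that performing the triangle moves across $T_1,\dots,T_N$ in succession realizes the original move, the moving edge sweeping across $ABC$ one small triangle at a time. Choosing the triangulation fine enough, I can arrange that the planar image of each $T_i$ is so small that its interior contains no vertex of the current diagram and meets the rest of the diagram in one of a short list of model pictures: it meets nothing at all (a planar isotopy of the diagram); it meets a single strand cutting off the corner opposite the moving edge (a Reidemeister~II move); it meets a single strand running from the moving edge to an adjacent side of $T_i$ (a crossing sliding past a vertex, again a planar isotopy); it contains exactly one crossing, between two strands of the rest of the knot lying on the far side of the moving edge (a Reidemeister~III move); or the strand it meets is an edge of the polygon adjacent along the knot to one of the edges being created, so that a kink is created or destroyed (a Reidemeister~I move) or again nothing essential changes. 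Finally, the hypothesis that the solid triangle $T_i$ is disjoint from the knot forces the over/under labelling in each of these local pictures to be precisely the one occurring in the corresponding Reidemeister move, so the original triangle move is a finite composition of Reidemeister moves and planar isotopies. Running through all triangle moves in the sequence relating the two given diagrams completes the proof.

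I expect the main obstacle to lie in this last step: checking that a sufficiently fine subdivision really does reduce every triangle move to the listed local models, and that in each model the constraint $\operatorname{int}(T_i)\cap K=\emptyset$ pins down the crossing signs. This is not conceptually deep, but it is exactly where general position must be invoked with care --- genericity of the projection direction, absence of coincidences among vertices and crossings, transversality of the triangle with the knot --- and where one must be sure the enumeration of local pictures is complete. The remaining ingredients, namely the reduction to polygonal knots and triangle moves and the realization of each Reidemeister move by an ambient isotopy, are routine.
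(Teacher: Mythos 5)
The paper does not prove this theorem: it is stated as a classical fact with citations to Alexander--Briggs and Reidemeister, and no argument is given. So there is nothing in the text to compare against, and I will assess your sketch on its own terms.

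Your outline is essentially the standard classical proof. The two ingredients you identify --- (i) reduce to polygonal knots and show that PL-equivalence is generated by elementary triangle moves, and (ii) subdivide the moving triangle finely and classify how each small triangle's projection meets the rest of the diagram --- are exactly the Reidemeister/Alexander--Briggs argument, and the enumeration of local models (empty triangle, single strand cutting off the vertex opposite the moving edge, single strand crossing moving edge and a new edge, one interior crossing, and the R1 situation where an adjacent polygon edge is involved) is the right shape. Two small points to tighten. First, your list omits the mirror R2 case in which a strand enters and exits the triangle through the moving edge itself (two crossings vanish rather than appear); this is harmless but should be stated. Second, and more substantively, the claim that $\operatorname{int}(T_i)\cap K=\varnothing$ ``pins down'' the over/under labelling to exactly the pictures in the figure is a slight overstatement. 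What the disjointness hypothesis actually gives you is that the swept edge lies entirely above or entirely below the other strands, so the local move is one of the several orientation/sign variants of RI, RII, RIII. You then need either to include all those variants in the defining list of Reidemeister moves, or to prove the standard (short but not trivial) lemma that the missing variants are generated by the ones drawn together with RII and planar isotopy. You already flag this as the delicate step, which is the right instinct; making that lemma explicit would close the only real gap in the sketch.
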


\begin{figure}[ht] \psfrag{r1}{RI} \psfrag{r2}{RII} \psfrag{r3}{RIII}
\begin{center} \includegraphics[width=250pt]{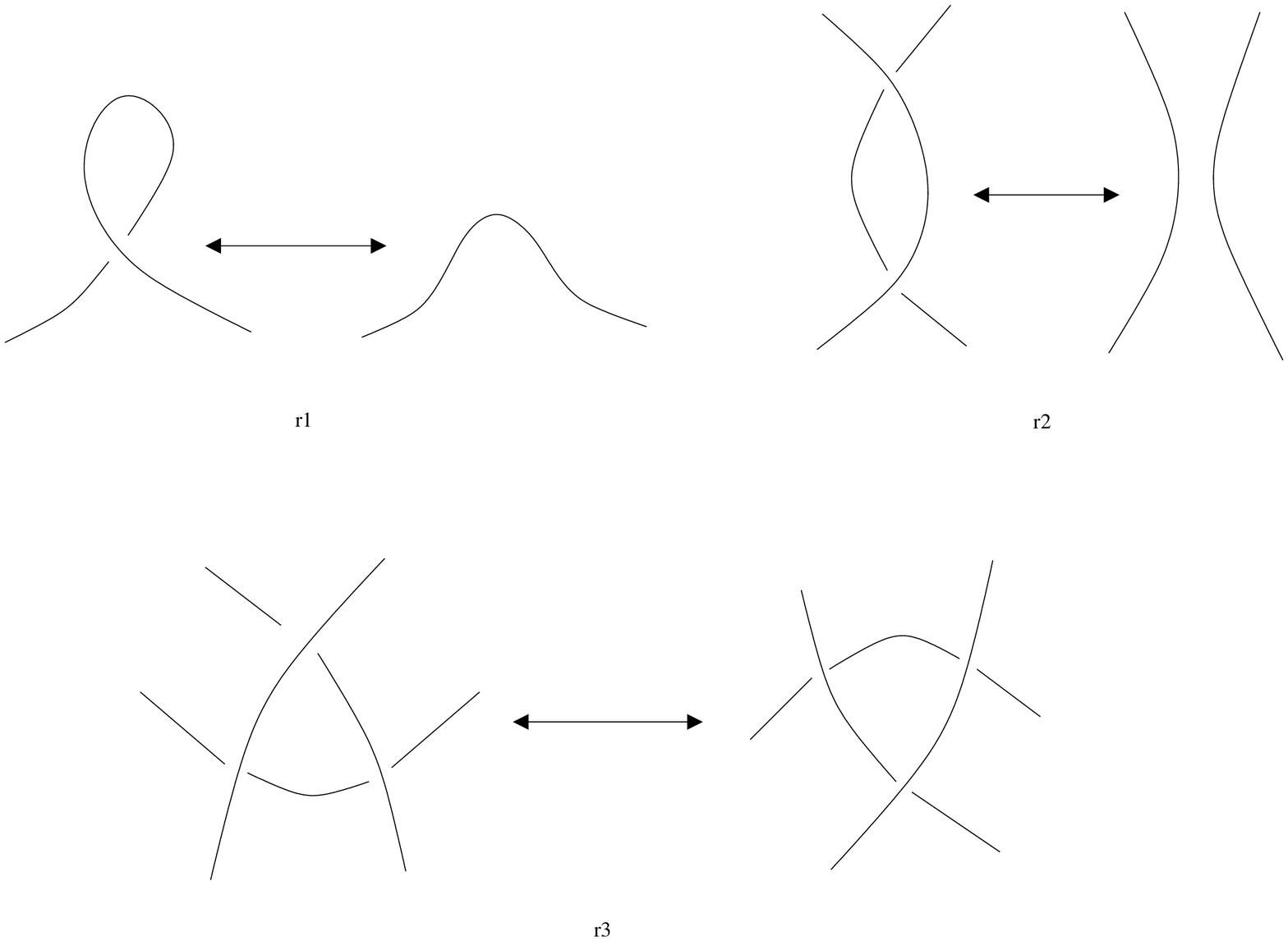}
\end{center}
\caption{The three Reidemeister moves}\label{fig:reidemeister}
\end{figure}

One of the central problems in knot theory is distinguishing two
knots. In other words, given two knot diagrams, we want to know
whether or not they represent the same knot. In case they do, it is
usually very easy to show that they do, simply by relating one knot
diagram to another using Reidemeister moves (however \emph{easy} is
relative, see for example Figure \ref{fig:unknot}). In case they do
not, i.e. the two knot diagrams represent different knots, they are
usually shown to be different using some invariants.

The first and the most classical invariant (and also the most
non-maneuverable one) is the fundamental group of the knot
complement, more commonly known as the knot group. The knot group is
already enough to distinguish the unknot from the trefoil (in fact it
is a theorem that the knot group distinguishes the unknot, but it is
not always easy to check whether or not two groups are
isomorphic). The knot group of the unknot is $\Z$, and a clever
application of Van Kampen shows that the knot group of the trefoil is
given by the group presentation $<a,b|a^3=b^2>$, which has a very
natural surjection to $S_3$, the symmetric group on three letters.

However most of the other classical invariants of knots are defined as
invariants of knot diagrams, which are then shown to remain invariant
under the Reidemeister moves. Perhaps the most famous knot invariant
of all times, the Alexander polynomial, can be argued to belong to
this category. In the original definition by J.W.Alexander
\cite{JAtop} where it is defined as the generator of a principal ideal
domain over $\Z[t,t^{-1}]$, the polynomial is only defined up to a
multiplication by $\pm t^n$. John Conway later showed that the
polynomial satisfies a linear Skein relation, and its value on the
unknot was enough to determine it, and a reparametrized version of the
Alexander polynomial is called the Alexander-Conway polynomial.
Throughout this thesis, we will be referring to the normalized but
unparametrized version of the polynomial as the Alexander polynomial
(even though technically it is a Laurent polynomial). For example, the
Alexander polynomial for the unknot is $1$ and the Alexander
polynomial for the trefoil is $t-1+t^{-1}$.

Much later Kauffman presented a combinatorial description of the
Alexander polynomial without using Skein relation, and defined only in
terms of a knot diagram. Given a knot diagram, let regions be the
connected components of the complement of the immersed circle in
$\R^2$. Let $A$ be the unbounded region, and let $B$ be another region
adjacent to the unbounded region.

\begin{defn}\cite{LK} A Kauffman state is a map which assigns to each
double point of the knot diagram, a region adjacent to it, such that
each region other than $A$ and $B$ is assigned to some double point.
\end{defn}

Let us now work with oriented knots, represented by oriented knot
diagrams. Given a Kauffman state $c$ and a double point $v$, let
$a_{c,v}$ be defined according to Figure \ref{fig:kauffman}.

\begin{figure}[ht] \psfrag{a1}{$-t^{-\frac{1}{2}}$} \psfrag{a2}{$1$}
\psfrag{a3}{$1$} \psfrag{a4}{$t^{\frac{1}{2}}$}
\psfrag{b1}{$-t^{\frac{1}{2}}$} \psfrag{b2}{$1$} \psfrag{b3}{$1$}
\psfrag{b4}{$t^{-\frac{1}{2}}$}
\begin{center} \includegraphics[width=200pt]{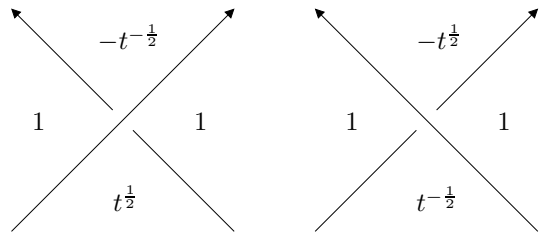}
\end{center}
\caption{The definition of $a_{c,v}$}\label{fig:kauffman}
\end{figure}

\begin{thm}\cite{LK} For a knot presented in an oriented knot diagram,
let $\mc{K}$ be the set of all Kauffman states and let $V$ be the set
of all double points.  Then the Alexander polynomial of the knot is
given by $\sum_{c\in\mc{K}}\prod_{v\in V} a_{c,v}$.
\end{thm}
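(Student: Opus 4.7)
The plan is to identify the state sum $\sum_{c\in\mc{K}}\prod_{v\in V} a_{c,v}$ with the determinant of a combinatorial matrix built from the knot diagram, and then recognize this determinant as a presentation matrix for the Alexander module of the knot.

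First I would observe that for a knot diagram with $n$ double points, the underlying four-valent planar graph has exactly $n+2$ regions by Euler's formula. I would then build an $n\times(n+2)$ matrix $M$ with rows indexed by the double points $V$ and columns indexed by the regions $R$: the entry at position $(v,r)$ is the weight $a_{c,v}$ prescribed by Figure \ref{fig:kauffman} whenever $r$ is one of the four regions adjacent to $v$, and $0$ otherwise. Deleting the two columns corresponding to the distinguished regions $A$ and $B$ yields a square $n\times n$ matrix $M'$. The Leibniz expansion of $\det M'$ is a sum over bijections $V\to R\setminus\{A,B\}$ sending each $v$ to an adjacent region, which is precisely the set $\mc{K}$ of Kauffman states. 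The first delicate step is verifying that the signature of each such permutation is absorbed into the prescribed weights $\pm t^{\pm 1/2}$, i.e.\ that the signs in Figure \ref{fig:kauffman} have been chosen so that every term in the Leibniz expansion appears with coefficient $+1$ after extracting the sign of the permutation.

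Having written the state sum as $\det M'$, I would next identify $M'$ with a presentation matrix for the Alexander module. The cleanest route is to take the Wirtinger presentation of $\pi_1(S^3\setminus K)$, compute its Fox Jacobian with respect to the abelianization $\pi_1\to\Z$, and then change basis from arcs to regions via the checkerboard coloring; a crossing-by-crossing comparison shows that the resulting matrix agrees with $M'$ up to the usual ambiguity of column deletions. Alternatively, one can bypass Fox calculus by checking directly that $\det M'$ is Reidemeister-invariant and evaluates to $1$ on the standard unknot diagram: in each Reidemeister move the appearance or disappearance of crossings is balanced by the appearance or disappearance of regions, and the signs in Figure \ref{fig:kauffman} are arranged so that the local changes in $\det M'$ cancel.

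The main obstacle, in either approach, is controlling the dependence of $\det M'$ on the auxiliary choices, particularly the choice of the region $B$ adjacent to $A$, since the state sum ostensibly depends only on the unordered pair $\{A,B\}$, while the determinant depends on which two columns are deleted. This is handled by producing a linear relation among the columns of $M$ whose coefficients are units $\pm t^{k/2}$, so that any two $n\times n$ minors obtained by deleting adjacent columns differ only by such a unit. Combined with the standing convention that the Alexander polynomial is a Laurent polynomial defined only up to multiplication by $\pm t^n$, this reconciles the ambiguity and completes the identification.
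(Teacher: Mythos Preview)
The paper does not prove this theorem; it is quoted as a known result from Kauffman's work \cite{LK} and no argument is given. So there is no ``paper's own proof'' to compare against.

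That said, your outline is the standard route and is essentially correct. A few remarks on the points you flagged as delicate. The identification of the state sum with $\det M'$ goes back to Alexander's original 1928 construction: his matrix is exactly of this region-by-crossing type, and Kauffman's contribution was to observe that the nonzero terms in the Leibniz expansion are precisely the states. The sign issue you raise is real; in Kauffman's treatment it is handled via the \emph{clock theorem}, which says that any two Kauffman states are connected by a sequence of local ``clock moves'', each of which exchanges two adjacent assignments. One checks that a clock move changes the permutation sign by $-1$ and simultaneously changes the product of weights by $-1$, so the signed product is constant across all states and the determinant really is the unsigned state sum. Your alternative of checking Reidemeister invariance directly also works but is more laborious. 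The column-dependence issue is exactly as you describe: Alexander already proved that adjacent columns of $M$ differ by a unit in $\Z[t^{\pm 1/2}]$, so the minor is well-defined up to units, matching the ambiguity in the Alexander polynomial.
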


The other central problem in knot theory is understanding geometric
properties of knots. This is the area where there is the closest
interaction between knot theory and other aspects of low dimensional
topology. It can be argued that understanding three-manifolds
is equivalent to understanding knots inside the three sphere $S^3$. To
state precise mathematical results in support of this claim, we first
need to extend the world of knots to embrace links.

\begin{defn} A link is an embedding of a disjoint union of circles
into $S^3$. Two links are said to be equivalent if there is an isotopy
of $S^3$ that takes one link to another. Each circle in the link is
called a link component.
\end{defn}

Planar link diagrams are defined similarly, and once more two link
diagrams represent the same link if and only if they can be connected
by a sequence of Reidemeister moves. The following theorem by
Alexander is the first indication of how links are related to
three-manifolds.

\begin{thm}\cite{JAriem} Any oriented three-manifold $Y$ is branched
cover of $S^3$ with the branch set being a link.
\end{thm}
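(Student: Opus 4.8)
The plan is to prove Alexander's theorem that every oriented three-manifold is a branched cover of $S^3$ with branch set a link. I would proceed by combining two classical facts. First, by a theorem of Alexander (the original motivation for his lemma on braid closures, but here used in its Heegaard-splitting guise), every closed oriented three-manifold $Y$ admits a Heegaard splitting $Y = H_1 \cup_\Sigma H_2$ into two handlebodies of the same genus $g$, glued along their common boundary surface $\Sigma$ of genus $g$. Second, the three-sphere $S^3$ itself has a genus-$g$ Heegaard splitting $S^3 = H_1' \cup_\Sigma H_2'$ for every $g$, where $H_1'$ and $H_2'$ are standardly embedded handlebodies. So both $Y$ and $S^3$ are built from two copies of the same handlebody, and the difference is entirely recorded by the two gluing maps $\phi, \phi' \colon \del H_2 \to \del H_1$ of the boundary surface, which are elements of the mapping class group of $\Sigma$.

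The key step is then to build the branched covering one handlebody at a time. First I would show that a handlebody $H$ of genus $g$ is a branched cover of a ball $B^3$ (equivalently, of a standard handlebody $H'$ of the same genus in $S^3$), with branch set a collection of properly embedded arcs; this is the three-dimensional analogue of realizing a surface with boundary as a branched cover of a disk, and can be arranged so that the covering, restricted to the boundary $\del H \to \del H'$, is a prescribed branched cover of surfaces $\Sigma \to \Sigma'$. One then arranges that the covering data on $H_1$ and on $H_2$ agree along $\Sigma$ after adjusting by the gluing homeomorphism: the point is that a homeomorphism of $\Sigma$ lifts, after possibly passing to a larger branched cover of surfaces and a suitable isotopy, to a homeomorphism compatible with the branched covering structure. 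Gluing the two handlebody branched covers along $\Sigma$ produces a branched covering $Y \to S^3$ whose branch locus is the union of the two arc systems along their endpoints, a closed one-manifold, i.e. a link $L \subset S^3$.

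The main obstacle I expect is the compatibility of the two branched covering structures along the Heegaard surface $\Sigma$: the gluing homeomorphism $\phi$ need not respect the branch points or the monodromy of the surface branched cover $\Sigma \to S^2$, so the two handlebody covers will not automatically match up. Handling this requires the flexibility to modify the surface branched cover — increasing the number of branch points, changing the monodromy representation $\pi_1(S^2 \setminus \{\text{pts}\}) \to S_n$, and absorbing the mapping class $\phi$ into these choices — while keeping everything extendable over the handlebodies. Concretely, one uses that any element of the mapping class group of $\Sigma$ is a product of Dehn twists, and that each Dehn twist can be realized compatibly with a suitable branched covering to $S^2$ after stabilizing the cover; iterating over the factors of $\phi$ and tracking how the branch arcs in the handlebodies are affected is the technical heart of the argument. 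Once that bookkeeping is done, the branch set is automatically a closed one-manifold embedded in $S^3$, hence a link, and the theorem follows.
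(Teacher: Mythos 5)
The paper itself offers no proof of this theorem, only a citation to Alexander, so the relevant comparison is to Alexander's classical argument. That argument is triangulation-based and quite elementary: take a triangulation of $Y$, pass to the second barycentric subdivision so that the vertices can be colored with four colors so that no two adjacent vertices share a color, and use the coloring to define a simplicial map onto the boundary of the standard $4$-simplex $\partial\Delta^4 \cong S^3$. This map is a branched covering whose branch locus sits in the image of the $1$-skeleton, a graph in $S^3$, and a short additional argument (perturbation and local modification near the vertices) turns the branch set into a link. The whole thing is essentially a piece of combinatorics.

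Your proposal takes a genuinely different route via Heegaard splittings, and it is at this point that a serious gap appears. You correctly identify that the whole difficulty lies in matching the two branched covering structures on the handlebodies along the Heegaard surface $\Sigma$ after the gluing map $\phi$ intervenes: one needs branched coverings $p_i\colon \Sigma \to S^2$ on the two sides together with a homeomorphism $\psi\colon S^2\to S^2$ satisfying $p_2\circ\phi = \psi\circ p_1$, extendable over both handlebodies. But the passage that is supposed to supply this, ``each Dehn twist can be realized compatibly with a suitable branched covering to $S^2$ after stabilizing the cover; iterating over the factors of $\phi$ and tracking how the branch arcs in the handlebodies are affected is the technical heart of the argument,'' is precisely where the proof stops being a proof. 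No mechanism is given for why stabilization suffices, how the monodromy representation is to be adjusted, or why the arc systems in the two handlebodies remain unknotted and matching after each twist. This compatibility problem is not a routine bookkeeping step: it is essentially the content of the Hilden--Montesinos theorem (that every closed oriented three-manifold is a $3$-fold simple branched cover of $S^3$ over a knot), whose proof is substantially harder than Alexander's. As written, your argument assumes the hard part. If you want a Heegaard-flavored proof you would need to actually carry out the stabilization argument for the mapping class group action on branched covers of $S^2$; otherwise the elementary triangulation proof is both shorter and complete.
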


However there is an even more subtle relation between links in $S^3$
and three-manifolds. A surgery on a link is a procedure by which we
remove a tubular neighborhood of a link in $S^3$ and then glue back
the neighborhood (which is a disjoint union of solid tori) in a
(possibly) different fashion. It is an amazing theorem that,

\begin{thm} Every oriented three-manifold $Y$ is a surgery along some
link $L$ in $S^3$.
\end{thm}

It is not surprising then that many geometric properties of knots and
links translate to properties of three-manifolds. We end this section
after discussing the geometric property that concerns us the most, the
Seifert genus of a knot.

\begin{defn} A Seifert surface for a knot $K$ is a compact oriented
surface $F$ embedded in $S^3$ such that $\del F=K$.
\end{defn}

Seifert showed \cite{HS} that every knot admits a Seifert surface,
thus leading to the definition of the Seifert genus of a knot.

\begin{defn} The genus of a knot $K$ is the smallest number among the
genera of the Seifert surfaces that bound $K$.
\end{defn}

It is easy to see that the unknot is the only knot of genus
$0$. Figure \ref{fig:seifert} shows a genus one surface bounding the
right-handed trefoil, thus showing that the trefoil has genus $1$.

\begin{figure}[ht]
\begin{center} \includegraphics[width=150pt]{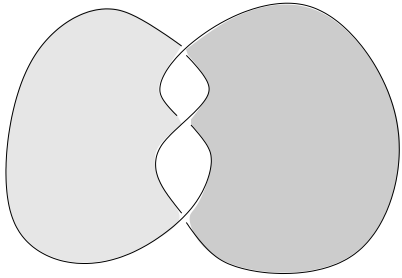}
\end{center}
\caption{Minimal genus Seifert surface of the
trefoil}\label{fig:seifert}
\end{figure}

From the very nature of the definition of the genus of a knot, it is
obvious that it is a knot invariant, but \emph{a priori} it is not
even clear whether or not it can be computed. Amazingly the Alexander
polynomial provides some information about the genus.

\begin{thm} The normalized Alexander polynomial is a symmetric Laurent
polynomial, and the genus of a knot is at least the degree of its
Alexander polynomial.
\end{thm}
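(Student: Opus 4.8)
The plan is to produce a single Laurent-polynomial representative of $\Delta_K$, built from a minimal-genus Seifert surface, which is manifestly symmetric and supported in degrees $-g$ through $g$; since the normalized Alexander polynomial differs from any other representative only by a unit $\pm t^n$ of $\Z[t,t^{-1}]$, the normalization then forces it to inherit both properties. Note that the diagrammatic descriptions recalled above (the Kauffman state sum, the Conway skein relation) are ill-suited to this: the genus of a knot is not visible in a knot diagram, so the natural route is through a Seifert surface and Alexander's original module-theoretic definition.

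First I would fix a Seifert surface $F$ for $K$ of minimal genus $g=g(K)$, which exists by Seifert's theorem. Since $F$ is a compact oriented surface of genus $g$ with a single boundary component, $H_1(F;\Z)$ is free abelian of rank $2g$; fix a basis $e_1,\dots,e_{2g}$. Pushing a $1$-cycle off $F$ in the positive normal direction gives the Seifert pairing $\theta(a,b)=\mathrm{lk}(a^{+},b)$, and I let $V=(\theta(e_i,e_j))$ be the associated $2g\times 2g$ integral Seifert matrix.

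The crux is the identification $\Delta_K(t)\doteq\det(tV-V^{T})$, where $\doteq$ denotes equality up to multiplication by a unit $\pm t^{n}$. I would obtain this by constructing the infinite cyclic cover $\widetilde{X}$ of the knot complement $X=S^{3}\setminus K$ from countably many copies of $S^{3}$ cut along $F$, glued cyclically, and running a Mayer--Vietoris argument: the resulting presentation of the Alexander module $H_{1}(\widetilde{X})$ over $\Z[t,t^{-1}]$ has presentation matrix $tV-V^{T}$, so its order ideal is generated by $\det(tV-V^{T})$, which is the Alexander polynomial up to a unit in the sense of the principal-ideal description of Alexander recalled earlier. This homological computation is the one genuinely nontrivial input and is the step I expect to be the main obstacle; a reader willing to take it on faith may instead simply cite Seifert's original paper, which proves exactly this bound.

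Granting the identification, both assertions follow from linear algebra. The matrix $tV-V^{T}$ is $2g\times 2g$ with entries linear in $t$, so $\det(tV-V^{T})$ is an ordinary polynomial in $t$ of degree at most $2g$; multiplying by $t^{-g}$ yields a Laurent polynomial supported in degrees between $-g$ and $g$, whence the normalized $\Delta_K$ has degree at most $g(K)$ (the inequality can be strict, as untwisted Whitehead doubles show, consistent with the "at least" in the statement). For symmetry, put $p(t)=\det(tV-V^{T})$ and compute $p(t^{-1})=\det(t^{-1}V-V^{T})=t^{-2g}\det(V-tV^{T})=t^{-2g}\det\bigl((V-tV^{T})^{T}\bigr)=t^{-2g}\det(V^{T}-tV)=t^{-2g}p(t)$, using $(-1)^{2g}=1$. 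Thus $p$ is symmetric up to the unit $t^{2g}$, and after the standard normalization (multiply by the unit that centers the polynomial and fixes its value at $t=1$) we get $\Delta_K(t^{-1})=\Delta_K(t)$, completing the argument.
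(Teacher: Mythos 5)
The paper states this theorem without proof, as a classical background fact, so there is nothing to compare your argument against; I will assess it on its own terms.

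Your proof is correct, and it is the standard classical route. You reduce everything to the identification $\Delta_K(t)\doteq\det(tV-V^{T})$ for a Seifert matrix $V$ associated to a minimal-genus Seifert surface, and you are right to flag that identification as the one genuinely substantive input (it is proved via the infinite cyclic cover and a Mayer--Vietoris computation, or can be cited to Seifert). Granting it, your linear algebra is correct: $\det(tV-V^{T})$ is a polynomial of degree $\leq 2g$ in $t$, so the centered representative $t^{-g}\det(tV-V^{T})$ is supported in $[-g,g]$, giving the genus bound; and the computation $p(t^{-1})=t^{-2g}\det(V^{T}-tV)=t^{-2g}p(t)$ (using $(-1)^{2g}=1$) gives exactly the palindromy $a_i=a_{2g-i}$ needed to conclude that the centered representative satisfies $q(t)=q(t^{-1})$. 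Two small points worth tightening: you should note that $p(t)\neq 0$ (indeed $p(1)=\det(V-V^{T})=\pm1$ since $V-V^{T}$ is the intersection form of $F$), so the centering and the normalization at $t=1$ make sense; and when you say "multiply by the unit that centers the polynomial," you should observe explicitly that centering by $t^{-g}$ already achieves symmetry, so that the remaining normalization is by $\pm1$ only and therefore preserves symmetry. You are also right that the Kauffman state sum and skein relation are poorly adapted to seeing the genus; the Seifert-surface approach is the natural one here, which is why essentially every textbook proof of this statement goes through the Seifert matrix.
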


For example, the Alexander polynomial for the $(3,4)$-torus knot is
$t^3-t^2+1-t^{-2}+t^3$, which shows that the genus of the
$(3,4)$-torus knot is at least three. (The genus is in fact equal to
three, as seen by cleverly finding a genus three Seifert surface).

Before we conclude this section, we should mention that this section
has been a mere glimpse at the wonderful world of knots and links. We
have only talked of theorems which have some (often minor) connections
with the rest of the thesis, and left hundreds of other stories in
knot theory untold.

\section{Floer homology}

We take leave of low dimensional topology to take a brief detour
to the realms of Floer homology. Historically, Floer homology deals
with two $n$-dimensional Lagrangians inside a $2n$-dimensional
symplectic manifold. However we will be dealing with a slightly
different situation. What follows is one of the simplest versions of
Floer homology, suited to our specific needs.

Let $M^{2n}$ be a closed manifold with a complex structure. Let the
induced almost complex structure be $J$, i.e.  $J$ is a map from the
tangent bundle to itself with $J^2=-Id$. A totally real subspace is a
submanifold $N$ such that if $v$ is a non-zero tangent vector to $N$,
then $J(v)$ is not a tangent vector to $N$. Clearly the dimension of a
totally real subspace it at most $n$. Let $L_1^n$ and $L_2^n$ be two
totally real subspaces which are transverse to one another. Thus $L_1$
and $L_2$ intersect in a finite number of points.

Let us work over a commutative ring $R$ (usually it is $\Z$ or
$\F_2$). The chain complex is the free $R$-module generated by the
finitely many points in $L_1\cap L_2$. Given $x,y\in L_1\cap L_2$, a
Whitney disk joining $x$ to $y$ is a map $\phi$ from the unit disk $D$
in the complex plane $\C$ to $M$ such that $\phi(-i)=x$, $\phi(i)=y$,
$\phi(\del D\cap\{s\in \C|Re(s)>0\})\subset L_1$ and $\phi(\del
D\cap\{s\in \C|Re(s)<0\})\subset L_2$. Two such Whitney disks are said
to be homotopic to one another, if they are homotopic relative the
boundary conditions. Let $\pi_2(x,y)$ be the set of all Whitney disks
joining $x$ to $y$ up to homotopy equivalence.  Note that given a
Whitney disk joining $x$ to $y$ and another Whitney disk joining $y$
to $z$, we can glue them together to get a Whitney disk joining $x$ to
$z$. This gives a natural map (which we denote by $+$) from
$\pi_2(x,y)\times\pi_2(y,z)$ to $\pi_2(x,z)$ which we will need
later.

To define the Floer homology we need a chain complex. We already have
the generators for the chain complex, namely the points in $L_1\cap
L_2$, so all that we need are the boundary maps. This is where things
get complicated. The boundary map $\del$ depends on a function $c$
called the count function, which maps Whitney disks to $R$, and for
$x\in L_1\cap L_2$, $\del x$ can be written as
$$\del x=\sum_{y\in L_1\cap L_2}\sum_{\phi\in\pi_2(x,y)}c(\phi)y$$

This definition immediately leads to further questions. It is not even
clear \emph{a priori} that given $x,y\in L_1\cap L_2$, there are
finitely many $\phi\in\pi_2(x,y)$. Thus for the definition to even
make sense, we must have $c(\phi)=0$ for all but finitely many
$\phi\in\pi_2(x,y)$.

The second and more important issue is that there is no guarantee that
$\del^2=0$. The definition of the count function has to be specially
designed to ensure this. The usual way to define $c(\phi)$ is the
following.

We first choose a number of divisors (complex submanifolds, each with
real dimension $(2n-2)$) $Z_1,\ldots,Z_k$ each disjoint from
$L_1\cup L_2$. The chain homotopy type of the Floer chain complex
would very much depend on the choice of these divisors. Then given a
Whitney disk $\phi$, its algebraic intersection number with each of
the $Z_i$'s is well-defined, since the boundary of the Whitney disk
lies on $L_1\cup L_2$ and $Z_i$'s are disjoint from $L_1\cup L_2$. We
declare $c(\phi)=0$ if $\phi\cdot Z_i\neq 0$ for some $i$.

Given a Whitney disk $\phi$, let its moduli space $\mc{M}(\phi)$ be
the space of all complex maps from the unit disk $D$ in $\mb{C}$ to
$M$ which represent $\phi$. Let the Maslov index $\mu(\phi)$ be the
expected dimension of the moduli space. We once more declare
$c(\phi)=0$ if $\mu(\phi)\neq 1$.

There is a natural action of $\mb{R}$ on $\mc{M}(\phi)$ given by the
precomposition by the one-parameter family of diffeomorphisms of $D$
which fixes $i$ and $-i$. Let $\wh{\mc{M}(\phi)}=\mc{M}(\phi)/\mb{R}$
be the reparametrized moduli space. If $\mu(\phi)=1$, the expected
dimension of $\mc{M}(\phi)$ is one, and hence the expected dimension
of $\wh{\mc{M}(\phi)}$ is zero. Let us assume that the complex
structure on $M$ is generic enough such that whenever $\mu(\phi)=1$,
the actual dimension of $\wh{\mc{M}(\phi)}$ is zero, and it
consists of finitely many points. There is usually an orientation on
$\mc{M}(\phi)$ which induces a sign of $\pm 1$ on these points, and
the aptly named count function $c(\phi)$ is simply the count of these
points with sign. Since we are still in complete awe of the definition
of the Floer chain complex, let us restate it once more in the light
of new knowledge.
$$\del x=\sum_{y\in L_1\cap L_2}\sum_{\begin{subarray}{l}\phi\in\pi_2(x,y)\\ \phi\cdot
  Z_i=0\forall i\\ \mu(\phi)=1\end{subarray}}\#(\wh{\mc{M}(\phi)})y$$

The reason for introducing the divisors $Z_i$'s in the definition is
two fold. Usually if there are enough divisors, then given $x,y$, all but
finitely many  of $\phi\in\pi_2(x,y)$ will not be disjoint from $\cup_i
Z_i$, and hence $c(\phi)$ will be zero for all but finitely many $\phi\in\pi_2(x,y)$.

The second reason is slightly more subtle. Recall that we also need
$\del$ to be a boundary map, i.e. $\del^2=0$. What this translates to
is the following. For all $x,z\in L_1\cap L_2$,
$$\sum_{y\in L_1\cap L_2}\sum_{\begin{subarray}{l}\phi\in\pi_2(x,y)\\
    \psi\in\pi_2(y,z)\end{subarray}} c(\phi)c(\psi)=0 $$

We may in addition assume that both $\phi$ and $\psi$ are disjoint
from the divisors, and either has $\mu=1$. Since the Maslov index is
additive, this would imply $\phi+\psi\in\pi_2(x,z)$ is a Whitney disk
of Maslov index two. Thus given $x,z$ and a Whitney disk
$u\in\pi_2(x,z)$ with $\mu(u)=2$ which avoids all the divisors, it is
enough to show that,
$$\sum_{y\in L_1\cap L_2}\sum_{\begin{subarray}{c}\phi\in\pi_2(x,y)\\
    \psi\in\pi_2(y,z)\\ \phi+\psi=u\\ \mu(\phi)=\mu(\psi)=1\\
    \phi\cdot Z_i=\psi\cdot Z_i =0\forall
    i\end{subarray}}\#(\wh{\mc{M}(\phi)})\#(\wh{\mc{M}(\psi)}) =0 $$

It is clear that to understand
$\#(\wh{\mc{M}(\phi)})\#(\wh{\mc{M}(\psi)})$ , we need to understand
$\wh{\mc{M}(u)}$. Recall that for a Whitney disk $\varphi$, the
expected dimension of $\wh{\mc{M}(\varphi)}$ is $(\mu(\varphi)-1)$. So
assume that the complex structure on $M$ is generic enough, such that
$\wh{\mc{M}(\varphi)}=\varnothing$ for all Whitney disks with
$\mu(\varphi)<1$, it is a collection of finitely many points when
$\mu(\varphi)=1$, and it a compact one-manifold when $\mu(\varphi)=2$.

Let us now analyze the boundary degenerations of $\wh{\mc{M}(u)}$. The
Maslov indices of the different components in a boundary degeneration
has to add up to $\mu(u)=2$, and the index of each component has to be
at least one, so there has to be exactly two components in each
boundary degeneration. Thus only three types of boundary degenerations
as shown in Figure \ref{fig:degenerations}, are possible.

\begin{figure}[ht] 
\psfrag{x1}{$x$}
\psfrag{y1}{$y$}
\psfrag{z1}{$z$}
\psfrag{x2}{$x$}
\psfrag{y2}{$y$}
\psfrag{z2}{$z$}
\psfrag{x3}{$x$}
\psfrag{y3}{$y$}
\psfrag{z3}{$z$}
\psfrag{a}{$(a)$}
\psfrag{b}{$(b)$}
\psfrag{c}{$(c)$}
\begin{center} \includegraphics[width=180pt]{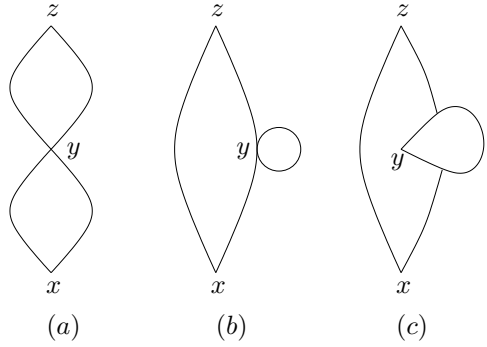}
\end{center}
\caption{The boundary degenerations of $u$}\label{fig:degenerations}
\end{figure}

Somehow by a miracle, if there are enough divisors such that every
holomorphic Maslov index one Whitney disk whose boundary lies entirely
in one of $L_1$ and $L_2$, intersects one of $Z_i$'s, then the Cases
$(b)$ and $(c)$ of Figure \ref{fig:degenerations} cannot occur. Then
the sum $\sum_{\phi+\psi=u}\#(\wh{\mc{M}(\phi)})\#(\wh{\mc{M}(\psi)})$
counts precisely the number of boundary points of $\wh{\mc{M}(u)}$
(with orientation). However since $\wh{\mc{M}(u)}$ is a compact one-manifold,
it has an even number of boundary points, and  hence the sum (even
with sign) is zero, leading to a proof that $\del^2=0$.

\section{Heegaard Floer homology}

Heegaard Floer homology is an amazing application of the techniques of
Floer homology where all these miracles do indeed come true. It was
introduced in a couple of revolutionary papers \cite{POZSz3manifolds,
  POZSzapplications} by Peter Ozsv\'{a}th and Zolt\'{a}n Szab\'{o},
primarily as an invariant for closed three-manifolds. From now on,
assume all the three-manifolds are closed, connected and oriented.

\begin{defn}
A genus $g$ Heegaard splitting of a three-manifold $Y$ is a
decomposition of $Y$ into a union of two oriented genus $g$ handlebodies
$U_g$ and $V_g$, which are glued together by an orientation reversing
diffeomorphism $h:\del U_g\rightarrow \del V_g$.
\end{defn}

It is clear that given two handlebodies and a gluing map between them,
we get a three-manifold. It is perhaps not that clear that every
three-manifold admits a Heegaard decomposition. However it is a well
known theorem that,

\begin{thm}
Every oriented three-manifold admits a Heegaard decomposition.
\end{thm}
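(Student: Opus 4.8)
The plan is to prove that every closed oriented three-manifold $Y$ admits a Heegaard decomposition by using a triangulation (or a handle decomposition) of $Y$ and organizing the handles by index. First I would invoke the fact that every smooth (or PL) three-manifold admits a finite triangulation, hence a handle decomposition with handles of indices $0,1,2,3$. The key observation is that, after possible handle slides and cancellations, one may arrange that all $0$-handles and $1$-handles are attached first, and all $2$-handles and $3$-handles last. The union $U_g$ of the $0$-handles and $1$-handles is then a handlebody, and by turning $Y$ upside down (reversing the Morse function), the union $V_g$ of the $2$-handles and $3$-handles is also a handlebody; the two share the common boundary surface $\Sigma_g$ where the gluing map $h$ lives.

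The key steps, in order, are as follows. First, fix a triangulation $T$ of $Y$; equivalently, fix a self-indexing Morse function $f\colon Y\to\R$ with critical points of index equal to $f$-value. Second, use connectedness of $Y$ to cancel all but one index-$0$ critical point against index-$1$ critical points (each cancellation is a standard Whitney-type move in the Morse complex), and dually cancel all but one index-$3$ critical point against index-$2$ critical points. Third, observe that the sublevel set $f^{-1}([{-}\tfrac12,\tfrac32])$, built from a single $0$-handle and some $1$-handles, is by definition a handlebody $U_g$, where $g$ is the number of remaining $1$-handles; its boundary is an orientifed genus-$g$ surface $\Sigma_g$. Fourth, apply the same reasoning to $-f$ to see that $f^{-1}([\tfrac32, 3{+}\tfrac12])$ is also a handlebody $V_g$; it must have the same genus since it shares the boundary $\Sigma_g$ with $U_g$ and $Y$ is connected (a standard Euler-characteristic count on $\Sigma_g$ forces the two genera to agree). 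Fifth, identify the attaching diffeomorphism $h\colon\del U_g\to\del V_g$ as the identification of both with the level surface $f^{-1}(\tfrac32)$, which is orientation-reversing once $U_g$ and $V_g$ are given the boundary orientations induced from their (oppositely oriented) roles in $Y$.

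The main obstacle is the cancellation step: one must justify that in a connected three-manifold the excess index-$0$ and index-$1$ critical points can be paired up and removed. This uses the fact that for a self-indexing Morse function the index-$0$ handles are connected to the rest through index-$1$ handles, and connectedness of $Y$ guarantees enough $1$-handles to do the cancellation; the precise tool is the Morse-theoretic cancellation lemma (the first Cancellation Theorem), which requires checking that a descending sphere of an index-$1$ critical point meets an ascending sphere of an index-$0$ critical point transversally in a single point. Handle slides may be needed to arrange this, but in low dimensions no delicate Whitney-trick obstruction arises. The dual cancellation of $3$-handles against $2$-handles is formally identical applied to $-f$. Everything else — that a $0$-handle with $1$-handles attached is a handlebody, that the boundary is a genus-$g$ surface, and that the gluing map is orientation-reversing — is routine and follows directly from the definitions.
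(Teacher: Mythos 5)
Your proof is correct and follows essentially the same route the paper takes: produce a self-indexing Morse function with a single minimum and a single maximum, and let the two handlebodies be the sublevel and superlevel sets split at $f^{-1}(\tfrac{3}{2})$. The paper simply cites the existence of such a Morse function with exactly $k$ minima and $k$ maxima (taking $k=1$); you have filled in the underlying cancellation argument, which is consistent with that citation.
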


One way to see this is by constructing Morse function on the
three-manifold $Y$.

\begin{defn}
  A Morse function on a manifold $M$ is a smooth function
  $f:M\rightarrow \R$, such that at every critical point (i.e. where
  $df=0$), the Hessian $d^2 f$ is non-singular. The index of a
  critical point is the number of negative eigenvalues of the
  Hessian. A Morse function is said to be self-indexing if at every
  critical point the value of the Morse function equals the index of
  the critical point.
\end{defn}

\begin{defn}
  A gradient-like flow associated to a Morse function $f$ on $M$ is a
  flow whose singularities are precisely the Morse critical points,
  and furthermore the flow agrees with a gradient flow induced from
  some metric in a neighborhood of the critical points, and the Morse
  function is a strictly decreasing function along any flowline.
\end{defn}

It is an extremely important result that every oriented smooth
manifold admits a self-indexing Morse function and a gradient-like
flow associated to it. In fact given a natural number $k$, we can even
ensure that the Morse function has exactly $k$ maxima and $k$ minima.
Thus to find a Heegaard decomposition of a three-manifold $Y$, all we
need to do is to find a self-indexing Morse function $f:Y\rightarrow
[0,3]$, and then define the handlebodies $U$ and $V$ as
$f^{-1}[0,\frac{3}{2}]$ and $f^{-1}[\frac{3}{2},3]$ respectively.

We choose the Morse function $f$ to have exactly $k$ maxima and $k$
minima (usually we choose $k=1$). This implies (since $\chi(Y)=0$)
that the number of index $1$ critical points must equal the number of
index $2$ critical points.  Let the common number be $(g+k-1)$. Then
$f^{-1}(\frac{3}{2})$ is a genus $g$ surface $\Sigma_g$ and the
Heegaard decomposition described in the previous paragraph is a genus
$g$ Heegaard decomposition.

In addition, if we are given a gradient like flow associated to this
Morse function, then we can represent the whole picture by a single
combinatorial diagram on the Heegaard surface $\Sigma_g$. Let
$\alpha_1,\ldots,\alpha_{g+k-1}$ (numbered arbitrarily) be the
disjoint circles on $\Sigma_g$ that flow down to the $(g+k-1)$ index
one critical points, and let $\beta_1,\ldots,\beta_{g+k-1}$ (also
numbered arbitrarily) be the circles that flow up to the $(g+k-1)$
index two critical points. While choosing the gradient-like flow, we
ensure that the $\alpha$ circles intersect the $\beta$ circles
transversely. Clearly the $\alpha$ circles are disjoint from one
another, and their complement has $k$ components flowing down to the
$k$ index zero critical points, and thus the $\alpha$ circles generate
a half-dimensional subspace of $H_1(\Sigma_g)$. A similar statement
holds for the $\beta$ circles. We also choose $k$ basepoints
$z_1,\ldots,z_k$ (needless to say, also numbered arbitrarily) such
that each component of $(\Sigma\setminus\alpha)$ contains one
basepoint, and each component of $(\Sigma\setminus\beta)$ contains one
basepoint. Such a diagram is called a Heegaard diagram, but for future
convenience, let us record the definition here.

\begin{defn}
A Heegaard diagram $(\Sigma_g,\alpha_1,\ldots,\alpha_{g+k-1},
\beta_1,\ldots,\beta_{g+k-1}, z_1,\cdots,z_k)$ is genus-$g$ surface
$\Sigma_g$ with two collections of $(g+k-1)$ disjoint curves, called
$\alpha$ curves and $\beta$ curves respectively, and $k$ basepoints
$z_1,\cdots,z_k$ such that $(\Sigma\setminus\alpha)$ has $k$ components
each with a basepoint, and $(\Sigma\setminus\beta)$ also has $k$
components each containing a basepoint. 
\end{defn}

It is reasonably clear that a Heegaard diagram captures all the
information that is needed to reconstruct the three-manifold $Y$. We
thicken $\Sigma_g$ to get $\Sigma_g\times[-1,1]$. We add two-handles
to $\alpha_i\times\{-1\}$ and to $\beta_j\times\{1\}$. This results in
a three-manifold with $2k$ boundary components each homeomorphic to
$S^2$. We add solid balls to each boundary component to recover the
three-manifold $Y$. Figure \ref{fig:s3heegaard} shows a genus-two
Heegaard diagram (with $k=2$) representing $S^3$.

\begin{figure}[ht] 
\psfrag{a1}{$\alpha_1$}
\psfrag{a2}{$\alpha_2$}
\psfrag{a3}{$\alpha_3$}
\psfrag{b1}{$\beta_1$}
\psfrag{b2}{$\beta_2$}
\psfrag{b3}{$\beta_3$}
\psfrag{z1}{$z_1$}
\psfrag{z2}{$z_2$}
\begin{center} 
\includegraphics[width=300pt]{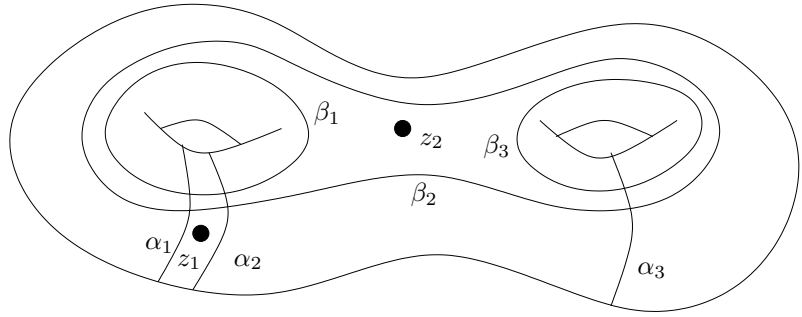}
\end{center}
\caption{A Heegaard diagram of $S^3$}\label{fig:s3heegaard}
\end{figure}

Thus every Heegaard diagram represents a specific three-manifold, and
any three-manifold can be represented by a Heegaard diagram. However
there can be lots of Heegaard diagrams representing the same
three-manifold. It turns out that any two Heegaard diagrams
representing the same three-manifold can be related by a sequence of
moves of the following type.

\begin{defn}
  In an isotopy, the $\alpha$ and the $\beta$ curves move
  independently (i.e. it does not have to be induced from an isotopy
  on the whole surface) by isotopies in the complement of the
  basepoints.
\end{defn}

\begin{defn}
  In a handleslide of the $\alpha$ curves, we take a pair of pants
  region bounded by the curves $c$, $c'$ and $c''$ which does not
  contain any basepoint, and whose intersection with the $\alpha$
  curves is precisely the union of the circles $c$ and $c'$, and we
  then replace the $\alpha$ curve $c'$ with a new $\alpha$ curve
  $c''$. A handleslide of the $\beta$ curves is defined similarly. 
\end{defn}

\begin{defn}
In a stabilization of the first type, we increase the genus of the
Heegaard surface by adding an one-handle, and we add an $\alpha$ curve
and a $\beta$ curve as shown in Figure \ref{fig:heegaardmoves}$(b)$. A
destabilization of the first type is the reverse of this move.
\end{defn}

\begin{defn}
  In a stabilization of the second type, we add one $\alpha$ circle,
  one $\beta$ circle and one basepoint like in
  \ref{fig:heegaardmoves}$(c)$. A destabilization of the second type
  is the reverse of this move.
\end{defn}

The moves (other than isotopy) are shown in Figure
\ref{fig:heegaardmoves}. It is clear that these moves do not change
the underlying three-manifold. Interestingly, the following theorem
shows that some sort of a converse is also true.

\begin{figure}[ht] 
\psfrag{a}{$(a)$}
\psfrag{b}{$(b)$}
\psfrag{c}{$(c)$}
\psfrag{a1}{$\alpha$}
\psfrag{b1}{$\beta$}
\psfrag{a2}{$\alpha$}
\psfrag{b2}{$\beta$}
\psfrag{z}{$z$}
\psfrag{c1}{$c$}
\psfrag{c2}{$c'$}
\psfrag{c3}{$c''$}
\begin{center} 
\includegraphics[width=330pt]{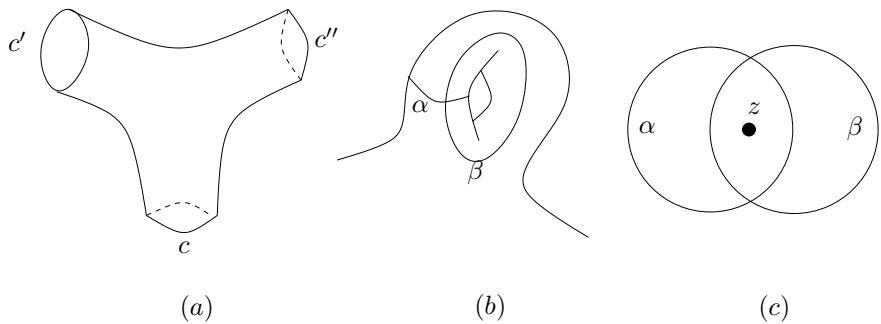}
\end{center}
\caption{Moves on Heegaard diagrams}\label{fig:heegaardmoves}
\end{figure}

\begin{thm}
  Two Heegaard diagrams represent the same three-manifold if and only
  if they are related by a sequence of isotopies, handleslides, and
  stabilizations and destabilizations of either type. In fact two Heegaard
  diagrams with the same number of basepoints representing the same
  manifold can be related by a sequence of isotopies, handleslides and
  stabilizations and destabilizations of the first type only.
\end{thm}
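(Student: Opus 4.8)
The \emph{only if} part is the substance; the \emph{if} part is essentially already done above. A handleslide is induced by an isotopy of the gradient-like flow, a stabilization of the first type by inserting a cancelling pair of index-$1$ and index-$2$ critical points, and a stabilization of the second type by simultaneously inserting cancelling pairs of indices $(0,1)$ and $(2,3)$ glued in as a standard genus-one summand; none of these changes $Y$.

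For the \emph{only if} part I would run Cerf theory. Realize $H_0$ and $H_1$ as coming from self-indexing Morse functions $f_0,f_1\colon Y\to[0,3]$ with chosen gradient-like flows and the prescribed numbers of critical points of each index. After some first-type stabilizations on the diagram of smaller genus --- and, only for the first assertion, some second-type stabilizations to equalize the numbers of basepoints --- arrange that $f_0$ and $f_1$ have the same number of critical points of every index. Join $f_0$ to $f_1$ by a path $f_t$ in the (convex, hence connected) space of all smooth functions $Y\to\R$ and perturb it to be generic. Then for all but finitely many $t$ the function $f_t$ is excellent Morse (distinct critical values, and a gradient-like flow with no flowline between critical points of the same index), and, after rearranging by a $t$-dependent isotopy of the flow, it stays self-indexing off the exceptional parameters; at an exceptional parameter exactly one thing happens: two critical values of equal-index critical points cross (only the arbitrary labelling of the curves changes); a single flowline momentarily runs between two critical points of the same index; or a cancelling pair of critical points of consecutive index is born or dies. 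Reading off the effect on the ascending/descending circles on the Heegaard surface, the flowline event is exactly a handleslide of the $\alpha$- or the $\beta$-curves, and a birth/death in indices $(1,2)$ is exactly a first-type (de)stabilization; tracking the diagram $H_t$ along the path then connects $H_0$ to $H_1$ by the listed moves, \emph{provided} we understand the births and deaths in indices $(0,1)$ and $(2,3)$.

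Those remaining events are where the two assertions diverge. For the first assertion they are harmless: such an event changes the number of basepoints by one and, after bringing the two ``half-events'' together by a further small perturbation, is precisely a second-type (de)stabilization. (Alternatively, and perhaps more cleanly for the first assertion, one can sidestep Cerf theory: whenever $k\ge 2$, a sequence of isotopies and handleslides lets one cut off a once-punctured-torus summand carrying a single basepoint and no other $\alpha$-curve, $\beta$-curve, or basepoint, hence a second-type destabilization; doing this repeatedly reduces both $H_0$ and $H_1$ to single-basepoint diagrams, whereupon the classical Reidemeister--Singer stabilization theorem makes the two underlying splittings isotopic after first-type stabilizations, and the classical fact that any two complete systems of compressing disks of a handlebody are related by handleslides and isotopy finishes the job.) For the second assertion, where $H_0$ and $H_1$ carry the same number $k$ of basepoints, one must instead choose the generic path with \emph{no} index-$0$ or index-$3$ birth or death at all: since the endpoints have equal numbers of index-$0$ and of index-$3$ critical points, first normalize $f_0$ and $f_1$ near their minima and maxima to a common standard form and then run the path rel a fixed set of $k$ minima and $k$ maxima, confining every Cerf event to indices $(1,2)$.

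\textbf{Main obstacle.} The delicate point is this last step for the refined (``same number of basepoints'') statement --- genuinely forcing the Cerf path to fix the index-$0$ and index-$3$ critical sets, equivalently showing that the second-type (de)stabilizations one is tempted to use occur in cancelling pairs and so are dispensable. A secondary but necessary check is that the flowline event really translates into a \emph{single} handleslide: one must verify that the relevant pair-of-pants region can be chosen to miss all the basepoints, so that the move is allowable. Once the two classical inputs (Reidemeister--Singer, and handleslide-connectivity of cut systems of a handlebody) are granted, the rest is bookkeeping.
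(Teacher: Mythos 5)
The paper does not actually prove this theorem: it is stated as background, without proof or citation, as a folklore combination of Reidemeister--Singer with later refinements for multi-pointed diagrams (the refinements appear, for example, in work of Ozsv\'ath--Szab\'o on links and in Manolescu--Ozsv\'ath--Szab\'o--Thurston). So there is no ``paper's own proof'' to compare against; I can only assess your proposal on its own terms.

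Your outline is the standard Cerf-theoretic route and the broad strokes are sound. Two points, though, go beyond the ``bookkeeping'' you defer. First, you have yourself flagged the real content of the second assertion --- forcing the generic path $f_t$ to avoid all index-$(0,1)$ and index-$(2,3)$ birth-deaths when the endpoints have equal numbers of basepoints --- and your proposed fix (``normalize near the minima and maxima and run the path rel that set'') is an assertion, not an argument: the relevant stratum of functions with a prescribed set of extrema is not convex, and a generic path constrained to agree near those points can still create new minima or maxima elsewhere. Resolving this is the theorem. The usual way out is not to forbid the events but to show that, when they occur, they can be cancelled again by Morse-theoretic cancellation (the birth creates a cancelling pair, and after handleslides one can arrange the cancellation to be by the same pair), so the net effect is a sequence of handleslides and first-type moves; that is the lemma you actually need and it is not supplied. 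Second, there is a quieter gap that you did not flag: in a generic Cerf path the index-$(0,1)$ birth and the index-$(2,3)$ birth that together constitute a second-type stabilization happen at \emph{different} times, and in between the middle level set does not give a legal pointed Heegaard diagram at all (the numbers of $\alpha$- and $\beta$-curves disagree, and the basepoint condition fails). Your phrase ``after bringing the two half-events together by a further small perturbation'' names the issue without proving that such a perturbation exists or that, when the two half-events are separated by other Cerf events, the intervening moves can be pushed past one of them. Both of these are resolvable, but as written they are holes rather than bookkeeping. The parenthetical alternative you sketch --- peel off once-punctured summands to reduce to a single basepoint, then invoke Reidemeister--Singer and handleslide-connectivity of cut systems --- is cleaner for the first assertion, but as you present it for the second assertion it still introduces and then must re-absorb second-type moves, and you have not given the cancellation lemma that would let you delete them.
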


Now, given a three-manifold $Y$, we (essentially by choosing a
specific type of Morse function, and a gradient like flow
corresponding to it) choose a Heegaard diagram $(\Sigma_g,
\alpha,\beta, z)$ representing $Y$. Consider the symmetric product
$Sym^{g+k-1}(\Sigma_g)=\Sigma_g\times \cdots \times\Sigma_g/ S_{g+k-1}$,
where $S_{g+k-1}$ is the group of permutations on $(g+k-1)$ letters
acting naturally on the Cartesian product. Even though the action of
$S_{g+k-1}$ on the Cartesian product is far from a free action, the
quotient turns out to be manifold (this follows from the observation
$\C^n=Sym^n(\C)$, a consequence of the fundamental theorem of
algebra). We choose a complex structure on $\Sigma_g$, which in turn
induces a complex structure on $Sym^{g+k-1}(\Sigma_g)$, and a generic
perturbation (in a precise sense, as described in
\cite{POZSz3manifolds}) of this complex structure is chosen. We are
soon going to apply the heavy machinery of Floer theory, and this
$2(g+k-1)$ dimensional manifold $Sym^{g+k-1}(\Sigma_g)$ is the complex
manifold that we start with.

Given any permutation $\sigma\in S_{g+k-1}$, there is a $(g+k-1)$
dimensional torus $T_{\alpha,\sigma}=\alpha_{\sigma(1)}\times\cdots
\times \alpha_{\sigma(g+k-1)}$ in $\Sigma_g^{g+k-1}$. These $(g+k-1)!$
tori are all disjoint (this is just an extremely fancy way of saying
that the $\alpha$ circles are disjoint), and the action of $S_{g+k-1}$
simply permutes these tori. Thus $\T_{\alpha}$, the quotient of these
tori, lying in $Sym^{g+k-1}(\Sigma_g)$ (and denoted by
$\alpha_1\times\cdots \times\alpha_{g+k-1}$) is also a torus, and is a
half-dimensional totally real subspace. The torus
$\T_{\beta}=\beta_1\times\cdots \times\beta_{g+k-1}$ is defined
similarly.

We are almost set for applying the Floer machinery. We have the
$2(g+k-1)$-dimensional complex manifold, and two totally real
$(g+k-1)$-dimensional subspaces. The divisors are all that we need.
Recall that the symmetric product is just the parametrizing space of
unordered $(g+k-1)$-tuples of points on the surface. Let
$Z_i=\{z_i\}\times Sym^{g+k-2}(\Sigma_g)$ be the codimension-two
holomorphic subspace consisting of all the points in the symmetric
product whose one of the $(g+k-1)$ coordinates is the basepoint
$z_i$. Once more, the statement that $Z_i$ is disjoint from
$\T_{\alpha}\cup\T_{\beta}$ is a fancy restatement of the fact that
$z_i$ lies in the complement of the $\alpha$ and $\beta$ curves.

Now finally, at the end of the beginning, we define the Floer chain
complex. The chain complex is the free $R$-module generated by
$\T_{\alpha}\cap \T_{\beta}$, and for a generator $x$, the boundary
map is given by
$$\wh{\del} x=\sum_{y\in\T_{\alpha}\cap\T_{\beta}}\sum_{\begin{subarray}{c}
    \phi\in\pi_2(x,y)\\ \mu(\phi)=1\\ \phi\cdot Z_i=0 \end{subarray}}
\#(\mc{M}(\phi)/\R)y$$

The chain complex defined above is called the hat version of the
Heegaard Floer chain complex (hence the notation $\wh{\del}$). In
order to complete our eduction, there is another important chain
complex that we need to know of, called the minus version of the
Heegaard Floer chain complex. The new chain complex is the
$R[U_1,\ldots, U_k]$-module generated freely by points of
$\T_{\alpha}\cap\T_{\beta}$, and the boundary map is defined on each
generator $x$ as follows
$$\del^{-} x=\sum_{y\in\T_{\alpha}\cap\T_{\beta}}\sum_{\begin{subarray}{c}
    \phi\in\pi_2(x,y)\\ \mu(\phi)=1\\ \phi\cdot Z_i=n_i \end{subarray}}
\#(\mc{M}(\phi)/\R)U_i^{n_i}y$$

We have made lots of choices on the way. We have chosen a
self-indexing Morse function with $k$ maxima and minima, we have
chosen a gradient-like flow corresponding to it, we have chosen $k$
basepoints (subject to certain restrictions), we have chosen a complex
structure on the Heegaard surface and a generic perturbation of the
induced complex structure on the symmetric product, and finally we
have chosen a ring $R$ which is usually $\Z$ or $\F_2$. If the
three-manifold $Y$ is a rational homology sphere, i.e. if $H^1(Y)=0$,
then this is all we need. If however $b_1(Y)>0$, then for the hat
version, we also need to ensure that the Heegaard diagram is
admissible, and for the minus version, we need to ensure that the
diagram is strongly admissible. These are minor technical restriction
that we do not need to bother ourselves with.

We end this section with the following wonderful theorems, established
by Ozsv\'{a}th and Szab\'{o}, which can easily be named the
Fundamental Theorems of Heegaard Floer Homology.  The theorems
basically say that the homologies of the chain complexes are
three-manifold invariants.

\begin{thm}\cite{POZSz3manifolds}\label{thm:fthf1}
  The map $\wh{\del}$ defined above is a boundary map, i.e.
  $(\wh{\del})^2=0$, and there is an $R$-module $\wh{HF}(Y,R)$
  depending only on $Y$ and $R$, such that the homology of the hat
  version of the Floer chain complex is isomorphic to
  $\wh{HF}(Y,R)\otimes^{k-1}R^2$.
\end{thm}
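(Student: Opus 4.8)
The plan is to prove the two assertions in turn: first that $(\wh{\del})^2=0$, and then that the homology of the hat complex, suitably normalized, depends only on $Y$ and $R$. For $(\wh{\del})^2=0$ I would run the scheme sketched in the Floer homology section above, now with ambient manifold $Sym^{g+k-1}(\Sigma_g)$, totally real tori $\T_\alpha,\T_\beta$, and divisors $Z_i=\{z_i\}\times Sym^{g+k-2}(\Sigma_g)$. Three ingredients are required. \emph{Transversality}: for a generic perturbation of the induced complex structure, every moduli space $\mc{M}(\phi)$ with $\mu(\phi)\le 2$ is a smooth manifold of dimension $\mu(\phi)$ and is empty when $\mu(\phi)<0$, so that $\mc{M}(\phi)/\R$ is a finite set when $\mu(\phi)=1$ and (after compactification) a compact one-manifold-with-boundary when $\mu(\phi)=2$. \emph{Gromov compactness}: the ends of $\mc{M}(u)/\R$ for $u\in\pi_2(x,z)$ with $\mu(u)=2$ and $n_{z_i}(u)=0$ are limits of the form $\phi*\psi$ with $\phi\in\pi_2(x,y)$, $\psi\in\pi_2(y,z)$, $\mu(\phi)=\mu(\psi)=1$ (these are exactly the configurations counted by the coefficient of $z$ in $(\wh{\del})^2x$), or configurations carrying a sphere or disk bubble. \emph{Exclusion of bubbles}: sphere bubbles do not occur for generic data because the generator of $\pi_2(Sym^{g+k-1}(\Sigma_g))$ has strictly positive intersection with each $Z_i$, incompatible with $n_{z_i}(u)=0$; a disk bubble with boundary entirely on $\T_\alpha$ has a domain supported on the components of $\Sigma_g\setminus\alpha$, and since the Heegaard-diagram axioms put exactly one basepoint in each such component, $n_{z_i}=0$ forces the domain, hence — the homotopy class of a disk being determined by its domain up to the sphere class — the bubble itself to be constant, impossible for a nonconstant bubble of index $\ge1$; the $\T_\beta$ case is identical. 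Therefore every boundary point of the compactified $\mc{M}(u)/\R$ is a broken disk, and since a compact one-manifold has an even number of boundary points (counted with sign once coherent orientations have been fixed, for $R=\Z$), the coefficient of $z$ in $(\wh{\del})^2x$ vanishes.

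For invariance I would invoke the theorem quoted above: any two Heegaard diagrams of $Y$ with the same number of basepoints are joined by isotopies, handleslides, and first-type (de)stabilizations, while a change in the number of basepoints is bridged by second-type (de)stabilizations. It therefore suffices to attach to each move a chain homotopy equivalence, respectively an isomorphism on homology up to the announced $R^2$ factors; throughout one restricts to admissible diagrams and checks that the moves can be carried out within the admissible class. Invariance under an isotopy of the $\alpha$- or $\beta$-curves and under a change of the perturbed complex structure is the standard continuation-map argument: a generic path of data produces a chain map counting index-zero disks for the moving data, and a generic two-parameter family produces a chain homotopy showing that the two composites are homotopic to the respective identities.

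The handleslide move is where the real work lies, and I expect it to be the main obstacle. One first replaces the handleslid system by $\alpha'$ consisting of a small Hamiltonian translate of $\alpha$ together with the single slid curve, arranged so that $(\Sigma_g,\alpha,\alpha',z)$ is a diagram whose Floer homology is a tensor product of copies of $H_*(S^1\times S^1)$ and $H_*(S^2)$, hence carries a canonical top-degree class $\Theta$. One then defines a map $\wh{CF}(\Sigma_g,\alpha,\beta,z)\to\wh{CF}(\Sigma_g,\alpha',\beta,z)$ by counting index-zero pseudoholomorphic triangles in $Sym^{g+k-1}(\Sigma_g)$ with the three vertices at $x$, at $\Theta$, and at the output generator, discarding triangles that meet any $Z_i$. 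Showing this is a chain map requires analyzing the ends of the one-dimensional triangle moduli spaces, which degenerate into a triangle together with a disk; showing it induces an isomorphism uses the associativity of the triangle count for a Heegaard quadruple, together with an explicit model computation: for a further small translate $\alpha''$ of $\alpha$, the product $\Theta_{\alpha,\alpha'}\cdot\Theta_{\alpha',\alpha''}$ is the top class $\Theta_{\alpha,\alpha''}$, and the triangle map attached to a top class is the nearest-point continuation isomorphism, so the composite of the two handleslide maps is an isomorphism and hence each of them is. Handleslides among the $\beta$-curves are handled symmetrically.

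It remains to treat the stabilizations and assemble the pieces. A first-type stabilization attaches a handle carrying a new $\alpha$- and a new $\beta$-curve meeting transversely in one point $p$; neck-stretching the almost complex structure along the connect sum shows the relevant holomorphic disks never enter the handle, so the generators are exactly $x\times\{p\}$ and the disk counts agree with those in the original diagram, yielding a canonical isomorphism of chain complexes. A second-type stabilization introduces a new $\alpha$-curve, a new $\beta$-curve meeting it in two points $p_+,p_-$, and a new basepoint $z_{k+1}$ inside the small bigon they bound; the generators now occur in pairs $x\times\{p_\pm\}$, and the only new disk, that small bigon, has $n_{z_{k+1}}>0$ and is discarded, so the chain complex becomes the original one tensored with $R^2$ and the homology is multiplied by $R^2$. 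Finally, given any $k$-basepoint admissible diagram of $Y$, I would apply $k-1$ second-type stabilizations to a once-and-for-all chosen one-basepoint diagram of $Y$ to match the basepoint count, connect the two $k$-basepoint diagrams by the first-type moves above, and compose the resulting isomorphisms; this defines $\wh{HF}(Y,R)$ as the homology of any one-basepoint diagram of $Y$ (well defined by first-type invariance) and shows that the $k$-basepoint complex has homology $\wh{HF}(Y,R)\otimes^{k-1}R^2$. For $R=\Z$ one fixes coherent orientations of all moduli spaces involved so that every chain map above is defined over $\Z$.
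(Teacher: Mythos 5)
The paper states this result as a citation to \cite{POZSz3manifolds} and offers no proof of its own, so there is no internal argument to compare against. Your sketch is a faithful outline of the standard Ozsv\'ath--Szab\'o proof (extended to the multi-basepoint setting): the exclusion of sphere bubbles via positivity of intersection with the $Z_i$ and of boundary degenerations via the basepoint-per-component axiom, the continuation-map argument for isotopies and changes of almost complex structure, the triangle-map plus associativity plus $\Theta$-class argument for handleslides, the neck-stretching argument for first-type stabilization, and the explicit tensor-by-$R^2$ computation for second-type stabilization that produces the $\otimes^{k-1}R^2$ factor, with coherent orientations supplying the $R=\Z$ case.
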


\begin{thm}\cite{POZSz3manifolds}\label{thm:fthf2}
  The map $\del^{-}$ defined above is also a boundary map, and there
  is an $R[U]$-module $HF^{-}(Y,R)$ depending only on $Y$ and $R$,
  such that the homology of the minus version of the Floer chain
  complex is isomorphic to $HF^{-}(Y,R)$ as $R[U]$-modules, where the
  $U$ action on the Floer homology is given by multiplication by any
  of the $U_i$'s.
\end{thm}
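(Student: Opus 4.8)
The plan is to prove the statement in two stages: first that $\del^{-}$ is a genuine differential, $(\del^{-})^{2}=0$, and then that the homology of the minus complex, together with its $R[U]$-module structure, is unchanged under every choice made in its construction, so that it defines an invariant $HF^{-}(Y,R)$. For $(\del^{-})^{2}=0$ I would run the standard count of ends of one-dimensional moduli spaces, now keeping track of basepoint multiplicities. Fix generators $x,z\in\T_{\alpha}\cap\T_{\beta}$ and non-negative integers $n_{1},\ldots,n_{k}$; the coefficient of $U_{1}^{n_{1}}\cdots U_{k}^{n_{k}}z$ in $(\del^{-})^{2}x$ is $\sum_{u}\sum_{\phi+\psi=u}\#(\mc{M}(\phi)/\R)\,\#(\mc{M}(\psi)/\R)$, where $u\in\pi_{2}(x,z)$ ranges over classes with $\mu(u)=2$ and $u\cdot Z_{i}=n_{i}$, and $\phi,\psi$ over index-one classes with $\phi\cdot Z_{i}+\psi\cdot Z_{i}=n_{i}$. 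By Gromov compactness this equals the signed count of the ends of the compact one-manifold $\mc{M}(u)/\R$ other than those where a holomorphic disk with boundary on a single $\T_{\alpha}$ or $\T_{\beta}$ bubbles off (Cases $(b)$ and $(c)$ of Figure \ref{fig:degenerations}). The crucial geometric input, due to Ozsv\'{a}th and Szab\'{o}, is positivity of domains in the symmetric product: a non-constant holomorphic disk with boundary on a single $\T_{\alpha}$ has a non-negative, non-trivial domain bounded by the $\alpha$ curves, and the minimal such class has domain all of $\Sigma_{g}$, hence multiplicity one at every $z_{i}$. Writing $u=u'+[\Sigma_{g}]$ for such a bubble forces $\mu(u')=0$, so $u'$ is constant and $x=z$ with every $n_{i}=1$; in that one case the $\alpha$- and the $\beta$-boundary degenerations contribute equal counts with opposite signs and cancel. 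Every remaining end of $\mc{M}(u)/\R$ is a broken flowline of the displayed form, and a compact one-manifold has an even number of boundary points, so the total vanishes (over $\Z$ after fixing a coherent orientation of the moduli spaces; over $\F_{2}$ no signs are needed). If $b_{1}(Y)>0$ one invokes strong admissibility of the Heegaard diagram so that only finitely many classes $u$ contribute to each fixed monomial in the $U_{i}$, keeping all sums finite.

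\emph{Invariance.} Next I would carry out the standard programme: for each move relating two Heegaard diagrams of $Y$ --- change of the (perturbed) complex structure, isotopy of the $\alpha$ or $\beta$ curves away from the basepoints, handleslide, stabilization and destabilization of the first and of the second type, and moving a basepoint --- produce a chain map between the two minus complexes and show it is a homotopy equivalence of complexes of $R[U_{1},\ldots,U_{k}]$-modules. Change of complex structure and isotopy are covered by continuation maps along a generic path, with the chain homotopies produced by one-parameter families of such maps; moving a basepoint within its region is of the same kind. A handleslide of, say, the $\alpha$ curves is treated via a Heegaard triple $(\Sigma_{g},\alpha,\alpha',\beta,z)$ with $\alpha'$ a small Hamiltonian translate of the handleslid system: one counts Maslov-index-zero holomorphic triangles, weighted by $U_{i}$'s according to their basepoint multiplicities, and pairs them against the top generator $\Theta$ of the minus homology of $(\Sigma_{g},\alpha,\alpha',z)$, which is a connected sum of standard diagrams for $S^{1}\times S^{2}$; associativity of these triangle maps, proved by degenerating Maslov-index-one families of holomorphic squares, reduces the composite of the handleslide map with the reverse handleslide map to a single triangle map that is computed by hand to be chain homotopic to the identity. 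A stabilization of the first type leaves $k$ unchanged and, near the new handle, adds a unique new coordinate to each generator; a neck-stretching argument identifies the stabilized minus complex with the original one, compatibly with all the $U_{i}$. A stabilization of the second type introduces a basepoint $z_{k+1}$ and a variable $U_{k+1}$; after passing to homology, where the $U_{i}$'s already agree (see below), one checks the resulting $R[U]$-module is the old one, which is what lets the invariant be stated independently of $k$.

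\emph{The $U_{i}$'s agree, and conclusion.} That $U_{1},\ldots,U_{k}$ induce a single well-defined $U$-action on $HF^{-}(Y,R)$ follows either from the invariance moves (one can bring all basepoints into a common region of $\Sigma_{g}\setminus(\alpha\cup\beta)$, where the multiplicities $n_{i}(\phi)$ coincide), or directly from a standard chain homotopy: counting index-one holomorphic disks with an interior marked point constrained to an arc $\gamma$ joining $z_{i}$ to $z_{j}$ and disjoint from the other basepoints, suitably weighted by powers of the $U_{\ell}$, gives an $R[U_{1},\ldots,U_{k}]$-linear map $H$ with $\del^{-}H+H\del^{-}=U_{i}-U_{j}$ (signs over $\Z$, none over $\F_{2}$). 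Combining this with the previous two steps produces the $R[U]$-module $HF^{-}(Y,R)$ depending only on $Y$ and $R$, and the homology of any minus complex for $Y$ is isomorphic to it as an $R[U]$-module, proving Theorem \ref{thm:fthf2}. The analogous but lighter argument --- dropping the $U$-bookkeeping and observing instead that each of the $k-1$ surplus basepoints contributes a tensor factor of $R^{2}$ to the homology --- proves Theorem \ref{thm:fthf1}.

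\emph{The main obstacle.} As always in this theory, the real work is analytic and combinatorial rather than formal: one must arrange transversality of all the moduli spaces by a generic choice of perturbed almost complex structure on $Sym^{g+k-1}(\Sigma_{g})$, control their ends by Gromov compactness and gluing, and --- over $\Z$ --- fix a coherent system of orientations, since every count above rests on these foundations. Within the invariance argument the genuinely delicate step is the handleslide: establishing associativity of the holomorphic-triangle maps and carrying out the model computation that identifies the round-trip handleslide map with the identity both require substantial effort. Everything peculiar to the minus version --- tracking the multiplicities $n_{i}$ and the monomials $U_{i}^{n_{i}}$, and checking that every chain map and homotopy above respects the $R[U_{1},\ldots,U_{k}]$-module structure --- is careful bookkeeping layered on top of the Ozsv\'{a}th--Szab\'{o} machinery.
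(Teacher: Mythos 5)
The paper does not prove this statement: it is presented as a cited result from Ozsv\'{a}th--Szab\'{o} with no argument supplied, consistent with the survey style of this chapter. So there is no ``paper's own proof'' to compare against; the comparison has to be against the standard argument in the cited source.

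Measured against that, your outline is essentially correct and identifies all the load-bearing pieces: $(\del^-)^2 = 0$ via ends of one-dimensional moduli spaces with boundary degenerations (Cases $(b)$ and $(c)$ of Figure \ref{fig:degenerations}) cancelling in pairs; invariance under change of complex structure, isotopy, handleslide (via Heegaard triples, $\Theta$, and associativity of the triangle maps), and both kinds of stabilization; strong admissibility to keep the sums finite when $b_1(Y)>0$; and the chain homotopy $\del^- H + H\del^- = U_i - U_j$ showing the $U_i$'s agree on homology. Two places where you are a little brisker than the actual argument deserves: (i) the analysis of a boundary degeneration $u = u' + [\Sigma_g]$ forcing $x=z$ and $n_i=1$ implicitly uses $\mu([\Sigma_g])=2$, which in Ozsv\'{a}th--Szab\'{o}'s treatment relies on genus hypotheses (or a more careful index computation in the multi-pointed setting) that you do not flag; and (ii) for stabilization of the second type, the statement you actually need is a specific algebraic cancellation --- the new pair of generators supports a differential of the form $U_{k+1}-U_j$, so passing to homology over $R[U_1,\ldots,U_{k+1}]$ simultaneously kills the extra tensor factor $R^2$ and identifies $U_{k+1}$ with the others --- and saying ``one checks the resulting $R[U]$-module is the old one'' elides the step that explains why the minus version picks up no $\otimes^{k-1}R^2$ factor, in contrast to Theorem \ref{thm:fthf1}. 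Neither is a fatal gap; both are things a full proof would have to spell out.
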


\section{Knot Floer homology}

The last time we talked about knots, we only talked about knots and
links inside the three-sphere $S^3$. This is because for the most
part in this thesis, we will not be needing the general case. However,
in general, a link is an embedding of a disjoint union of circles
inside a three-manifold $Y$, and a knot is a link with one
component. The following is a Heegaard diagram describing a link.

\begin{defn}
  A link Heegaard diagram $(\Sigma_g,\alpha_1,\ldots,\alpha_{g+k-1},
  \beta_1,\ldots,\beta_{g+k-1}, z_1,\cdots, z_k, w_1, \cdots, w_k)$ is
  genus-$g$ surface $\Sigma_g$ with two collections of $(g+k-1)$
  disjoint curves, called $\alpha$ curves and $\beta$ curves
  respectively, and two collections of $k$ basepoints called $z$
  points and $w$ points respectively, such that
  $(\Sigma\setminus\alpha)$ has $k$ components each with a
  $z$-basepoint and a $w$-basepoint, and $(\Sigma\setminus\beta)$ also
  has $k$ components each containing a $z$-basepoint and
  $w$-basepoint.
\end{defn}

Given a link Heegaard diagram, observe that if we forget about the
$w$-basepoints, we get an ordinary Heegaard diagram. The
three-manifold which that Heegaard diagram represents is the ambient
three-manifold $Y$. To recover the link $L\subset Y$, in each
component of $(\Sigma\setminus \alpha)$ join $z$ to $w$ by an embedded
oriented arc avoiding all the $\alpha$ curves, and then push the
interior of this arc towards the $\alpha$-handlebody $U_g$ (i.e. the
handlebody in which all the $\alpha$ curves bound disks). Similarly in
each component of $(\Sigma\setminus\beta)$ join $w$ to $z$ by an
embedded oriented arc avoiding all the $\beta$ curves, and then push
the interior of the arc towards the $\beta$-handlebody $V_g$. The
resulting one-dimensional oriented subspace of $Y$ is the link $L$.

More often than not, we work with knots inside $S^3$. In that case, we
usually choose $k=1$, although for the most part in this thesis, we
will not be doing that. Figure \ref{fig:trefoildiagram} shows a
Heegaard diagram for the trefoil inside $S^3$ with $k=1$.

\begin{figure}[ht] 
\psfrag{a}{$\alpha$}
\psfrag{b}{$\beta$}
\psfrag{z}{$z$}
\psfrag{w}{$w$}
\begin{center} 
\includegraphics[width=180pt]{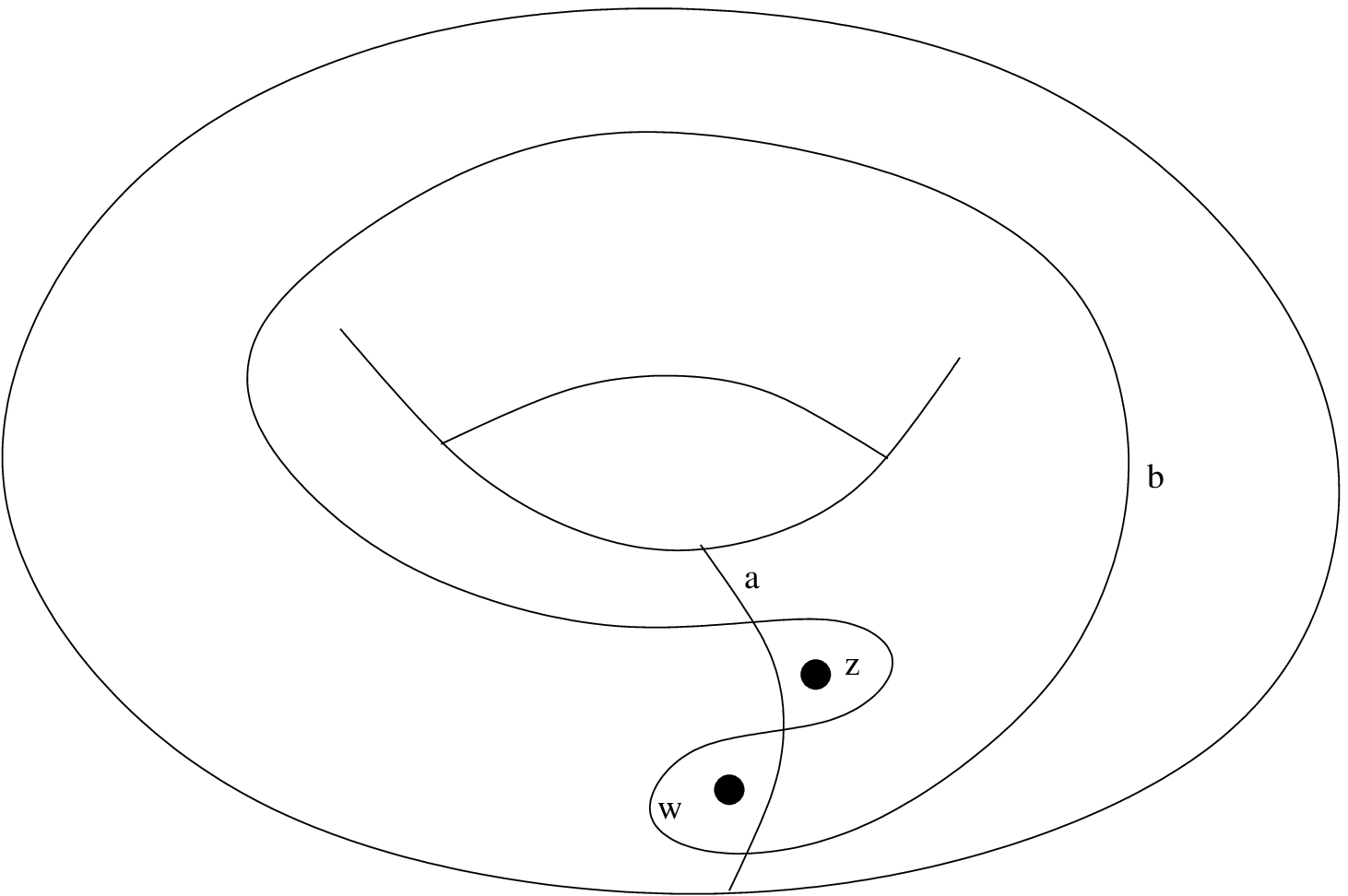}
\end{center}
\caption{A Heegaard diagram for the trefoil}\label{fig:trefoildiagram}
\end{figure}

Knot Floer homology was introduced by Peter Ozsv\'{a}th and Zolt\'{a}n
Szab\'{o} \cite{POZSzknotinvariants} and independently by Jacob
Rasmussen in his PhD thesis \cite{JR}. It was later generalized by
Ozsv\'{a}th and Szab\'{o} to include the case of links
\cite{POZSzlinkinvariants}, but for now, let us just present the
definition of knot Floer homology.

Given an oriented knot $K$ inside an oriented three-manifold $Y$, let
$(\Sigma,\alpha,\beta,z,w)$ be an admissible Heegaard diagram
representing the knot. It turns out that there is always such a
Heegaard diagram, and two such Heegaard diagrams with the same number
of basepoints are related by a sequence of isotopies, handleslides and
stabilizations and destabilizations of the first type in the
complement of both the $z$-basepoints and the $w$-basepoints. We once
more choose a complex structure on $\Sigma_g$ and then take a generic
perturbation of the induced complex structure on
$Sym^{g+k-1}(\Sigma_g)$. Let $\T_{\alpha}$ and $\T_{\beta}$ be the two
totally real half-dimensional tori, and let $Z_i=\{z_i\}\times
Sym^{g+k-2}(\Sigma_g)$ and $W_i=\{w_i\}\times Sym^{g+k-2}(\Sigma_g)$
be the codimension-two holomorphic subspaces. Fix a commutative ring
$R$ (once more, usually $\Z$ or $\F_2$). For the hat version, the
chain complex is the $R$-module freely generated by
$\T_{\alpha}\cap\T_{\beta}$, and for a generator $x$, the boundary map
is given by
$$\wh{\del} x=\sum_{y\in\T_{\alpha}\cap\T_{\beta}}\sum_{\begin{subarray}{c}
    \phi\in\pi_2(x,y)\\ \mu(\phi)=1\\ \phi\cdot Z_i=\phi\cdot W_i=0 \end{subarray}}
\#(\mc{M}(\phi)/\R)y$$

In the minus version, the chain complex is the
$R[U_1,U_2,\ldots,U_k]$-module freely generated by $\T_{\alpha}\cap
\T_{\beta}$, and for a generator $x$, the boundary map is given by
$$\del^{-} x=\sum_{y\in\T_{\alpha}\cap\T_{\beta}}\sum_{\begin{subarray}{c}
    \phi\in\pi_2(x,y)\\ \mu(\phi)=1\\ \phi\cdot Z_i=n_i\\ \phi\cdot
    W_i=0 \end{subarray}} \#(\mc{M}(\phi)/\R)U_i^{n_i}y$$

The natural analogues of Theorems \ref{thm:fthf1} and \ref{thm:fthf2}
hold, and thus in both the hat version and the minus version, Heegaard
Floer homology presents us with knot invariants called knot Floer
homology and denoted by $\wh{HFK}(K,Y)$ and $HFK^{-}(K,Y)$. However in
certain cases, especially for knots inside $S^3$, the invariant has
more structure than meets the eye, and hence from now on until
the end of this section, let us always choose the ambient three-manifold
to be $S^3$.

Given two generators $x,y\in\T_{\alpha}\cap\T_{\beta}$, the space of
Whitney disks joining them $\pi_2(x,y)$, is isomorphic to $\Z$ for
$(k+g)\geq 4$ (a minor restriction that can easily be ensured by
stabilization). In fact, in the next section, we will introduce a
slightly different definition of $\pi_2(x,y)$ and under the new
definition, the space of Whitney disks joining any two points will
always be isomorphic to $\Z$ for integral homology spheres. Choose a
Whitney disk $\phi\in\pi_2(x,y)$. For any point
$p\in\Sigma_g\setminus(\alpha\cup\beta)$, let
$n_p(\phi)=\phi\cdot(\{p\}\times Sym^{g+k-2}(\Sigma_g))$ (we are mostly
interested in the case when $p$ is one of the basepoints). Then define
the relative Maslov grading to be $M(x,y)=\mu(\phi)-\sum_i
n_{z_i}(\phi)$ and the relative Alexander grading to be $A(x,y)=\sum_i
(n_{w_i}(\phi)-n_{z_i}(\phi))$. It is relative easy to check that the
definition is independent of the choice of $\phi\in\pi_2(x,y)$, and
the only subtlety in showing that they are indeed relative gradings
(i.e. $M(x,y)+M(y,z)=M(x,z)$ and $A(x,y)+A(y,z)=A(x,z)$) lies in the
observation that the Maslov index $\mu$ is additive.

The definitions convert the hat version of the chain complex to a
relatively bigraded $R$-module (we declare all elements of $R$ to have
$(M,A)$ bigrading $(0,0)$). The minus version of the chain complex can
also be made a relatively bigraded $R[U_1,\ldots,U_k]$-module by
declaring each $U_i$ to have $(M,A)$ bigrading $(-2,-1)$. It is easy
to check that in both the hat and the minus version, the boundary map
reduces the Maslov grading by one and keeps the Alexander grading
constant. Thus in either case, the homology carries a relative
bigrading, where the relative Maslov grading is essentially the
homological grading. This induces a relative bigrading on
$\wh{HFK}(K,S^3)\otimes^{k-1} R^2$ and $HFK^{-}(K,S^3)$. For the hat
version, in each copy of $R^2$, the two generators are declared to
have $(M,A)$-bigradings of $(0,0)$ and $(-1,-1)$, and thus we get an
induced bigrading on $\wh{HFK}(K,S^3)$ too. Further note that the
definition of the relative Maslov grading did not use the
$w$-basepoints, and hence the relative Maslov grading is in fact a
relative grading on the Heegaard Floer homology of the ambient
three-manifold.

The three-sphere admits a Heegaard diagram with only one generator (in
fact it is the only three-manifold to admit such Heegaard diagrams)
and hence $\wh{HF}(S^3)=\Z$. For knots inside $S^3$, the relative
Maslov grading can be lifted to an absolute Maslov grading (also
denoted by $M$) by declaring the absolute Maslov grading of the
generator of $\wh{HF}(S^3)$ to be zero. There is a similar
well-defined lift of the relative Alexander grading to an absolute
one. It is defined to be the unique lift such that following property
holds. 
$$\#\{x\in\T_{\alpha}\cap\T_{\beta}|A(x)>0\} \equiv
\#\{x\in\T_{\alpha}\cap\T_{\beta}|A(x)<0\}\pmod{2}$$

The not so obvious fact that there is such a lift, and it is unique,
is a simple consequence of the following cute theorem by Ozsv\'{a}th
and Szab\'{o}. The proof uses Kauffman's definition of Alexander's
polynomial, and we leave it as something for the interested reader to
prove or look up.

\begin{thm}\cite{POZSzknotinvariants}
The Alexander polynomial is the Euler characteristic of the knot Floer
homology, or in other words the Alexander polynomial of a knot $K$ is
equal to $\pm(\sum_i \sum_j (-1)^i rk(\wh{HFK}_{i,j}(K,S^3))t^j)$,
where $\wh{HFK}_{i,j}(K,S^3)$ is the part of the hat version of knot
Floer homology in $(M,A)$ bigrading $(i,j)$.
\end{thm}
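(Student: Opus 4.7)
The plan is to pass, via Kauffman's state sum formula stated in the earlier theorem (\cite{LK}), from the topological Alexander polynomial to a combinatorial count that can be matched, term by term, against the Euler characteristic of $\wh{CFK}$. The key is to choose a particularly convenient Heegaard diagram, adapted to a planar projection of $K$, in which the Floer generators $\T_\alpha \cap \T_\beta$ correspond bijectively to Kauffman states and the gradings $(M,A)$ record exactly the local contributions $a_{c,v}$.

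First I would fix an oriented knot projection $D$ of $K$ with $n$ crossings and distinguished regions $A$ and $B$ as in the definition of Kauffman states, and build a Heegaard diagram $(\Sigma,\alpha,\beta,z,w)$ with $k=1$ as follows. Take $\Sigma$ to be the boundary of a regular neighborhood of the 4-valent planar graph underlying $D$, so that $\Sigma$ has genus $g=n$. To each bounded region of $D$ other than $B$ associate an $\alpha$ curve (a meridian of that region), and to each crossing associate a small $\beta$ curve encoding the over/under strand in the standard way, adding one extra $\beta$ running along a meridian of the knot so that there are $g$ $\alpha$-circles and $g$ $\beta$-circles. Place $z$ in the region $A$ and $w$ in a small bigon near the extra $\beta$ on the other side of the knot arc. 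This is essentially the construction used by Ozsv\'{a}th--Szab\'{o} in \cite{POZSzknotinvariants}.

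Second I would show combinatorially that $\T_\alpha \cap \T_\beta$ is in bijection with the set $\mc{K}$ of Kauffman states. A generator is an unordered tuple of intersection points, one on each $\alpha$ curve and one on each $\beta$ curve; the local picture at each crossing $v$ forces the tuple to select a region of $D$ incident to $v$, and the basepoint constraints rule out $A$ and $B$. Conversely, any such assignment of regions to crossings extends uniquely to a generator. The main combinatorial obstacle here is checking that exactly the Kauffman admissibility condition (each region other than $A, B$ is used) corresponds to selecting one intersection point per $\alpha$ and per $\beta$ curve; this is a Hall-marriage style argument.

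Third I would compute the bigrading of a generator $x_c$ corresponding to a Kauffman state $c$. Since $\pi_2(x,y)\cong\Z$ is generated by domains on $\Sigma$, the relative Alexander grading $A(x,y)=\sum_i(n_{w_i}-n_{z_i})(\phi)$ can be computed by comparing how the local domain near each crossing covers $z$ versus $w$; a direct local calculation shows that this contribution at crossing $v$ is exactly the $t$-exponent of $a_{c,v}$ in Figure \ref{fig:kauffman}. An analogous local computation of the Maslov index, together with the orientation of $D$, shows that $(-1)^{M(x_c)}$ matches the $\pm 1$ coefficient of $a_{c,v}$, up to an overall global sign depending only on $D$. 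Combining these,
\[
\chi_t\bigl(\wh{CFK}(K)\bigr) \;=\; \sum_{x\in\T_\alpha\cap\T_\beta}(-1)^{M(x)}t^{A(x)} \;=\; \pm \sum_{c\in\mc{K}}\prod_{v\in V}a_{c,v}.
\]

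Finally, Kauffman's theorem identifies the right-hand side with $\pm\Delta_K(t)$, and the Euler characteristic of a finitely generated bigraded chain complex equals that of its homology, so $\chi_t(\wh{HFK}(K,S^3))=\pm\Delta_K(t)$, which is the claim. The main obstacle I expect is the explicit local grading calculation: producing the Heegaard diagram is mostly bookkeeping, and the bijection with Kauffman states is combinatorial, but verifying that the Alexander and Maslov contributions at each crossing reproduce precisely the four cases of Figure \ref{fig:kauffman} (with correct signs in the Maslov case) is the delicate step where all the pieces must line up.
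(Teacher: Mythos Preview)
The paper does not actually prove this theorem; it cites it from \cite{POZSzknotinvariants} and only remarks that ``the proof uses Kauffman's definition of Alexander's polynomial, and we leave it as something for the interested reader to prove or look up.'' Your proposal follows precisely this hinted approach---the Ozsv\'{a}th--Szab\'{o} Heegaard diagram built from a knot projection, the bijection between $\T_\alpha\cap\T_\beta$ and Kauffman states, and the local grading computation matching the $a_{c,v}$---which is indeed the standard argument from the cited reference, so there is nothing further to compare.
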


Recall that the Alexander polynomial provided some information about
the genus of a knot. It is only natural to expect that the knot Floer homology
will also provide some information about the genus. However it turns
out that, due to yet another amazing theorem by Ozsv\'{a}th and
Szab\'{o}, the knot Floer homology in fact determines the genus.

\begin{thm}\cite{POZSzgenusbounds}
  If $g(K)$ is the genus of a knot $K$, then $g(K)$ is the highest
  Alexander grading $j$ such that $\bigoplus_i \wh{HFK}_{i,j}(K,S^3)$ is
  non-trivial (with coefficients in $\Z$).
\end{thm}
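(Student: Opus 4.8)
The plan is to deduce the genus-detection statement from the adjunction-type inequality for knot Floer homology on one side, and a direct construction from a minimal-genus Seifert surface on the other. Write $g = g(K)$ and let $\tau$ denote the highest Alexander grading $j$ with $\bigoplus_i \wh{HFK}_{i,j}(K,S^3) \neq 0$. The two inequalities $\tau \leq g$ and $\tau \geq g$ are established by rather different mechanisms, so I would treat them separately.

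For the inequality $\tau \leq g$, I would exploit the relation between knot Floer homology and the three-manifold invariants of surgeries, or more directly, a surgery formula / mapping-cone description in which the top Alexander grading of $\wh{HFK}$ controls the spin$^c$ structures with nontrivial $HF^+$ on a large surgery. First I would fix a minimal-genus Seifert surface $F$ for $K$ and cap it off inside the surgered manifold to obtain a closed surface of genus $g$ generating homology. The adjunction inequality for $HF^+$ (Ozsv\'{a}th--Szab\'{o}) then forces $HF^+$ to vanish in all spin$^c$ structures whose evaluation on this capped surface exceeds $2g$ in absolute value; translating through the identification of these spin$^c$ structures with Alexander gradings gives $\wh{HFK}_{*,j}(K,S^3)=0$ for $j>g$, hence $\tau\leq g$. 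The bookkeeping here — matching the spin$^c$ labels on the large surgery with the Alexander filtration levels, and making sure the capped surface really does have genus $g$ — is routine but must be done carefully.

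For the reverse inequality $\tau \geq g$, the idea is to show that in the top Alexander grading the knot Floer homology cannot vanish. I would again start from a minimal-genus Seifert surface and feed it into the construction of a Heegaard diagram subordinate to $F$ (a Heegaard diagram adapted to the knot in which one of the $\beta$ curves, or the arc structure, is built from $F$). In such a diagram one identifies the generators in the extremal Alexander grading $g$ with a combinatorially restricted set, and the key point is a nonvanishing statement: the subcomplex (or quotient complex) concentrated in Alexander grading $g$ has nontrivial homology. This is the heart of the argument; it is exactly where Ozsv\'{a}th--Szab\'{o} invoke sutured/relative machinery — essentially that $\wh{HFK}$ in the top grading computes the Floer homology of the sutured manifold obtained by cutting $S^3$ along $F$, and that a \emph{taut} sutured manifold (which a minimal-genus Seifert surface complement is, by Gabai) has nontrivial sutured Floer homology. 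So I would reduce to: (i) the top Alexander grading of $\wh{HFK}(K,S^3)$ equals $SFH$ of the complementary sutured manifold $(S^3\setminus F, \gamma)$; and (ii) $SFH$ of a taut balanced sutured manifold is nonzero.

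The main obstacle is step (ii) together with establishing (i): the nonvanishing of sutured Floer homology for taut sutured manifolds is genuinely deep and rests on Gabai's theory of sutured manifold hierarchies combined with a decomposition formula showing $SFH$ behaves well (and stays nonzero) under a surface decomposition along a well-groomed surface, terminating at a product sutured manifold whose $SFH$ is manifestly $\Z$. I expect the $\tau\leq g$ direction to be comparatively soft (adjunction plus spin$^c$ bookkeeping), while the $\tau\geq g$ direction is where all the weight lies; in a thesis-style exposition I would likely quote the sutured decomposition theorem and Gabai's existence of taut hierarchies as black boxes and assemble them, rather than reprove them.
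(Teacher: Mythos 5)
The paper does not actually give a proof of this theorem; it is cited from Ozsv\'{a}th--Szab\'{o}'s genus-bounds paper, so there is no in-paper argument to compare against. Your proposal is, in outline, a valid modern proof, but it is worth being precise about provenance and about which route the cited reference actually takes, since your argument for $\tau\geq g$ differs substantially from theirs.

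The inequality $\tau\leq g$ is indeed the soft direction, but the route usually taken in Ozsv\'{a}th--Szab\'{o}'s knot-invariants paper is more elementary than what you sketch: one constructs a Heegaard diagram subordinate to a Seifert surface of genus $g$ and reads off directly that every generator has Alexander grading of absolute value at most $g$. Your adjunction-on-large-surgery argument is a legitimate alternative, but you should be careful with the bookkeeping: the adjunction inequality bounds $|\langle c_1(\mathfrak{s}),[\wh F]\rangle|$ by $2g(\wh F)-2$, not $2g$, and the shift between the spin$^c$ labeling on the surgery and the Alexander grading on $\wh{HFK}$ must be tracked with care to land exactly on $\tau\leq g$ rather than off by one.

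For $\tau\geq g$, your argument via sutured Floer homology --- identify the top Alexander grading of $\wh{HFK}$ with the sutured Floer homology of the complement of a Seifert surface, then quote Gabai's existence of taut sutured hierarchies and Juh\'{a}sz's decomposition theorem to force nonvanishing --- is correct, and it is now the cleanest proof in the literature. But this is Juh\'{a}sz's (and, in a closely related form, Ni's) later reproof, not the original Ozsv\'{a}th--Szab\'{o} argument. The proof in the cited reference runs instead through symplectic geometry: Gabai gives a taut foliation on the zero-surgery $S^3_0(K)$ with the capped Seifert surface as a compact leaf; Eliashberg--Thurston perturb it to a weakly fillable contact structure; a symplectic filling yields a nonvanishing class in $HF^+$ of the zero-surgery in the extremal spin$^c$ structure; and a surgery exact sequence transfers this nonvanishing to the top Alexander grading of $\wh{HFK}$. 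Sutured Floer homology did not yet exist when that paper was written. So your proposal is mathematically sound but misattributes the mechanism; the tradeoff is that the sutured route is conceptually lighter (no symplectic geometry, no Eliashberg--Thurston, no contact invariants) and has the bonus of immediately yielding the fiberedness theorem as well, whereas the original route requires the full contact-geometric package but stays closer to the closed-manifold invariants that were already in place.
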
 

Thus, modulo an algorithm to calculate the knot Floer homology, the
above theorem provides a way to calculate a geometric invariant, the
genus of a knot. Another geometric property of knots that can be
computed using knot Floer homology is fiberedness. A knot is said to
be fibered if the knot complement is a fiber bundle fibering over the
meridian (a meridian is a simple closed curve on the boundary of a
tubular neighborhood of the knot, which bounds a disk inside the
neighborhood). The strength of knot Floer homology as a knot invariant
is further established by the following theorem proved by Yi Ni
\cite{YN} and later by Andr\'{a}s Juh\'{a}sz \cite{AJ}.

\begin{thm}\cite{YN,AJ}
  If $g(K)$ is the genus of a knot $K$, then $K$ is fibered if and
  only if, $\bigoplus_i \wh{HFK}_{i,g(K)}(K,S^3)$ (computed with
  coefficients in $\Z$) is isomorphic to $\Z$.
\end{thm}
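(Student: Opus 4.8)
The plan is to treat the two implications separately, since the direction ``fibered $\Rightarrow$ rank one'' is considerably softer and was already established by Ozsv\'{a}th and Szab\'{o}, while the converse is the deep content (due to Ni, and with different tools to Juh\'{a}sz). For the easy direction, suppose $S^3\setminus N(K)$ fibers over $S^1$ with fiber a minimal genus Seifert surface $F$ and monodromy $\phi\colon F\to F$. I would build a Heegaard diagram subordinate to this fibration, coming from a handle decomposition of $F$ (an open-book-type diagram), in which the generators lying in the top Alexander grading $g(K)$ are identified with the fixed points of a suitable representative of $\phi$; a direct analysis of the holomorphic disks then shows that exactly one of these generators survives to homology, so $\bigoplus_i\wh{HFK}_{i,g(K)}(K,S^3)\cong\Z$. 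Equivalently, and in the language I will use for the hard direction: the complement of $F$ in the knot exterior is a product sutured manifold $F\times I$, whose (sutured) Floer homology is $\Z$, and this group is precisely the top Alexander grading summand of $\wh{HFK}(K,S^3)$.

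For the converse I would develop the sutured Floer homology $SFH$ of balanced sutured manifolds and record three facts. First, $SFH(F\times I,\del F\times I)\cong\Z$ for a product sutured manifold. Second, $SFH$ detects tautness: a taut balanced sutured manifold has $SFH\neq 0$. Third, the surface decomposition theorem: if $(M,\gamma)\rightsquigarrow(M',\gamma')$ is a decomposition along a sufficiently nice (well-groomed) surface, then $SFH(M',\gamma')$ is a direct summand of $SFH(M,\gamma)$, namely the sum of the $\mathrm{Spin}^c$-summands that are ``outer'' relative to the decomposing surface. I would then take a minimal genus Seifert surface $R$ for $K$, arrange it to be well-groomed (via Gabai's work), form the sutured knot exterior $(M,\gamma)$ with two oppositely oriented meridional sutures, for which $SFH(M,\gamma)\cong\wh{HFK}(K,S^3)$ as relatively graded groups, and decompose along $R$ to obtain $(M',\gamma')$ with $SFH(M',\gamma')\cong\bigoplus_i\wh{HFK}_{i,g(K)}(K,S^3)$. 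The hypothesis is that this last group is $\Z$, and since $R$ is Thurston norm minimizing, $(M',\gamma')$ is taut.

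The crux — and the step I expect to be the main obstacle — is the structural statement that a \emph{taut balanced sutured manifold with $SFH\cong\Z$ is a product sutured manifold}. Here I would invoke Gabai's sutured manifold hierarchy theorem to get a sequence of decompositions, along choosable well-groomed surfaces, running from $(M',\gamma')$ down to a product sutured manifold, with tautness preserved at every stage. Tracking $SFH$ along the hierarchy: it equals $\Z$ at the top, it equals $\Z$ at the product at the bottom, it is nonzero at every intermediate stage by tautness, and at each step it passes to a direct summand. The delicate issue is that a careless choice of hierarchy might split $SFH$ nontrivially at some intermediate stage even though both endpoints have rank one; ruling this out forces one to choose each decomposing surface to represent a prescribed $\mathrm{Spin}^c$ structure and to control the set of $\mathrm{Spin}^c$ structures carrying nonzero $SFH$, so that the total rank can never drop below one while the support collapses to a single structure only at the product stage. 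Once $(M',\gamma')\cong R\times I$ is forced, the knot exterior $S^3\setminus N(K)$ fibers over $S^1$ with fiber $R$, so $K$ is fibered, completing the argument.
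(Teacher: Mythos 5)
The paper itself does not prove this theorem --- it only cites Ni~\cite{YN} and Juh\'asz~\cite{AJ} --- so there is no in-text argument to compare against. Your proposal is a reasonable high-level sketch of Juh\'asz's sutured-Floer route, which is one of the two cited proofs; Ni's original argument is genuinely different (it works with $HF^+$ of the zero-surgery in the top $\mathrm{Spin}^c$ structure, building on Ghiggini's genus-one case, and does not pass through sutured Floer homology at all). The ingredients you list for the hard direction --- $SFH(F\times I)\cong\Z$, tautness detection, the surface-decomposition theorem with its $\mathrm{Spin}^c$-graded direct-summand statement, and the identification of $\bigoplus_i\wh{HFK}_{i,g(K)}$ with $SFH$ of the Seifert-surface complement --- are exactly the right ones, and the easy direction (fibered $\Rightarrow$ rank one) is correctly attributed and sketched.

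The way you propose to close the crux, however, does not work as written. Running Gabai's hierarchy from $(M',\gamma')$ down to a product and ``tracking'' $SFH$ only shows that $SFH$ remains $\Z$ at every stage of the hierarchy; it supplies no mechanism to conclude that the \emph{top} of the hierarchy, $(M',\gamma')$ itself, is a product rather than merely decomposing to one. The argument must instead proceed in the contrapositive and in a single step: one shows that a taut, balanced, irreducible sutured manifold with $H_2=0$ which is \emph{not} a product admits a well-groomed decomposing surface $S$ such that the decompositions along both $S$ and $-S$ are taut, and such that the corresponding ``outer'' $\mathrm{Spin}^c$-summands of $SFH$ are disjoint; tautness then forces both summands to be nonzero, giving total rank at least $2$. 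A single well-chosen decomposition, not the whole hierarchy, is what delivers the contradiction. Your closing remark about controlling the set of $\mathrm{Spin}^c$ structures carrying nonzero $SFH$ gestures in the right direction, but the lemma you would actually need --- that non-productness produces two disjoint nonzero $\mathrm{Spin}^c$-blocks in one decomposition --- is precisely what you have not isolated, and it is where all the work lies.
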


\section{Cylindrical Reformulation}

The story of Floer homology that we have described so far involves maps
from a disk to high dimensional complex manifolds. Not only are such
maps incredibly hard to maneuver, they are also incredibly hard to
visualize. In a remarkable paper \cite{RL}, Robert Lipshitz presented
the cylindrical reformulation of Heegaard Floer homology, which made
certain aspects somewhat unnatural, but made almost all the aspects
easier to compute, and as a side product produced a combinatorial
formula for the Maslov index.

Let us restrict ourselves to the case of closed three-manifolds since
the case for knots inside three-manifolds is very similar. Let
$(\Sigma_g, \alpha_1,\ldots,\alpha_{g+k-1}, \beta_1,
\ldots,\beta_{g+k-1}, z_1,\ldots, z_k)$ be an admissible Heegaard
diagram for a three-manifold $Y$. A generator $x$ is a formal sum
$x_1+\cdots+ x_{g+k-1}$ of $(g+k-1)$ distinct points on $\Sigma_g$
such that each $\alpha$ circle contains one point and each $\beta$
circle contains one point. (It is easy to see that generators
correspond to points of $\T_{\alpha}\cap\T_{\beta}$.) Let $\mc{G}$ be
the set of all such generators. A domain $D$ joining $x$ to $y$ is a
$2$-chain generated by components of $\Sigma_g\setminus(\alpha
\cup\beta)$ such that $\del((\del D)_{|\alpha})=y-x$, and by a
(slight) misuse of notation, the set of all domains joining $x$ to $y$
is denoted by $\pi_2(x,y)$. It is not true that domains joining $x$ to
$y$ correspond to Whitney disks joining $x$ to $y$ in the symmetric
product, but however given a Whitney disk $\phi$, there is a domain
$D(\phi)$ associated to it, defined as follows. A region is defined
to be a component of $\Sigma\setminus (\alpha\cup\beta)$ and the
coefficient of the $2$-chain $D(\phi)$ at a region is defined to be
$n_p(\phi)$ where $p$ is any point in the region. In fact for
$(k+g)\geq 4$, this association is bijective.

If $p$ is a point of intersection between an $\alpha$ and a $\beta$
curve, and $D$ is some $2$-chain generated by regions, then $n_p(D)$
is defined to be the average of the coefficients of $D$ at the four
(possibly different) regions around $p$. Then for a generator
$x=\sum_i x_i$, the point measure $n_x(D)$ is defined as $\sum_i
n_{x_i}(D)$.

Fix a metric on the surface $\Sigma_g$ such that all the $\alpha$
curves and all the $\beta$ curves are geodesics and they intersect
each other at right angles. For any $2$-chain $D$ generated by the
regions, define the Euler measure $e(D)$ as $\frac{1}{2\pi}$ times the
integral of the curvature along the $2$-chain $D$. Being an integral,
the Euler measure is additive, which implies that if $D=\sum_i a_i
D_i$ where $a_i$'s are integers and $D_i$'s are regions, then
$e(D)=\sum_i a_i e(D_i)$. Also note that if a region $D_i$ is a
$2n$-gon (i.e. if it is homeomorphic to an open ball, and if it has
$n$ $\alpha$ arcs and $n$ $\beta$ arcs on its boundary), then
$e(D_i)=1-\frac{n}{2}$.

Given a domain $D\in\pi_2(x,y)$ (and after another minor abuse of
notation), the Maslov index is defined by the Lipshitz' formula as
$\mu(D)=e(D)+n_x(D)+n_y(D)$. The abuse of notation is quickly
justified by the following theorem by Lipshitz,

\begin{thm}\cite{RL}
Let $\phi$ be a Whitney disk joining $x$ to $y$ in the symmetric
product, and let $D(\phi)$ be domain associated to it. Then
$\mu(\phi)=\mu(D(\phi))$. 
\end{thm}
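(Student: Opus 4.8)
The plan is to reduce the equality $\mu(\phi)=\mu(D(\phi))$ to a local computation, by exploiting additivity on both sides together with the behaviour of the two quantities under boundary degenerations and under juxtaposition with small disk classes. First I would record that both sides are additive under the concatenation $+\colon\pi_2(x,y)\times\pi_2(y,z)\to\pi_2(x,z)$: the Maslov index $\mu(\phi)$ is additive because it is the expected dimension of a moduli space and the linearized $\bar\partial$-operators glue, and $\mu(D)=e(D)+n_x(D)+n_y(D)$ is additive in $D$ because $e$ is an integral hence additive, while $n_x$ and $n_y$ are $\R$-linear in $D$ and the middle point measures at $y$ cancel correctly when one checks $n_y(D_1)+n_y(D_2)$ against the concatenated picture. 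Consequently it suffices to verify the formula on a generating set of $\pi_2(x,x)$ (the periodic classes, where the identity reads $\mu=e+2n_x$) together with one ``connecting'' disk between each pair of generators in the same $\mathrm{Spin}^c$ class; and the periodic classes are themselves spanned by the homology classes of the $\alpha$- and $\beta$-curves capped off in the handlebodies, i.e. by the components coming from $[\alpha_i]$ and $[\beta_j]$, plus the doubly-periodic class $[\Sigma_g]$ itself.

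Next I would pin down the normalization using a model computation. For the class $[\Sigma_g]$ (the whole surface with multiplicity one, viewed in $\pi_2(x,x)$) the right side is $e(\Sigma_g)+2n_x(\Sigma_g)=\chi(\Sigma_g)+2(g+k-1)=2-2g+2(g+k-1)=2k$, which must match the index of the $(k)$-fold branched-cover sphere bubbles predicted by the Ozsváth–Szabó analysis; more useful is the small disk at a single intersection point $x_i\in\alpha_i\cap\beta_i$: here $D(\phi)$ is a bigon (one $\alpha$-arc and one $\beta$-arc), so $e=1-\tfrac12\cdot 1\cdot 2 = 0$? — one must be careful, a bigon is a $2$-gon with $n=1$, giving $e=1-\tfrac12=\tfrac12$, and the point measures at the two corners contribute $\tfrac14+\tfrac14$ each, so $\mu(D)=\tfrac12+\tfrac12\cdot 2=1$, matching the Maslov index $1$ of an embedded holomorphic bigon. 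I would run the analogous computation for an embedded holomorphic rectangle (a $4$-gon, $e=0$, four corners contributing $\tfrac14$ each at both ends, total $\mu=2$) to confirm the formula agrees with the Riemann–Roch index in the cases where the symmetric-product disk is itself visibly an embedded holomorphic curve.

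The substantive step is the general case, where $\phi$ need not correspond to an embedded curve in $\mathrm{Sym}^{g+k-1}(\Sigma_g)$ and $D(\phi)$ may have high multiplicities. Here I would follow Lipshitz's own strategy: realize a holomorphic representative of $\phi$ (after the cylindrical reformulation) as a holomorphic map $u\colon S\to \Sigma_g\times[0,1]\times\R$ of the appropriate homology class, and compute the index of the associated $\bar\partial$-operator via the Riemann–Roch theorem together with a careful accounting of the Lagrangian boundary conditions along $\alpha\times\{1\}\times\R$ and $\beta\times\{0\}\times\R$ and of the asymptotics at the punctures mapping to $x$ and $y$. The Euler-measure term $e(D)$ will emerge as the contribution of the relative first Chern number / Euler characteristic of the source (built from $\chi(\Sigma_g)$ and the branch points, repackaged via the $2n$-gon formula $e=1-n/2$), and the point measures $n_x(D),n_y(D)$ will emerge from the contributions of the asymptotic operators at the $x_i$ and $y_i$ punctures (the average-of-four-corners definition is exactly the local index correction coming from the corner of the Lagrangians there). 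The main obstacle is this index bookkeeping: one has to match the Riemann–Roch output — which naturally involves the genus of the source curve $S$, the number of branch points over $\Sigma_g$, and a Maslov-type boundary term — against the purely combinatorial data $e(D)+n_x(D)+n_y(D)$, and the identity that makes this work is an Euler-characteristic/branch-point count (a Riemann–Hurwitz computation) showing that the source data is forced by the domain $D$. Once that accounting is done, additivity (Step one) extends the equality from the model pieces to all of $\pi_2(x,y)$, completing the proof.
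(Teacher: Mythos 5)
The paper does not prove this theorem; it cites Lipshitz's original paper \cite{RL} and uses the statement as a black box. So there is no internal proof to compare against, and I will evaluate your proposal on its own terms.

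Your outline correctly identifies the right ingredients (additivity on both sides, model computations in simple cases, Riemann--Roch in the cylindrical reformulation), but it has a genuine gap at the step you call ``substantive.'' Having observed that both $\mu(\phi)$ and $\mu(D)$ are additive under concatenation and that periodic classes are generated by $[\alpha_i]$, $[\beta_j]$, $[\Sigma_g]$, you reduce to verifying the formula on those classes \emph{and} on one base disk $\phi_0\in\pi_2(x,y)$ for each pair $(x,y)$. Your model computations (bigon, rectangle) handle only very special $\phi_0$'s; for an arbitrary pair $(x,y)$ there is no canonical small disk to check, and the generic $\phi_0$ has a complicated domain. You then say this is handled by ``a careful accounting of the Lagrangian boundary conditions and asymptotics'' and by ``matching the Riemann--Roch output against $e(D)+n_x(D)+n_y(D)$.'' But that matching \emph{is} the content of the theorem: asserting that the Riemann--Roch index will come out equal to the combinatorial expression is precisely what needs to be proved, and the proposal offers no mechanism by which the $e(D)$ and point-measure terms actually emerge from the index formula. (Lipshitz's own argument goes through an Euler-characteristic / branch-point count for the source curve in $\Sigma\times[0,1]\times\mathbb{R}$ and requires a separate lemma that a given homology class can be deformed to a standard form; none of that bookkeeping appears here.)

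A smaller but real issue: the claimed additivity of $D\mapsto e(D)+n_x(D)+n_y(D)$ under $\pi_2(x,y)\times\pi_2(y,z)\to\pi_2(x,z)$ is not just linearity of $e$, $n_x$, $n_y$. What is needed is the identity $n_y(D_1)+n_y(D_2)=n_x(D_2)+n_z(D_1)$ for $D_1\in\pi_2(x,y)$, $D_2\in\pi_2(y,z)$; you wave this off as ``the middle point measures cancel correctly,'' but it is a nontrivial combinatorial lemma. The paper even sketches the combinatorial proof of this identity (in the grid-diagram setting, via the dot-product-of-boundaries formula), flagging it as ``the only subtlety.'' Your proposal should state and prove this lemma rather than absorb it into a parenthetical.

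Finally, a minor slip in your normalization check: you compute $e(\Sigma_g)=\chi(\Sigma_g)$, which is fine, but $2n_x(\Sigma_g)=2(g+k-1)$ only because each of the $g+k-1$ coordinates of $x$ has $n_{x_i}=1$ on the full surface; stating this explicitly would make the sanity check cleaner, and the asserted $\mu([\Sigma_g])=2k$ should be cross-checked against the known statement (boundary degenerations in Ozsv\'ath--Szab\'o have index $2$ per copy with $k=1$ basepoint), which your formula does give.
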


The relative Maslov grading is defined similarly. If $Y$ is an integer
homology sphere, then for two generators
$x$ and $y$, choose $D\in\pi_2(x,y)$, and define $M(x,y)=\mu(D)-\sum_i
n_{z_i}(D)$. This definition is again easily seen to be independent of
the choice of the domain $D$. However showing that this defines a
relative grading, i.e. $M(x,y)+M(y,z)=M(x,z)$ without resorting to the
previous theorem involves more work, and is hereby left as a
challenging exercise to the interested reader, see \cite{SS}.

As before, if $b_1(Y)>0$, then we require the Heegaard diagram to be
admissible. This time, we will actually state precisely what this
means. Let $\pi_2^0(x,y)$ be the subset of $\pi_2(x,y)$ consisting of
all the domains $D$ with $n_{z_i}(D)=0$ for all basepoints $z_i$. A
domain $D\in\pi_2^0(x,x)$ is called a periodic domain. For a
diagram to be (weakly) admissible, we require all non-trivial periodic
domains to have both positive and negative coefficients (as
$2$-chains).

Let us choose a complex structure on $\Sigma_g$, and consider the
induced complex structure on $Sym^{g+k-1}(\Sigma_g)$. In theory, we
should be working with a generic perturbation of the complex
structure, but that is where things get complicated, so for now, let
us just stick to the induced complex structure. Let us consider a
Whitney disk $\phi\in\pi_2(x,y)$ which has a holomorphic
representative, i.e. $\phi$ can be thought of as a holomorphic map
from the unit disk to the symmetric product, satisfying certain
boundary conditions. There is a $(g+k-1)$-sheeted holomorphic branched
covering map $\Sigma_g\times Sym^{g+k-2}(\Sigma_g)\rightarrow
Sym^{g+k-1}(\Sigma_g)$. Therefore there is a compact surface $F$ (with
boundary) which is $(g+k-1)$-sheeted covering of the unit disk $D^2$
and a map $F\rightarrow\Sigma_g\times Sym^{g+k-2}(\Sigma_g)$ such that
the following diagram commutes. 
$$\xymatrix{F\ar[r]\ar[d]&\Sigma_g\times Sym^{g+k-2}(\Sigma_g)\ar[d]\\
D^2\ar^-{\phi}[r] &Sym^{g+k-1}(\Sigma_g)}$$

We postcompose the map from $F$ to
$\Sigma_g\times Sym^{g+k-2}(\Sigma_g)$ with the projection map to the
first factor. Thus we get holomorphic maps from $F$ to $\Sigma_g$ and
$D^2$, and hence an induced holomorphic map $u:F\rightarrow
\Sigma_g\times D^2$ (where the target has the product complex
structure). Let $p_1$ and $p_2$ be the first projection and the second
projection respectively, which implies that the map from $F$ to
$\Sigma_g$ is $p_1\circ u$ and the $(k+g-1)$-sheeted branched cover of
$F$ over $D_2$ is $p_2\circ u$. It is easy to see that the image of
$p_1\circ u$ (as $2$-chains) is $D(\phi)$, the domain associated to $\phi$.

Let $\Delta\subset Sym^{g+k-1}(\Sigma_g)$ be the fat diagonal,
i.e. the set of all unordered $(g+k-1)$-tuples of points in $\Sigma_g$
where some two points are equal. It is a codimension-two holomorphic
subspace which is disjoint from the two tori $\T_{\alpha}$ and
$\T_{\beta}$. Thus for any Whitney disk $\phi$, the intersection
number $\phi\cdot\Delta$ is well-defined. The following relation was
proved by Rasmussen in his PhD thesis,

\begin{thm}\cite{JR}
For any Whitney disk $\phi$, $\phi\cdot\Delta=\mu(\phi)-2e(D(\phi))$.
\end{thm}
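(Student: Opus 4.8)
The plan is to compare two quantities attached to a holomorphic Whitney disk $\phi\in\pi_2(x,y)$: the intersection number $\phi\cdot\Delta$ with the fat diagonal, and the Euler measure $e(D(\phi))$ of the associated domain. The natural tool is the branched cover picture set up in the excerpt just above the statement. Recall that from a holomorphic representative of $\phi$ we obtained a compact surface $F$ which is a $(g+k-1)$-sheeted branched cover $p_2\circ u:F\to D^2$, together with a holomorphic map $p_1\circ u:F\to\Sigma_g$ whose image as a $2$-chain is exactly $D(\phi)$. First I would observe that the points of $F$ lying over $\Delta\subset Sym^{g+k-1}(\Sigma_g)$ are precisely the branch points of the cover $p_2\circ u:F\to D^2$, so that $\phi\cdot\Delta$ equals the total branching number of this cover (with multiplicities, all positive since everything is holomorphic). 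Then I would apply Riemann--Hurwitz to this branched cover to express $\chi(F)$ in terms of $\chi(D^2)=1$, the degree $g+k-1$, and the total branching number; this gives $\chi(F)=(g+k-1)-(\phi\cdot\Delta)$.

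Next I would compute $\chi(F)$ a second way, via the map $p_1\circ u:F\to\Sigma_g$. The key point is that $F$, being the source of a holomorphic map onto the $2$-chain $D(\phi)$, is essentially a ``branched thickening'' of $D(\phi)$: up to the contributions of interior branch points of $p_1\circ u$ and the corner structure along the boundary, $\chi(F)$ is computed by the Euler measure together with boundary correction terms. More precisely, for a domain that is an honest immersed surface the Euler measure $e(D(\phi))$ is, by the Gauss--Bonnet computation recalled in the excerpt (a $2n$-gon contributes $1-\tfrac n2$), exactly the ``Euler characteristic relative to the $90^\circ$ corners'' of the domain; and since $F$ maps to $\Sigma_g\times D^2$ with the boundary of $F$ lying over $\partial D^2$ in a way dictated by the Lagrangian boundary conditions, the corners of $F$ over $\pm i$ are governed by the local intersection data at $x$ and $y$. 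Carrying this out should yield $\chi(F)=2e(D(\phi))+(\text{boundary term involving }x\text{ and }y)$, and comparing with the Riemann--Hurwitz computation will produce the stated identity $\phi\cdot\Delta=\mu(\phi)-2e(D(\phi))$ once one recognizes the boundary term as $\mu(\phi)-(g+k-1)$ via Lipshitz's formula $\mu(\phi)=e(D(\phi))+n_x(D)+n_y(D)$.

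The main obstacle, I expect, is the careful bookkeeping of the boundary and corner contributions in the second computation of $\chi(F)$: one must track how the $g+k-1$ sheets of $F$ come together along $\partial D^2$, how this relates to the points $x_i$, $y_i$ lying on the $\alpha$ and $\beta$ curves, and how the corner angles enter the Gauss--Bonnet/Euler-measure count. This is exactly the place where the difference between a naive domain and the surface $F$ shows up, and where branch points of $p_1\circ u$ (as opposed to those of $p_2\circ u$, which we already accounted for) could in principle contribute; one needs to argue they cancel or are already absorbed. A clean way to organize this is to first treat the case where $D(\phi)$ is an embedded surface with no interior branching (so $F\cong D(\phi)$ as a surface with corners) and verify the formula directly from $e$ and the corner count, and then show both sides change compatibly under the local modifications (adding a branch point, or changing a coefficient) that reduce the general case to this one. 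Alternatively, one may simply invoke additivity: both $\phi\cdot\Delta$ and $e(D(\phi))$ are additive under juxtaposition of disks, and $\mu$ is additive, so it suffices to check the identity on a spanning set of ``elementary'' domains, where it is a finite check.
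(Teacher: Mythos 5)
Your overall strategy---compute $\chi(F)$ once by Riemann--Hurwitz for the degree-$(g+k-1)$ branched cover $p_2\circ u:F\to D^2$, where the branching is supported exactly over the preimages of $\Delta$, and a second time in terms of the Euler measure of $D(\phi)$ and boundary/corner data---is exactly what is implicit in the surrounding text of this paper (the paper itself does not prove the theorem, citing Rasmussen, but records both the branch-point observation and the Lipshitz Euler characteristic formula immediately nearby). So the outline is sound. However, you have a sign error in the second computation that makes the algebra fail: you assert the boundary term should be $\mu(\phi)-(g+k-1)$, so that $\chi(F)=2e(D(\phi))+\mu(\phi)-(g+k-1)$. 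Combined with $\chi(F)=(g+k-1)-\phi\cdot\Delta$ this gives $\phi\cdot\Delta=2(g+k-1)-2e(D(\phi))-\mu(\phi)$, which is not the claimed identity. The correct relation, recorded in the paper as a cited theorem of Lipshitz, is $\chi(F)=2e(D(\phi))+(g+k-1)-\mu(D(\phi))$: the boundary term is $(g+k-1)-\mu$, not $\mu-(g+k-1)$. With this sign the two expressions for $\chi(F)$ immediately give $\phi\cdot\Delta=\mu(\phi)-2e(D(\phi))$.

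A secondary remark: your plan to derive the $\chi(F)$ formula from scratch via $p_1\circ u$ and Gauss--Bonnet is more delicate than the sketch suggests. The Gauss--Bonnet count on $F$ with the pulled-back metric gives $2\pi\chi(F)=2\pi e(D)-2\pi b_1+(g+k-1)\pi$, where $b_1$ is the number of branch points of $p_1\circ u$, and the $2(g+k-1)$ right-angle corners each contribute turning angle $\pi/2$. This has only a \emph{single} $e(D)$ term; the factor of $2$ in the formula $\chi(F)=2e(D)+(g+k-1)-\mu(D)$ arises only after expressing $b_1$ in terms of $n_x+n_y$ via Lipshitz's index formula $\mu=e+n_x+n_y$ (indeed the paper records $b_1=\mu(D)-e(D)-\tfrac{1}{2}(g+k-1-t)$, with $t$ trivial disks). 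So deriving the $\chi(F)$ relation is essentially the same amount of work as the theorem itself, and the cleanest version of your argument is simply to invoke the Lipshitz formula (as the paper does) rather than re-derive it. Your fallback of reducing to elementary domains by additivity is a legitimate alternative, but you would still need to verify the elementary cases, which is where the same corner/branch bookkeeping reappears.
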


In light of Lipshitz' formula, the above simplifies to give
$\phi\cdot\Delta=\mu_x(D(\phi))+\mu_y(D(\phi))-e(D(\phi))$. Also note
that the $(g+k-1)$-sheeted branched cover of $\Sigma_g\times
Sym^{g+k-2}(\Sigma_g)\rightarrow Sym^{g+k-1}(\Sigma_g)$ is branched
precisely over the fat diagonal $\Delta$. Hence $F$ is branched over
the unit disk $D^2$ precisely at the points which map by $\phi$ to
$\Delta$. Hence the number of branch points of $p_2\circ u$ is
$\phi\cdot\Delta =\mu_x(D(\phi))+\mu_y(D(\phi))-e(D(\phi))$.

The above observation also has the following two important
implications. Firstly, all the branch points of $p_2\circ u$ lie in
the interior of the disk $D^2$, since the boundary of the disk maps to
$\T_{\alpha}\cup\T_{\beta}$ which is disjoint from the fat diagonal
$\Delta$. This implies that the map $p_2\circ u_{|\del F}$ is
$(g+k-1)$-sheeted covering map to the circle $\del D^2$. Let
$X_1,\ldots,X_{g+k-1}$ be the preimages of $-i$ and let
$Y_1,\ldots,Y_{g+k-1}$ be the preimages of $i$ (all numbered
arbitrarily). The complement of the $X$-points and $Y$-points in $\del
F$ can be divided into arcs of two types $A$-type and $B$-type, where
the $A$-type arcs cover $\del D^2\cap\{s\in\C|Re(s)>0\}$ and $B$-type
arcs cover $\del D^2\cap\{s\in\C|Re(s)<0\}$. Then on $\del F$, the
$X$-points and the $Y$-points alternate, and the $A$-type arcs and the
$B$-type arcs alternate. The boundary conditions on
$\phi$ induce certain boundary conditions on $p_1\circ u$, whereby the
formal sum of the images of the $X$-points is the generator $x$, the
formal sum of the images of the $Y$-points is the generator $y$, the
$A$-type arcs map to the $\alpha$ curves and the $B$-type arcs maps to
the $\beta$ curves.

Secondly, the holomorphic map $u:F\rightarrow \Sigma_g\times D^2$ is
an embedding. For otherwise, if two distinct points $p$ and $q$ map to
the same point $\{t\}\times\{s\}$, then $\phi(s)$ is a
$(g+k-1)$-unordered tuple of points where two of the points are $t$,
and hence $\phi(s)$ intersects the fat diagonal $\Delta$, and hence
the map $p_2\circ u$ should have been branched over $s$, implying
$p=q$.

All this says is that given a Whitney disk $\phi$ with holomorphic
representatives, we can construct a Riemann surface $F$ (with
boundary) and a holomorphic embedding $u:F\rightarrow \Sigma_g\times
D^2$, satisfying certain boundary conditions. Conversely, given a
holomorphic embedding $u$ satisfying the previously mentioned boundary
conditions, we can recover the Whitney disk $\phi$. Since $p_2\circ u$
is a $(g+k-1)$-sheeted branched cover, for each point $s\in D^2$, look
at its $(g+k-1)$-preimages in $F$ (possibly with multiplicities), and
then map them by $p_1\circ u$ to get an unordered $(g+k-1)$-tuple of
points in $\Sigma_g$ or in other words a point in
$Sym^{g+k-1}(\Sigma_g)$. This (holomorphic) map is the required
Whitney disk $\phi$.

The setting for the cylindrical reformulation of Floer homology is now
clear. For some commutative ring $R$, the hat version of the chain
complex is the $R$-module freely generated by $\mc{G}$.  Given a
domain $D$ joining a generator $x$ to a generator $y$, the moduli
space $\mc{M}(D)$ is the moduli space of all embeddings of compact
Riemann surfaces $u:F\hookrightarrow \Sigma_g\times D^2$, satisfying
the above-mentioned boundary conditions such that the image of
$p_1\circ u$ is $D$. Here the complex structure on $\Sigma_g\times
D^2$ is a generic perturbation of the product complex structure, but
throughout this section, we will keep things simple and assume that it
is in fact the product complex structure. There is a natural
$\R$-action on this moduli space given by postcomposing the map
$p_2\times u$ by the one-parameter family of diffeomorphisms of
$D^2\setminus\{\pm i\}$, and let the quotient be the reparametrized
moduli space $\wh{\mc{M}(D)}$. It turns out that the expected
dimension of $\mc{M}(D)$ is $\mu(D)$ (and hence the expected dimension
of $\wh{\mc{M}(D)}$ is $\mu(D)-1$), and for a generator $x$, the hat
version of the boundary map is given by
$$\wh{\del} x=\sum_{y\in\mc{G}}\sum_{\begin{subarray}{c}
    D\in\pi_2(x,y)\\ \mu(D)=1\\ n_{z_i}(D)=0 \end{subarray}}
\#(\mc{M}(D)/\R)y$$

In the minus version, we are allowed to pass through the
basepoints. The chain complex is the $R[U_1,\ldots,U_k]$-module freely
generated by $\mc{G}$, and for a generator $x$, the boundary map
is given by 
$$\del^{-} x=\sum_{y\in\mc{G}}\sum_{\begin{subarray}{c}
    D\in\pi_2(x,y)\\ \mu(D)=1\\ n_{z_i}(D)=n_i \end{subarray}}
\#(\mc{M}(D)/\R)U_i^{n_i}y$$

We have spent a long time without any figures, so this is an opportune
moment to introduce Figure \ref{fig:domain}, a genus two Heegaard
diagram for $S^3$ with one basepoint. The genus two surface $\Sigma_2$
is obtained by gluing the circles that lie in the same
horizontal level (which are also the $\alpha$ circles), in an
orientation reversing way such that the black dots on the circles
match up. The shaded region is a (positive) Maslov index one domain
joining the generator marked with white squares to the generator
marked with white dots.
Here is yet another tricky exercise for the
curious reader. Show that (with the product complex structure on
 $\Sigma_2\times D^2$ and coefficients in $\F_2$) $\#(\wh{\mc{M}(D)})=1$.

\begin{figure}[ht] 
\psfrag{a1}{$\alpha_1$}
\psfrag{a2}{$\alpha_2$}
\psfrag{b1}{$\beta_1$}
\psfrag{b2}{$\beta_2$}
\psfrag{z}{$z$}
\begin{center} 
\includegraphics[width=200pt]{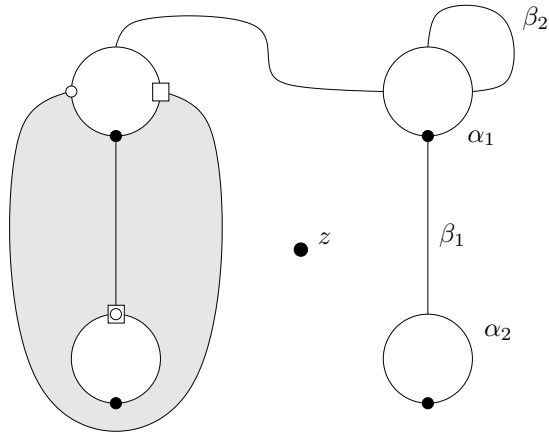}
\end{center}
\caption{A Maslov index one domain}\label{fig:domain}
\end{figure}

The following theorems bring relief, and justify the inclusion of the
word `reformulation' in the nomenclature of this section.

\begin{thm}\cite{RL}
  The homology of the hat version of the chain complex defined above is isomorphic
  to $\wh{HFK}(Y,R)\otimes^{k-1}R^2$.
\end{thm}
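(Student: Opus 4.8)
The engine of the argument is the \emph{tautological correspondence} already assembled in the preceding discussion: for the split complex structure on $\Sigma_g\times D^2$, a holomorphic Whitney disk $\phi\in\pi_2(x,y)$ in $Sym^{g+k-1}(\Sigma_g)$ produces, via the branched cover $\Sigma_g\times Sym^{g+k-2}(\Sigma_g)\to Sym^{g+k-1}(\Sigma_g)$, an embedded holomorphic curve $u\colon F\hookrightarrow\Sigma_g\times D^2$ whose first projection $p_1\circ u$ has image the domain $D(\phi)$, and conversely every such $u$ arises from a unique $\phi$. The plan is to upgrade this bijection of sets of holomorphic representatives to an identification of chain complexes, and then quote Theorem~\ref{thm:fthf1}. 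First I would verify that the correspondence intertwines the reparametrization $\R$-actions on the two sides, so that it descends to a bijection $\wh{\mc{M}(\phi)}\cong\wh{\mc{M}(D(\phi))}$ of reduced moduli spaces; combined with the Lipshitz index identity $\mu(\phi)=\mu(D(\phi))$ quoted above, this matches the index-one strata, and hence the two boundary operators, term by term --- provided everything is cut out transversally.

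Transversality is where the real work lies, since neither the split structure on $\Sigma_g\times D^2$ nor the family of almost complex structures it induces on the symmetric product is generic. I would work with the class $\mc{J}$ of \emph{cylindrical} almost complex structures on $\Sigma_g\times D^2$: those for which $p_2$ is holomorphic, which are translation invariant in the $\R$-direction and split in a collar of $\del D^2$, and which preserve $T\Sigma_g$ near the basepoints. For $J\in\mc{J}$ the projection argument given above for the split structure still forces $J$-holomorphic curves of the relevant type to be embedded with $(g+k-1)$-sheeted covering ends; a Riemann--Roch computation for the linearized $\bar\del$-operator confirms that the expected dimension $\mu(D)$ of $\mc{M}(D)$ is attained for generic $J$; and Gromov compactness together with the usual gluing theorem shows that $\#\bigl(\mc{M}(D)/\R\bigr)$ is finite when $\mu(D)=1$. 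The proof that $\wh{\del}^2=0$ then runs parallel to the symmetric-product case, but with the bubbling analysis carried out in $\Sigma_g\times D^2$: admissibility of the diagram bounds the positive domains that can contribute, only the three boundary degenerations survive, and the boundary (disk) bubbles with boundary on a single torus are excluded in the hat theory by the condition $n_{z_i}(D)=0$ together with admissibility. A continuation-map argument along generic paths in $\mc{J}$ shows the resulting homology $H_*\bigl(\wh{CF}^{\mathrm{cyl}}\bigr)$ is independent of the chosen $J$.

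It remains to identify $H_*\bigl(\wh{CF}^{\mathrm{cyl}}\bigr)$ with $\wh{HF}(Y,R)\otimes^{k-1}R^2$, i.e. with the $R$-module produced by Theorem~\ref{thm:fthf1}. The cleanest route is to use that $\wh{HF}(Y,R)$ is itself independent of the nearly-symmetric almost complex structure on $Sym^{g+k-1}(\Sigma_g)$ used to define it, that $H_*\bigl(\wh{CF}^{\mathrm{cyl}}\bigr)$ is independent of $J\in\mc{J}$, and then to exhibit a single $J\in\mc{J}$ whose pushforward to the symmetric product is nearly-symmetric \emph{and} simultaneously regular for both moduli problems; for that $J$ the tautological correspondence becomes a literal isomorphism of $R$-chain complexes, so the two homologies coincide. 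When $R=\Z$ one must additionally check that the coherent orientations on the cylindrical moduli spaces pull back to coherent orientations of the disk moduli spaces, which follows from the compatibility of the relevant determinant line bundles under the branched-cover construction. The principal obstacle is exactly this transversality juggling act: producing one generic cylindrical $J$ that regularizes the cylindrical problem and whose pushforward is admissible and regular for the Ozsv\'ath--Szab\'o problem, so that the comparison is an honest chain-level identity rather than a delicate analytic limit. Granting that, Theorem~\ref{thm:fthf1} completes the proof.
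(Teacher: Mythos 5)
The paper itself does not prove this theorem: it is stated with the citation \cite{RL} and no argument is supplied, so there is no in-house proof to compare against. Your outline is a sensible reconstruction of Lipshitz's argument and correctly identifies the tautological correspondence, the index matching via $\mu(\phi)=\mu(D(\phi))$, and transversality as the moving parts.

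One adjustment is worth making, because it affects the viability of your closing strategy. The tautological correspondence is an honest bijection of holomorphic representatives only when the structure on $\Sigma_g\times D^2$ is \emph{split}, $j_\Sigma\times j_D$: a genuinely non-split cylindrical $J$ has no well-defined pushforward to $Sym^{g+k-1}(\Sigma_g)$, so the phrase ``a single $J\in\mc{J}$ whose pushforward is nearly-symmetric and simultaneously regular'' forces you back into the split case. But the split structure is precisely the one that fails to be generically regular for the Ozsv\'ath--Szab\'o moduli problem --- which is why they pass to a generic perturbation in the first place --- so the ``literal chain-level isomorphism at a single $J$'' you propose to finish with is not available in the form stated. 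Lipshitz gets around this not by choosing a cleverer $J$ but by perturbing the $\alpha$ and $\beta$ circles: his Lemma 3.10 (the boundary-injectivity criterion, already invoked in the proof of Theorem~\ref{thm:embedded=>holo} in this chapter) shows that for boundary-injective domains the split structure becomes regular after a generic isotopy of the curves, and the rest of the comparison in \cite{RL} is a degeneration-and-continuation argument, not a chain equality at one $J$. You flag ``the principal obstacle is exactly this transversality juggling act,'' which is the right instinct; the resolution, though, lives in boundary injectivity and curve isotopy rather than in the space of $J$'s, and a proof along your lines would need to say so.
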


\begin{thm}\cite{RL}
  The homology of the minus version of the chain complex defined above
  is isomorphic
  to $HFK^{-}(Y,R)$ as $R[U]$-modules.
\end{thm}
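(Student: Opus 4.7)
The plan is to reduce the theorem to Theorem~\ref{thm:fthf2} by establishing a chain-level isomorphism between the cylindrical minus complex and the symmetric-product minus complex. Most of the construction has already been carried out in the discussion preceding the theorem: a holomorphic Whitney disk $\phi\in\pi_2(x,y)$ in $Sym^{g+k-1}(\Sigma_g)$ yields, via the tautological branched-cover construction, a holomorphic embedding $u:F\hookrightarrow\Sigma_g\times D^2$ with $p_1\circ u$ tracing out $D(\phi)$, and conversely any such embedding produces a Whitney disk $\phi$ whose domain is the image of $p_1\circ u$. This correspondence is $\R$-equivariant with respect to reparametrization of $D^2$ fixing $\pm i$, so it descends to $\wh{\mc{M}(\phi)}$ and $\wh{\mc{M}(D)}$.

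First I would verify that the correspondence preserves the data indexing the boundary map. Lipshitz's formula $\mu(\phi)=\mu(D(\phi))$, stated earlier in the section, matches the Maslov indices. The identity $\phi\cdot Z_i=n_{z_i}(D(\phi))$ holds essentially by definition of the associated domain, so $\mu=1$ disks with $\phi\cdot Z_i=n_i$ correspond bijectively to $\mu=1$ domains with $n_{z_i}(D)=n_i$, and the weight $U_i^{n_i}$ appears identically on both sides. Over $\F_2$, this bijection of point sets already identifies the two chain complexes. Over $\Z$, one must additionally compare the orientations of the two moduli spaces, which I would handle by identifying the determinant line bundles of the two Fredholm problems through the branched-cover correspondence $\phi\mapsto u$.

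The main obstacle is transversality. Neither moduli space is cut out transversely by the induced product complex structure, so an honest count requires perturbing simultaneously on both sides. On $\Sigma_g\times D^2$ one perturbs within the class of almost complex structures for which $p_2$ remains holomorphic (so that the $\R$-action and the labeling of boundary arcs by $\alpha$ and $\beta$ curves are preserved) and which agree with the product structure near the ends of the cylinder; on $Sym^{g+k-1}(\Sigma_g)$ one uses the matched perturbation along the fat diagonal $\Delta$. Showing that the bijection of moduli spaces persists under these paired perturbations, that both sides become transversely cut out, and that the reparametrized moduli spaces compactify with only the expected broken-flowline ends, is the technical heart of the argument and relies on Gromov compactness together with standard gluing theory.

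Once these ingredients are in place, the chain-level isomorphism combined with Theorem~\ref{thm:fthf2} delivers the identification of homologies as $R[U]$-modules, the $U$-action being induced by multiplication by any single $U_i$.
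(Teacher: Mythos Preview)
The paper does not prove this theorem. It is stated as a citation from Lipshitz's paper \cite{RL}, alongside the companion hat-version statement, with the preceding sentence ``The following theorems bring relief, and justify the inclusion of the word `reformulation' in the nomenclature of this section.'' No argument is given beyond the tautological correspondence sketched earlier in the section, which the paper uses only as motivation for the cylindrical setup, not as a proof.

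Your outline is a reasonable sketch of the actual argument in \cite{RL}, and you have correctly identified the main technical point: the tautological correspondence between holomorphic disks in the symmetric product and holomorphic curves in $\Sigma_g\times D^2$ is exact only for the induced product complex structure, for which neither side is transverse. One remark on your handling of this: Lipshitz does not in fact carry out a matched perturbation on both sides simultaneously. Rather, he establishes transversality, compactness, gluing, and invariance entirely within the cylindrical framework using a suitable class of almost complex structures on $\Sigma_g\times[0,1]\times\R$, and then compares the resulting invariant to the Ozsv\'ath--Szab\'o invariant via a deformation argument. Your proposed route of matching perturbations across the tautological correspondence is plausible in spirit but would require controlling how perturbations on one side transport to the other through the branched cover, which is not straightforward near the diagonal. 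Also note a likely typo in the paper's statement: the context is closed three-manifolds, so the target should be $HF^{-}(Y,R)$ rather than $HFK^{-}(Y,R)$.
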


Before we conclude this section, we should mention the following few
formulae, all of which are corollaries of the cylindrical
reformulation.

\begin{thm}\cite{RL}
  The Euler characteristic of the surface $F$ is given by
  $\chi(F)=2e(D)+g+k-1-\mu(D)$.
\end{thm}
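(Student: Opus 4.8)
Proof proposal for $\chi(F)=2e(D)+g+k-1-\mu(D)$.

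The plan is to compute $\chi(F)$ via the Riemann--Hurwitz formula applied to the branched cover $p_2\circ u:F\to D^2$, and then to substitute the two combinatorial identities already established in this section. First I would recall that $p_2\circ u$ exhibits $F$ as a $(g+k-1)$-sheeted branched cover of the disk $D^2$, with all branch points lying in the interior of $D^2$ (this is exactly the ``firstly'' observation of the cylindrical reformulation), and that the number of branch points, counted with multiplicity, equals $\phi\cdot\Delta$. Since $\chi(D^2)=1$, Riemann--Hurwitz gives $\chi(F)=(g+k-1)\cdot 1-b$, where $b$ is the number of branch points (using that simple branch points each contribute $-1$; for the generic product complex structure all branching is simple, and in any case the multiplicity-counted version of Riemann--Hurwitz gives the same answer). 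Thus $\chi(F)=g+k-1-\phi\cdot\Delta$.

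Next I would invoke Rasmussen's relation, stated just above as $\phi\cdot\Delta=\mu(\phi)-2e(D(\phi))$, together with Lipshitz' theorem $\mu(\phi)=\mu(D(\phi))$. Writing $D=D(\phi)$ and $\mu(D)=\mu(\phi)$, substitution yields
$$\chi(F)=g+k-1-\big(\mu(D)-2e(D)\big)=2e(D)+g+k-1-\mu(D),$$
which is the desired formula. The only remaining point is to make sure the quantities match: $\mu(D)$ here is the Maslov index computed by Lipshitz' formula $e(D)+n_x(D)+n_y(D)$, and $e(D)$ is the Euler measure as defined via the curvature integral, so all notation is consistent with the earlier parts of the section.

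The main obstacle—really the only place one has to be a little careful—is the bookkeeping in Riemann--Hurwitz: one must confirm that every branch point of $p_2\circ u$ is interior (so the boundary $\partial F$ is an honest $(g+k-1)$-fold cover of $\partial D^2$ and contributes nothing to the ramification term), and that the ramification count appearing in Riemann--Hurwitz is precisely $\phi\cdot\Delta$ rather than some other weighting. Both facts are supplied by the discussion preceding the theorem: the boundary of $D^2$ maps into $\T_\alpha\cup\T_\beta$, which is disjoint from $\Delta$, forcing all branching inward; and the branched cover $\Sigma_g\times\mathrm{Sym}^{g+k-2}(\Sigma_g)\to\mathrm{Sym}^{g+k-1}(\Sigma_g)$ is branched exactly over $\Delta$, so the pullback branch divisor on $F$ has total degree $\phi\cdot\Delta$. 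Once this is pinned down, the computation is the two-line substitution above.
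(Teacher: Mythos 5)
Your proof is correct, and it is exactly the derivation suggested by the surrounding discussion in the text. The paper itself does not supply a proof of this formula — it simply cites it from Lipshitz — but the two ingredients you invoke, namely that $p_2\circ u$ is a $(g+k-1)$-sheeted branched cover of $D^2$ with all branching in the interior and with total branching number $\phi\cdot\Delta=\mu(D)-2e(D)$, are precisely the facts recorded in the paragraphs immediately preceding the statement. Feeding them into Riemann--Hurwitz for a cover of a disk ($\chi(F)=(g+k-1)\cdot\chi(D^2)-b$) gives the claimed identity, and your care about the boundary being an honest covering and about the ramification count matching the intersection number $\phi\cdot\Delta$ is exactly where the argument needs attention. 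A quick sanity check against the cases analyzed in Theorem \ref{thm:holo=>embedded} confirms the formula: for an empty bigon $e(D)=\tfrac12$, $\mu(D)=1$, giving $\chi(F)=g+k-1$, and for an empty square $e(D)=0$, $\mu(D)=1$, giving $\chi(F)=g+k-2$, both matching the explicit surface descriptions there.
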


A component of the surface $F$ is called a trivial disk if it is a
disk with only one $X$-marking and one $Y$-marking on its boundary,
and it maps to a single point in $\Sigma_g$ by $p_1\circ u$. If
$p\in\Sigma_g$ is the image a trivial disk, then clearly $p$ is both
an $x$-coordinate and a $y$-coordinate, and furthermore $n_p(D)=0$.
Recall that the number of branch points of $p_2\circ u$ is
$\mu(D)-2e(D)$. The number of branch points of $p_1\circ u$ can also
be computed as $\mu(D)-e(D)-\frac{1}{2}(g+k-1-t)$ where $t$ is the
number of trivial disks.

There is one last thread that we need to wrap up. It is a
simple observation, but an extremely important one.

\begin{thm}
  If $\mc{M}(D)\neq\varnothing$, then $D$ is a positive domain, i.e.
  $n_p(D)\geq 0$ for all points $p\in\Sigma
  \setminus(\alpha\cup\beta)$.
\end{thm}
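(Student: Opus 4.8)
The plan is to translate the non-emptiness of $\mc{M}(D)$ into a statement about the holomorphic map $u\colon F\to \Sigma_g\times D^2$ that represents some element of $\mc{M}(D)$, and then read off positivity from the fact that $p_1\circ u$ is holomorphic. Recall from the discussion above that $D$ is, up to sign conventions, the image of $p_1\circ u$ as a $2$-chain: concretely, for a point $p\in\Sigma_g\setminus(\alpha\cup\beta)$ the coefficient $n_p(D)$ equals the local intersection number of $u(F)$ with $\{p\}\times D^2$ inside $\Sigma_g\times D^2$, equivalently the number of preimages (with multiplicity) of $p$ under $p_1\circ u$ that occur over a point $s\in D^2$ with $p\in \pi_1(u(\text{fiber over }s))$. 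So the task reduces to showing that this intersection number is $\geq 0$ for every such $p$.

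First I would fix $p\in\Sigma_g\setminus(\alpha\cup\beta)$ and the vertical complex line $V_p=\{p\}\times D^2\subset \Sigma_g\times D^2$, which is a holomorphic submanifold of complex codimension one (since we are using the product complex structure, or a generic perturbation for which this curve remains pseudo-holomorphic — in the product case it is literally holomorphic). Next I would invoke the standard principle of positivity of intersections for pseudo-holomorphic curves: if $u\colon F\to \Sigma_g\times D^2$ is $J$-holomorphic and $V_p$ is a $J$-holomorphic submanifold, and the image of $u$ is not contained in $V_p$, then every point of $u(F)\cap V_p$ contributes a strictly positive local intersection multiplicity. Since $p\notin\alpha\cup\beta$, the boundary $\del F$ maps into $(\alpha\cup\beta)\times \del D^2$, which is disjoint from $V_p$, so all intersection points are interior and this principle applies cleanly. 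The image of $u$ cannot be contained in $V_p$ because $p_2\circ u$ is a branched cover of $D^2$, hence surjective with two-dimensional image, while $V_p$ meets each such fiber in at most a point. Summing the positive local contributions gives $n_p(D)\geq 0$, and if the image misses $V_p$ entirely then $n_p(D)=0$; either way $n_p(D)\geq 0$.

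The main obstacle is purely bookkeeping: making sure the identification ``coefficient of $D$ at $p$'' $=$ ``intersection number $u(F)\cdot V_p$'' is set up with the right sign, i.e. that the $2$-chain $D$ really is the \emph{oriented} pushforward $(p_1\circ u)_*[F]$ and not its negative. This is exactly the content of the remark preceding Rasmussen's theorem above, that the image of $p_1\circ u$ as a $2$-chain is $D(\phi)$; once that orientation convention is pinned down, positivity of intersections does the rest with no further analysis. (If one prefers to avoid the general positivity-of-intersections machinery, an alternative is to note that $p_1\circ u$ is a non-constant holomorphic map on each non-constant component of $F$, so near a generic point of a fiber it is an orientation-preserving local homeomorphism onto its image, again forcing the local degree to be non-negative; the branch points of $p_1\circ u$, being isolated, do not affect the count.)
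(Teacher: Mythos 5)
Your proof is correct and follows essentially the same line as the paper's: identify $n_p(D)$ with the intersection number of $u(F)$ and the holomorphic divisor $\{p\}\times D^2$, and invoke positivity of intersections. You supply a few details the paper leaves implicit (the boundary of $F$ avoids $\{p\}\times D^2$ since $p\notin\alpha\cup\beta$, and $u(F)$ is not contained in $\{p\}\times D^2$ since $p_2\circ u$ is a branched cover), but the core argument is identical.
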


\begin{proof}
  If $\mc{M}(D)\neq\varnothing$, then there is a holomorphic map
  $u:F\rightarrow \Sigma_g\times D^2$ representing the domain $D$.
  However $n_p(D)$ is the intersection between $u(F)$ and $\{p\}\times
  D^2$, and assuming the complex structure is the product complex
  structure (at least in a neighborhood of $\{p\}\times D^2$), both
  the subspaces are holomorphic objects and hence they intersect
  non-negatively, thus concluding the proof.
\end{proof}

\chapter{Letting bigons be bigons}
Heegaard Floer homology is a great invariant. Other than being
completely new and extremely powerful, it also enjoys a geometric
lineage which allows it to provide information about many geometric
properties of three-manifolds. (One prime example of this phenomenon
is knot Floer homology for knots in $S^3$ determining the knot genus.)
Hopefully our brief encounter with Heegaard Floer homology in the
previous chapter has already convinced the reader of this fact.

However there is one minor inconvenience in the whole theory. Till
date, it has not admitted any combinatorial reformulation. For
example, there is no algorithm to compute ${HFK}^{-}(Y,R)$ for any
ring $R$ (just a word caution, we are not claiming whether or not
there can be any algorithm, it is just that until now we have not
discovered any). In this chapter we present a partial solution to the
problem, mostly following the lines of the paper \cite{SSJW} by Jiajun
Wang and the present author.

\section{Consequences of being nice}

We restrict our attention to some special Heegaard diagrams called
nice pointed Heegaard diagrams. The terminology is perhaps a little
unfortunate, since the term `nice' is neither mathematical, nor an
accurate description of these types of Heegaard diagrams.

\begin{defn}\label{defn:nicediagram} 
  Let $\mc{H}=(\Sigma_g, \alpha_1,\ldots,\alpha_{g+k-1},
  \beta_1,\ldots,\beta_{g+k-1}, z_1,\ldots,z_k)$ be a Heegaard diagram
  for a three-manifold $Y$. The diagram $\mc{H}$ is called a nice
  pointed diagram if any region that does not contain any basepoint
  $z_i$ is either a bigon or a square.
\end{defn}

Let $Y$ be a closed oriented three-manifold. Suppose $Y$ has a nice
admissible Heegaard diagram $\mc{H}=(\Sigma, \alpha, \beta, z)$ (in
fact it is not so hard to see that a diagram being nice implies that
the diagram is admissible, see \cite{RLCMJW}). We choose a product
complex structure on $\Sigma_g\times D^2$.

\begin{defn}\label{defn:empty}
  A domain $D\in \pi_2(x,y)$ with coefficients $0$ and $1$ is called
  an empty embedded $2n$-gon, if it is topologically an embedded disk
  with $2n$ vertices (a vertex being a point of the form $x_i$ or
  $y_i$) on its boundary, such that at each vertex $v$,
  $n_v(D)=\frac{1}{4}$, and it does not contain any $x_i$ or $y_i$ in
  its interior.
\end{defn}

The following theorems show that, for a domain $D\in\pi_2^0(x,y)$ (or
in other words, a domain $D\in\pi_2(x,y)$ which avoids all the
basepoints, i.e. $n_{z_i}(D)=0$ for all $i$), the count function
$c(D)\neq 0$ if and only if $D$ is an empty embedded bigon or an empty
embedded square, and in that case $c(D)=1$. Thus $c(D)$ can be
computed combinatorially in a nice Heegaard diagram.

\begin{thm}\label{thm:holo=>embedded}\cite{SSJW}
  Let $D\in \pi_2^0(x,y)$ be a domain such that $\mu(D)=1$. If $D$ has
  a holomorphic representative, then $\phi$ is an empty embedded bigon
  or an empty embedded square.
\end{thm}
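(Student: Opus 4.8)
The plan is to extract combinatorial consequences from the cylindrical picture built up in the previous section. Since $\mc{M}(D)\neq\varnothing$, there is a holomorphic embedding $u:F\hookrightarrow\Sigma_g\times D^2$ representing $D$, with $p_1\circ u$ realizing $D$ as a $2$-chain and $p_2\circ u$ a $(g+k-1)$-sheeted branched cover of $D^2$. First I would record the numerical constraints: by the Euler characteristic formula $\chi(F)=2e(D)+g+k-1-\mu(D)=2e(D)+(g+k-2)$ since $\mu(D)=1$, and the number of branch points of $p_2\circ u$ is $\phi\cdot\Delta=\mu(D)-2e(D)=1-2e(D)$, which being a count of points is a non-negative integer, forcing $e(D)\leq\frac{1}{2}$; combined with positivity of $D$ (which holds since $\mc{M}(D)\neq\varnothing$) and the fact that $e$ is additive with $e(\text{bigon})=\frac{1}{2}$, $e(\text{square})=0$, $e(\text{region with}\geq 3\text{ pairs of arcs})<0$, niceness of the diagram says every region appearing in $D$ with positive coefficient is a bigon or a square, so $e(D)$ is $\frac{1}{2}$ times the number of bigon-copies in $D$. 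Hence $e(D)\in\{0,\tfrac12\}$ with at most one bigon.

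Next I would analyze the surface $F$ itself. Split off the trivial disks: if $t$ is the number of trivial disks, the non-trivial part $F'$ of $F$ carries all the branch points, and $\chi(F')=\chi(F)-t$. The boundary of $F$ consists of arcs alternating between $A$-type (mapping to $\alpha$ curves under $p_1\circ u$) and $B$-type (mapping to $\beta$ curves), with $X$- and $Y$-marked points interleaved; since $D$ has coefficients only $0$ and $1$ (this is where I would need to check that positivity plus $e(D)\leq\frac12$ forces all coefficients into $\{0,1\}$ — a small argument using that a coefficient $\geq 2$ somewhere combined with niceness would push $e(D)$ below what is allowed, or else create too many branch points), the map $p_1\circ u$ is generically injective, so $F'$ is essentially embedded in $\Sigma_g$ and its image is the support of $D$. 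The key step is then to show $F'$ is connected and is a single disk: using that $p_2\circ u$ has at most one branch point (when $e(D)=0$, zero branch points, so $p_2\circ u|_{F'}$ is an honest covering, and each component of $F'$ covers $D^2$ evenly, forcing each component to be a disk with exactly one $X$ and one $Y$ marking; when $e(D)=\frac12$, exactly one branch point, so exactly one component of $F'$ is a branched double cover of the disk branched once — a disk with two $X$ and two $Y$ markings — and all others are trivial disks). In the $e(D)=0$ case the non-trivial disk maps to an embedded square (four vertices, each contributing $n_v(D)=\frac14$); in the $e(D)=\frac12$ case, the branched-double-cover disk maps to an embedded bigon.

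Finally I would check the "empty" condition — that $D$ contains no $x_i$ or $y_i$ in its interior. This follows because any such interior coordinate would be an image of a point of $\del F$ or would force extra sheets/branching incompatible with the count above; more carefully, a coordinate $x_i$ of $x$ lying in the interior of the support of $D$ would mean the corresponding preimage $X_i\in\del F$ maps into the interior of $D(\phi)$ under $p_1\circ u$, which is impossible because $\del F$ maps to $\alpha\cup\beta$ and the interior of a region is disjoint from these curves — so the only subtlety is a coordinate lying on the boundary but "inside" in the $2$-chain sense, handled by the vertex condition $n_v(D)=\frac14$ already built into the $2n$-gon definition. The main obstacle I anticipate is the bookkeeping that pins the coefficients of $D$ to $\{0,1\}$ and simultaneously rules out $F$ having a more complicated topology (higher genus, extra boundary components) than a disjoint union of disks; this is where the interplay between positivity, the Euler measure bound $e(D)\leq\frac12$, the branch-point count, and niceness of the diagram all have to be combined tightly, and it is the heart of the argument rather than the soft topological conclusions that follow.
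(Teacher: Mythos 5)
You have the right framework (Lipshitz's cylindrical picture, the Euler characteristic count for $F$, the branch-point count $\mu(D)-2e(D)$, positivity and niceness forcing $0\le e(D)\le\tfrac12$), but there is a genuine gap at exactly the step you flag as "the heart of the argument." Your proposed route is to first pin the coefficients of $D$ to $\{0,1\}$ via the bound $e(D)\le\tfrac12$ or the branch-point count, and then read off embeddedness. That route does not work: a square region has $e=0$, so a coefficient of $2$ over a square changes $e(D)$ not at all, and it also changes $\mu(D)-2e(D)$ not at all, so neither the Euler measure bound nor the branch-point count sees it. The paper reverses your logic. It first splits into the three combinatorial cases allowed by $n_x(D)+n_y(D)\le 1$, concludes in each viable case that the non-trivial part of $F$ is a disk (square or bigon) with the map $f=p_1\circ u$ a local diffeomorphism with a unique preimage of each corner, and then proves $f$ is a global embedding by showing $f|_{\del F}$ is injective via a moving-point argument: if $p\ne q$ on $\del F$ with $f(p)=f(q)$, one slides $q$ along an $\alpha$-arc while dragging $p$ to match, and $p$ is forced into a corner before $q$ can escape, contradicting uniqueness of corner preimages. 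Only after embeddedness of $\del F$ is established does one conclude the coefficients are $0$ and $1$ (the embedded nullhomologous boundary circle cuts $\Sigma$ into two pieces on which the coefficient is constant, and the values $0,1$ appear near $x_1,y_1$). Your sketch contains no substitute for this moving-point step, and it is not a bookkeeping detail but the entire content of the theorem.

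A secondary error: you have the two cases swapped. With $\mu(D)=1$, the branch-point count is $1-2e(D)$, so $e(D)=0$ gives one branch point (and the non-trivial component of $F$ is a disk with four marked points, i.e.\ the square case), while $e(D)=\tfrac12$ gives zero branch points (an honest single cover, i.e.\ the bigon case). You wrote "when $e(D)=0$, zero branch points" and "when $e(D)=\tfrac12$, exactly one branch point," which inverts this, even though your final assignment of square to $e=0$ and bigon to $e=\tfrac12$ happens to be correct.
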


\begin{proof}

  We know that only positive domains can have holomorphic
  representatives. We also know that bigons and squares have
  non-negative Euler measure. We will use these facts to limit the
  number of possible cases.

  Suppose $D=\sum a_i D_i$, where $D_i$'s are regions (i.e. components
  of $\Sigma\setminus(\alpha\cup\beta)$) containing no basepoints.
  Since $D$ has a holomorphic representative, we have $a_i\geq 0$,
  for all $i$. Since each $D_i$ is a bigon or a square, we have
  $e(D_i)\geq 0$ and hence $e(D)\geq 0$. So, by Lipshitz' formula
  $\mu(D)=e(D)+n_x(D)+n_y(D)$, we get
  $0\le n_x(D)+n_y(D)\le 1$.

  Now let $x=x_1+\cdots+x_{g+k-1}$ and $y=y_1+\cdots+y_{g+k-1}$, with
  $x_i,y_i \in \alpha_i$. We say $D$ hits some $\alpha$ circle if
  $\partial D$ is non-zero on some part of that $\alpha$ circle.
  Since $D \neq n\Sigma$, it has to hit at least one $\alpha$ circle,
  say $\alpha_1$, and hence $n_{x_1}, n_{y_1} \ge \frac{1}{4}$ as
  $\partial(\partial D_{|\alpha})=y-x$. Also if $D$ does not hit
  $\alpha_i$, then $x_i=y_i$ and they must lie outside the domain $D$,
  since otherwise we have $n_{x_i}=n_{y_i}\geq \frac{1}{2}$ and hence
  $n_x+n_y$ becomes too large.

  We now note that $e(D)$ can only take half-integral values, and
  thus only the following cases might occur.

\begin{itemize}

\item  $D$ hits $\alpha_1$ and another $\alpha$
  circle, say $\alpha_2$, $D$ consists of squares,
  $n_{x_1}=n_{x_2}=n_{y_1}=n_{y_2}=\frac{1}{4}$, and there are
  $(g+k-3)$ trivial disks.

\item  $D$ hits $\alpha_1$, $D$ consists of
  squares and exactly one bigon, $n_{x_1}=n_{y_1}=\frac{1}{4}$,
  and there are $(g+k-2)$ trivial disks.

\item  $D$ hits $\alpha_1$, $D$ consists of
  squares, $n_{x_1}+n_{y_1}=1$, and there are $(g+k-2)$ trivial
  disks.

 \end{itemize}

Using the reformulation by Lipshitz, in each of these cases, we will
try to figure out the surface $S$ which maps to $\Sigma\times D^2$.
Recall that a trivial disk is a component of $S$ which maps to a point
in $\Sigma$ after post-composing with the projection $\Sigma\times
D^2\rightarrow\Sigma$.

The first case corresponds to a map from $S$ to $\Sigma$ with
$\chi(S)=(g+k-2)$, and $S$ has $(g+k-3)$ trivial disk components. If
the rest of $S$ is $F$, then $F$ is a double branched cover over $D^2$
with one branch point and with $\chi(F)=1$, i.e. $F$ is a disk with
$4$ marked points on its boundary.  Call the marked points corners,
and call $F$ a square.

In the other two cases, $S$ has $(g+k-2)$ trivial disk components, so
if $F$ denotes the rest of $S$, then $F$ is just a single cover over
$D^2$. Thus the number of branch points has to be $0$. But in the
third case the number of branch points is $1$, so the third case
cannot occur. In the second case, $F$ is a disk with $2$ marked points
on its boundary. Call the marked points corners, and call $F$ a bigon.

Thus in both the first and the second cases, $D$ is the image of $F$
and all the trivial disks map to the $x$-coordinates (which are also
the $y$-coordinates) which do not lie in $D$. Note that in both cases,
the map from $F$ to $D$ has no branch point, so it is a local
diffeomorphism, even at the boundary of $F$. Furthermore using the
condition that $n_{x_i}=n_{y_i}=\frac{1}{4}$, we can conclude that
there is exactly one preimage for the image of each corner of $F$.

All we need to show is that the map from $F$ to $\Sigma$ is an
embedding, or in other words, the local diffeomorphism $f:F\rightarrow
D$ is actually a diffeomorphism. First note that it is enough to show
that $f_{|\partial F}$ is an embedding. For then, the image of $\del
F$ under the map $f$ is an embedded circle in $\Sigma$, and it is also
nullhomologous since it bounds the $2$-chain $D$. Therefore, the
circle divides up $\Sigma$ into two components, and the coefficients
of $D$ are constant in each component. Since the coefficients $0$ and
$1$ appear in a neighborhood of $x_1$ and $y_1$, these are the only
two coefficients that appear in $D$, and hence $D$ is an empty
embedded square or an empty embedded bigon.

Now in $F$ (which is a square or a bigon) look at the preimages of all
the $\alpha$ and the $\beta$ circles. Using the fact that $f$ is a
local diffeomorphism, we see that each of the preimages of $\alpha$
and $\beta$ arcs are also $1$-manifolds, and by an abuse of notation,
we also call them $\alpha$ or $\beta$ arcs. Now since $f$ is a local
diffeomorphism, it is easy to see that when $F$ is a square, all the
components of $F\setminus(\alpha\cup\beta)$ are squares, and when $F$
is a bigon, all but one component of $F\setminus(\alpha\cup\beta)$ are
squares, and that component is a bigon. Figure
\ref{fig:squarebigon} shows the induced tiling on $F$ in each of the
cases. Throughout  this chapter, in all  the figures, we  will use the
convention that the thick lines denote the $\alpha$ arcs, and the thin
lines denote the $\beta$ arcs.

\begin{figure}[ht]
\psfrag{p1}{$p$}
\psfrag{p2}{$p$}
\psfrag{q1}{$q$}
\psfrag{q2}{$q$}
\begin{center}
\includegraphics[width=300pt]{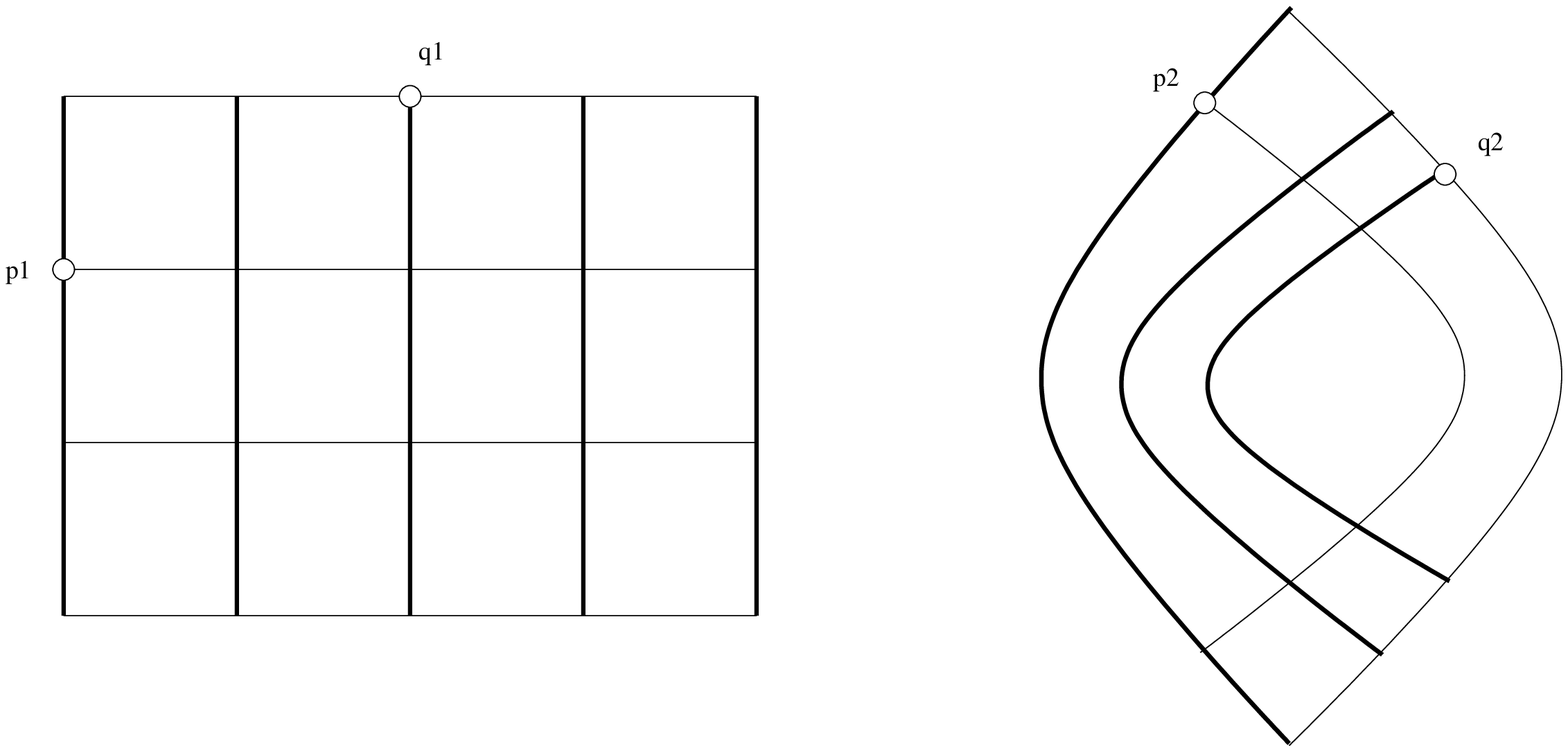}
\end{center}
\caption{Induced tiling on the surface $F$}\label{fig:squarebigon}
\end{figure}

Let the vertices be the intersection points between the $\alpha$ arcs
and the $\beta$ arcs in $F$. Recall that in $F$, some of the vertices
(namely the ones at the corners) are called corners. Also recall that
there is exactly one preimage for the image of each corner. Now assume
if possible, $f_{|\del F}$ is not an embedding. This immediately
implies that there are two distinct vertices $p$ and $q$ lying on
$\del F$ such that $f(p)=f(q)$. Furthermore, if $F$ is a square, then
the opposite sides of $\del F$ either map to different $\alpha$
circles or map to different $\beta$ circles, and hence we can assume
that $p$ and $q$ lie on adjacent sides on the boundary of $F$. (The
proof of admissibility in \cite{RLCMJW} actually shows that the image
of each side of $F$ is embedded and hence $p$ and $q$ can not lie on
the same side). In Figure \ref{fig:squarebigon}, we have marked
certain vertices on $\del F$ as $p$ and $q$. We assume that the
$\alpha$ curve passing through $p$ lies on $\del F$, and similarly
(since $p$ and $q$ lie on adjacent sides) the $\beta$ curve passing
through $q$ lies on $\del F$.

The rest of the proof is fairly straightforward. We will move the
points $p$ and $q$ on $F$ such that $p$ and $q$ remain disjoint and
the condition $f(p)=f(q)$ is still satisfied. Eventually the point $p$
will hit a corner, and that will be a contradiction since the image of
each corner has exactly one preimage. So now move the point $q$ in a
single direction along an $\alpha$ curve (since $q$ started as a point
on a $\beta$ curve lying in $\del F$, the direction of motion is
fixed). To ensure that $f(p)=f(q)$, the point $p$ also starts to move
along an $\alpha$ curve. Note that since $p$ started off as a point on
an $\alpha$ curve lying in $\del F$, $p$ continues to lie on the same
$\alpha$ curve in $\del F$ and it approaches one of the corners of
$F$. Also observe that $p$ encounters a vertex on its way exactly when
$q$ encounters a vertex on its way. Thus it is clear (at least from
Figure \ref{fig:squarebigon}) that irrespective of which direction $p$
is moving, the point $p$ hits a corner no later than when the point
$q$ hits the boundary of $F$ again. This is the required
contradiction.
\end{proof}

\begin{thm}\label{thm:embedded=>holo}\cite{SSJW}
  If $D\in \pi_2^0(x,y)$ is an empty embedded bigon or an
  empty embedded square, then the product complex structure on
  $\Sigma\times D^2$ achieves transversality for $D$ under a
  generic perturbation of the $\alpha$ and the $\beta$ circles, and
  $\mu(D)=c(D)=1$ (with coefficients in $\F_2$).
\end{thm}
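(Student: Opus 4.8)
The plan is to verify three things for an empty embedded bigon or square $D$: that $\mu(D)=1$, that the product complex structure achieves transversality after a small generic perturbation of the $\alpha$ and $\beta$ curves, and that the (signed, or here mod $2$) count $\#(\wh{\mc{M}(D)})=1$. The Maslov index is the easiest: an empty embedded bigon has $e(D)=\frac12$ and $n_x(D)=n_y(D)=\frac14$ (the two vertices are its only corners), so $\mu(D)=e(D)+n_x(D)+n_y(D)=\frac12+\frac14+\frac14=1$; an empty embedded square has $e(D)=0$ and four corners contributing $\frac14$ each, again giving $\mu(D)=1$. I would dispatch this first and then move to the analytic content.

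For the count, I would use the cylindrical picture from the previous section. Since $D$ is an embedded bigon or square and empty, the surface $F$ mapping to $\Sigma_g\times D^2$ is forced (by the Euler characteristic formula $\chi(F)=2e(D)+g+k-1-\mu(D)$ together with the branch-point count) to be a single disk — a bigon or a square — with the remaining components of $S$ being trivial disks over the $x$-coordinates (equivalently $y$-coordinates) lying outside $D$. So a holomorphic representative of $D$ amounts to a holomorphic map $f:F\to D\subset\Sigma_g$ that is the identity on the image together with a compatible holomorphic map $F\to D^2$ realizing the correct boundary marking pattern. Because $D$ is embedded and $f$ is forced to be a diffeomorphism onto $D$ (this was established in the proof of Theorem \ref{thm:holo=>embedded}), the holomorphic data reduces to choosing a holomorphic identification of the bigon/square $D$, as a Riemann surface with boundary marked by the $X$- and $Y$-points, with the standard disk $D^2$ marked at $\pm i$ (for the bigon) or with a standard square (for the square) mapping to $D^2$ as a double branched cover with one interior branch point. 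After quotienting by the $\R$-action (translation fixing $\pm i$), the reparametrized moduli space $\wh{\mc{M}(D)}$ is a single point: for the bigon this is the Riemann mapping theorem statement that any two bigons are conformally equivalent rel the two marked boundary points up to the one-parameter family we have already quotiented; for the square it is the classical fact that any conformal quadrilateral is determined up to conformal equivalence by its cross-ratio/modulus, and once we record that the target square has a prescribed modulus there is exactly one such map (again up to the quotiented reparametrization). Hence $\#(\wh{\mc{M}(D)})=1$ over $\F_2$.

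For transversality I would argue that the unique holomorphic representative $u:F\hookrightarrow\Sigma_g\times D^2$ is regular once we perturb. The linearized $\bar\partial$-operator at $u$ is a real-linear Cauchy–Riemann operator on a bundle over $F$ (a disk, hence with trivial-ish topology) with totally real boundary conditions coming from the $\alpha$ and $\beta$ curves; its index equals the expected dimension $\mu(D)=1$, matching the actual dimension of $\mc{M}(D)$ (the unquotiented moduli space is a one-parameter family). Because $F$ is planar and the bundle splits as the pullback of $T\Sigma_g$ and $TD^2$ with the boundary conditions determined by the (perturbed) curves, a standard Riemann–Roch / automatic-transversality argument in dimension two — of the kind used for low-dimensional holomorphic curves, where positivity of the relevant Maslov-type index forces surjectivity of the linearized operator — shows the operator is surjective after a generic perturbation of the $\alpha$ and $\beta$ curves makes the boundary conditions generic. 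I would cite the relevant automatic-transversality results (and \cite{RL}, \cite{SSJW}) rather than redo the Fredholm theory.

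The main obstacle is the transversality step: making precise that the product complex structure, which is highly non-generic in the symmetric product, nonetheless achieves transversality for these specific small domains after only perturbing the curves. The key is that the embeddedness of $D$ and its low Euler measure put us in the regime where the index of the linearized operator is so small (and positive) that surjectivity is automatic — but spelling out the correct totally-real boundary-value problem on $F$, checking the index computation, and invoking the right automatic-transversality theorem is where the real work lies. The moduli-count step, by contrast, is essentially the Riemann mapping theorem for bigons and the conformal classification of quadrilaterals, and should go through cleanly once the shape of $F$ has been pinned down by the Euler-characteristic bookkeeping already done in Theorem \ref{thm:holo=>embedded}.
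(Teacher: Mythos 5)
Your computation of $\mu(D)=1$, your bookkeeping to pin down the topological type of $F$ via the Euler-characteristic formula, and your conformal-geometry argument for $\#(\wh{\mc{M}(D)})=1$ (Riemann mapping for bigons, cross-ratio/modulus of quadrilaterals for squares) all match the paper's approach closely. The paper also locates the unique holomorphic representative of a square by noting that the complex structure on $F$ is determined by a cross-ratio and that exactly one branch-point position in $D^2$ realizes it.

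Where you diverge, and where you correctly flag the difficulty, is the transversality step. The paper dispatches this in one line by invoking \cite[Lemma 3.10]{RL}: an empty embedded bigon or square satisfies Lipshitz's \emph{boundary injective} criterion (the $\alpha$- and $\beta$-arcs of $\del D$ map injectively to $\Sigma$), and boundary injectivity is precisely the hypothesis under which the product complex structure on $\Sigma\times D^2$ achieves transversality after a generic perturbation of the $\alpha$ and $\beta$ circles. You instead sketch an ``automatic transversality'' argument based on positivity of the index of the linearized Cauchy--Riemann operator on the planar surface $F$. That direction is in the right spirit --- Lipshitz's lemma is itself proved by analysis of exactly this kind --- but as written you leave it as a pointer to a general framework rather than identifying the concrete criterion that applies here, and the automatic-transversality theorems you gesture at (positivity conditions for closed curves or for the $c_1$-positive regime) do not literally cover the totally-real boundary value problem on $F$. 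For these specific domains boundary injectivity is immediate from the definition of an empty embedded $2n$-gon, so you should verify that condition and cite Lipshitz's lemma rather than leave the transversality argument at the level of an analogy with automatic transversality.
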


\begin{proof}

  Let $D$ be an empty embedded $2n$-gon. Each of the corners of $D$
  must be an $x$-coordinate or a $y$-coordinate, and at every other
  $x$ or $y$-coordinate the point measures $n_{x_i}$ and $n_{y_i}$
  must be zero. Therefore $n_x(D)+n_y(D)= 2n\cdot\frac{1}{4}=
  \frac{n}{2}$. Also $D$ is topologically a disk, so it has Euler
  characteristic $1$. Since it has $2n$ corners each with an angle of
  $\frac{\pi}{4}$, the Euler measure $e(D)$ is equal to $1-\frac{2n}{4}=
  1-\frac{n}{2}$. Thus the Maslov index $\mu(D)=e(D)+n_x(D)+n_y(D)=1$.

  By \cite[Lemma 3.10]{RL}, we see that $D$ satisfies the
  boundary injective condition, and hence under a generic perturbation
  of the $\alpha$ and the $\beta$ circles, the product complex
  structure achieves transversality for $D$.

  When $D$ is an empty embedded square, we can choose the surface $F$ to be a
  disk with $4$ marked points on its boundary, which is mapped to
  $D$ diffeomorphically. Given a complex structure on $\Sigma$, the
  holomorphic structure on $F$ is determined by the cross-ratio of the
  four points on its boundary, and there is an one-parameter family of
  positions of the branch point in $D^2$ which gives that cross-ratio.
  Thus there is a holomorphic branched cover $F\rightarrow D^2$
  satisfying the boundary conditions, unique up to reparametrization.
  Hence the domain of $D$ has a holomorphic representative, and from the proof of
  Theorem \ref{thm:holo=>embedded} we see that this determines the
  topological type of $F$, and hence it is the unique holomorphic
  representative.

  When $D$ is an empty embedded bigon, we can choose $F$ to be a
  disk with $2$ marked points on its boundary, which is mapped to
  $D$ diffeomorphically. A complex structure on $\Sigma$ induces a
  complex structure on $F$, and there is a unique holomorphic map from
  $F$ to the standard $D^2$ after reparametrization. Thus again $D$
  has a holomorphic representative, and similarly it must be the
  unique one.
 \end{proof}

 The upshot of Theorems \ref{thm:holo=>embedded} and
 \ref{thm:embedded=>holo} is the following. With coefficients in
 $\F_2$, the hat version of Heegaard Floer homology of a
 three-manifold $Y$ can be computed combinatorially in a nice pointed
 Heegaard diagram representing $Y$. The story for knot Floer homology
 is similar. A nice pointed Heegaard diagram for a knot $K\subset Y$
 is a Heegaard diagram for the knot, which when viewed as a Heegaard
 diagram for $Y$ (by forgetting the $w$-basepoints) is a nice pointed
 diagram. It is immediate that given a nice pointed diagram for a
 knot, the hat version of knot Floer homology can be computed
 combinatorially with coefficients in $\F_2$. We will return to the
 case for knots in $S^3$ in the next chapter.

 However, despite having these theorems, we are actually quite far
 from having an algorithm to compute the hat version of the invariant
 (with coefficients in $\F_2$). To be able to achieve that, we need an
 algorithm which inputs a Heegaard diagram for $Y$, and outputs a nice
 pointed Heegaard diagram for the same three-manifold. We would also
 require the output to be an admissible Heegaard diagram, but as we
 have already mentioned, for nice diagrams, (weak) admissibility comes
 for free, so we will not bother with admissibility issues in the
 future. With this in mind, we head on to the next section, which does
 exactly what it claims to do, which happens to be precisely what we
 need.

\section{An algorithm for being nice}

Let $(\Sigma_g,\alpha_1,\ldots,\alpha_{g+k-1},\beta_1,\ldots,
\beta_{g+k-1}, z_1,\ldots,z_k)$ be a Heegaard diagram for $Y$. Before
describing the algorithm, let us recall a few notations. A region is a
component of $\Sigma\setminus(\alpha\cup\beta)$. A $2n$-gon is a
region which is topologically an open disk, and has $2n$ vertices on
its boundary, where a vertex is an intersection between an $\alpha$
curve and a $\beta$ curve, both lying on the boundary of the region.

We will gradually modify the Heegaard diagram, such that it throughout
remains a Heegaard diagram for $Y$, and eventually it becomes a nice
pointed Heegaard diagram. The only modifications that we will do are
isotopies and handleslides (in fact mostly it will be isotopies), and
this ensures that all through the process the Heegaard diagram
represents $Y$.

Let $D_1,D_2,\ldots,D_m$ be the regions that do not contain the
basepoints, and let $B_i$ be the region that contains the basepoint $z_i$.
For a region $D_i$, let $\chi(D_i)$ be its Euler characteristic, and
let $e(D_i)$ be its Euler measure (which is simply
$\chi(D_i)-\frac{v}{4}$ where $v$ is the number of vertices on the
boundary of $D_i$). Define the reduced Euler measure $\wt{e}(D_i)=
\mathrm{min}\{0,e(D_i)\}$. Let 
$\chi(\mc{H})=\sum_i(1-\chi(D_i))$ and let
$e(\mc{H})=-2\sum_i\wt{e}(D_i)$.

Recall that each component of
$\Sigma\setminus\alpha$ contains some basepoint $z_j$. Let the
distance $d(D_i)$ be the smallest number of times we have to cross the
$\alpha$ arcs to get from a point in the interior of $D_i$ to a
basepoint. If $\mc{H}$ is nice, define $d(\mc{H})=0$, otherwise define
it to be the smallest distance of a region which does not contain a
basepoint and is not a bigon or a square.

Given a Heegaard diagram $\mc{H}$, we define its complexity
$c(\mc{H})=(\chi(\mc{H}),e(\mc{H}), d(\mc{H}))$. After observing that
each of the entries are non-negative integers, we order all the
complexities lexicographically (the crucial thing is that the ordering
is a well-ordering). A careful reader will also notice that this
definition of complexity is slightly different from the one in
\cite{SSJW}.

We make one final observation before we state the main result of this
section and embark upon its proof. A Heegaard diagram $\mc{H}$ is nice
if and only if $c(\mc{H})=(0,0,0)$ (which happens if and only if the
first two entries are zero). It is clear now that the following
theorem provides the required algorithm to make a Heegaard diagram
nice and achieves the purpose of this section.

\begin{thm}
If $\mc{H}$ is a Heegaard diagram which is not a nice diagram, then we
can modify $\mc{H}$ by isotopies and handleslides to get a new
Heegaard diagram $\mc{H}'$ with a smaller complexity.
\end{thm}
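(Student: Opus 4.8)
The plan is to run a downward induction on the lexicographically ordered complexity $c(\mc{H}) = (\chi(\mc{H}), e(\mc{H}), d(\mc{H}))$, reducing one coordinate at a time while never increasing an earlier one. Since $\mc{H}$ is not nice, at least one of the first two entries is positive, or else both are zero but $d(\mc{H}) > 0$. First I would dispose of the case $\chi(\mc{H}) = e(\mc{H}) = 0$: here every region without a basepoint has $e(D_i) \ge 0$ (so $\wt{e}(D_i) = 0$) and $\sum_i(1 - \chi(D_i)) = 0$. Because a region disjoint from basepoints embedded in $\Sigma$ with $e(D_i) \ge 0$ must be a disk (so $\chi(D_i) = 1$, forcing $\chi(D_i) = 1$ for each such region individually, as the summands are each $\le 0$), and a disk with $e(D_i) \ge 0$ is a bigon or a square, the diagram would already be nice — contradiction. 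So in the remaining case $d(\mc{H}) > 0$ is automatic, and there exists a ``bad'' region $D_i$ (not a bigon, square, or basepoint region) of minimal distance $d := d(\mc{H})$.

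The core of the argument is a \emph{finger move}: pick a bad region $D_i$ realizing the minimal distance $d$, together with an adjacent region on the side closer to a basepoint (distance $d-1$), and push a finger of the appropriate $\alpha$ arc (or $\beta$ arc) across the shared edge into the region one step closer to the basepoint. Concretely, I would isotope an $\alpha$ curve so that a thin tongue of $D_i$ protrudes through an edge into the neighbouring region. This isotopy cuts $D_i$ into smaller pieces — typically splitting off bigons and squares and a remaining region whose distance has strictly dropped — at the cost of subdividing the neighbour. The bookkeeping I would set up: show that (i) $\chi(\mc{H})$ is unchanged because the finger move introduces two new vertices and correspondingly splits regions in a way that preserves $\sum_i (1 - \chi(D_i))$ — each time we cut a disk region by an arc we replace $(1 - 1)$ by $(1-1) + (1-1)$, and we must check the non-disk bad region behaves correctly; (ii) $e(\mc{H})$ does not increase, since the new small regions created are bigons/squares (contributing $\wt{e} = 0$) and the reduced Euler measure of the modified pieces is controlled — this is where one uses that $\wt e = \min\{0, e\}$ so that splitting off a positive-Euler-measure piece cannot hurt; (iii) the minimal bad distance $d(\mc{H})$ strictly decreases, because the only surviving bad region after the move sits one $\alpha$-step closer to a basepoint, and no new bad region of distance $\le d-1$ is created. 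If the finger move happens to lower $\chi$ or $e$ instead (e.g. when it merges the situation favourably), we are also done, so the worst case is exactly (i)–(iii) holding simultaneously.

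The main obstacle will be step (ii) together with the possibility that a single finger move fails to isolate a clean bigon/square and instead produces another non-disk or high-valence region of the \emph{same} distance — i.e., controlling the intersection pattern of the pushed finger with the $\beta$ arcs (or $\alpha$ arcs) crossing $D_i$ so that the debris left behind really is a union of bigons, squares, and exactly one region of strictly smaller distance. I expect to handle this by choosing the finger to run parallel to a $\beta$ arc of $\partial D_i$ and pushing it all the way across $D_i$, so that $D_i$ is swept out as a stack of squares (one per $\beta$ arc crossed) capped by a bigon, mirroring the tiling picture of Figure~\ref{fig:squarebigon}; a separate small case analysis is needed when $D_i$ is not planar/embedded or when the relevant boundary arc is not embedded, and here I would invoke (and possibly extend) the admissibility/boundary-injectivity discussion from \cite{RLCMJW, RL}. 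Handleslides enter only in the degenerate situation where an $\alpha$ arc on $\partial D_i$ cannot be fingered without crossing another $\alpha$ curve or creating a closed component; a handleslide of that $\alpha$ curve over its neighbour removes the obstruction without changing $Y$, and one checks it does not increase $\chi(\mc{H})$ or $e(\mc{H})$. Once the finger move is shown to strictly decrease $c(\mc{H})$, the theorem follows, and iterating it terminates by well-ordering, yielding the promised algorithm.
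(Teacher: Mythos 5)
Your high-level plan — push finger moves to decrease the lexicographic complexity, splitting into the case of a non-disk region and the case of a high-valence disk — matches the paper's strategy, and your preliminary observation that $\chi(\mc{H})=e(\mc{H})=0$ forces the diagram to be nice is correct. But the core of the argument, the description of the complexity-reducing move itself, has a genuine gap that would sink the proof as written.

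The problem is where the finger goes. You propose to run the finger parallel to a $\beta$ arc of $\partial D_i$ and ``push it all the way across $D_i$,'' sweeping $D_i$ out as a stack of squares capped by a bigon, or alternatively to poke a finger one step into the neighbour at distance $d-1$. Neither of these reduces the complexity. If the finger terminates inside $D_i$ (making a bigon at its tip), you have isotoped an $\alpha$ arc into $D_i$ and created a thin slot; the original excess negative Euler measure has not been destroyed, it has merely been redistributed among the new faces, and the reduced Euler measure $e(\mc{H}) = -2\sum_i\wt{e}(D_i)$ does not go down. If the finger exits $D_i$ into the adjacent region at distance $d-1$, you have simply dumped a thin slot into that region, which — unless it happens to be a bigon or basepoint region — now has more negative Euler measure, and $D_i$'s distance has not changed. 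The crucial idea you are missing is that the finger must be continued along a tight \emph{$\alpha$-avoiding} path $\delta$ all the way out of $D_1$ until it reaches a sink, namely a bigon or a basepoint region, and it is that sink which absorbs the negative Euler measure: a bigon becomes a square ($e$ drops), a basepoint region absorbs the debris freely, or, in the remaining case, what is left of $D_1$ is a hexagon at strictly smaller distance ($d$ drops with $\chi$ and $e$ fixed). Moreover, one pushes not a single $\alpha$ arc but all of the $d(\mc{H})$ arcs crossed by the $\beta$-avoiding access path $\gamma$, in a nested multi-finger move. Without the $\delta$-continuation to a sink, your bookkeeping step (ii) — that $e(\mc{H})$ does not increase — fails.

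This omission then hides the real combinatorial content of the theorem. The paper's Subcase 2b handles the situation where every admissible $\alpha$-avoiding path $\delta$ leaves $D_1$ through one of the two extreme $\beta$ arcs $b_1$ or $b_n$, which blocks the multi-finger move from reducing complexity. Resolving this requires a long chase (Subcases 2b.i--2b.v) producing either an alternative path $\delta'$ or eventually, in the rigid configuration 2b.v'' where $D_1$ is a hexagon and the surrounding corridor is tiled by squares closing up around an $\alpha$ circle, a sequence of handleslides over that circle. Your stated reason for introducing handleslides — that ``an $\alpha$ arc cannot be fingered without crossing another $\alpha$ curve'' — is not correct (finger moves are isotopies of a single curve and never force such a crossing); the handleslide is needed precisely because the $\alpha$-avoiding escape route is topologically trapped. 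Finally, the $\chi(\mc{H})>0$ case needs its own argument (a finger move joining an $\alpha$ arc and a $\beta$ arc lying on different boundary components of the non-disk region), which you flag only in passing as ``a separate small case analysis'' but do not actually supply.
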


\begin{proof}
In fact we will prove something stronger than what we stated in the
theorem. We will actually explicity produce a sequence of isotopies
and handleslides that decreases the complexity.

We start with a Heegaard diagram $\mc{H}$ which is not nice. Let the
complexity $c(\mc{H})=(\chi(\mc{H}), e(\mc{H}), d(\mc{H}))$. Since
$\mc{H}$ is not nice, we know that $(\chi(\mc{H}),e(\mc{H}))\neq
(0,0)$.

We will break up the proof into lots of cases, and we will try to keep
the cases as organized as possible. Keeping that in mind, let us
proceed by immediately dividing up the proof into two cases. 

\subsection*{Case 1: $\chi(\mc{H})\neq 0$} In this case, there is a
region (say $D_1$) which is topologically not a disk. However since
the $\alpha$ circles (and also the $\beta$ circles) span a
half-dimensional subspace of $H_1(\Sigma)$, the region $D_1$ has genus
zero. Thus $D_1$ has more than one boundary component. We will do an
isotopy, after which the number of boundary components of $D_1$ will
decrease by one, and Euler characteristic of all the other regions
will remain unchanged.

Note that not all the boundary components of $D_1$ can consist of just
$\alpha$ circles. For then $D_1$ will be a component of
$\Sigma\setminus\alpha$, but each such component contains a basepoint
and $D_1$ does not contain a basepoint. Similarly not all boundary
components of $D_1$ can be $\beta$ circles. Therefore, somewhere on
$\del D_1$ there is an $\alpha$ arc, and on some other component of
$\del D_1$ there is a $\beta$ arc. Join such an $\alpha$ arc to such a
$\beta$ arc by an embedded path in $D_1$, and then do an isotopy of
the $\alpha$ curve along this path until it hits the $\beta$ curve.
Such an isotopy is called a finger move, and we will constantly be
using such moves. Figure \ref{fig:fingermove} illustrates the relevant
finger move. It is clear that this isotopy reduces $\chi(\mc{H})$ and
thus decreases the complexity.

\begin{figure}[ht]
\psfrag{d}{$D_1$}
\begin{center}
\includegraphics[width=300pt]{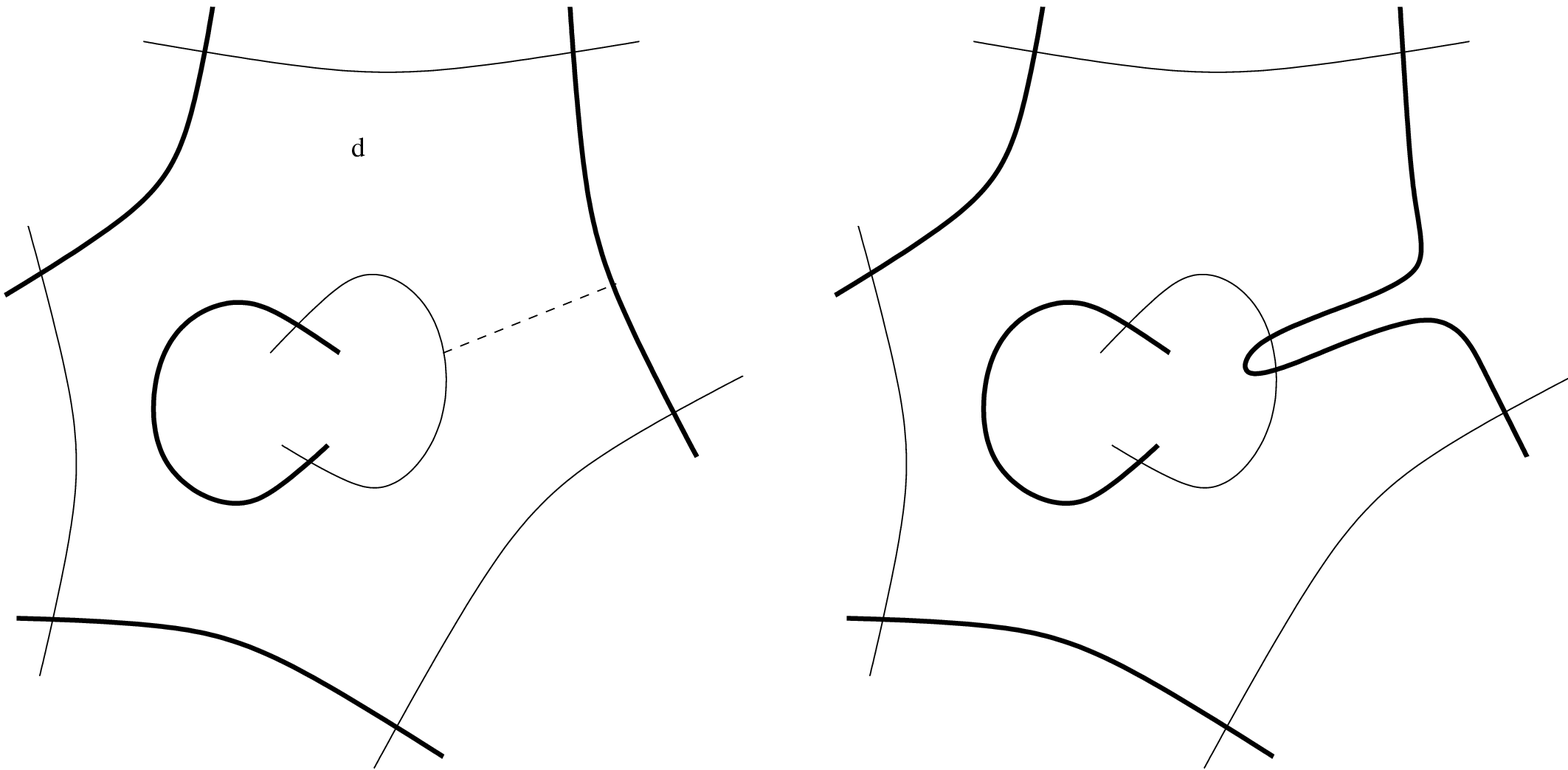}
\end{center}
\caption{Reducing $\chi(D_1)$}\label{fig:fingermove}
\end{figure}

\subsection*{Case 2: $\chi(\mc{H})=0$} In this case, all the regions
not containing any basepoints are topological disks. Since $\mc{H}$ is
not a nice diagram, we must have $e(\mc{H})\neq 0$. We will do a
sequence of moves such that at the end, all the regions not containing
any basepoints are still disks (i.e. $\chi(\mc{H})=0$), and we have
either reduced $e(\mc{H})$ or we have kept $e(\mc{H})$ constant and
reduced $d(\mc{H})$ (or in other words, we have decreased the value of
the pair $(e,d)$).

Let $D_1$ be a $2n$-gon region not containing any basepoint such that
$n>2$ and $d(D_1)=d(\mc{H})$. We call a path $\gamma:[0,1]\rightarrow
\Sigma$ to be $\alpha$-avoiding if it is an embedded path and it lies
in the complement of all the $\alpha$ circles, and we call such an
$\alpha$-avoiding path to be tight if it never enters a region and
then immediately proceeds to leave the region through the same $\beta$
arc. We similarly define $\beta$-avoiding paths and tight
$\beta$-avoiding paths.

Let $\gamma:[0,1]\rightarrow \Sigma$ be a $\beta$-avoiding path
joining a point in the interior of $D_1$ to a basepoint which
intersects the $\alpha$ arcs exactly $d(D_1)$ ($=d(\mc{H})$)
times. Clearly $\gamma$ is a tight $\beta$-avoiding curve. Let the
sides of the region $D_1$ be named (going counterclockwise)
$a_1,b_1,a_2,b_2,\cdots,a_n,b_n$, where the $a_i$'s are the $\alpha$
arcs, the $b_i$'s are the $\beta$ arcs, and $\gamma$ enters $D_1$
through $a_1$.

Let $\delta:[0,1]\rightarrow \Sigma$ be a tight $\alpha$-avoiding path
which joins $\gamma(1)$ to either a basepoint or a point
inside a bigon, and whose only intersection with $\gamma$ inside $D_1$
is at $\gamma(1)$. We know that there is at least one such path, since
$D_1$ can be joined to a basepoint by a tight $\alpha$-avoiding path. 

The notations are already set up, so it is about time to divide this
case into two further subcases. The main idea of the proof is already
present in the first subcase, the second subcase is just there to
round up a few other situations.

\textbf{Subcase 2a:} The path $\delta$ can be chosen such that it does
not leave $D_1$ through either $b_1$ or $b_n$.

We choose such a path $\delta$ which does not leave $D_1$ through
either $b_1$ or $b_n$. Recall that the paths $\gamma$ and $\delta$ are
embedded, but there could be intersections between $\gamma$ and
$\delta$ (but no such intersections inside $D_1$ except at
$\gamma(1)=\delta(0)$). Let $S\subset(0,1]$ be the set of all points $s$,
such that $\delta(s)$ lies in either the image of $\gamma$ or in a
bigon region or in a region $B_i$ containing some basepoint. Let $p$
be the image under $\delta$ of a point lying in the leftmost connected
component of $S$.

We choose the last $\alpha$ arc on the path $\gamma$, and we do a
finger move of that along the remaining part of $\gamma$ and then
along $\delta$ and stop just before $p$. After doing this isotopy,
note that $\gamma$ intersects the $\alpha$ arcs one fewer time. Now
again choose the last $\alpha$ arc on $\gamma$ (which was
originally the second to last $\alpha$ arc on $\gamma$), then do a finger
move along the remaining part of $\gamma$ and then along $\delta$ for
as long as we can (we have to stop short of $p$). We repeat this
process until we have done finger moves on all the $\alpha$ arcs that
originally hit $\gamma$. This is a total of $d(\mc{H})$ finger moves,
and we can view this process as a single multi-finer move. In the
future also, we will be using such multi-finger moves. 

Figure \ref{fig:multifinger} illustrates the case when $p$ lies on
$\gamma$. It is fairly clear (at least from the figure) that the pair
$(\chi,e)$ is unchanged after such a move, but however after the move
the region $D'$ is a hexagon with $d(D')<d(D_1)=d(\mc{H})$. Thus after
this multi-finger move isotopy, the new Heegaard diagram has a smaller
complexity.

\begin{figure}[ht]
\psfrag{a1}{$a_1$}
\psfrag{a2}{$a_2$}
\psfrag{a3}{$a_3$}
\psfrag{b1}{$b_1$}
\psfrag{b2}{$b_2$}
\psfrag{b3}{$b_3$}
\psfrag{d}{$D_1$}
\psfrag{d1}{$D'$}
\psfrag{p}{$p$}
\psfrag{de}{$\delta$}
\psfrag{ga}{$\gamma$}
\begin{center}
\includegraphics[width=330pt]{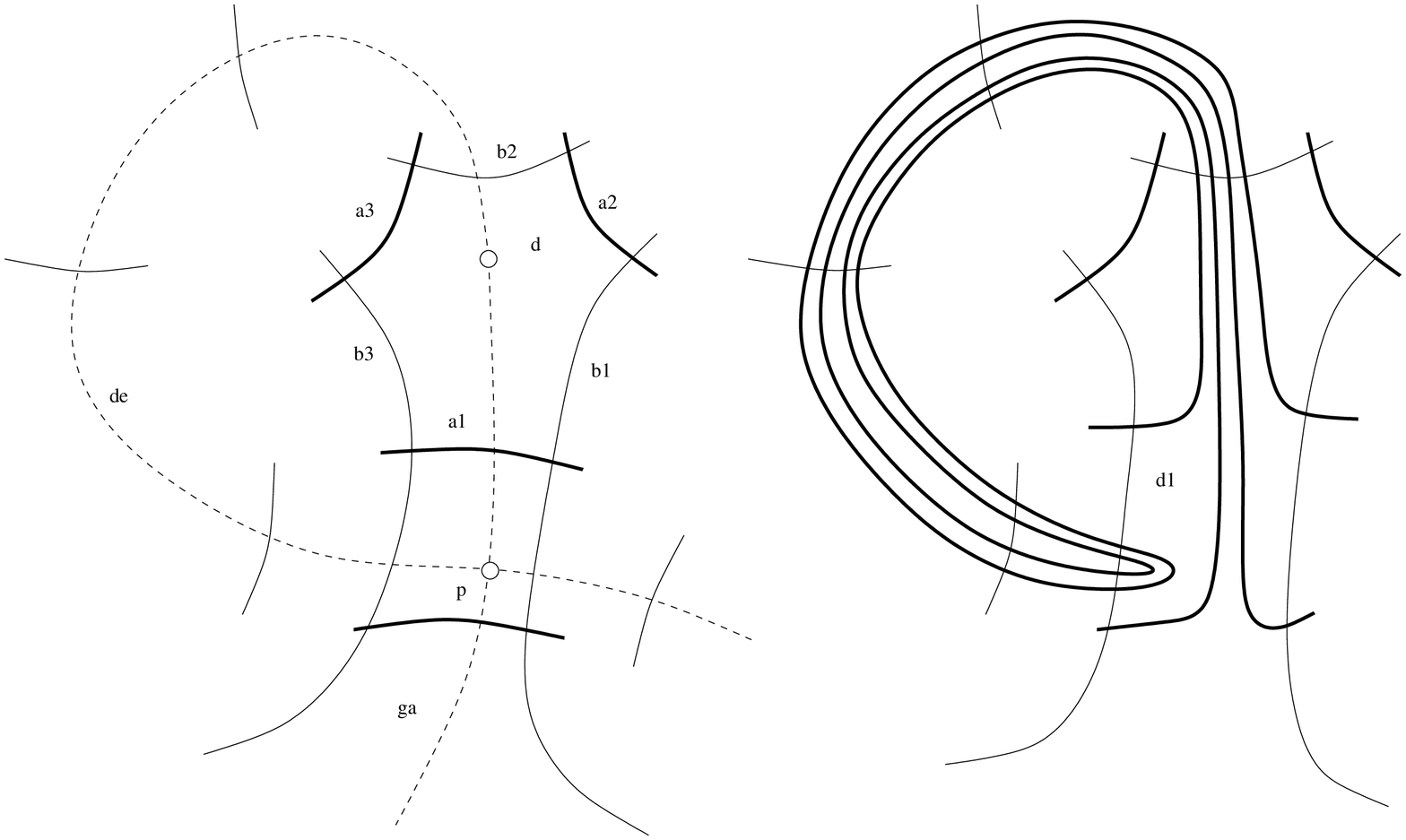}
\end{center}
\caption{Reducing complexity when $p\in\gamma$}\label{fig:multifinger}
\end{figure}

Figure \ref{fig:multifinger1} illustrates the case when $p$ lies in a
region $B$, which is either a bigon or a region with a basepoint. If
$B$ is a bigon, then after this multi-finger move $B$ becomes a
square, and hence $e$ decreases ($\chi$ however remains constant).
Similarly, if $B$ is a region containing a basepoint, then $\chi$
remains constant and $e$ decreases. Therefore in either case, the
complexity of the Heegaard diagram decreases after the multi-finger
move.

\begin{figure}[ht]
\psfrag{a1}{$a_1$}
\psfrag{a2}{$a_2$}
\psfrag{a3}{$a_3$}
\psfrag{b1}{$b_1$}
\psfrag{b2}{$b_2$}
\psfrag{b3}{$b_3$}
\psfrag{d}{$D_1$}
\psfrag{b}{$B$}
\psfrag{p}{$p$}
\psfrag{de}{$\delta$}
\psfrag{ga}{$\gamma$}
\begin{center}
\includegraphics[width=330pt]{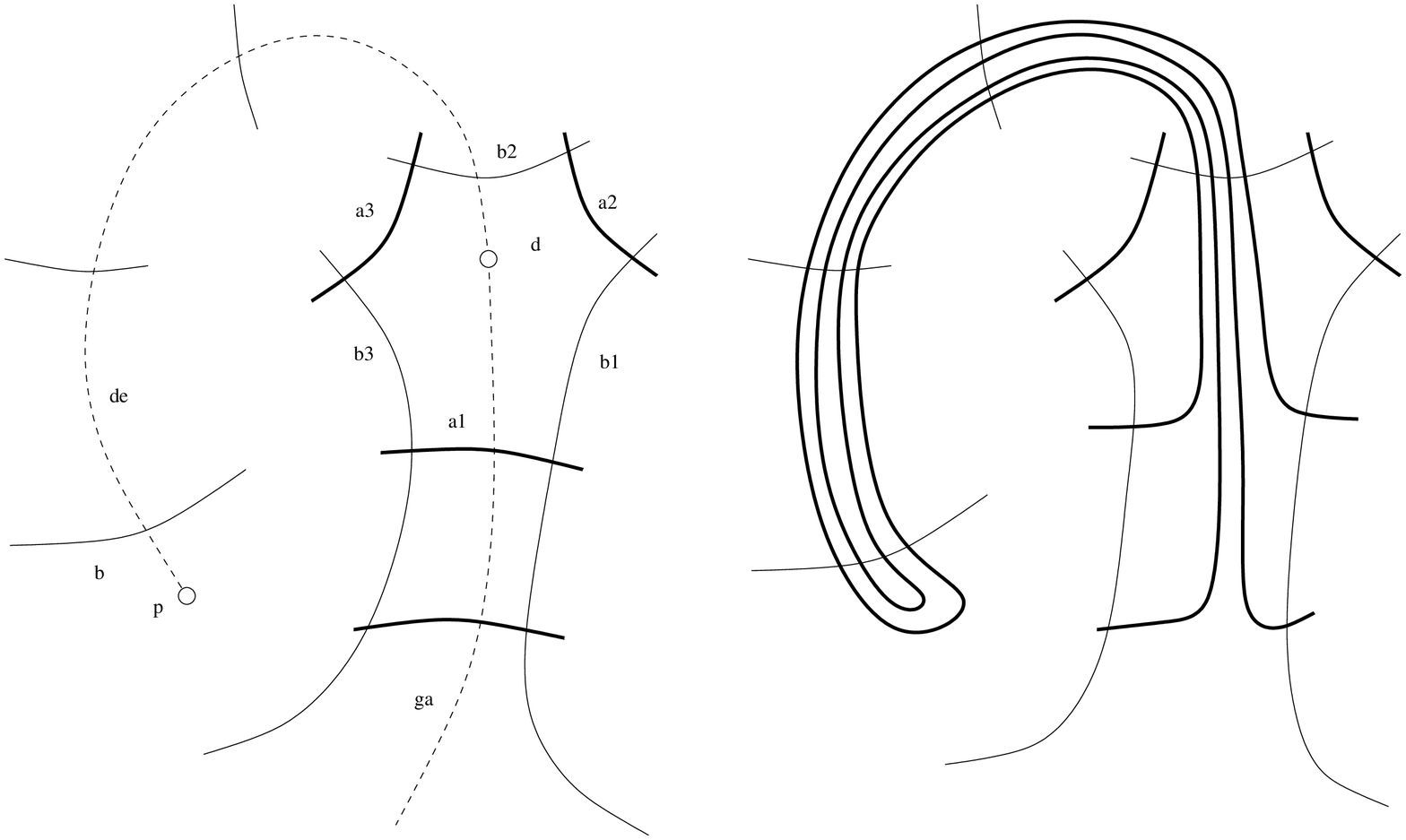}
\end{center}
\caption{Reducing complexity when $p\in B$}\label{fig:multifinger1}
\end{figure}
 
\textbf{Subcase 2b:} All such paths $\delta$ leave $D_1$ either
through $b_1$ or $b_n$.
 
We again choose a path $\delta$ which leaves $D_1$ through either
$b_1$ or $b_n$, and without loss of generality, let us assume it
leaves $D_1$ through $b_1$. Even though it is not necessary, we choose
$\delta$ such that $\delta$ enters each region at most once.

We now try to create another tight $\alpha$-avoiding path
$\varepsilon$ which starts at $\gamma(0)$ and does not leave $D_1$
though either $b_1$ or $b_n$. We also assume that inside $D_1$, the
path $\varepsilon$ intersects $\gamma$ and $\delta$ only at
$\gamma(1)=\delta(0)$.  Starting the construction of $\varepsilon$ is
easy. We start at $\gamma(1)$ and immediately leave $D_1$ through some
$b_i$ with $1<i<n$ (since $n>2$, such an $i$ always exists). The way
we keep constructing the path $\varepsilon$ is the following. After we
enter a region through some $\beta$ arc, we leave that region through
a different $\beta$ arc (this ensures tightness). We can clearly keep
doing this unless we hit a bigon. So to construct $\varepsilon$, we
basically continue this process, until we either enter a bigon, or
enter a region $B_i$ containing some basepoint, or enter a region hit
by $\delta$ or enter a region previously visited by $\varepsilon$
(this includes re-entering $D_1$). Since there are only finitely many
regions in $\mc{H}$, at least one of the above must happen, and we
stop immediately after one of these things happens. While making the
proof more cumbersome, this naturally leads to further subcases.

\begin{itemize}
\item \emph{2b.i:} $\varepsilon$ enters a bigon or a region containing
  a basepoint.
\end{itemize} 

This is a direct contradiction to the assumption of Subcase 2b, because
$\varepsilon$ is a perfectly good candidate for the required tight
$\alpha$-avoiding path.

\begin{itemize}
\item \emph{2b.ii:} $\varepsilon$ enters a region hit by $\delta$ other
  than $D_1$.
\end{itemize} 

Let $D_2$ be the region (other than $D_1$) that is visited by both
$\varepsilon$ and $\delta$. Let $b$ be the $\beta$ arc through which
$\delta$ leaves $D_2$, and let $b'$ be the $\beta$ arc through which
$\varepsilon$ enters $D_2$. Since $D_2$ is the only region other than
$D_1$ which is visited by both $\varepsilon$ and $\delta$, the arc $b$
is different from the arc $b'$. Thus we can construct a new tight
$\alpha$-avoiding path $\delta'$ joining $\gamma(1)$ to either a bigon
or a basepoint, as shown in Figure \ref{fig:subcase2bii}. The new path
$\delta'$ basically follows $\varepsilon$ until $D_2$, then leaves
$D'$ through $b$, and follows $\delta$ for the rest of the way (the
observation that $b\neq b'$ ensures tightness at $D_2$). This new path
$\delta'$ again provides a contradiction to the assumption of Subcase
2b.

\begin{figure}[ht]
\psfrag{a1}{$a_1$}
\psfrag{a2}{$a_2$}
\psfrag{b1}{$b_1$}
\psfrag{b2}{$b_2$}
\psfrag{b}{$b$}
\psfrag{bb}{$b'$}
\psfrag{d}{$\delta$}
\psfrag{g}{$\gamma$}
\psfrag{e}{$\varepsilon$}
\psfrag{dd}{$\delta'$}
\psfrag{d1}{$D_1$}
\psfrag{d2}{$D_2$}
\begin{center}
\includegraphics[width=330pt]{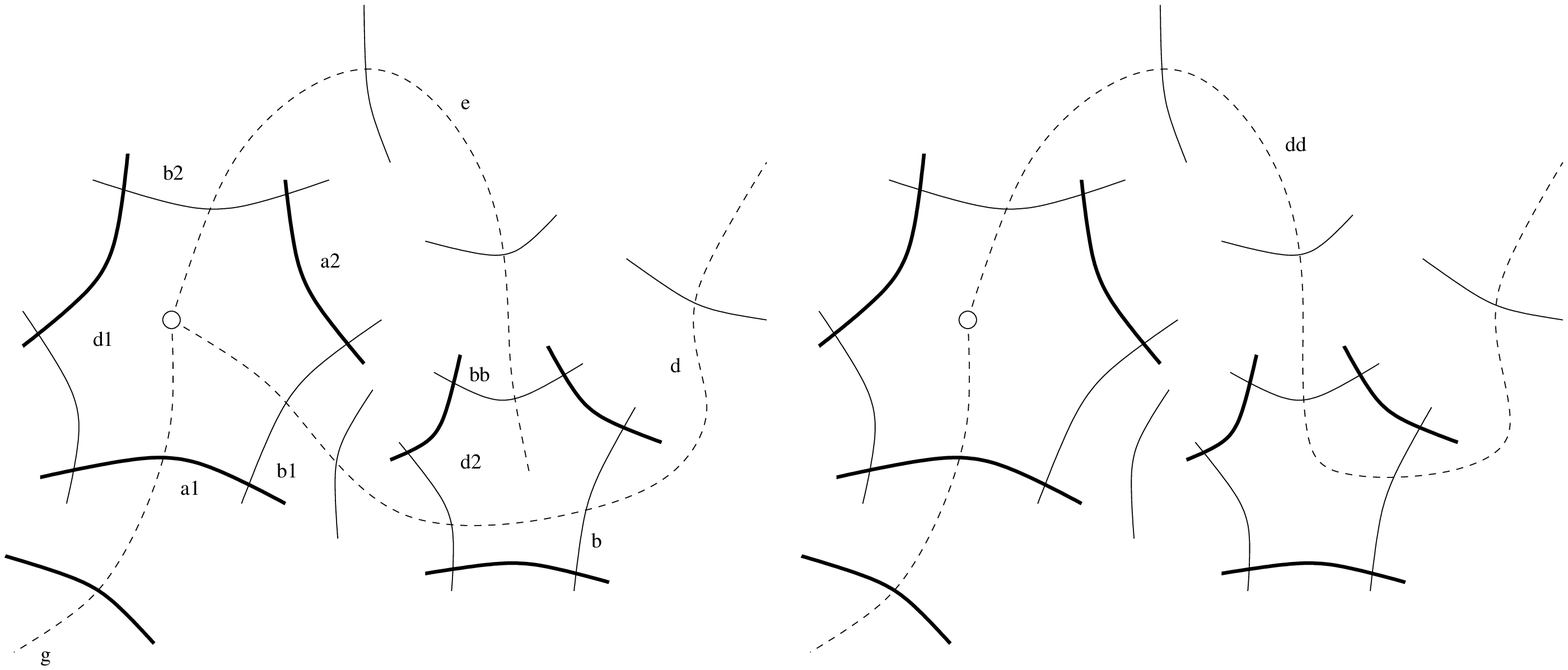}
\end{center}
\caption{Constructing $\delta'$ in Subcase 2b.ii}\label{fig:subcase2bii}
\end{figure}

\begin{itemize}
\item \emph{2b.iii:} $\varepsilon$ re-enters $D_1$ through an arc other than $b_n$.
\end{itemize} 

Once more we try to construct a path $\delta'$ which will provide a
contradiction to the assumption of Subcase 2b. If $\varepsilon$
re-enters $D_1$ on the same side of $\gamma(1)$ as $b_1$, the we can
basically construct $\delta'$ in the same way as in Subcase 2b.ii
(even though the counterexample path $\delta'$ will hit the region
$D_1$ twice). This is shown in Figure \ref{fig:subcase2biiia}. Note that
to ensure tightness of $\delta'$, $\varepsilon$ should not re-enter
$D_1$ through $b_1$, but if that happens, we are actually in Subcase 2b.ii.

\begin{figure}[ht]
\psfrag{a1}{$a_1$}
\psfrag{b1}{$b_1$}
\psfrag{d}{$\delta$}
\psfrag{g}{$\gamma$}
\psfrag{e}{$\varepsilon$}
\psfrag{dd}{$\delta'$}
\psfrag{d1}{$D_1$}
\begin{center}
\includegraphics[width=300pt]{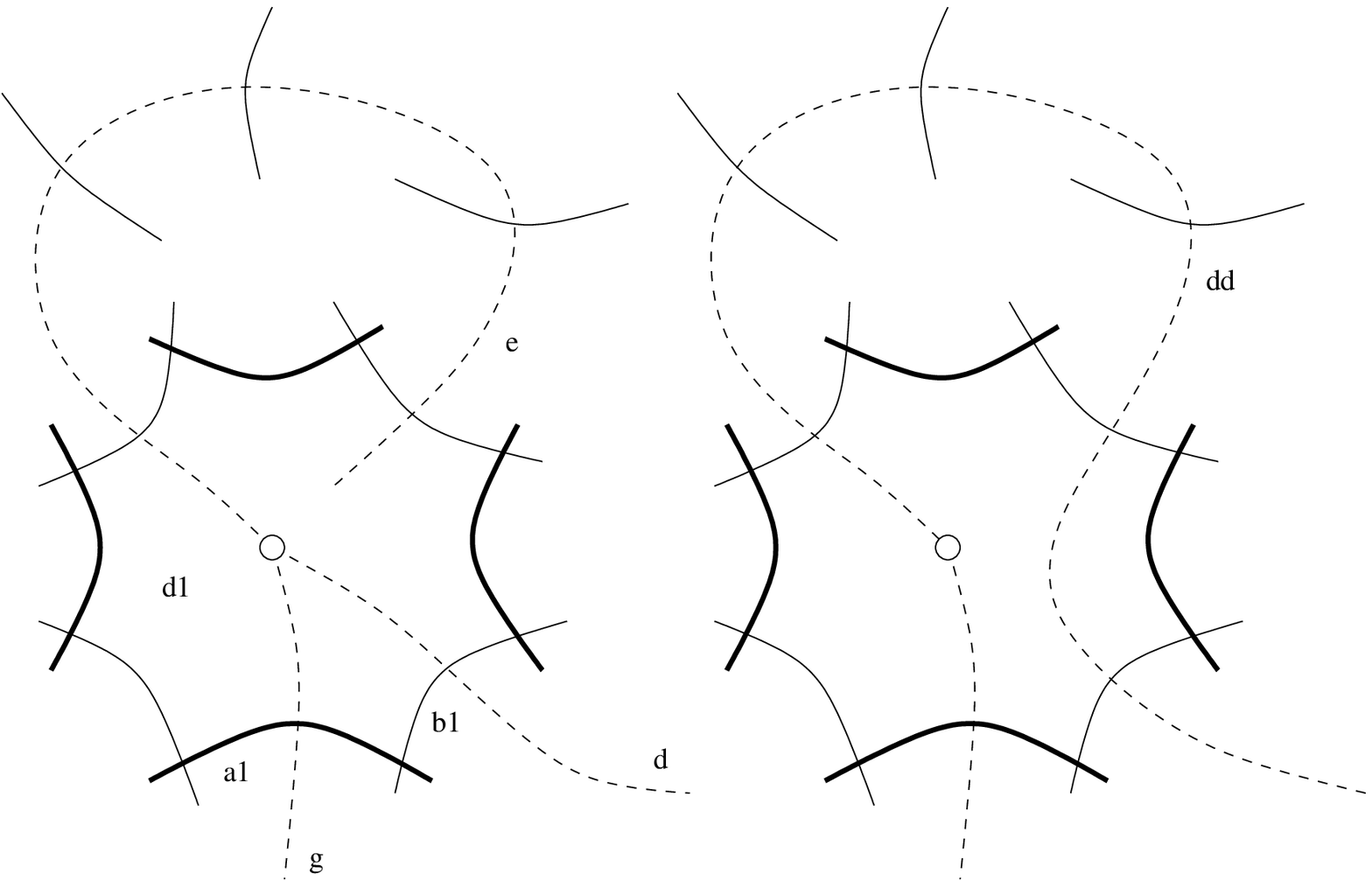}
\end{center}
\caption{Constructing $\delta'$ in Subcase 2b.iii}\label{fig:subcase2biiia}
\end{figure}

On the other hand, if $\varepsilon$ returns to $D_1$ on the other side
of $\gamma(1)$ as $b_1$, then the above construction will yield a path
$\delta'$ which either intersects itself inside $D_1$ or intersects
$\gamma$ inside $D_1$ (and neither is allowed). The way to fix this is
very simple. We just reverse the orientation on $\varepsilon$ to get a
new path $\varepsilon'$, which then returns to $D_1$ on the same side
of $\gamma(0)$ as $b_1$, and we carry out the above construction with
$\varepsilon'$ and $\delta$ to get the required counterexample
$\delta'$. The reversal of orientation on $\varepsilon$ to get
$\varepsilon'$ is shown in Figure \ref{fig:subcase2biiib}. Note how
the assumption that $\varepsilon$ does not return through $b_n$ is
crucial in this argument, for if it did, then after the orientation
reversal, $\varepsilon'$ will leave $D_1$ through $b_n$, a situation
that is not desirable.

\begin{figure}[ht]
\psfrag{a1}{$a_1$}
\psfrag{b1}{$b_1$}
\psfrag{d}{$\delta$}
\psfrag{g}{$\gamma$}
\psfrag{e}{$\varepsilon$}
\psfrag{ee}{$\varepsilon'$}
\psfrag{d1}{$D_1$}
\begin{center}
\includegraphics[width=300pt]{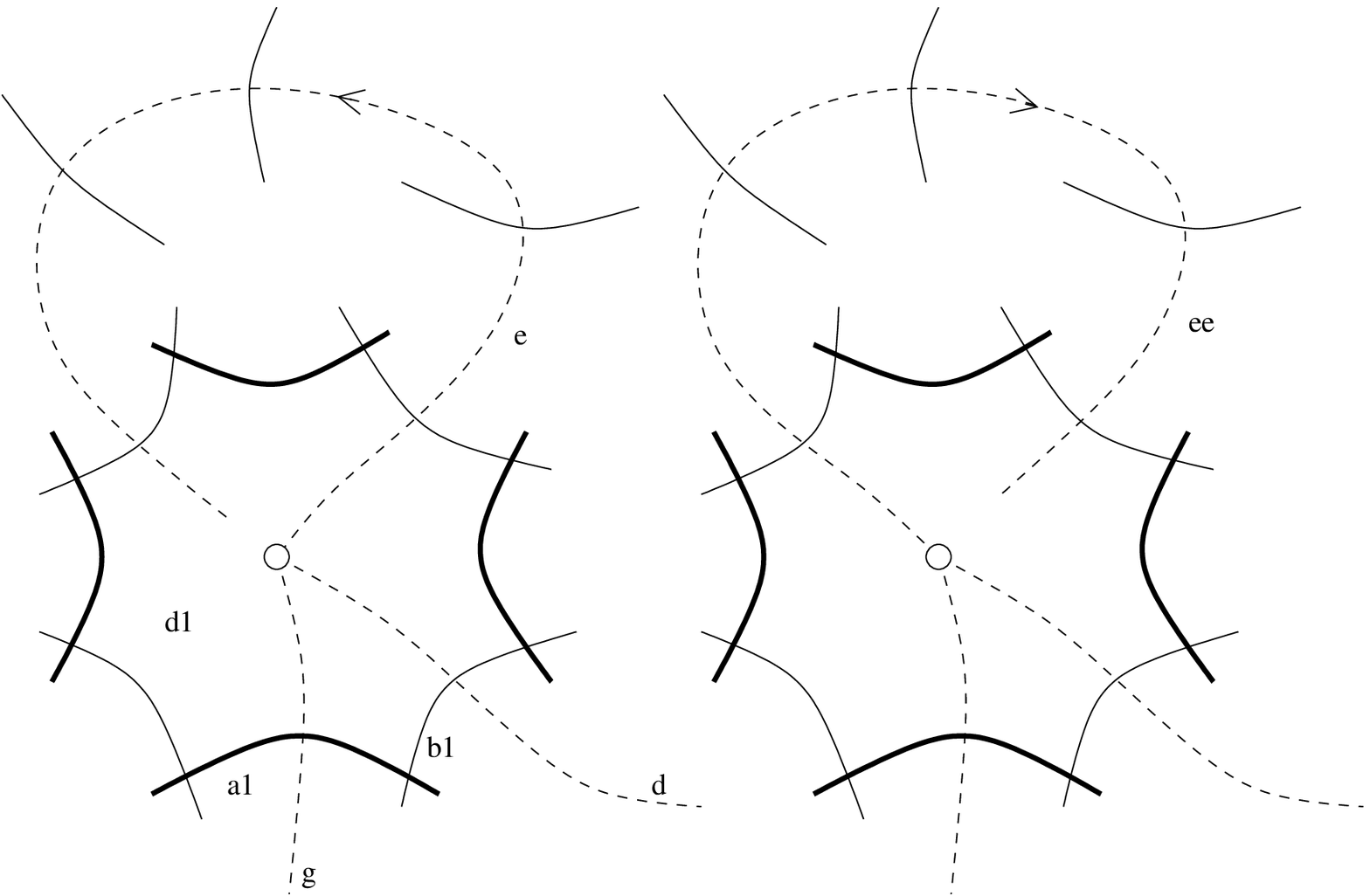}
\end{center}
\caption{Constructing $\varepsilon'$ in Subcase 2b.iii}\label{fig:subcase2biiib}
\end{figure}

\begin{itemize}
\item \emph{2b.iv:} $\varepsilon$ hits a region already visited by
  $\varepsilon$ other than $D_1$.
\end{itemize} 

This subcase can easily be reduced to Subcase 2b.iii. Let $D_2$ be the
region where $\varepsilon$ enters a region (other than $D_1$) that it
has already visited. Let $b$ be the arc through which $\varepsilon$
entered $D_2$ for the first time, and let $b'$ be the arc through
which $\varepsilon$ enters $D_2$ for the second time. Since $D_2$ is
the only region that $\varepsilon$ has visited twice, it is immediate
that $b\neq b'$. We construct a new tight $\alpha$-avoiding path
$\varepsilon'$ in the following way. The initial part of the path
$\varepsilon'$ is same as $\varepsilon$, and then from $D_2$ onwards,
$\varepsilon'$ just follows $\varepsilon$ back to $D_1$. (Tightness of
$\varepsilon'$ is ensured by the observation that $b\neq b'$). Clearly
$\varepsilon'$ re-enters $D_1$ through the same arc that it left
(which is neither $b_1$ nor $b_n$), and thus the tight
$\alpha$-avoiding path $\varepsilon'$ provides a reduction to Subcase
2b.iii. The construction of $\varepsilon'$ is shown in Figure
\ref{fig:subcase2biv}.

\begin{figure}[ht]
\psfrag{d}{$\delta$}
\psfrag{g}{$\gamma$}
\psfrag{e}{$\varepsilon$}
\psfrag{ee}{$\varepsilon'$}
\psfrag{d1}{$D_1$}
\psfrag{d2}{$D_2$}
\psfrag{b}{$b$}
\psfrag{bb}{$b'$}
\begin{center}
\includegraphics[width=300pt]{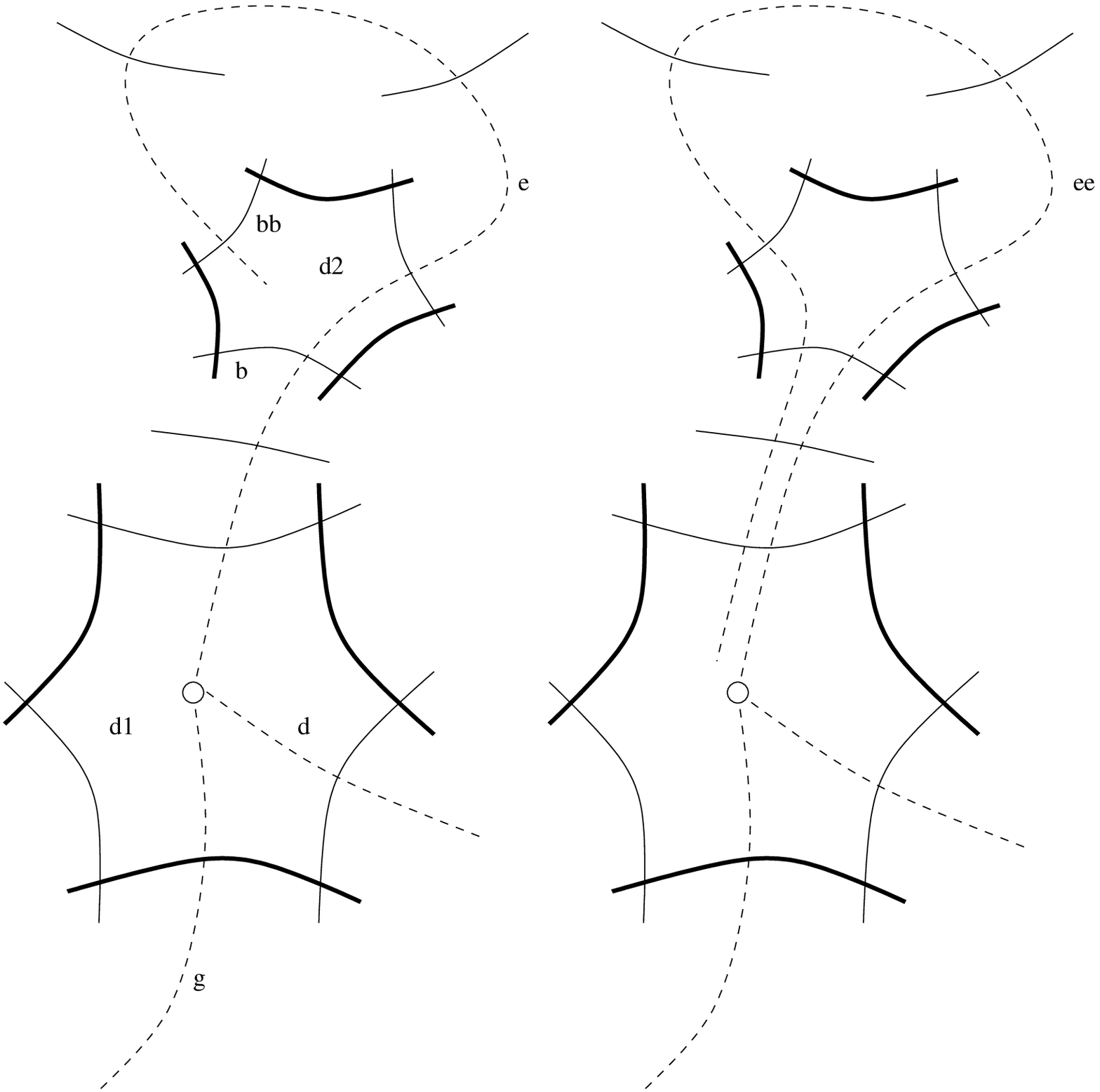}
\end{center}
\caption{Constructing $\varepsilon'$ in Subcase 2b.iv}\label{fig:subcase2biv}
\end{figure}

\begin{itemize}
\item \emph{2b.v:} $\varepsilon$ re-enters $D_1$ through $b_n$.
\end{itemize} 

We are almost done with the proof. This is the very final subcase to
consider. Let the regions that $\varepsilon$ visits be (in order)
$D_1,D_2,\ldots,D_l,D_1$. If $D_1$ is a hexagon, and all the regions
$D_2,\ldots,D_l$ are squares, then we had no choice in the
construction of $\varepsilon$. However if $D_1$ is not a hexagon or if
any of the regions $D_2,\ldots,D_l$ is not a square, then somewhere
along the way, we had a choice while constructing $\varepsilon$. As
unfortunate as it might be, this breaks up Subcase 2b.v into two
further subcases.

\emph{2b.v':} Either $D_1$ is not a hexagon, or one of $D_2,\ldots,D_l$ is
not a square.

We have already constructed one tight $\alpha$-avoiding path
$\varepsilon$ which left $D_1$ through an arc other than $b_1$ or
$b_n$, and returned to $D_1$ through $b_n$. However, by the
hypothesis, we had a choice while constructing $\varepsilon$. Let us
now choose another such tight $\alpha$-avoiding path $\varepsilon'$
which also leaves $D_1$ through an arc other than $b_1$ or $b_n$. If
$\varepsilon'$ satisfies the conditions of one of the subcases from
2b.i to 2b.iv, we are done after having reduced this case to an
earlier one. Therefore assume that $\varepsilon'$ also returns to
$D_1$ through $b_n$.

Let $D$ be the first region such that $\varepsilon$ and $\varepsilon'$
agree upto $D$ and start to disagree immediately after that, or in
other words, they leave $D$ through different $\beta$ arcs ($D$ could
very well be $D_1$). Let $D'$ be the next region after $D$ along
$\varepsilon$, which is also visited by $\varepsilon'$. Since both
$\varepsilon$ and $\varepsilon'$ return to $D_1$ through $b_n$, there
is such a region $D'$, and it is not $D_1$. Let $b$ and $b'$ be the arcs
through which $\varepsilon$ and $\varepsilon'$ enter $D'$ (it is
fairly clear that $b\neq b'$). We now construct a new path
$\varepsilon''$ as follows. Travel along $\varepsilon$ all the way
upto $D'$, and then travel back along $\varepsilon'$. Note that since
$b\neq b'$,  $\varepsilon''$ is tight, and since $\varepsilon''$
returns to $D_1$ through the arc through which $\varepsilon'$ left
$D_1$ (which is neither $b_1$ nor $b_n$), the path $\varepsilon''$
reduces this subcase to either Subcase 2b.iii or 2b.iv. The path
$\varepsilon''$ is shown in Figure \ref{fig:subcase2bva}

\begin{figure}[ht]
\psfrag{de}{$\delta$}
\psfrag{g}{$\gamma$}
\psfrag{e}{$\varepsilon$}
\psfrag{ee}{$\varepsilon'$}
\psfrag{eee}{$\varepsilon''$}
\psfrag{d1}{$D_1$}
\psfrag{d}{$D$}
\psfrag{dd}{$D'$}
\psfrag{b}{$b$}
\psfrag{bb}{$b'$}
\begin{center}
\includegraphics[width=330pt]{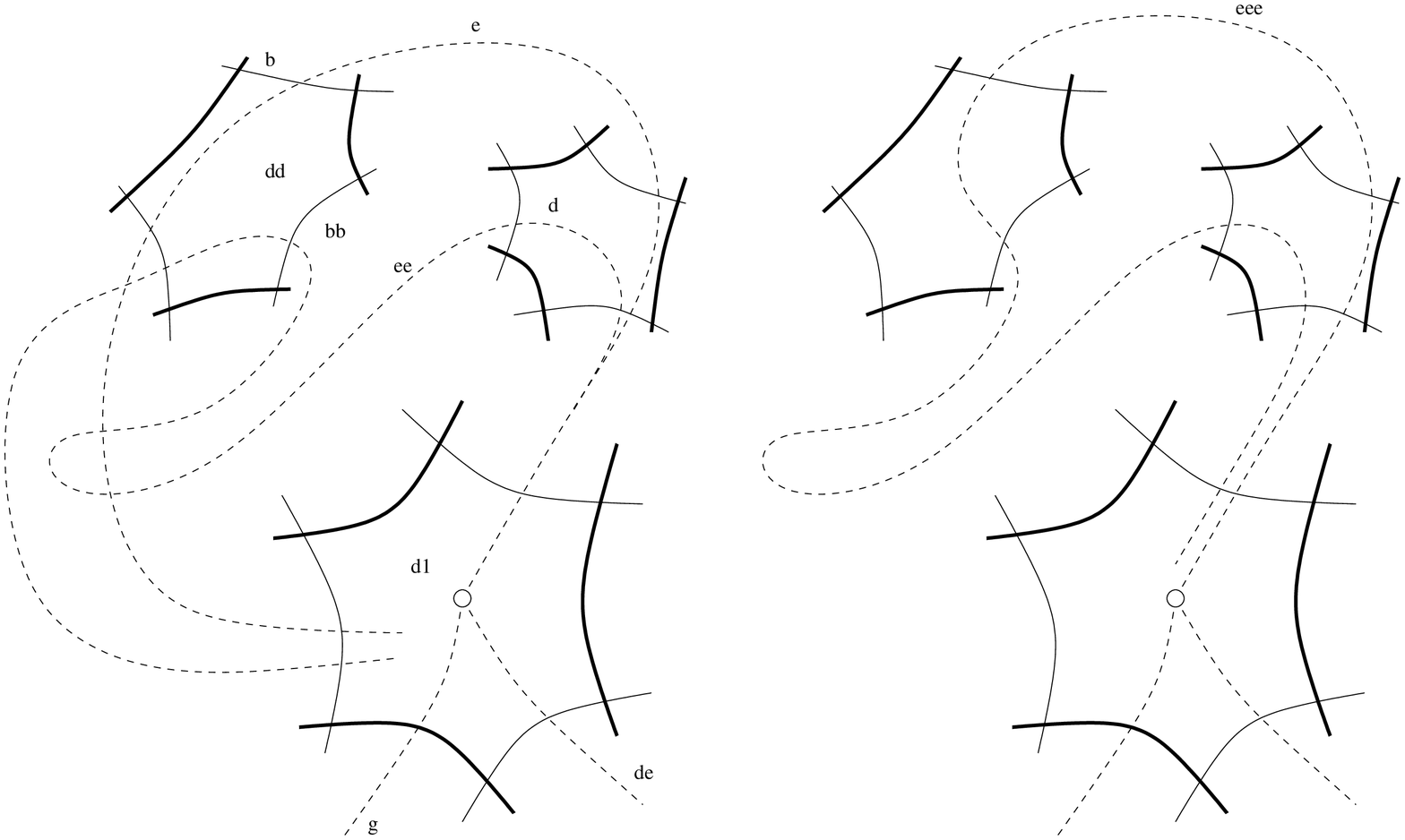}
\end{center}
\caption{Constructing $\varepsilon''$ in Subcase 2b.v'}\label{fig:subcase2bva}
\end{figure}

\emph{2b.v'':}$D_1$ is a hexagon, and each of $D_2,\ldots,D_l$ is a square.

This, we promise beforehand, is the final subcase. Since $D_1$ is a
hexagon, and each of $D_2,\ldots,D_l$ is a square, locally the
Heegaard diagram $\mc{H}$ looks like the first picture in
Figure \ref{fig:subcase2bvb}, with the paths $\gamma$ and $\varepsilon$
shown. Observe that the $\alpha$ arcs to the left of $\varepsilon$ at
each region join to form a whole $\alpha$ circle.  Without loss of
generality, let us call this $\alpha$ circle $\alpha_1$.

\begin{figure}[ht]
\psfrag{ga}{$\gamma$}
\psfrag{e}{$\varepsilon$}
\psfrag{d1}{$D_1$}
\psfrag{d2}{$D_2$}
\psfrag{d3}{$D_3$}
\psfrag{d4}{$D_4$}
\psfrag{d5}{$D_5$}
\psfrag{a1}{$a_1$}
\psfrag{b1}{$b_1$}
\psfrag{a2}{$a_2$}
\psfrag{b2}{$b_2$}
\psfrag{a3}{$a_3$}
\psfrag{b3}{$b_3$}
\psfrag{a}{$\alpha_1$}
\begin{center}
\includegraphics[width=330pt]{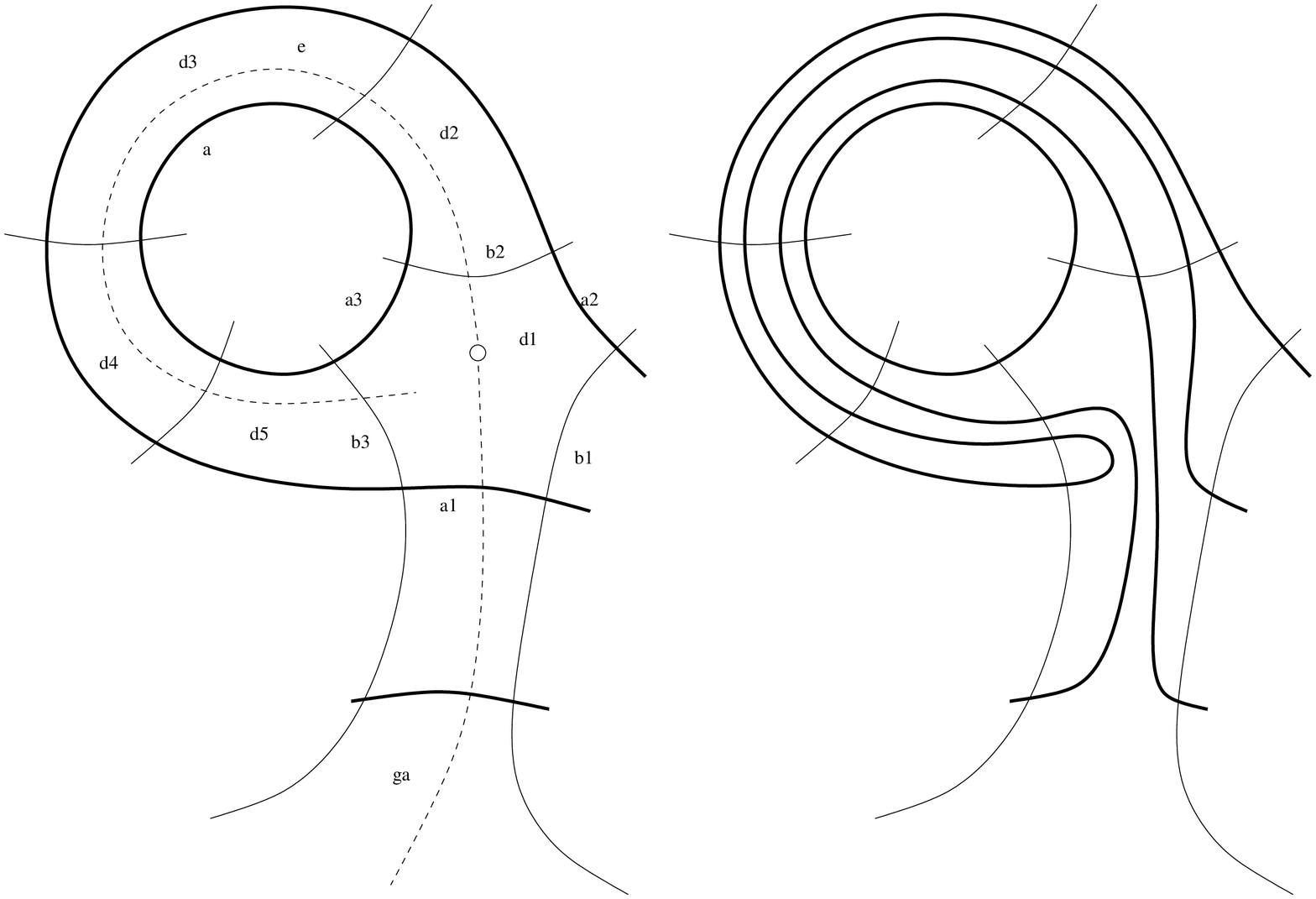}
\end{center}
\caption{Handleslides to reduce complexity}\label{fig:subcase2bvb}
\end{figure}

Our method is very similar to that in Subcase 2a. Take the last
$\alpha$ arc on $\gamma$, and then do a finger move on it along
$\gamma$ upto $\gamma(1)$, and then handleslide it along $\alpha_1$.
Then consider the $\alpha$ arc which is now the last one on $\gamma$
(which was previously the second to last one), and do the same thing.
Repeat this process until we are done with all the $\alpha$ arcs on
$\gamma$. We can view this process as a multi-handleslide move, as
shown in Figure \ref{fig:subcase2bvb}. If $\mc{H}'$ is the new
Heegaard diagram, it is clear that $\chi(\mc{H}')=\chi(\mc{H})$ and
$e(\mc{H}')<e(\mc{H})$, and hence complexity decreases, thus finishing
the proof.
\end{proof}

The proof was fairly long, but it was straightforward. The idea was to
push all the negative Euler measure to the regions containing the
basepoints. We did it essentially by using the fact that each
component of $\Sigma\setminus\alpha$ has a basepoint, and each
component of $\Sigma\setminus\beta$ has a basepoint. There were
certain situations where the method did not work, but then we took
cases, and applied new methods to tackle those cases. This led to
more and more cases, and newer and newer methods, until we were stuck
with a very special case, and in that situation, simple handleslides
did the trick. The reader is advised to also read the algorithm
presented in \cite{SSJW}, which is very similar to (if not the same
as) the algorithm presented above. Algorithms to make a Heegaard
diagrams nice are usually messy (as are the final Heegaard diagrams),
and more often than not, the algorithm itself is far less important
than the final nice Heegaard diagram, where the computation of
$\wh{HF}$ with coefficients in $\F_2$ can be carried out
combinatorially. Purely for amusement, we present a nice Heegaard
diagram for the Poincar\'{e} homology sphere $\Sigma(2,3,5)$ in Figure
\ref{fig:nicepoincare}. The figure uses the same conventions as in
Figure \ref{fig:domain}.

\begin{figure}[ht]
\psfrag{z}{$z$}
\psfrag{a1}{$\alpha_1$}
\psfrag{a2}{$\alpha_2$}
\psfrag{b1}{$\beta_1$}
\psfrag{b2}{$\beta_2$}
\begin{center}
\includegraphics[totalheight=0.9\textheight]{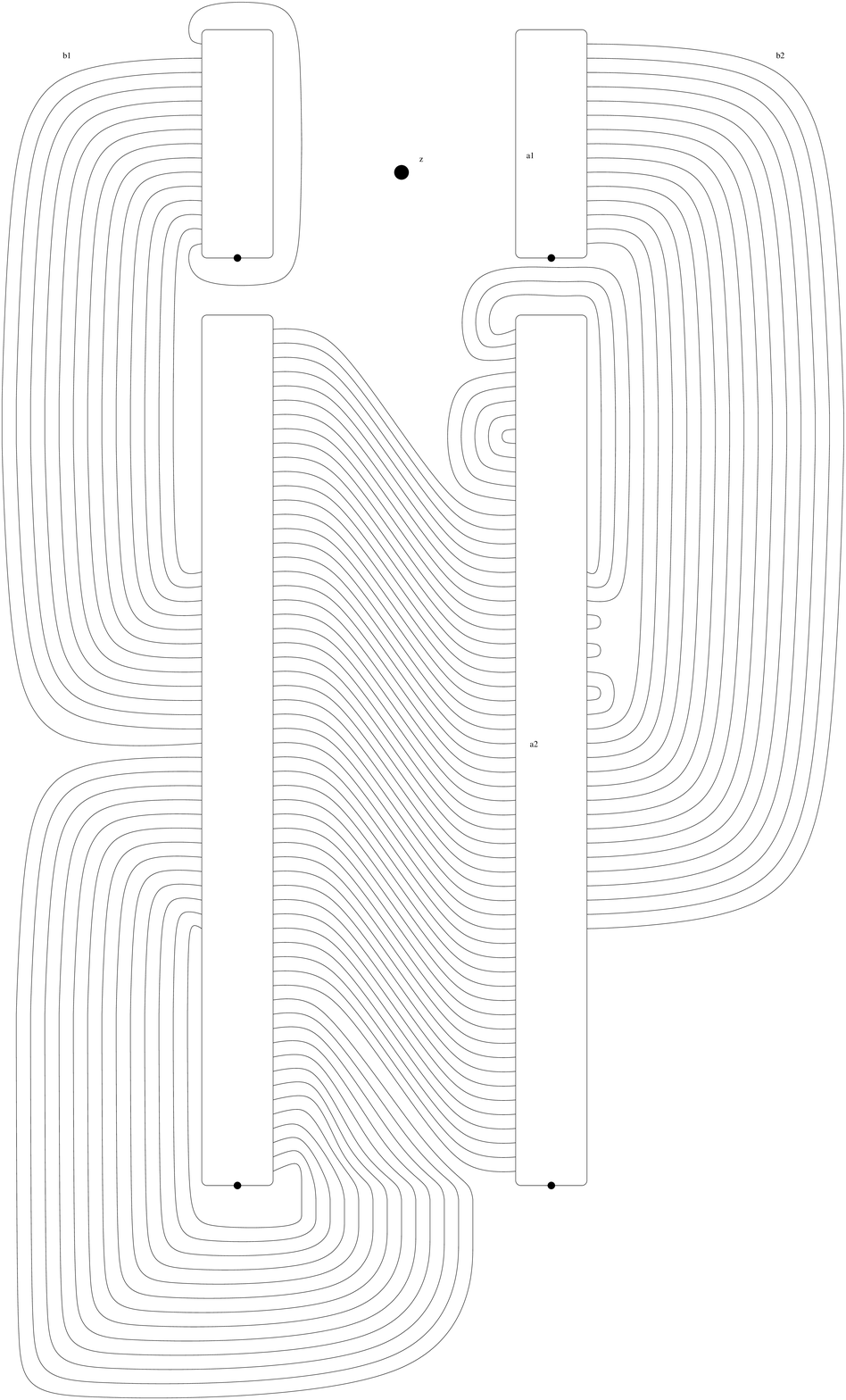}
\end{center}
\caption{A nice pointed diagram for the Poincar\'{e} homology
  sphere}\label{fig:nicepoincare}
\end{figure}

\chapter{The griddy algorithm}
In this chapter, we concentrate on the case of links inside $S^3$, and
indeed for the most part, we will be dealing with knots. Recall that
the two versions of knot Floer homology that we work with, are the hat
version and the minus version denoted by $\widehat{HFK}$ and
${HFK}^{-}$ respectively. They are bigraded modules over $\mathbb{Z}$
and $\mathbb{Z}[U]$ respectively, although, we will often ignore the
$U$ action on $HFK^{-}$ and treat them simply as bigraded abelian
groups. The two gradings $M$ and $A$ are the Maslov grading and the
Alexander grading respectively, and they both assume integer values
for knots in $S^3$. 

In \cite{CMPOSS}, based on a grid presentation of the knot, chain
complexes over $\mathbb{F}_2$ are constructed, whose homologies agree
with knot Floer homologies with coefficients in $\mathbb{F}_2$. A sign
refined version of the grid chain complexes was constructed by Ciprian
Manolescu, Peter Ozsv\'{a}th, Zolt\'{a}n Szab\'{o} and Dylan Thurston
in \cite{CMPOZSzDT}, where they also gave a combinatorial proof of the
invariance of the homology of the chain complex. In this chapter we try
to construct CW complexes corresponding to those grid chain complexes,
and mimic the proof of invariance from \cite{CMPOZSzDT} to show that
the stable homotopy type of these CW complexes is also a knot
invariant. We first review some of the basic definitions about posets.

\section{Partially ordered sets}\label{sec:posets}

A set $P$ with a binary  relation $\preceq$ is a partially ordered set
if  $a\preceq  b,b\preceq   c\Rightarrow  a\preceq  c$  and  $a\preceq
b,b\preceq a\Leftrightarrow  a=b$. If $a\preceq  b, a\neq b$,  then we
often say $a$ is less than  $b$ and write $a\prec b$.  If $\nexists z,
b\prec z$, we say $b$ is  a maximal element. Minimal elements are also
defined similarly. We also  often abbreviate partially ordered sets as
posets.

We say  $b$ covers $a$,  and write $a\leftarrow  b$ if $a\prec  b$ and
$\nexists z,  a\prec z\prec b$. Any  subset of a poset  has an induced
partial  order. A  subset  $C\subseteq P$  is  called a  chain if  the
induced  order is  a  total order.   Chains  themselves are  partially
ordered by  inclusion. Maximal chains  are the maximal  elements under
this order. Submaximal chains are  chains which are covered by maximal
chains under this  order. The length of a chain  is the cardinality of
the chain considered just as a set.

The 
Cartesian product of two posets $P$ and $Q$ is defined as the poset
$P\times Q$, whose elements are pairs $(p,q)$ with $p\in P$ and $q\in
Q$, and we declare $(p',q')\preceq (p,q)$ if $p'\preceq p$ in $P$ and
$q'\preceq q$ in $Q$.

The  order  complex  of  a   poset  is  a  simplicial  complex,  whose
$k$-simplexes  are chains  of length  $(k+1)$. The  boundary  maps are
defined naturally.

We define a closed interval $[a,b]=\{z\in P|a\preceq z\preceq b\}$. Open intervals,
or half-closed intervals are defined similarly. We 
also define $(-\infty,b]$ as  $\{z\in P|z\preceq b\}$ and $[a,\infty)$
as $\{z\in P|a\preceq z\}$.

A poset is said to be  graded if in every interval, all maximal chains
have the same length, in which  case the common length is known as the
length of  the interval. A graded poset  is said to be  thin, if every
submaximal chain is  covered by exactly $2$ maximal  chains.  A graded
poset is  subthin if  it is  not thin, and  every submaximal  chain is
covered by at most $2$ maximal chains.

A graded  poset is said to be  shellable if the maximal  chains have a
total    ordering     $\leq$,    such    that     $\mathfrak{m}_i    <
\mathfrak{m}_j\Rightarrow \exists \mathfrak{m}_k < \mathfrak{m}_j$ and
$\exists                    x\in\mathfrak{m}_j$                   with
$\mathfrak{m}_i\cap\mathfrak{m}_j\subseteq\mathfrak{m}_k
\cap\mathfrak{m}_j=\mathfrak{m}_j\setminus\{x\}$.

\begin{lem} Any  interval (closed,  half-closed, open) of  a shellable
poset is itself shellable.
\end{lem}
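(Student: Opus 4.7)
The plan is to handle the closed case $[a,b]$ first, since the open and half-closed intervals will reduce to it. The main idea is to transport the given shelling order on $P$ to the interval by choosing, for each maximal chain of the interval, a canonical lift to a maximal chain of $P$.

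Let $I=[a,b]$, and let $\mathcal{M}_I$ denote the set of maximal chains of $P$ that contain both $a$ and $b$. Each $M\in\mathcal{M}_I$ restricts to a maximal chain $M\cap I$ of $I$, and by gradedness every maximal chain of $I$ arises this way (simply extend below $a$ and above $b$ by maximal chains of $(-\infty,a]$ and $[b,\infty)$). For a maximal chain $C$ of $I$, let $\phi(C)$ be the minimum, in the given shelling order on $P$, among all $M\in\mathcal{M}_I$ with $M\cap I=C$. Since $\phi(C)\cap I=C$ the map $\phi$ is injective, so it pulls the shelling order on $P$ back to a total order on maximal chains of $I$.

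The main step is to verify the shelling condition under this induced order. Suppose $C_i<C_j$, so that $\phi(C_i)<\phi(C_j)$ in $P$. By shellability of $P$, there exist $M_k<\phi(C_j)$ and $x\in\phi(C_j)$ with $\phi(C_i)\cap\phi(C_j)\subseteq M_k\cap\phi(C_j)=\phi(C_j)\setminus\{x\}$. I claim $x\in I$: since $a,b\in\phi(C_i)\cap\phi(C_j)$, both $a$ and $b$ lie in $\phi(C_j)\setminus\{x\}$, so $x\neq a,b$; and if $x$ were outside $[a,b]$ altogether, then $M_k$ would agree with $\phi(C_j)$ on $I$, giving $M_k\cap I=C_j$ and hence $\phi(C_j)\leq M_k$ by minimality of $\phi$, contradicting $M_k<\phi(C_j)$. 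Therefore $x\in I$, which forces $M_k\in\mathcal{M}_I$, and setting $C_k:=M_k\cap I$ we get $\phi(C_k)\leq M_k<\phi(C_j)$, so $C_k<C_j$, while restricting the inclusion to $I$ yields $C_i\cap C_j\subseteq C_k\cap C_j=C_j\setminus\{x\}$ with $x\in C_j$. This is exactly the shelling condition for $I$.

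For the open and half-closed cases, maximal chains of $(a,b)$, $[a,b)$, and $(a,b]$ are in canonical bijection with maximal chains of the closed interval $[a,b]$ by appending and/or prepending the missing endpoints. Transporting the shelling of $[a,b]$ along this bijection gives the required shelling, because the adjoined endpoints $a$ and $b$ belong to every chain and hence can never play the role of the distinguished element $x$ in the shelling condition. The main obstacle in the whole argument is the verification that $x\in[a,b]$ in the closed case; this is precisely where the minimality built into the definition of $\phi$ is essential, and without it the selection rule would break down.
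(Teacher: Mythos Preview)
Your proof is correct, but it takes a more elaborate route than the paper. The paper simply \emph{fixes} a maximal chain $c_1$ in $(-\infty,a]$ and a maximal chain $c_2$ in $(b,\infty)$, so that maximal chains of the interval are in bijection with maximal chains of $P$ containing $c_1\cup c_2$; the induced order is then checked directly. Since $c_1\cup c_2$ lies in the intersection of any two such lifted chains, the distinguished element $x$ produced by the shelling condition in $P$ is automatically forced into the interval with no further argument.

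By contrast, you allow all extensions and select the minimal one via $\phi$, which then obliges you to argue by contradiction (using minimality of $\phi(C_j)$) that $x$ cannot fall outside $[a,b]$. This works, and it has the mild aesthetic advantage of producing a total order on the interval that does not depend on an arbitrary choice of $c_1,c_2$; but for the purposes of the lemma it is more machinery than needed. The paper's fixed-extension trick is the standard and simpler device here.
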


\begin{proof} We  just prove for the  case of an interval  of the form
$(a,b]$. The other cases follow  similarly. Take a maximal chain $c_1$
in $(-\infty, a]$, and take a maximal chain $c_2$ in
  $(b,\infty)$. Using the chosen maximal chains, the
maximal chains  in $(a,b]$ can be put in an one-one  correspondence with maximal
chains  of the  original poset  which start  with $c_1$  and  end with
$c_2$. But such maximal chains  have a total ordering induced from the
shellable structure, and it  is routine  to check  that such  an ordering
suffices.
\end{proof}

\begin{lem} \label{lem:shell1} Let $P$ be  a shellable poset with a unique
minimum $z$.  If we construct a  new poset $P'$ by  adjoining a single
element $z'$ which  covers nothing and is itself  covered by precisely
the elements that cover $z$, then $P'$ is shellable.
\end{lem}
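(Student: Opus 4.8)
The plan is to produce $P'$ concretely and then lift a shelling order from $P$. Write $a_1,\dots,a_k$ for the atoms of $P$, i.e.\ the elements covering $z$; by hypothesis these are exactly the elements of $P'$ covering $z'$, so $z'$ becomes a second minimal element of $P'$ that is a ``twin'' of $z$ (by transitivity $z'\prec w$ iff $a_i\preceq w$ for some $i$, and $z,z'$ are incomparable). Before anything else I would check that $P'$ is graded, since shellability presupposes this: an interval $[a,b]$ of $P'$ with $a\ne z'$ is literally the interval $[a,b]$ of $P$, while $[z',b]$ is order-isomorphic to the interval $[z,b]$ of $P$ via the bijection fixing every element except $z'\mapsto z$; hence gradedness of $P'$ follows from that of $P$.

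Next I would identify the maximal chains of $P'$. Every maximal chain contains a $\preceq$-minimal element, hence starts at $z$ or at $z'$. The maximal chains starting at $z$ are precisely the maximal chains of $P$ (one cannot enlarge them by $z'$, which is incomparable to $z$). The maximal chains starting at $z'$ are precisely the sets $\mathfrak{m}':=(\mathfrak{m}\setminus\{z\})\cup\{z'\}$, where $\mathfrak{m}$ ranges over the maximal chains of $P$: here one uses that the second element of such a chain must cover $z'$, hence is an atom, so deleting $z'$ and re-attaching $z$ produces a maximal chain of $P$. So if $\mathfrak{m}_1<\mathfrak{m}_2<\dots<\mathfrak{m}_N$ is the given shelling order of $P$, the maximal chains of $P'$ are $\mathfrak{m}_1,\dots,\mathfrak{m}_N,\mathfrak{m}_1',\dots,\mathfrak{m}_N'$; I propose the total order that lists all the ``$z$-chains'' first in the order they have in $P$, followed by all the ``$z'$-chains'' in the order they have in $P$.

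The verification of the shelling condition for a pair $\mathfrak{m}_i<\mathfrak{m}_j$ then splits according to the type of $\mathfrak{m}_j$. If $\mathfrak{m}_j$ is a $z$-chain, then so is $\mathfrak{m}_i$, and the required pair $(\mathfrak{m}_k,x)$ is handed to us verbatim by the shelling of $P$. If $\mathfrak{m}_j=\mathfrak{m}_b'$ and $\mathfrak{m}_i=\mathfrak{m}_a'$ are both $z'$-chains, I would take the pair $(\mathfrak{m}_c,x)$ that the shelling of $P$ supplies for $\mathfrak{m}_a<\mathfrak{m}_b$ and pass to $z'$-chains. The one elementary observation that makes this work is that, since $z$ lies on every maximal chain of $P$ (it being the unique minimum), one has $z\in\mathfrak{m}_c\cap\mathfrak{m}_b=\mathfrak{m}_b\setminus\{x\}$, so $x\ne z$; consequently replacing $z$ by $z'$ commutes with taking intersections and with deleting $x$, yielding $\mathfrak{m}_a'\cap\mathfrak{m}_b'\subseteq\mathfrak{m}_c'\cap\mathfrak{m}_b'=\mathfrak{m}_b'\setminus\{x\}$ with $\mathfrak{m}_c'<\mathfrak{m}_b'$ in the new order.

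The genuinely new situation, which I expect to be the crux, is $\mathfrak{m}_j=\mathfrak{m}_b'$ a $z'$-chain and $\mathfrak{m}_i=\mathfrak{m}_a$ a $z$-chain (this always satisfies $\mathfrak{m}_i<\mathfrak{m}_j$ in the proposed order). Here I would take $\mathfrak{m}_k=\mathfrak{m}_b$ --- which is a $z$-chain and hence precedes $\mathfrak{m}_b'$ --- and $x=z'$. Then $\mathfrak{m}_b\cap\mathfrak{m}_b'=\mathfrak{m}_b\setminus\{z\}=\mathfrak{m}_b'\setminus\{z'\}$, while $\mathfrak{m}_a\cap\mathfrak{m}_b'=(\mathfrak{m}_a\cap\mathfrak{m}_b)\setminus\{z\}\subseteq\mathfrak{m}_b\setminus\{z\}$, so the required inclusion $\mathfrak{m}_i\cap\mathfrak{m}_j\subseteq\mathfrak{m}_k\cap\mathfrak{m}_j=\mathfrak{m}_j\setminus\{x\}$ holds. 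Once these set identities are written out and the order-comparisons ``$z$-chains before $z'$-chains'' are invoked, the proof is complete; the only points demanding a little care are the claim $x\ne z$ in the middle case and bookkeeping about which of the $\mathfrak{m}$'s and $\mathfrak{m}'$'s precede which in the new order.
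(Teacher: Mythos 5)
Your proposal uses the same total ordering as the paper — all maximal chains of $P$ (the $z$-chains) first, then their $z'$-twins in the corresponding order — and is correct. The paper leaves the verification of the shelling condition as ``easy to check''; you have simply written out the three cases, including the one nontrivial observation that $x\ne z$ in the $z'$-$z'$ case because $z$ lies on every maximal chain of $P$.
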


\begin{proof} Note that the maximal chains in $[z',\infty)$ correspond
to maximal chains in $[z,\infty)$, and thus a shellable total ordering
of maximal chains in $[z,\infty)$  gives us a shellable total ordering
of maximal chains in $[z',\infty)$. We put a total ordering on maximal
chains in  $P'$ by declaring any  maximal chain in  $[z,\infty)$ to be
smaller than any  maximal chain in $[z',\infty)$. It  is again easy to
check that this ordering satisfies all the required properties.
\end{proof}

\begin{lem}  \label{lem:shell2} Let  $P$  be a  shellable  poset with  two
minimums $z$ and  $z'$ which are covered by the  same elements.  If we
construct a new poset $P'$ by  adjoining a single element $w$ which is
covered by $z$ and $z'$, then $P'$ is shellable.
\end{lem}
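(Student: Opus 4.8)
The plan is to mimic the proof of Lemma \ref{lem:shell1}, exploiting the fact that adjoining $w$ makes the minimum situation \emph{simpler} rather than more complicated. The first thing I would record is that in $P'$ the new element $w$ is the unique minimum, and that its only covers are $z$ and $z'$: every other element of $P$ lies strictly above $z$ or strictly above $z'$ (here I use that $z,z'$ are the only minimal elements of $P$), hence does not cover $w$. Consequently every maximal chain of $P'$ begins $w\prec z\prec\cdots$ or $w\prec z'\prec\cdots$, and deleting $w$ gives a maximal chain of $P$; conversely, for any maximal chain $\mathfrak{m}$ of $P$ — which necessarily starts at $z$ or at $z'$ — the chain $\{w\}\cup\mathfrak{m}$ is maximal in $P'$. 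So $\mathfrak{m}\mapsto\{w\}\cup\mathfrak{m}$ is a bijection between the maximal chains of $P$ and those of $P'$.

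Next I would check that $P'$ is graded (which shellability presupposes). Intervals of $P'$ not containing $w$ are intervals of $P$, so the only point is that an interval $[w,b]$ with $b\in P$ has all its maximal chains of one length. If $b$ lies above only one of $z,z'$, all maximal chains of $[w,b]$ pass through that element and there is nothing to prove. The case where $b$ lies above both is where the hypothesis enters: I would show that $(z,b]=(z',b]$ as subsets of $P$. Indeed, if $z\prec c\preceq b$, then since $P$ is graded there is a saturated chain from $z$ to $c$ whose first element $c_1$ covers $z$; by hypothesis $c_1$ also covers $z'$, so $z'\prec c_1\preceq c$, and the reverse inclusion is symmetric. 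Granting this, $[z,b]$ and $[z',b]$ coincide except that each contains its own minimum, so they have a common length $\ell$, and every maximal chain of $[w,b]$ then has length $\ell+1$. Hence $P'$ is graded.

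Finally, I would transport the shelling order: declare $\{w\}\cup\mathfrak{m}_i\le\{w\}\cup\mathfrak{m}_j$ exactly when $\mathfrak{m}_i\le\mathfrak{m}_j$ in the chosen shelling of $P$. The shelling axiom is then a mechanical verification: given $\{w\}\cup\mathfrak{m}_i<\{w\}\cup\mathfrak{m}_j$, take the witness $\mathfrak{m}_k$ and the vertex $x\in\mathfrak{m}_j$ furnished by the shelling of $P$, and output $\{w\}\cup\mathfrak{m}_k$ together with the same $x$. Since $w\notin P$ we have $x\ne w$, and then $(\{w\}\cup\mathfrak{m}_k)\cap(\{w\}\cup\mathfrak{m}_j)=\{w\}\cup(\mathfrak{m}_k\cap\mathfrak{m}_j)=(\{w\}\cup\mathfrak{m}_j)\setminus\{x\}$, while $(\{w\}\cup\mathfrak{m}_i)\cap(\{w\}\cup\mathfrak{m}_j)=\{w\}\cup(\mathfrak{m}_i\cap\mathfrak{m}_j)\subseteq\{w\}\cup(\mathfrak{m}_k\cap\mathfrak{m}_j)$, exactly as required.

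I expect the only real obstacle to be the gradedness step — specifically, seeing why the two ``legs'' $[z,b]$ and $[z',b]$ below a common upper bound $b$ must have the same length. That is precisely the content of the assumption that $z$ and $z'$ are covered by the same elements; once it is in hand, everything else is bookkeeping carried across the obvious bijection of maximal chains, just as in Lemma \ref{lem:shell1}.
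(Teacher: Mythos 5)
Your proof is correct and takes essentially the same approach as the paper: the paper's entire argument is to observe that $\mathfrak{m}\mapsto\{w\}\cup\mathfrak{m}$ is a bijection of maximal chains and that transporting the shelling order across it ``is easily checked to be shellable.'' What you add is the explicit verification that $P'$ is graded --- in particular the observation that $(z,b]=(z',b]$ below any common upper bound $b$, which is exactly where the ``covered by the same elements'' hypothesis is used --- a step the paper treats as routine but which is indeed the only non-mechanical part of the check.
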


\begin{proof} Note  that maximal chains of $P'$  correspond to maximal
chains of  $P$. Thus a shellable  total ordering of  maximal chains in
$P$  induces a  total ordering  of maximal  chains in  $P'$,  which is
easily checked to be shellable.
\end{proof}

A  graded poset  is  said to  be  edge-lexicographically shellable  or
EL-shellable if there is a map  $f$ from the set of covering relations
(alternatively closed  intervals of length  $2$) to a  totally ordered
set,  such that  for any  interval $[x_1,x_n]$  of length  $n$,  if we
associate the  $(n-1)$-tuple $(f([x_1,x_2]),\ldots,f([x_{n-1},x_n]))$ to a
maximal  chain $x_1\leftarrow  x_2\cdots  \leftarrow x_{n-1}\leftarrow
x_n$, then there is a unique  maximal chain for which the $(n-1)$-tuple is
increasing,  and   under  lexicographic  ordering,   the  corresponding
$(n-1)$-tuple  is smaller than  any  $(n-1)$-tuple coming  from any  other
maximal chain between $x_1$ and $x_n$.

We shall mainly use the following theorems.

\begin{thm} \cite{AB} \label{thm:el}
EL-shellable $\Rightarrow$ every closed interval is shellable.
\end{thm}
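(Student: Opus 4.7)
The plan is to use the EL-labeling on the interval $[x,y]$ together with lexicographic order on label sequences to build the shelling.

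First, I would observe that the EL-labeling $f$ of $P$ restricts to an EL-labeling of any closed interval $[x,y]$: take $f$ on the covering relations contained in $[x,y]$, and note that any sub-interval of $[x,y]$ is a sub-interval of $P$, so the uniqueness-of-increasing-chain condition transfers verbatim. It therefore suffices to show that any EL-shellable poset (here $[x,y]$) is shellable.

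Order the maximal chains of $[x,y]$ lexicographically by their edge-label sequences; I claim this is a shelling in the paper's sense. The main local ingredient is the \emph{descent-swap}: if $\mathfrak{m}_j = (x = x_0 \leftarrow x_1 \leftarrow \cdots \leftarrow x_n = y)$ has label sequence $(a_1, \ldots, a_n)$ and $r$ is a descent (meaning $a_r > a_{r+1}$), then applying the EL-hypothesis to the sub-interval $[x_{r-1}, x_{r+1}]$ yields a unique increasing maximal chain $x_{r-1} \leftarrow x_r^{*} \leftarrow x_{r+1}$ whose first label is strictly smaller than $a_r$ (otherwise its label pair would be lex-larger than $(a_r, a_{r+1})$, contradicting minimality). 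Replacing $x_r$ by $x_r^{*}$ produces a chain $\mathfrak{m}_k$ that differs from $\mathfrak{m}_j$ in exactly one element and is strictly lex-smaller than $\mathfrak{m}_j$.

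The heart of the argument, and the main obstacle, is to show that for every $\mathfrak{m}_i < \mathfrak{m}_j$ one can find a descent $r$ of $\mathfrak{m}_j$ with $x_r \notin \mathfrak{m}_i$; the chain $\mathfrak{m}_k$ from the previous step then satisfies $\mathfrak{m}_k \cap \mathfrak{m}_j = \mathfrak{m}_j \setminus \{x_r\}$ and $\mathfrak{m}_i \cap \mathfrak{m}_j \subseteq \mathfrak{m}_j \setminus \{x_r\}$, verifying the paper's shelling condition. I would argue by contradiction: if every descent element of $\mathfrak{m}_j$ belonged to $\mathfrak{m}_i$, then $\mathfrak{m}_i$ and $\mathfrak{m}_j$ would share all ``descent breakpoints''. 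Between consecutive breakpoints $\mathfrak{m}_j$ has strictly increasing labels, so on each such sub-interval $\mathfrak{m}_j$ is the unique lex-minimum chain by EL-shellability; hence $\mathfrak{m}_i$'s labels on that segment are lexicographically $\geq$ those of $\mathfrak{m}_j$. Proceeding segment by segment, any strict lex-excess on the first differing segment would force $\mathfrak{m}_i > \mathfrak{m}_j$ overall, so all labels must agree; the uniqueness part of the EL-hypothesis then upgrades equality of labels on each segment to equality of chains, yielding $\mathfrak{m}_i = \mathfrak{m}_j$ and contradicting $\mathfrak{m}_i < \mathfrak{m}_j$.
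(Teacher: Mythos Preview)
Your proof is correct and follows essentially the same approach as the paper's: order the maximal chains lexicographically by their label sequences, and for $\mathfrak{m}_i<\mathfrak{m}_j$ locate a descent of $\mathfrak{m}_j$ at an element outside $\mathfrak{m}_i$, then swap that element via the unique increasing chain on the relevant length-$3$ interval. The only differences are organizational: the paper finds the needed descent by restricting to the first sub-interval on which the two chains disagree (where $\mathfrak{m}_2$ cannot be increasing, so a descent exists there, automatically at an element not in $\mathfrak{m}_1$), whereas you argue its existence by a global contradiction; and the paper explicitly refines the lexicographic order to a total order by breaking ties among chains with identical label sequences, a minor point you omit but which your argument accommodates without change since the descent-swap always yields a \emph{strictly} lex-smaller chain.
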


\begin{proof}
Choose an interval $[x_1,x_n]$ with length $n$. There is a map from the
set of covering relations to a totally ordered set, and the
lexicographic ordering induces an ordering of the maximal chains. This
is almost a total order, except two different maximal chains might
have the same labeling. So for each $(n-1)$-tuple of elements from the
totally ordered set, look at all the maximal chains which have that
$(n-1)$-tuple as its label, and totally order them in any way. This gives
us a total ordering of all maximal chains in $[x_1,x_n]$.

Let $\mathfrak{m}_1$ and $\mathfrak{m}_2$ be two maximal chains with
$\mathfrak{m}_1 < \mathfrak{m}_2$. Each maximal chain is a sequence of
$n$ elements from the poset, starting at $x_1$ and ending at
$x_n$. Thus $\mathfrak{m}_1$ and $\mathfrak{m}_2$ agree up to some
$x_k$, and start being different, and then agree again at $x_l$ (and
maybe disagree again later). In other words, $\mathfrak{m}_1$ starts
as $x_1\leftarrow\cdots\leftarrow x_k\leftarrow
y_{k+1}\leftarrow\cdots\leftarrow 
y_{l-1}\leftarrow x_l\leftarrow\cdots$, and $\mathfrak{m}_2$ starts
as $x_1\leftarrow\cdots\leftarrow x_k\leftarrow z_{k+1}\leftarrow
\cdots\leftarrow z_{l-1}\leftarrow x_l\leftarrow\cdots$, and the set
$\{y_{k+1},\ldots,
y_{l-1}\}$ is disjoint from the set $\{z_{k+1},\ldots,z_{l-1}\}$. Look
at the interval $[x_k,x_l]$, and let $\mathfrak{n}_i=\mathfrak{m}_i
\cap[x_k,x_l]$. Since the interval $[x_k, x_l]$ has a unique maximal
chain whose labeling is increasing, which in addition happens to the
minimum one, the labeling in $\mathfrak{n}_2$ cannot be
increasing. Hence there is a first place $z_{t-1}\leftarrow
z_t\leftarrow z_{t+1}$, where the labeling decreases. However there
must be an increasing chain $z_{t-1}\leftarrow z_t^{\prime}\leftarrow
z_{t+1}$ in the interval $[z_{t-1},z_{t+1}]$. Thus if
$\mathfrak{m}_3=\mathfrak{m}_2\cup \{z_t^{\prime}\}\setminus\{z_t\}$,
then $\mathfrak{m}_3 < \mathfrak{m}_2$, and
$\mathfrak{m}_1\cap\mathfrak{m}_2 \subseteq
\mathfrak{m}_3\cap\mathfrak{m}_2=\mathfrak{m}_2\setminus\{z_t\}$. This
shows $[x_1,x_n]$ is shellable. 
\end{proof}

\begin{thm} \cite{GDVK} Finite,  shellable and thin  (resp. subthin) $\Rightarrow$
Order complex is PL-homeomorphic to a sphere (resp. ball).
\end{thm}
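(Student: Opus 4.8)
\section*{Proof proposal}

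The plan is to translate the poset hypotheses into the language of simplicial complexes and then run the classical Danaraj--Klee argument: a shellable pseudomanifold is a PL sphere if it is closed and a PL ball if it has nonempty boundary. Since the poset $P$ is finite and graded, say of length $n$, its order complex $\Delta$ is pure of dimension $n-1$; the facets of $\Delta$ are exactly the maximal chains, and the ridges (codimension-one faces) are exactly the submaximal chains. Thus the thin hypothesis says every ridge of $\Delta$ lies in exactly two facets, while the subthin hypothesis says every ridge lies in at most two facets and at least one ridge lies in exactly one. The shellability hypothesis, spelled out via the definition given above, says precisely that the facets can be listed $F_1,\ldots,F_t$ so that for each $j\ge 2$ the intersection $F_j\cap(F_1\cup\cdots\cup F_{j-1})$ is a nonempty union of ridges of $F_j$; I will also use the standard fact that this ``restriction complex'' inherits a shelling. (Shellable complexes of dimension $\ge 1$ are automatically connected, so we may assume $\Delta$ connected; the dimension-zero case is just one or two points, i.e.\ a $0$-ball or $S^0$, and is checked by hand.)

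Next I would carry out the main induction. Write $\Delta_j=F_1\cup\cdots\cup F_j$; the claim is that each $\Delta_j$ is a PL $(n-1)$-ball, except that $\Delta_t=\Delta$ is a PL $(n-1)$-sphere exactly when $F_t\cap\Delta_{t-1}=\partial F_t$. We induct on $j$, and simultaneously on the dimension $n-1$ for the statements about restriction complexes. The base case $\Delta_1$ is a simplex, hence a PL ball. For the inductive step, set $R:=F_j\cap\Delta_{j-1}$. By the restriction fact, $R$ is a shellable pure $(n-2)$-dimensional subcomplex of $\partial F_j\cong S^{n-2}$, and every ridge of $R$ lies in at most two facets; so by the dimension induction $R$ is a PL $(n-2)$-ball when $R\subsetneq\partial F_j$ and equals $\partial F_j$ otherwise. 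Using the ridge-count hypothesis one checks that any ridge of $F_j$ lying in $\Delta_{j-1}$ lies in only one facet of $\Delta_{j-1}$, whence $R\subseteq\partial\Delta_{j-1}$. Now invoke the PL gluing lemmas of Newman type: if $R$ is a proper subcomplex, then $\Delta_j=\Delta_{j-1}\cup F_j$ is obtained by gluing two PL $(n-1)$-balls along a common boundary $(n-2)$-ball, hence is a PL $(n-1)$-ball; if $R=\partial F_j$, then (using the ridge count again to see that $\partial\Delta_{j-1}=\partial F_j$) $\Delta_j$ is obtained by gluing two PL $(n-1)$-balls along their common boundary $(n-2)$-sphere, hence is a PL $(n-1)$-sphere, and this can happen only at $j=t$ since a closed connected pseudomanifold admits no further shelling step.

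Finally I would close the two cases of the theorem. In the thin case every ridge of $\Delta$ lies in exactly two facets, so $\Delta$ has empty boundary and therefore is not a PL ball; by the induction the only remaining possibility is that the process terminates in the sphere case, giving $\Delta\cong S^{n-1}$. In the subthin case $\Delta$ has a ridge in exactly one facet, hence nonempty boundary, so the sphere case never occurs and $\Delta=\Delta_t$ is a PL $(n-1)$-ball.

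I expect the main obstacle to be the careful bookkeeping around the two gluing situations: verifying $R\subseteq\partial\Delta_{j-1}$, and establishing the ``propagation'' fact that the closed (sphere) case can occur only at the very last step, i.e.\ that a closed connected pseudomanifold cannot appear as a proper subcomplex assembled from strictly fewer facets while keeping every ridge in at most two facets. The remaining ingredients --- purity and connectedness of $\Delta$, shellability of the restriction complex, and the two Newman gluing lemmas for PL balls and spheres --- are standard and may simply be cited.
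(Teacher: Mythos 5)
Your proposal is correct and follows essentially the same Danaraj--Klee shelling argument as the paper's proof: assemble the order complex one facet at a time, show each partial union is a PL ball by gluing along a ball or the entire boundary sphere, and use thinness versus subthinness to decide whether the final step closes the complex into a sphere or leaves a ball. The only phrasing to watch is the ``dimension induction on restriction complexes'': the complex $R\subseteq\partial F_j$ is a union of facets of a simplex boundary, not itself the order complex of a poset, so as literally stated the induction is slightly circular --- the paper instead directly invokes the elementary fact that a nonempty union of facets of $\partial\Delta^{n-1}$ is a PL ball or all of $\partial\Delta^{n-1}$, which is the fact you actually need and which, as you observe, may simply be cited.
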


\begin{proof}
  Let $P$ be a finite, shellable poset which is also either thin or
  subthin. Choose some shellable total ordering on the the maximal
  chains, and under that ordering let the maximal chains be
  $\mf{m}_1<\mf{m}_2<\cdots< \mf{m}_k$. Let $n$ be the length of each
  maximal chain. The order complex of $P$ is the union of the order
  complexes of the maximal chains $\mf{m}_i$, each of which is an
  $(n-1)$-simplex $\Delta^{n-1}$. 

Let us construct the order complex of $P$ in the following
manner. Let $X_i$ be the order complex of the union of the elements in
$\mf{m}_1,\ldots,\mf{m}_i$. We glue to it the order complex of
$\mf{m}_{i+1}$ to get $X_{i+1}$.

We start with $X_1$ which is an $(n-1)$-simplex $\Delta^{n-1}$ (and hence
PL-homeomorphic to a ball). By induction each of the $X_i$'s (except
possibly $X_k$ when $P$ is thin) is PL-homeomorphic to an
$(n-1)$-dimensional ball. A careful consideration reveals that while
gluing $\Delta^{n-1}$, the order complex of $\mf{m}_{i+1}$, to $X_i$
(which by induction is an $(n-1)$-ball), thinness or subthinness along
with shellability implies that the gluing is done along a union
of $(n-2)$-simplices on the boundaries. The proof finishes after the (slightly
non-trivial) observation that the union of a non-empty collection of
$(n-2)$-simplices on $\del \Delta^{n-1}$ is either an $(n-1)$-ball or
an $(n-1)$-sphere.
\end{proof}

In case of a finite subthin  shellable poset, the boundary of the ball
corresponds to  those submaximal chains, which are  covered by exactly
$1$ maximal chain.

We will often encounter posets with the following properties. A sign
assignment is a map from the set of
covering relations to $\{\pm1\}$, such that every length $3$
closed interval has exactly two maximal chains and the product of the
signs for all the four covering
relations is $(-1)$. Two such sign assignments are said to be
equivalent if one can be obtained from another by a sequence of moves,
where at each move we choose an element of the poset and change the
signs of all the covering relations involving that element. A
grading assignment is a map $g$ from the elements of the
poset to $\mathbb{Z}$, such that whenever $a\leftarrow b$,
$g(b)=g(a)+1$. Having a grading assignment is
weaker than being graded, but is stronger than each closed interval being
graded. 

\begin{defn} 
A poset equipped with a sign assignment and a grading assignment,
whose every closed interval of the form $[a,b]$ is shellable, is
called a graded signed shellable poset, or in other words, a GSS
poset.
\end{defn}

For most of the time, we will be working with GSS posets. Given a GSS
poset, it is very easy to associate a chain complex to it. The
generators of the chain complex are the elements of the poset with
gradings determined by the grading assignment, and the
boundary map is given by

\begin{eqnarray*}
\partial x=\sum_{y,y\leftarrow x} s(y,x) y
\end{eqnarray*}

where $s(y,x)$ is sign assigned to the covering relation $y\leftarrow
x$. It is easy to see that this indeed is a chain complex, and the
chain homotopy type of the chain complex remains unchanged if the sign
assignment is replaced by an equivalent one. We call this complex to
be the chain complex associated to the GSS poset.

\section{Grid diagrams}\label{sec:griddiagrams}

In this section we will introduce three types of diagrams, grid
diagrams, commutation diagrams and stabilization diagrams. They are
all pictures on the standard torus, and we will associate certain
posets to each one of them. We often think of diagrams on the torus as
diagrams on the unit square in the plane. There are certain
transformations that we can work with. We can rotate the diagrams by
an angle of $\theta$, where $\theta\in\{\frac{\pi}{2},\pi,
\frac{3\pi}{2}\}$, and we call it the rotation $R(\theta)$. We can
reflect the whole diagram along a horizontal line or a vertical line,
and we call them the reflections $R(h)$ and $R(v)$ respectively. The
transformations $R(\frac{\pi}{2}), R(\frac{3\pi}{2}), R(h)$ and $R(v)$
keep the elements of the posets unchanged but reverse the partial
order. But if a poset is a GSS poset, it stays a GSS poset even after
reversing its partial order, so as far as being GSS is concerned, it
does not matter. 

A grid diagram with grid number $N$, is a picture on the standard
torus $T$. There are $N$ $\alpha$ (resp. $\beta$) circles, which are
pairwise disjoint and parallel to the meridian (resp. longitude) and
cut the torus into $N$ horizontal (resp. vertical) annuli. Clearly
$T\setminus (\alpha\cup\beta)$ has $N^2$ components. There are $2N$
markings on $T\setminus (\alpha\cup\beta)$, $N$ of them marked $X$,
$N$ of them marked $O$, such that each component contains at most one
marking, and each horizontal (resp. vertical) annulus contains one $X$
and one $O$. At this point, a careful reader will observe that
$(T,\alpha,\beta,X,O)$ is a genus one Heegaard diagram for a link
inside $S^3$, where the $X$-points are the $z$-basepoints, and the
$O$-points are the $w$-basepoints.  If $T$ is embedded in
$\mathbb{R}^3$ in the standard way, with the meridian bounding a disc
inside the torus, and the longitude bounding a disc outside, then the
link is obtained by joining $O$ to $X$ (resp.  $X$ to $O$) in the same
horizontal (resp. vertical) annulus, inside (resp. outside) the torus
$T$. Thus at every crossing, the vertical strands are the
overpasses. Furthermore note that a grid diagram when viewed as a
Heegaard diagram is a nice Heegaard diagram as defined in the previous
chapter. Therefore if a knot or a link is presented in a grid diagram,
every version of the link Floer homology can be computed using the
grid diagram.

In the other direction, given a link $L\subset \mathbb{R}^3$, it is
not difficult to get a grid diagram for $L$.

\begin{lem}  Given  a link  $L\subset\mathbb{R}^3$,  there  is a  grid
diagram that represents $L$.
\end{lem}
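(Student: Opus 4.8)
The plan is to reduce the statement to a purely two-dimensional one about link diagrams, and then put a diagram into \emph{grid (rectilinear) position} by a short sequence of elementary planar moves. First I would fix a regular projection of $L$ to $\R^2$, obtaining a link diagram $D$ with finitely many transverse double points; since we work piecewise-linearly we may take $D$ to be polygonal, and $L$ is recovered from $D$ exactly as described for knot diagrams at the start of the paper. (The identification $\R^3\subset S^3$ via one-point compactification lets us regard $L$ as a link in $S^3$, which is the ambient manifold underlying a grid diagram.) Thus it suffices to isotope $D$, through link diagrams, to one whose underlying immersed curve is a union of horizontal and vertical segments meeting at right angles.

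Second, I would straighten the diagram by replacing each edge of the polygonal diagram by a fine staircase of short horizontal and vertical segments; this is a small planar isotopy. After a further generic perturbation I may assume that all horizontal segments lie on distinct horizontal lines, all vertical segments on distinct vertical lines, and every double point is a transverse crossing of one horizontal with one vertical segment. Because the underlying curve is closed, horizontal and vertical maximal segments alternate around each component, so their numbers agree, say $N$ of each; after relabelling coordinates the horizontal segments occupy $N$ distinct rows and the vertical segments $N$ distinct columns of an $N\times N$ grid.

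Third --- and this is the one genuinely non-formal step --- I would arrange that at every crossing the vertical segment is the overpass. If a crossing has the horizontal strand on top, I perform a local modification inside a small disk meeting only the two strands through that crossing: push the broken vertical underpass slightly aside and reroute it by a short $\sqcap$-shaped detour (a horizontal jog, a vertical run past the crossing height, a horizontal jog back), declaring the new vertical run to pass over the horizontal strand. Since nothing else of the diagram enters this disk, this is an isotopy of $L$ in $\R^3$; it trades the bad crossing for one good one at the cost of a few short new segments, which I place in fresh rows and columns (raising $N$). Iterating over the finitely many bad crossings yields a rectilinear diagram of some grid number $N$ with every vertical strand an overpass. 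I expect this bookkeeping to be the main obstacle, and if pressed I would cite the standard treatments (Cromwell's book, or the grid homology papers \cite{CMPOSS, CMPOZSzDT}) for the routine verification that such a detour can always be inserted.

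Finally, I would read off the grid diagram. The $N$ horizontal and $N$ vertical segments, in distinct rows and columns with vertical-over-horizontal crossings, are precisely the link strands of a genus-one Heegaard diagram $(T,\alpha,\beta,X,O)$ once the corners are marked by $X$'s and $O$'s. The corners, with two corners joined whenever they share a row or a column, form a disjoint union of cycles (each corner has degree two), hence admit a proper $2$-colouring; colouring one class $X$ and the other $O$ produces exactly one $X$ and one $O$ in every row and in every column, and joining $O$ to $X$ along rows inside $T$ and $X$ to $O$ along columns outside $T$ recovers $L$ by construction. This is the required grid diagram; steps one, two and four are essentially formal, and the only real work lies in step three.
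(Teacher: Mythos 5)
Your proposal is correct and follows essentially the same route as the paper: take a PL diagram, rectilinearize it with all rows and columns distinct, repair the crossings where the horizontal strand is the overpass by a local rerouting (the paper's Figure \ref{fig:verttohor}), and read off the resulting picture as a grid diagram. The only additions you make — the $2$-colouring argument for placing the $X$'s and $O$'s, and spelling out that the detour lives in a small disk meeting only the two offending strands — are details the paper leaves implicit, not a different method.
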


\begin{proof} Let $L$ be represented by a PL-link diagram in the
  $xy$-plane. That means that there are a bunch of vertices and a
  bunch of straight edges joining some of the vertices, such that each
  vertex has exactly two edges coming into it. By moving the vertices
  slightly, we can ensure no two vertices lie in the same horizontal
  line or the vertical line. We then replace each edge by a pair of
  horizontal and vertical edges, in one of two possible ways, as shown
  in Figure \ref{fig:verthor}. Thus $L$ is now represented by
  horizontal edges (with no two on the same horizontal line) and
  vertical edges (with no two on the same vertical line).

\begin{figure}[ht]
\begin{center} \includegraphics[width=80pt]{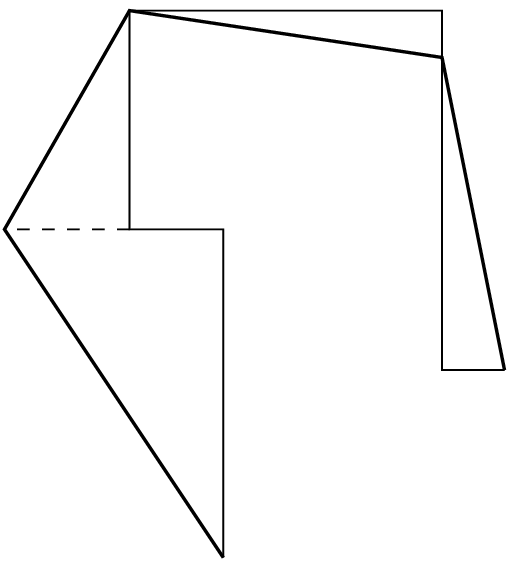}
\end{center}
\caption{Converting all edges to horizontal and vertical ones}\label{fig:verthor}
\end{figure}

  If in any crossing, the horizontal edge is the overpass, then we
  change the local picture as shown in Figure \ref{fig:verttohor} to
  ensure that the vertical edge is the overpass. Such a diagram then
  easily corresponds to a grid diagram.

\begin{figure}[ht]
\begin{center}
\includegraphics[width=200pt]{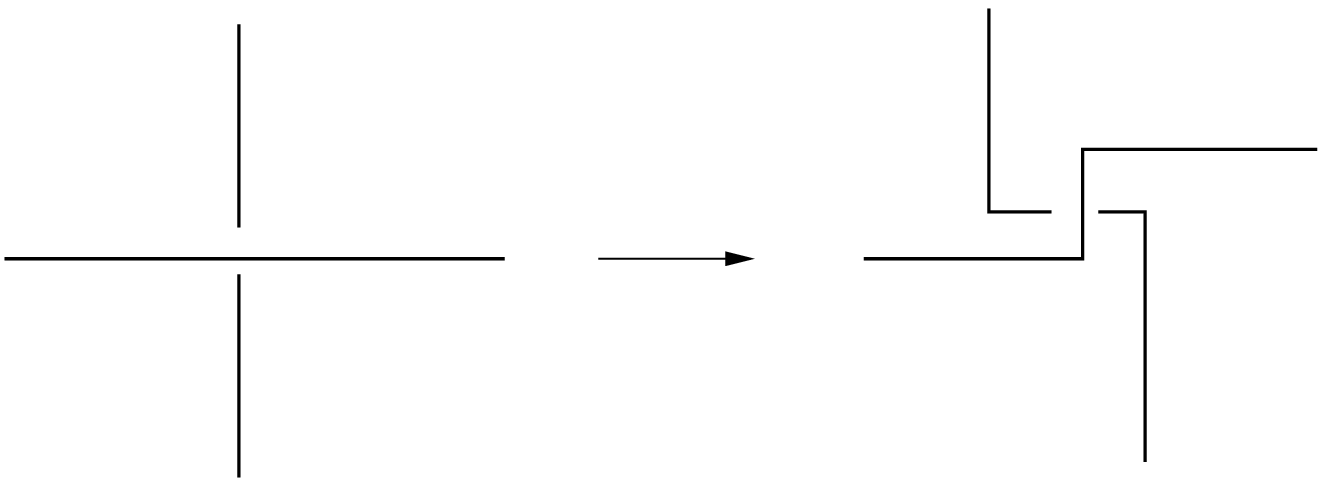}
\end{center}
\caption{Changing the horizontal overpasses to vertical ones}\label{fig:verttohor}
\end{figure}
\end{proof}

There are two processes on the grid diagram, namely commutation and
stabilization, which do not
change the isotopy class of the underlying link. We view markings in a
particular horizontal (resp. vertical) annulus as an embedded
$0$-sphere in one of the bounding $\alpha$ (resp. $\beta$) circles. 

In a horizontal (resp. vertical) commutation, we choose two adjacent
horizontal (resp. vertical) annuli, such that the markings in one of
them is unlinked with the markings in the other. Then we interchange
the markings for the two annuli. This process can also be viewed as
changing the $\alpha$ (resp. $\beta$) circle that lies between the two
adjacent horizontal (resp. vertical) annuli. Note that commutation
does not change the grid number, and it also keeps the isotopy class
of the link unchanged.

\begin{figure}[ht]
\psfrag{O}{O}
\psfrag{O1}{O}
\psfrag{O2}{O}
\psfrag{O3}{O}
\psfrag{O4}{O}
\psfrag{O5}{O}
\psfrag{X}{X}
\psfrag{X1}{X}
\psfrag{X2}{X}
\psfrag{X3}{X}
\psfrag{X4}{X}
\psfrag{X5}{X}
\psfrag{e}{$=$}
\begin{center} \includegraphics[width=300pt]{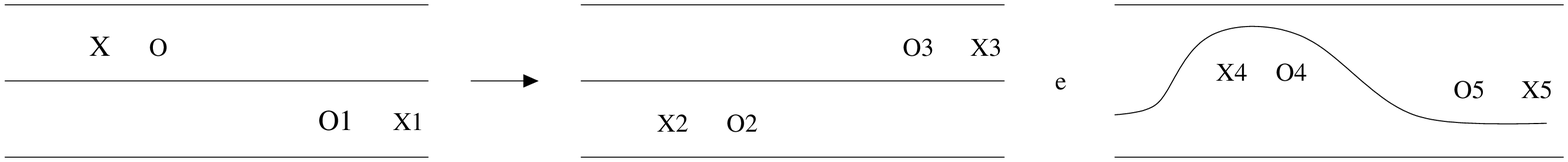}
\end{center}
\caption{Commutation}
\end{figure}

We can represent the process of commutation by a single grid like
diagram on the torus. 
Let $G$ and $G'$ be two grid diagrams drawn on the same torus $T$ with
grid number $N$, which differ from one another by a horizontal
commutation. (The case of a vertical commutation can be obtained from
the horizontal commutation by the rotation $R(\frac{\pi}{2})$). Thus $G'$ looks exactly
like $G$, except it has a circle $\alpha_c^{\prime}$ instead of
$\alpha_c$. We can represent the whole commutation by a single diagram $G_c$,
which is basically the grid diagram $G$ with an extra circle
$\alpha_c^{\prime}$. The circles $\alpha_c$ and $\alpha_c^{\prime}$
intersect in exactly two points, and we ensure that none of the
$\beta$ circles pass through either of those two points. Thus the
diagram has $(N^2+N+2)$ regions, of which $4$ are triangles, $4$ are
pentagons, and the rest are squares. There are two triangles and two
pentagons around each point of $\alpha_c\cap\alpha_c^{\prime}$, and we
can ensure that for each of those points, either the triangle to the right
or the triangle to the left has an $X$ marking. Of the two points of
intersection between $\alpha$ and $\alpha'$, let $\rho$ be the one
with $\alpha$ on its top-left. We call the pair $(G_c,\rho)$ a
commutation diagram. Note that due to presence of the point $\rho$,
the definition is not symmetric regarding the roles of $G$ and $G'$.

In a stabilization, we
choose a marking $X$, and change the vertical annulus
through the marking into two parallel vertical annuli by adding a $\beta$
circle, and change the horizontal annulus through the marking into two
parallel horizontal annuli by adding an $\alpha$ circle. The component
containing the original $X$ marking has now become $4$
components, and
we put two $X$ markings in two diagonally opposite
components, and put one $O$ marking in one of the other
two components. The original horizontal and vertical annuli through
our $X$ marking contained two $O$ markings,
and their position in the new diagram gets fixed by the condition that
each horizontal and each vertical annulus must contain exactly one $X$
and exactly one $O$ marking. Again note that stabilization keeps the
isotopy class of the link unchanged, but increases the grid number by
$1$. The roles of $X$ and $O$ seem asymmetric in this definition,
but the other type of stabilization, where the roles of $X$ and $O$
are reversed, can be obtained as a composition of stabilization of
this type and a few commutations.

\begin{figure}[ht]
\psfrag{O}{O}
\psfrag{O1}{O}
\psfrag{O2}{O}
\psfrag{O3}{O}
\psfrag{O4}{O}
\psfrag{X}{X}
\psfrag{X1}{X}
\psfrag{X2}{X}
\begin{center}
\includegraphics[width=200pt]{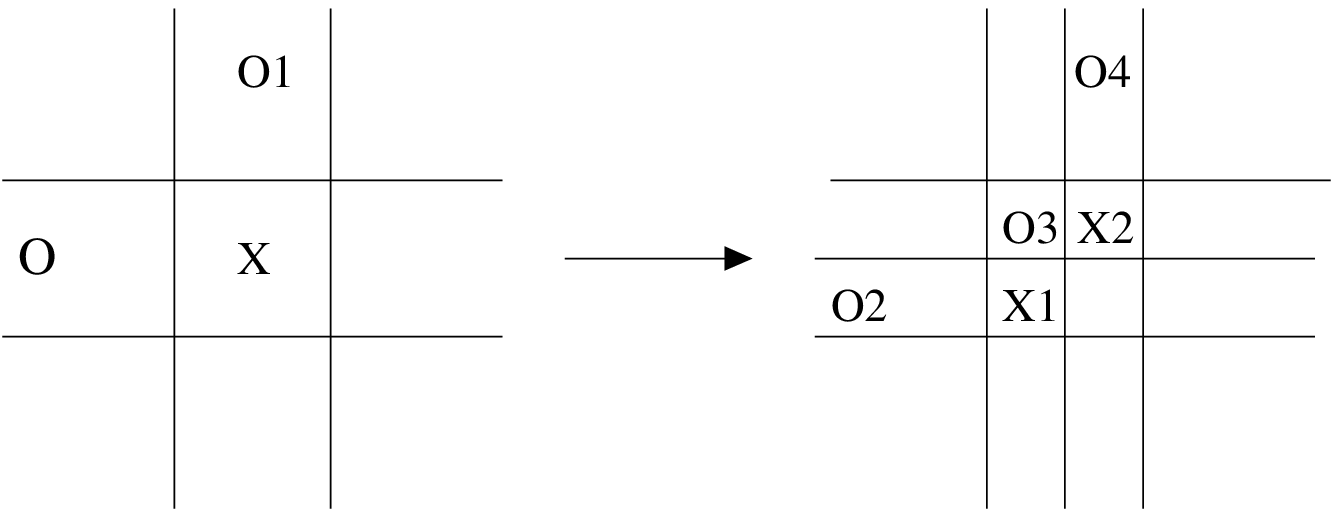}
\end{center}
\caption{Stabilization}
\end{figure}

Note that after stabilization, in the new grid diagram, a neighborhood
of the original $X$ marking looks like Figure \ref{fig:stablecases}.
The new $\alpha$ and $\beta$ circles are denoted by thick lines. The
cases $(c)$ and $(d)$ can be obtained from cases $(a)$ and $(b)$
respectively after the rotation $R(\pi)$. Thus we will only be
concentrating on the cases $(a)$ and $(b)$. (Indeed the case $(b)$ can
be obtained from the case $(a)$ by a rotation $R(\frac{\pi}{2})$, but
the reversal of the partial order presents some problems). We call the
new $\alpha$ circle and the new $\beta$ circle, $\alpha_s$ and
$\beta_s$, and call their intersection $\rho$. If the new grid diagram
is $G$, we call the pair $(G,\rho)$ a stabilization diagram. Thus a
stabilization diagram is basically just a grid diagram with a
distinguished $\alpha$ and a distinguished $\beta$ circle such that the
neighborhood of their intersection looks like Figure
\ref{fig:stablecases}.

\begin{figure}[ht]
\psfrag{O}{O}
\psfrag{O1}{O}
\psfrag{O2}{O}
\psfrag{O3}{O}
\psfrag{X}{X}
\psfrag{X1}{X}
\psfrag{X2}{X}
\psfrag{X3}{X}
\psfrag{X4}{X}
\psfrag{X5}{X}
\psfrag{X6}{X}
\psfrag{X7}{X}
\psfrag{a}{$(a)$}
\psfrag{b}{$(b)$}
\psfrag{c}{$(c)$}
\psfrag{d}{$(d)$}
\begin{center} \includegraphics[width=350pt]{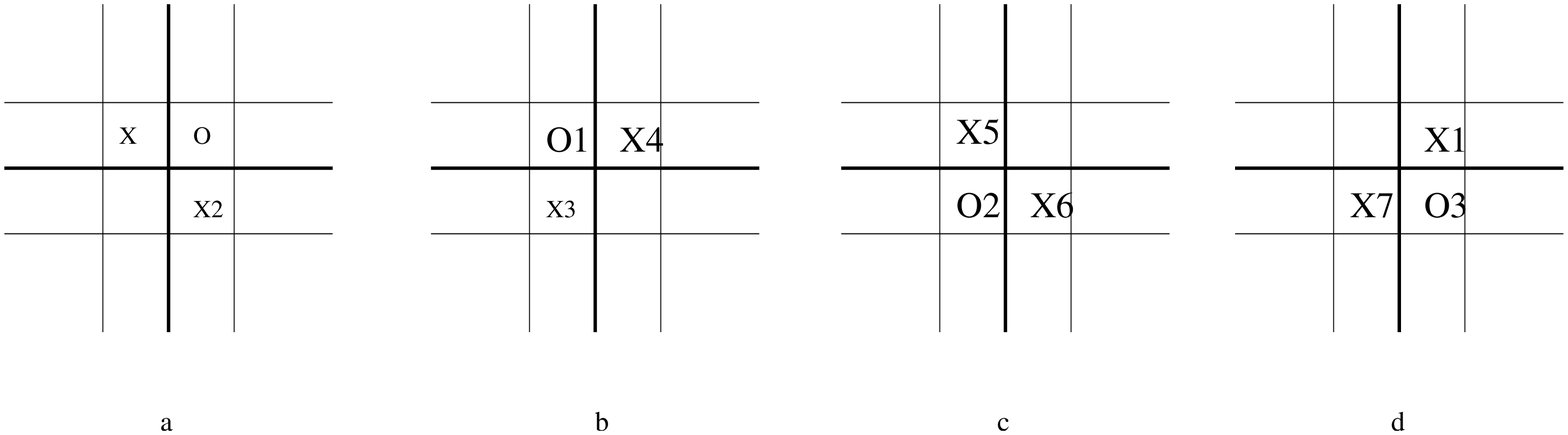}
\end{center}
\caption{Different types of stabilization}\label{fig:stablecases}
\end{figure}

\begin{thm}\cite{PC}
If two grid diagrams represent the same link, then we can apply
sequences of commutations and stabilizations on each of them, such
that the final two grid diagrams are the same.
\end{thm}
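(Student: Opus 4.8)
The plan is to translate the statement into the language of planar link diagrams, where both moves acquire an obvious geometric meaning, and then appeal to Reidemeister's theorem. A grid diagram $G$ with grid number $N$, with its link drawn in the plane so that all vertical strands are overpasses, produces a \emph{rectangular diagram} $R(G)$: a link diagram whose underlying curve is a union of $N$ horizontal and $N$ vertical segments, one horizontal and one vertical per $X$--$O$ pair, no two horizontal (resp. vertical) segments collinear, and with every crossing of vertical-over-horizontal type. The Lemma above shows conversely that every rectangular diagram arises this way. Under this dictionary a horizontal commutation slides one horizontal segment vertically past a disjoint horizontal segment (equivalently, it replaces the $\alpha$ circle lying between two adjacent rows), a vertical commutation is its transpose, and a stabilization is a local finger move inserting one extra horizontal and one extra vertical segment near a corner. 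Each of these is visibly induced by an isotopy of $S^3$, so the moves preserve the link type of $G$; that is the easy direction, already observed in the text.

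For the converse, let $G$ and $G'$ represent the same link $L$. Then $R(G)$ and $R(G')$ are two planar diagrams of $L$, so by Reidemeister's theorem they are joined by a finite sequence of planar isotopies and Reidemeister moves RI, RII, RIII. I would prove the theorem by exhibiting, for each elementary step in this sequence, a finite sequence of commutations, stabilizations and destabilizations on the grid side realising it. For a planar isotopy, general position lets us cut the isotopy into pieces so that between successive rectangular snapshots the only combinatorial change is one segment crossing a vertex, crossing a marking, or crossing a parallel segment; each such elementary rectangular move is either a commutation outright or becomes one after a preliminary ``fine subdivision'' stabilization. For a Reidemeister move one performs the move inside a small disk, notes that the result differs from a rectangular diagram only inside that disk, and then ``combs'' the picture back to rectangular form: the combing is supported in a bounded region and is effected by commutations, while the finitely many crossings that were created or destroyed are paid for by finitely many stabilizations and destabilizations; an RI move in particular is, after rectangularisation, exactly a (de)stabilization.

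The main obstacle is the RII and especially the RIII (triangle) move, together with the orientation bookkeeping forced by the rectangular convention. Because rectangular diagrams carry only vertical-over-horizontal crossings, one cannot in general perform a prescribed Reidemeister move in standard position without first \emph{rotating} a local piece of the diagram, and each such rotation must itself be displayed as a finite composition of allowed moves --- this is exactly the content of the remark in the text that the $X\leftrightarrow O$--reversed stabilization, and likewise a transposed commutation, is a composition of the standard moves, which one then invokes repeatedly. Assembling a uniform, explicitly finite \emph{local dictionary} --- a recipe in commutations and (de)stabilizations for every local rectangular picture that can arise from an RII or RIII move --- is the technical heart of the argument; once it is in hand the global statement follows by concatenating the recipes along the Reidemeister chain joining $R(G)$ and $R(G')$. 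One last bookkeeping point: the above yields the equivalence relation ``related by commutations, stabilizations and destabilizations,'' and to reach the precise form stated --- a common grid diagram obtained from each of $G$ and $G'$ using only commutations and stabilizations --- one reorders the sequence so that all destabilizations occur last, cancels them against the stabilizations coming from the other side, and thereby stabilizes both diagrams up to a common one. This is Cromwell's theorem for arc presentations transplanted to grid diagrams, and the argument I would write down ultimately follows \cite{PC}.
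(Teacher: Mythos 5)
The paper itself offers no proof of this statement; it is imported from Cromwell \cite{PC} (see also Dynnikov) and stated as background, so there is nothing in the paper to compare your argument against. Your sketch does follow the standard line of attack for Cromwell's theorem --- rectangularize planar diagrams, realize Reidemeister moves and planar isotopies by commutations and (de)stabilizations, then reorganize the move sequence --- so the overall strategy is sound and agrees with the cited source.

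That said, as a proof this is a plan rather than an argument, and the two places you flag as ``the technical heart'' are exactly where the content lives and where the write-up stops. First, the local dictionary for RII and RIII moves is asserted to exist but never constructed; in practice one must enumerate the finitely many rectangular local pictures, and the enumeration is where one discovers which auxiliary stabilizations are forced (in particular, RIII in rectangular form cannot be done by commutations alone --- a stabilization is genuinely required, and this is not obvious from general principles). You would also need to handle planar isotopies more carefully than ``cut into elementary pieces'': a horizontal segment sliding past a \emph{non}-disjoint horizontal segment is not a commutation, and recognizing when a preliminary stabilization is needed is part of the dictionary, not a triviality. Second, the final reordering --- converting a chain of commutations, stabilizations and destabilizations into the symmetric form ``stabilize and commute both sides to a common diagram'' --- is a confluence/diamond-type argument, not a formal rearrangement. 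One must check that a destabilization followed by any other move can be rewritten as that move (or a short substitute) followed by a destabilization, and that stabilizations at different sites commute; you invoke the conclusion without supplying these local rewriting rules. Neither gap is a wrong turn, but both must be filled before the sketch becomes a proof, and both are nontrivial enough that the cited reference devotes real effort to them.
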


\subsection{Grid diagram}
Given a grid diagram with grid number $N$ representing a link $L$, we
can define two GSS posets $\widehat{\mathcal{G}}$ and
$\mathcal{G}^{-}$ such that the homology of the associated chain
complexes in the first case depends only on $L$ and $N$, and in the
second case depends only on $L$. The homologies are closely related to
the hat version and the minus version of the knot Floer homologies. To
help the reader and to keep this chapter mostly independent of the
first chapter, we will redefine all the relevant objects now.
The elements of the poset $\widehat{\mathcal{G}}$ are indexed by
formal sums $\widehat{x}=x_1+x_2+\cdots+x_N$ of $N$ points, such that
each $\alpha$ circle (resp.  each $\beta$ circle) contains one point.
The elements of $\mathcal{G}^{-}$ are indexed elements of the form
$x=\widehat{x} \prod_{i=0}^N U_i^{k_i}$ where
$\widehat{x}\in\widehat{\mathcal{G}}$ and $k_i\in\mathbb{N}\cup\{0\}$.
We need the following few definitions to understand the partial order
in the poset.

First number the $O$ (resp. $X$) markings as $O_1,O_2,\ldots,O_N$
(resp. $X_1,X_2,\ldots,X_N$). Let $\mathbb{O}$ (resp. $\mathbb{X}$) be
the formal sums $\sum_i O_i$ (resp. $\sum_i X_i$).  A domain $D$
connecting a generator $\widehat{x}$ to another generator
$\widehat{y}$, is a $2$-chain generated by components of
$T\setminus(\alpha\cup\beta)$ with $\partial(\partial
D_{|\alpha})=\widehat{y}-\widehat{x}$.  The set of all domains
connecting $\widehat{x}$ to $\widehat{y}$ is denoted by
$\mathcal{D}(\widehat{x},\widehat{y})$. For a point $p\in
T\setminus(\alpha\cup\beta)$ and a domain
$D\in\mathcal{D}(\widehat{x},\widehat{y})$, we define $n_p(D)$ to be
the coefficient of the $2$-chain $D$ at the point $p$. We define
$\mathcal{D}^0(\widehat{x},\widehat{y})$ (resp.
$\mathcal{D}^{0,0}(\widehat{x},\widehat{y})$) as a subset of
$\mathcal{D}(\widehat{x},\widehat{y})$ consisting of domains $D$ with
$n_p(D)=0$ whenever $p$ is any of the $N$ $X$ markings (resp. $2N$ $X$
or $O$ markings). For $x=\widehat{x}\prod_i U_i^{k_i}$ and
$y=\widehat{y} \prod_i U_i^{l_i}$ in $\mathcal{G}^{-}$, we define
$\mathcal{D}^0(x,y)$ as the subset of
$\mathcal{D}^0(\widehat{x},\widehat{y})$ consisting of all domains
with $n_{O_i}=l_i-k_i$. A domain $D$ is positive if $n_p(D)\geq
0\forall p$. For $v$ a point of intersection between an $\alpha$ curve
and a $\beta$ curve, and $D\in\mathcal{D}(\widehat{x},\widehat{y})$,
we define $n_v(D)$ as the average of the coefficients of $D$ in the
four components of $T\setminus(\alpha\cup\beta)$ around $v$.  Domains
in $\mathcal{D}(\widehat{x},\widehat{x})$ are said to be periodic
domains.

\begin{lem}  All  periodic  domains  are  generated  by  vertical  and
horizontal annuli.
\end{lem}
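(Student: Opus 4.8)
The plan is to analyze the homology class in $H_1(T;\mathbb{Z})$ determined by the boundary of a periodic domain. Let $D\in\mathcal{D}(\widehat{x},\widehat{x})$ be a periodic domain, so $\partial((\partial D)_{|\alpha})=\widehat{x}-\widehat{x}=0$ and likewise $\partial((\partial D)_{|\beta})=0$. Thus $(\partial D)_{|\alpha}$ is a $1$-cycle supported on the $\alpha$ circles, hence an integral linear combination $\sum_i a_i\alpha_i$, and similarly $(\partial D)_{|\beta}=\sum_j b_j\beta_j$. First I would observe that since $D$ is a $2$-chain on the closed surface $T$ with $\partial D=\sum_i a_i\alpha_i+\sum_j b_j\beta_j$, the class $\sum_i a_i[\alpha_i]+\sum_j b_j[\beta_j]$ must vanish in $H_1(T;\mathbb{Z})\cong\mathbb{Z}^2$. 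In a grid diagram all $\alpha$ circles are parallel meridians, so $[\alpha_i]=m$ for a fixed generator $m$, and all $\beta$ circles are parallel longitudes, so $[\beta_j]=\ell$; hence $(\sum_i a_i)\,m+(\sum_j b_j)\,\ell=0$, which forces $\sum_i a_i=0$ and $\sum_j b_j=0$.

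Next I would use these relations to subtract off annuli. The horizontal annulus $A_i$ between $\alpha_i$ and $\alpha_{i+1}$ (indices mod $N$) is a periodic domain with $\partial A_i=\alpha_{i+1}-\alpha_i$; similarly the vertical annulus $B_j$ between $\beta_j$ and $\beta_{j+1}$ has $\partial B_j=\beta_{j+1}-\beta_j$. Using $\sum_i a_i=0$, a standard telescoping argument shows that $\sum_i a_i\alpha_i$ is an integral combination $\sum_i c_i(\alpha_{i+1}-\alpha_i)$, i.e. the $\alpha$-part of the boundary of an integral combination $D_\alpha$ of horizontal annuli; likewise the $\beta$-part is $\partial_\beta$ of an integral combination $D_\beta$ of vertical annuli. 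Set $D' = D - D_\alpha - D_\beta$. Then $D'$ is a $2$-chain with $\partial D'=0$ (both its $\alpha$-boundary and $\beta$-boundary vanish by construction), so $D'$ is a $2$-cycle on the closed connected surface $T$, hence $D'=n\cdot[T]$ for some integer $n$. But $[T]$ itself is the sum of all horizontal annuli $\sum_i A_i$ (equivalently all vertical annuli), so $D=D_\alpha+D_\beta+n\sum_i A_i$ is an integral combination of horizontal and vertical annuli, as claimed.

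The main thing to be careful about — and really the only nontrivial point — is the telescoping step: one must check that a combination $\sum_i a_i\alpha_i$ with $\sum a_i=0$ is genuinely realized as the $\alpha$-part of $\partial$ of an explicit integral chain of annuli, and that this chain has no leftover $\beta$-boundary (which is automatic, since horizontal annuli have no $\beta$ arcs on their boundary). This is elementary because $\{\alpha_{i+1}-\alpha_i\}$ spans the kernel of the augmentation map $\mathbb{Z}^N\to\mathbb{Z}$. I expect no real obstacle here; the argument is essentially the computation of $H_1$ and $H_2$ of the torus together with the remark that the $\alpha$ (and $\beta$) circles each span a half-dimensional, in fact one-dimensional-image, subspace of $H_1(T)$.
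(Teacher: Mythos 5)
Your proposal is correct and follows essentially the same route as the paper: use $H_1(T)$ to deduce $\sum_i a_i=\sum_j b_j=0$, subtract off combinations of horizontal and vertical annuli to kill the boundary, and recognize the remaining boundaryless $2$-chain as a multiple of $[T]$, which is itself a sum of annuli. The only difference is that you spell out the telescoping step that the paper dismisses as "pretty easy to see."
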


\begin{proof} Let $D$ be a periodic domain. Let $\partial D=\sum_i n_i
\alpha_i +\sum_j m_j \beta_j$.  Since any $\alpha_i$ (resp. $\beta_j$)
is homologous  to the meridian (resp. longitude),  this means $(\sum_i
n_i)\alpha_1+(\sum_j  m_j)\beta_1$  is  null-homologous in  the  torus
$T$. This implies $\sum_i n_i=\sum_j  m_j=0$. It is pretty easy to see
that we  can construct  a periodic domain  $D_v$ (resp. $D_h$)  out of
only vertical (resp. horizontal) annuli such that $\partial D_v=\sum_j
m_j  \beta_j$   (resp.  $\partial  D_h=\sum_i   n_i  \alpha_i$).  Thus
$D-D_v-D_h$ is a periodic domain  without boundary, and thus has to be
$kT$ for some $k$. We finish the proof by observing that the torus $T$
is also generated by vertical annuli.
\end{proof}

For two generators $\widehat{x}=\sum_i x_i$ and $\widehat{y}=\sum_i
y_i$, and a domain $D\in\mathcal{D}(\widehat{x},\widehat{y})$, the
Maslov index is defined to be $\mu (D)=\sum_i
(n_{x_i}(D)+n_{y_i}(D))$.  Notice that this is Lipshitz' formula for
the Maslov index suited to the case of grid diagrams. The relative
Maslov grading is defined to be $M(\widehat{x},\widehat{y})=\mu
(D)-2(\sum_i n_{O_i}(D))$. The relative Alexander grading is defined
to be $A(\widehat{x},\widehat{y})= \sum_i (n_{X_i}(D)-n_{O_i}(D))$.

 The following lemma shows that
the gradings are well defined.

\begin{lem} The relative gradings $A(\widehat{x},\widehat{y})$ and
  $M(\widehat{x},\widehat{y})$ are independent of the choice of domain
  $D\in\mathcal{D}(\widehat{x},\widehat{y})$.
\end{lem}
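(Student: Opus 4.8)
The plan is to show that the difference of any two domains connecting $\widehat{x}$ to $\widehat{y}$ is a periodic domain, and then check that both relative gradings vanish on all periodic domains. First I would observe that if $D, D' \in \mathcal{D}(\widehat{x},\widehat{y})$, then $D - D'$ satisfies $\partial(\partial(D-D')_{|\alpha}) = (\widehat{y} - \widehat{x}) - (\widehat{y} - \widehat{x}) = 0$, so $D - D' \in \mathcal{D}(\widehat{x},\widehat{x})$ is a periodic domain. Hence $\mu$, $M$ and $A$ are well defined on $\mathcal{D}(\widehat{x},\widehat{y})$ as soon as we know $\mu(P) = M(\widehat{x},\widehat{x}) = A(\widehat{x},\widehat{x}) = 0$ for every periodic domain $P$ — here I am using that all the quantities in question are, by their defining formulas, additive under the natural addition of domains (since $n_p$ is linear in the $2$-chain and $\mu$ is a sum of point measures), so $M(\widehat{x},\widehat{y}; D) - M(\widehat{x},\widehat{y}; D') = M(\widehat{x},\widehat{x}; D-D')$ and similarly for $A$.

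Next I would invoke the previous lemma, which says every periodic domain is an integer combination of the $N$ horizontal annuli and the $N$ vertical annuli (and the whole torus $T$, which is itself a sum of the horizontal annuli). By additivity it therefore suffices to check $M$ and $A$ on a single horizontal annulus $H_i$ and a single vertical annulus $V_j$. For the Alexander grading this is immediate: each horizontal annulus contains exactly one $X$ marking and exactly one $O$ marking, so $\sum_i(n_{X_i}(H) - n_{O_i}(H)) = 1 - 1 = 0$, and likewise for a vertical annulus; so $A$ vanishes on all periodic domains. For the Maslov grading, consider a horizontal annulus $H$. The points $x_i$ of $\widehat{x}$ lying on the two $\alpha$ circles bounding $H$ contribute $n_{x_i}(H) = \tfrac12$ each (the average of a $0$ and a $1$ across the annulus boundary), while the points on $\alpha$ circles in the interior of $H$ contribute $n_{x_i}(H) = 1$ and points on $\alpha$ circles disjoint from $\overline H$ contribute $0$; the same holds for $\widehat{y}$. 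But $\widehat{x} = \widehat{y}$ for a periodic domain, so $\mu(H) = 2\sum_i n_{x_i}(H)$; this is $2$ times the number of $\alpha$ circles "inside or on" $H$ counted appropriately — I would compute it directly for the annulus (each of the $N$ $\alpha$ circles passes once through the vertical annuli, so the analysis is cleanest for a vertical annulus, and the horizontal case follows by the rotation $R(\tfrac\pi2)$ discussed in the grid-diagram section). The key identity is that $\sum_i n_{O_i}(H)$ equals exactly the number that makes $M = \mu - 2\sum_i n_{O_i} = 0$; concretely, a vertical annulus $V$ has $\sum_i n_{O_i}(V) = 1$ and $\mu(V) = 2$, while a horizontal annulus $H$ has $\sum_i n_{O_i}(H) = 1$ and $\mu(H) = 2$, so in each case $M = 0$.

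The main obstacle is the bookkeeping in the Maslov computation: one must carefully track which vertices $x_i$ lie on the boundary $\alpha$ (or $\beta$) circles of the annulus versus in its interior, and verify that $\mu$ of a single annulus comes out to exactly $2$ — equivalently $e + n_x + n_y = 2$, where the Euler measure of an annulus is $0$. I would handle this by noting that an annulus, viewed as a $2$-chain with all coefficients $1$ on one band of squares, has $n_{x_i} + n_{y_i} = 1$ for the two generator points adjacent to it and $n_{x_i} + n_{y_i} \in \{0, 2\}$ otherwise; since $\widehat{x} = \widehat{y}$, the total is an even integer, and a direct local picture (or the formula $\mu(D) = e(D) + n_x(D) + n_y(D)$ with $e(\text{annulus}) = 0$ together with the fact that an annulus represents a genuine homology class pairing trivially) pins it down to $2$. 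Once these single-annulus values are in hand, additivity and the periodic-domain lemma finish the proof, since $T = \sum_i H_i$ also contributes $M(T) = 0$ and $A(T) = 0$ by the same token.
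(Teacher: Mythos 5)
Your proposal is correct and follows essentially the same route as the paper's (very terse) proof: observe that the difference of two domains in $\mathcal{D}(\widehat{x},\widehat{y})$ is a periodic domain, invoke the preceding lemma that periodic domains are generated by horizontal and vertical annuli, and verify directly that adding a single width-one annulus increases $\mu$ by $2$ and each of $\sum_i n_{O_i}$ and $\sum_i n_{X_i}$ by $1$, so that $M$ and $A$ are unchanged. Your write-up is a fuller expansion of the same argument; the one place to be a touch more careful is the step asserting $M(\widehat{x},\widehat{y};D)-M(\widehat{x},\widehat{y};D')=M(\widehat{x},\widehat{x};D-D')$, since the two sides compute $\mu$ with different pairs of point measures ($n_{x_i}+n_{y_i}$ versus $2n_{x_i}$); this is justified precisely because for any annulus $V$ and any generator $\widehat{z}$ one has $\sum_i n_{z_i}(V)=1$ independently of $\widehat{z}$, a fact your subsequent single-annulus calculation establishes.
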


\begin{proof}  Any two domains  joining $\widehat{x}$  to $\widehat{y}$  are related  by a
periodic domain which is generated  by annuli. Adding any annulus to a
domain  increases  the Maslov  index  by  $2$,  increases  $\sum_i
n_{O_i}$ by $1$ and increases $\sum_i n_{X_i}$ by $1$, thus completing the proof.
\end{proof}

\begin{lem}\cite{RL}        For        generators
  $\widehat{x},\widehat{y},\widehat{z} \in\widehat{\mathcal{G}}$,
  $A(\widehat{x},\widehat{y})+A(\widehat{y},\widehat{z})=A(\widehat{x},\widehat{z})$
  and
  $M(\widehat{x},\widehat{y})+M(\widehat{y},\widehat{z})=M(\widehat{x},\widehat{z})$.
\end{lem}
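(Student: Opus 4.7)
The plan is to pick representative domains $D_1\in\mathcal{D}(\widehat{x},\widehat{y})$ and $D_2\in\mathcal{D}(\widehat{y},\widehat{z})$, observe that the $2$-chain $D_1+D_2$ lies in $\mathcal{D}(\widehat{x},\widehat{z})$ (since $\partial(\partial(D_1+D_2)|_{\alpha})=(\widehat{y}-\widehat{x})+(\widehat{z}-\widehat{y})=\widehat{z}-\widehat{x}$), and reduce each additivity statement to a statement purely about these three domains. By the preceding lemma both gradings are independent of the choice of representative, so this reduction is legitimate.

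The Alexander-grading additivity is then immediate. Since the point measures $n_p$ are $\mathbb{Z}$-linear on $2$-chains, the function $A(D)=\sum_i(n_{X_i}(D)-n_{O_i}(D))$ is linear in $D$, so $A(D_1+D_2)=A(D_1)+A(D_2)$, which translates to $A(\widehat{x},\widehat{z})=A(\widehat{x},\widehat{y})+A(\widehat{y},\widehat{z})$.

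For the Maslov grading, the $-2\sum_i n_{O_i}$ summand of $M(D)=\mu(D)-2\sum_i n_{O_i}(D)$ is also linear in $D$, so the entire question reduces to the additivity of the Maslov index itself, $\mu(D_1+D_2)=\mu(D_1)+\mu(D_2)$. Unpacking the formula $\mu(D)=n_{\widehat{u}}(D)+n_{\widehat{v}}(D)$ for $D\in\mathcal{D}(\widehat{u},\widehat{v})$ (Lipshitz's formula, noting that $e(D)=0$ for any grid-diagram domain since every region is a square), the required identity becomes $n_{\widehat{y}}(D_1)+n_{\widehat{y}}(D_2)=n_{\widehat{x}}(D_2)+n_{\widehat{z}}(D_1)$. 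This is the crux of the proof, and it is not at all transparent from the point-measure definition on its own.

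To establish it I would pass to the cylindrical/symmetric-product reformulation of the preceding chapter. After an initial stabilization, if necessary, to ensure $N\geq 3$, the domains $D_1$ and $D_2$ lift to homotopy classes of Whitney disks $\phi_1\in\pi_2(\widehat{x},\widehat{y})$ and $\phi_2\in\pi_2(\widehat{y},\widehat{z})$ in $Sym^N(T)$ with $D(\phi_j)=D_j$. Their concatenation $\phi_1*\phi_2\in\pi_2(\widehat{x},\widehat{z})$ has associated domain $D_1+D_2$, and the standard additivity of the Maslov index of Whitney disks under concatenation, combined with Lipshitz's identity $\mu(\phi)=\mu(D(\phi))$, yields $\mu(D_1+D_2)=\mu(\phi_1*\phi_2)=\mu(\phi_1)+\mu(\phi_2)=\mu(D_1)+\mu(D_2)$. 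The main obstacle is precisely this additivity of $\mu$: a purely combinatorial derivation would require tracking how $n_{\widehat{w}}(D)$ transforms as the generator $\widehat{w}$ moves along the $\alpha$-circles relative to $D$, which is the very exercise the author flags as challenging in the preceding chapter.
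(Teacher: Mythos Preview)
Your reduction is correct and your analytic argument via concatenation of Whitney disks is valid, but the paper deliberately takes the combinatorial route that you decline to pursue. Immediately after acknowledging that additivity of $\mu$ is immediate if one assumes it is an honest index, the paper writes that ``a combinatorial setting deserves a combinatorial proof'' and supplies one. The device is a signed intersection pairing of $1$-chains: for $D\in\mathcal{D}(\widehat{x},\widehat{y})$ and any $2$-chain $D'$ one has
\[
n_{\widehat{x}}(D') = n_{\widehat{y}}(D') + (\partial D_{|\alpha})\cdot(\partial D'_{|\beta}),
\]
where the dot product is defined by averaging over four small translations of the $\alpha$-arcs. Applying this twice (once with $(D,D')=(D_1,D_2)$ and once with $(D_2,D_1)$) and then using that $(\partial D_1)\cdot(\partial D_2)=0$, since each $\partial D_i$ is a closed $1$-cycle, gives exactly the identity $n_{\widehat{y}}(D_1+D_2)=n_{\widehat{x}}(D_2)+n_{\widehat{z}}(D_1)$ you isolated. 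This is precisely the ``challenging exercise'' alluded to in the previous chapter, now carried out.

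Your approach buys brevity at the cost of importing the holomorphic machinery (Whitney disks, Lipshitz's theorem), which runs against the chapter's stated goal of being self-contained. The paper's approach keeps everything at the level of $2$-chains on the torus and makes the lemma logically independent of the analytic theory.
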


\begin{proof}  The proof for the relative Alexander grading is
  trivial. We only present the slightly trickier case of the relative
  Maslov grading. The proof is immediate if we assume that $\mu$ is
  indeed an index, and hence is additive over Whitney disks. However a
  combinatorial setting deserves a combinatorial proof, and we give a
  proof without assuming that fact.

For a domain  $D\in\mathcal{D}(\widehat{x},\widehat{y})$ and  any $2$-chain
$D'$,  we  have $n_{\widehat{x}}(D')=n_{\widehat{y}}(D')+(\partial  D_{|\alpha})\cdot(\partial
D^\prime_{|\beta})$.    Here  the   dot  product   is   defined  after
translating the  $\alpha$ arcs in  four possible directions,  and then
taking the average of the four dot products, as shown in Figure \ref{fig:dotproduct}

\begin{figure}[ht]
\psfrag{a}{$\partial D$ is denoted by thick solid lines}
\psfrag{b}{The four translates of $\partial D_{|\alpha}$ are shown}
\psfrag{c}{$\partial D'$ is denoted by thick dotted lines}
\psfrag{d}{$(\partial  D_{|\alpha})\cdot(\partial
D^\prime_{|\beta})=\frac{5}{4}$}
\begin{center} \includegraphics[width=180pt]{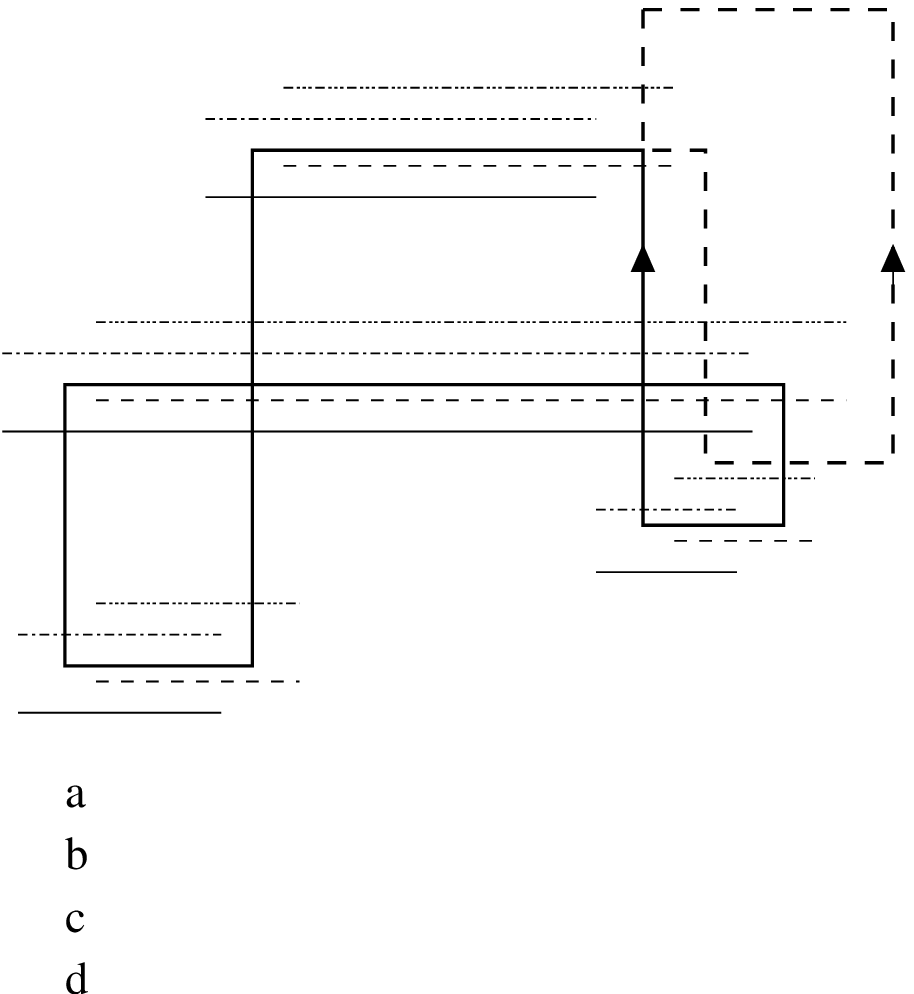}
\end{center}
\caption{Defining dot product of arcs}\label{fig:dotproduct}
\end{figure}

Now  take  $D_1\in\mathcal{D}(\widehat{x},\widehat{y})$  and $D_2\in\mathcal{D}(\widehat{y},\widehat{z})$.  We
have      to      show      $n_{\widehat{y}}(D_1+D_2)=n_{\widehat{x}}(D_2)+n_{\widehat{z}}(D_1)$.      But
$n_{\widehat{x}}(D_2)=n_{\widehat{y}}(D_2)+(\partial D_{1|\alpha})\cdot(\partial D_{2|\beta})$
and      $n_{\widehat{z}}(D_2)=n_{\widehat{y}}(D_2)-(\partial      D_{2|\alpha})\cdot(\partial
D_{1|\beta})$. Note $(\partial  D_1)\cdot(\partial D_2)=0$, and expand
to finish the proof.
\end{proof}

Indeed there is a different way to see this. For
$\widehat{x}\in\widehat{\mathcal{G}}$, we can define absolute Maslov
grading $M(\widehat{x})$ and absolute Alexander grading
$A(\widehat{x})$ such that
$M(\widehat{x},\widehat{y})=M(\widehat{x})-M(\widehat{y})$ and
$A(\widehat{x},\widehat{y})=A(\widehat{x})-A(\widehat{y})$.

We choose an $\alpha$ circle and a $\beta$ circle on the grid diagram
$G$ and cut open the torus $T$ along those circles to obtain a diagram
in $[0,N)\times[0,N)\subset \mathbb{R}^2$. In this planar diagram, 
the $\alpha$ circles become the lines $y=i$ and the $\beta$ circles
become the lines $x=i$ for $0\le i<N$. Let the $X$ marking and $O$
markings occupy half-integral lattice points. Now for two points
$a=(a_1, a_2)$ and $b=(b_1,b_2)$ in $\mathbb{R}^2$, we define
$J(a,b)=\frac{1}{2}$ if $(a_1-b_1)(a_2-b_2)>0$ and $0$ otherwise. We
extend $J$ bilinearly for formal sums and differences of points. For
$\widehat{x}\in\widehat{\mathcal{G}}$, we define
$M(\widehat{x})=J(\widehat{x}-\mathbb{O},\widehat{x}-\mathbb{O})+1$ and
$A(\widehat{x})=J(\widehat{x}-\frac{\mathbb{X}+\mathbb{O}}{2},\mathbb{X}-\mathbb{O})-\frac{N-1}{2}$. The
following is mere verification.

\begin{lem}\cite{CMPOZSzDT}
$A(\widehat{x})$ and $M(\widehat{x})$ are independent of choice of $\alpha$ and $\beta$
  circles along which the torus is cut open. $M(\widehat{x})$
  always takes integral values and $A(\widehat{x})$ takes integral values for a
  knot. Furthermore $M(\widehat{x},\widehat{y})=M(\widehat{x})-M(\widehat{y})$ and $A(\widehat{x},\widehat{y})=A(\widehat{x})-A(\widehat{y})$.
\end{lem}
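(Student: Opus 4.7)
My plan is to establish the relative-grading identity $M(\widehat{x})-M(\widehat{y})=M(\widehat{x},\widehat{y})$ (and its Alexander analogue) with respect to an arbitrary fixed cut, and then to deduce independence of the cut from this identity together with a short direct check. Integrality will follow by inspection of the defining sums.

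Step 1. Two generators of $\widehat{\mathcal{G}}$ are always joined by a chain in which consecutive pairs differ by an empty rectangle (generators correspond to permutations, rectangles realize transpositions, and transpositions generate $S_N$). Both sides of the desired identity are additive along such chains, so it suffices to check the case where $\widehat{x}$ and $\widehat{y}$ differ by an empty rectangle $r$. Taking $D=r$, only the four corners contribute to the Maslov index, each with weight $\tfrac{1}{4}$, giving $\mu(D)=1$ and so $M(\widehat{x},\widehat{y})=1-2\#(O\cap r)$. For the other side, bilinearity and symmetry of $J$ give $M(\widehat{x})-M(\widehat{y})=J(\widehat{x}-\widehat{y},\widehat{x}+\widehat{y}-2\mathbb{O})$. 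The key computational input is the classical identity that if $\widehat{x}-\widehat{y}$ is the alternating signed sum of the four corners of $r$, then $J(\widehat{x}-\widehat{y},p)$ equals $1$ when $p$ lies in the interior of $r$, $0$ when $p$ lies outside $r$, and $1/2$ or $0$ at the corners of $r$ according to whether that corner belongs to $\widehat{x}$ or to $\widehat{y}$. Summing the nonzero contributions from $\widehat{x}+\widehat{y}-2\mathbb{O}$: the four corners collectively contribute $1$, the remaining $\widehat{x}$-points (which lie outside $r$ because $r$ is empty) contribute $0$, and each $O$ inside $r$ contributes $-2$, totalling $1-2\#(O\cap r)$. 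The Alexander identity is entirely analogous, producing $\#(X\cap r)-\#(O\cap r)$.

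Step 2. Step 1 shows $M(\widehat{x})-M(\widehat{y})$ is cut-invariant, so a change of cut shifts $M$ by a constant independent of $\widehat{x}$. To see this constant is zero, it suffices by iteration and by the symmetry between the $\alpha$- and $\beta$-cuts to verify invariance under moving the $\alpha$-cut up by one row. This operation adds $N$ to the $y$-coordinate of exactly two points of $S=\widehat{x}-\mathbb{O}$: the unique $\widehat{x}$-point $p^*$ on the removed $\alpha$-circle, and the unique $O$-point $O^*$ in the former bottom row. In the double sum $J(S,S)=\sum_{p,q}s_p s_q J(p,q)$, pairs where both or neither coordinate shifts see no change, and the pairs with exactly one shifted coordinate contribute a signed sum that telescopes to zero using the pigeonhole facts that each $\beta$-circle contains exactly one $\widehat{x}$-point and each vertical strip contains exactly one $O$. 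A parallel calculation for $A(\widehat{x})$ shows that the $(N-1)/2$ normalization exactly absorbs the contribution of the shifted $X$-marking.

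Step 3. For $M$: in $J(S,S)=\sum_{p,q}s_p s_q J(p,q)$, diagonal terms vanish, and the symmetry $J(p,q)=J(q,p)$ pairs off-diagonal terms into integer contributions $2\cdot\tfrac{1}{2}\cdot s_p s_q=s_p s_q$. For $A$ in the knot case, one expands $J(\widehat{x}-(\mathbb{X}+\mathbb{O})/2,\mathbb{X}-\mathbb{O})$ and uses the single-component hypothesis: the cyclic sequence $\cdots O_i\to X_i\to O_{i+1}\cdots$ along the knot forms a single $N$-cycle, which fixes the parity of certain half-integer sums in just the right way for the $-(N-1)/2$ shift to produce an integer. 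The main obstacle in the whole proof is the sign bookkeeping in Step 2: though the conceptual picture (that a cut shift affects only two coordinates and the induced change in $J(S,S)$ telescopes by pigeonhole) is clean, carrying out the signed counting with the half-integer values of $J$ requires careful case analysis. Integrality of $A$ for a knot is the subtlest sub-claim, as it genuinely exploits the single-component hypothesis.
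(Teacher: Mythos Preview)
The paper does not actually prove this lemma: it is prefaced only by ``The following is mere verification'' and cited to \cite{CMPOZSzDT}. Your three-step argument is the standard approach from that reference and is essentially correct.

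A couple of places where your write-up is looser than it should be: In Step~1, the reduction requires that any two generators can be connected by a chain of \emph{empty} rectangles; your parenthetical (``transpositions generate $S_N$'') does not quite give this, since for an arbitrary transposition both connecting rectangles may contain generator coordinates. The clean fix is to use adjacent transpositions, which yield width-one rectangles that are automatically empty. Alternatively, one can drop emptiness altogether: for a general rectangle $r$ with $k$ shared $\widehat{x}=\widehat{y}$ coordinates in its interior, both $\mu(r)$ and $J(\widehat{x}-\widehat{y},\widehat{x}+\widehat{y}-2\mathbb{O})$ pick up the same extra $2k$, so the identity still holds. In Step~2 the ``telescopes by pigeonhole'' and in Step~3 the knot-case integrality of $A$ are gestured at rather than executed; these are genuine computations (the latter really does use that the $O_i\to X_i\to O_{i+1}$ sequence is a single $N$-cycle), but since the paper itself omits them entirely, your sketch already exceeds what the text provides.
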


We extend the assignment of Maslov and Alexander gradings from
$\widehat{\mathcal{G}}$ to $\mathcal{G}^{-}$. We define $M(\widehat{x}
\prod_i U_i^{k_i})=M(\widehat{x})-2\sum_i k_i$ and $A(\widehat{x} \prod_i
U_i^{k_i})=A(\widehat{x})-\sum_i k_i$. (In other words, we assign an $(M,A)$
bigrading of $(-2,-1)$ to each $U_i$). We define $\widehat{\mathcal{G}_m}$
(resp. $\mathcal{G}^{-}_m$) to be the the subset of
$\widehat{\mathcal{G}}$ (resp. $\mathcal{G}^{-}$) which has Alexander
grading $m$. Note that even though $\mathcal{G}^{-}$ is an infinite
set, for each $m$, $\widehat{\mathcal{G}_m}$ and $\mathcal{G}^{-}_m$ are
finite sets. In either case, we define $M_c=M+c$, and call it the
Maslov grading shifted by $c$.

If the reader is following the analogies from the Floer homology
picture, it should be pretty clear by this point that positive domains
of index one are of special importance to us. The following theorem
characterizes them. The theorem is in fact a consequence of the
results from the previous chapter, but we reprove it in these settings
so as to not disrupt the flow of the text.

\begin{lem}\cite{CMPOSS}  Let  $D\in\mathcal{D}(\widehat{x},\widehat{y})$  be  a positive  domain  with
$\mu(D)=1$.     Then   $\widehat{x}$    and    $\widehat{y}$   differ    in   exactly    two
coordinates. Furthermore, $D$ has coefficients $0$ and $1$ everywhere,
and the closure  of the regions where $D$ has  coefficients $1$ form a
rectangle   which  does   not  contain   any  $x$-coordinate   or  any
$y$-coordinate in its interior.
\end{lem}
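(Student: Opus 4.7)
The approach is to combine Lipshitz's cylindrical formula for the Maslov index with the positivity of $D$. In a grid diagram every region is a topological square bounded by four arcs, so its Euler measure is $e(R) = 1 - 4/4 = 0$ and consequently $e(D) = 0$ for every 2-chain. Lipshitz's formula $\mu(D) = e(D) + n_x(D) + n_y(D)$ therefore reduces to $\mu(D) = n_x(D) + n_y(D) = \sum_i (n_{x_i}(D) + n_{y_i}(D))$, which is exactly the formula used to define $\mu$ in this chapter. The hypothesis $\mu(D)=1$ says this sum of non-negative quarter-integers equals $1$, leaving only a handful of possible distributions.

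The first step is a local structural observation at vertices $v$ not lying in $\widehat{x}\cup\widehat{y}$: the equations $\partial(\partial D|_\alpha)_v = 0 = \partial(\partial D|_\beta)_v$ force the four coefficients of $D$ around $v$, read cyclically as $(a,b,c,d)$, to satisfy $a + c = b + d$ (a linear consistency pattern $(t, t+p, t+p+q, t+q)$ with $t\geq 0$). This prevents any superlevel set $S_k = \{R : a_R \geq k\}$ from turning a convex or concave corner at such a $v$. Writing $D = \sum_{k=1}^M \mathbf{1}_{S_k}$ with $M=\max_R a_R$, a discrete Gauss--Bonnet count (the number of convex minus concave corners of $S_k$ is four times its Euler characteristic) shows that every $S_k$ with non-trivial boundary has at least four convex corners, and every such corner lies in $\widehat{x}\cup\widehat{y}$ and contributes at least $1/4$ to $n_v(D)$. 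Summing over the $M$ levels gives $\mu(D) \geq M$, so $M \leq 1$ and hence all coefficients of $D$ lie in $\{0,1\}$.

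Once the coefficients of $D$ are in $\{0,1\}$, the support of $D$ is a 2-chain on $T$ bounded by a disjoint union of simple closed polygonal curves made of $\alpha$- and $\beta$-arcs. The budget $\mu(D)=1 = 4\cdot\tfrac{1}{4}$ allows exactly four convex corners and no concave corners, all at vertices in $\widehat{x}\cup\widehat{y}$; topologically this forces a single axis-aligned rectangle. An $x$- or $y$-vertex in the interior of the rectangle would have $n_v(D)=1$ and one on a non-corner edge would have $n_v(D)=1/2$, both overshooting $\mu(D)=1$, so no such vertex lies in the interior. Finally, inspecting $\partial(\partial D|_\alpha)$ along the top edge of the rectangle (whose two endpoints contribute with opposite signs to $\widehat{y}-\widehat{x}$) forces exactly one top corner to be an $x$-vertex and the other a $y$-vertex; the same holds on the bottom, and a symmetric argument with $\beta$-arcs handles the vertical edges. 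Hence the corners alternate $x,y,x,y$ around the rectangle, which identifies the two coordinates in which $\widehat{x}$ and $\widehat{y}$ differ.

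The main obstacle is the Gauss--Bonnet-style count in the second step, and in particular ruling out degenerate superlevel sets $S_k$ (such as the entire torus, or a horizontal or vertical annulus) that carry no corners on which to run the count. This is handled by observing that any such $S_k$ would make $D$ contain a positive periodic subdomain generated by horizontal or vertical annuli, and any such annulus already contributes $2$ to $\mu$, immediately contradicting $\mu(D)=1$. Everything else is bookkeeping.
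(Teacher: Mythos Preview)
Your argument has a gap in the second step, where you claim that the local constraint $a+c=b+d$ at a vertex $v\notin\widehat{x}\cup\widehat{y}$ prevents each superlevel set $S_k$ from having a convex or concave corner at $v$. This is false: with cyclic coefficients $(2,1,0,1)$ around $v$ (which satisfy $2+0=1+1$), the set $S_2$ has a convex corner at $v$ and $S_1$ has a concave one. What the constraint actually buys you is only that these corners cancel when summed over all levels; that is not enough to guarantee four convex corners of $S_k$ lying in $\widehat{x}\cup\widehat{y}$ at each individual level $k$, which is what your inequality $\mu(D)\geq M$ relies on. The degenerate-case handling has a related problem: a superlevel set with $\chi\leq 0$ need not be, or contain, a horizontal or vertical annulus---a diagonal staircase of grid squares wrapping a $(1,1)$ curve on the torus is an annulus with corners but no axis-aligned sub-annulus---so the appeal to periodic subdomains does not cover all the cases you need.

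The paper's proof bypasses all of this by arguing directly from the point-measure budget. Since $D$ is nontrivial it has boundary on some $\alpha_1$, and then necessarily on a second circle $\alpha_2$. This already produces four vertices $x_1,y_1,x_2,y_2\in\widehat{x}\cup\widehat{y}$ with $n_v(D)\geq\tfrac14$, and since their sum is at most $\mu(D)=1$, each equals exactly $\tfrac14$ and every other $n_{x_i},n_{y_i}$ vanishes. The condition $n_v=\tfrac14$ together with positivity forces three of the four surrounding coefficients to be zero, so $\partial D$ makes a single $90^\circ$ turn at each of these four vertices and traces out the boundary of a rectangle; the coefficient bound $M\leq 1$ then comes for free rather than needing to be proved first. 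Your concluding steps (ruling out interior $x/y$-coordinates and reading off the two differing coordinates) are fine and match the paper once the rectangle shape is established.
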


\begin{proof} The domain $D$ cannot be copies of the torus, since each
copy of  the torus has  index $2N$. Thus  $D$ must have  boundary, and
without  loss of  generality, let  $\partial  D$ be  non-zero on  some
$\alpha$ circle, say  $\alpha_1$. It is easy to  see that $\partial D$
then  also  must  be  non-zero  on some  other  $\alpha$  circle,  say
$\alpha_2$.  Let $x_i$ and  $y_i$ be  the $x$  and $y$  coordinates on
$\alpha_i$.  Thus  $n_p(D)\ne  0$ for  $p\in\{x_1,x_2,y_1,y_2\}$,  and
since  each   is  at  least   $\frac{1}{4}$,  they  are   all  exactly
$\frac{1}{4}$.  Thus $\partial  D$ must  look like  the boundary  of a
rectangle, and $D$ itself must  be a rectangle. Furthermore it is also
clear   that  $D$   can  not   contain  any   $x$-coordinate   or  any
$y$-coordinate in its interior.
\end{proof}

We call positive  index one domains in $\mathcal{D}(\widehat{x},\widehat{y})$  to be empty
rectangles   and  denote  them   by  $\mathcal{R}(\widehat{x},\widehat{y})$.    Note  that
$\mathcal{R}(\widehat{x},\widehat{y})=\varnothing$  unless $\widehat{x}$ and  $\widehat{y}$ differ  in exactly
two  coordinates,  and  even  then $\#|\mathcal{R}(\widehat{x},\widehat{y})|\leq  2$.   We
define     $\mathcal{R}^0(\widehat{x},\widehat{y})=\mathcal{R}(\widehat{x},\widehat{y})\cap\mathcal{D}^0(\widehat{x},\widehat{y})$
and
$\mathcal{R}^{0,0}(\widehat{x},\widehat{y})=\mathcal{R}(\widehat{x},\widehat{y})\cap\mathcal{D}^{0,0}(\widehat{x},\widehat{y})$.
For $x=\widehat{x}\prod_i U_i^{k_i}$ and $y=\widehat{y}\prod_i
U_i^{l_i}$ in $\mathcal{G}^{-}$, we define $\mathcal{R}^0(x,y)=\mathcal{R}(\widehat{x},\widehat{y})\cap\mathcal{D}^0(x,y)$.
The following characterizes positive index $k$ domains.

\begin{lem}\label{lem:grading}  Let $D\in\mathcal{D}(\widehat{x},\widehat{y})$  be  a positive
domain.  Then    there    exists    generators
$\widehat{u_0},\widehat{u_1},\cdots,\widehat{u_k}\in\widehat{\mathcal{G}}$  with   $\widehat{u_0}=\widehat{x}$  and  $\widehat{u_k}=\widehat{y}$,  and
domains $D_i\in\mathcal{R}(\widehat{u_{i-1}},\widehat{u_i})$ such that $D=\sum_i D_i$.
\end{lem}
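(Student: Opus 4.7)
I would prove this by induction on $\mu(D)$. Since empty rectangles each have Maslov index $1$ and the Maslov index is additive under concatenation of domains, the integer $k$ in the statement will turn out to be forced to equal $\mu(D)$.

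\emph{Base case $\mu(D)=0$.} By Lipshitz's formula for grid diagrams, $\mu(D)=\sum_i(n_{x_i}(D)+n_{y_i}(D))$, and each summand is non-negative because $D$ is positive and $n_v(D)$ is an average of four non-negative integers. So $n_{x_i}(D)=n_{y_i}(D)=0$ for all $i$, and by positivity $D$ vanishes on all four regions around each coordinate of $\widehat{x}$ and $\widehat{y}$. The boundary condition $\partial(\partial D|_{\alpha_j})=\widehat{y}-\widehat{x}$ on each $\alpha$-circle now forces $\widehat{x}=\widehat{y}$, so $D$ is a positive periodic domain. The preceding lemma then writes $D$ as an integer combination of horizontal and vertical annuli, but any annulus contributes $1$ to some $n_{x_i}(D)$; positivity therefore forces the trivial combination, i.e.\ $D=0$, and the empty decomposition ($k=0$) suffices.

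\emph{Inductive step.} The entire game reduces to producing a single empty rectangle $R\in\mathcal{R}(\widehat{x},\widehat{u_1})$ for some generator $\widehat{u_1}$ such that $D-R$ is still a positive domain in $\mathcal{D}(\widehat{u_1},\widehat{y})$. Once $R$ is found, additivity of the Maslov index gives $\mu(D-R)=\mu(D)-1$, and the inductive hypothesis decomposes $D-R=\sum_{i=2}^k D_i$ with $D_i\in\mathcal{R}(\widehat{u_{i-1}},\widehat{u_i})$; prepending $R$ yields the desired decomposition of $D$.

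\emph{Constructing $R$.} Since $\mu(D)>0$, there exists some $x_i\in\widehat{x}$ with $n_{x_i}(D)>0$, so at least one of the four regions meeting $x_i$ carries a positive coefficient. The local picture at $x_i$ is constrained by the fact that the multiplicity of $\partial D|_\alpha$ must jump by $-1$ across $x_i$ (since $x_i\in\widehat{x}$), and similarly for $\partial D|_\beta$; combining these jump conditions with positivity pins down one of the four adjacent regions, say the northeast one, as the place where $D$ is strictly positive at $x_i$. Starting from $x_i$, I would then walk along the boundary of the support of $D$ in the northeast direction, using positivity and the jump conditions at each vertex encountered to continue the walk. A discrete intermediate-value argument, driven by the boundary conditions $\partial(\partial D|_\alpha)=\widehat{y}-\widehat{x}$ and $\partial(\partial D|_\beta)=\widehat{y}-\widehat{x}$, guarantees that the walk terminates at a vertex $y_j\in\widehat{y}$. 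The axis-aligned rectangle $R$ spanned by $x_i$ and $y_j$ is then entirely contained in the support of $D$ with coefficient $\geq 1$, and the construction of the walk rules out any other coordinate of $\widehat{x}$ or $\widehat{y}$ appearing in the interior of $R$; thus $R$ is an empty rectangle in $\mathcal{R}(\widehat{x},\widehat{u_1})$, where $\widehat{u_1}$ is obtained from $\widehat{x}$ by exchanging $x_i$ and the corresponding coordinate to $y_j$.

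\emph{Main obstacle.} The delicate step is the traversal argument: one must verify that the walk along the boundary of the support of $D$ can always be extended (never gets blocked at a vertex with the wrong local configuration), that it terminates at some $y_j\in\widehat{y}$ rather than cycling or returning to $x_i$, and that the resulting rectangle is indeed empty. Each of these points requires careful case analysis of the local $4$-tuple of coefficients at every vertex traversed, combined with the two boundary conditions on $\partial D$; this is where all the combinatorial subtlety of the proof is concentrated.
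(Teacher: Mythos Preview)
Your induction framework and base case are fine, but the inductive step has a genuine gap: the rectangle you propose to peel off does not exist as described. For $R\in\mathcal{R}(\widehat{x},\widehat{u_1})$ the boundary condition $\partial(\partial R|_\alpha)=\widehat{u_1}-\widehat{x}$ forces the two $\widehat{x}$-coordinates involved to occupy \emph{opposite} corners of $R$ (bottom-left and top-right), with the two new $\widehat{u_1}$-coordinates at the remaining corners. Thus the vertex diagonally opposite $x_i$ must be a second $\widehat{x}$-coordinate, not a $\widehat{y}$-coordinate; a rectangle spanned by $x_i$ and some $y_j\in\widehat{y}$ is simply not an element of any $\mathcal{R}(\widehat{x},\cdot)$ unless $y_j$ happens already to lie in $\widehat{x}$. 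Your boundary walk, as you describe it, terminates at the wrong kind of vertex, and nothing in the jump/positivity constraints you invoke singles out a second $x$-coordinate rather than a $y$-coordinate---indeed, walking along the support boundary from $x_i$ one typically meets a $y$-coordinate first.

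The paper replaces the walk by a maximality/minimality argument. Starting from an $x_1$ with positive coefficient to its northeast (after a rotation if needed, and after disposing of the case that $D$ contains a full width-one annulus through that square), one considers all lattice points $p\neq x_1$ such that the rectangle with corners $x_1,p$ lies in $D$, and takes a \emph{maximal} such $p_0$. A short local case analysis at $p_0$, driven by maximality, shows this maximal rectangle must already contain some other $\widehat{x}$-coordinate, either at $p_0$ itself or along an edge. One then restricts the same partial order to $\widehat{x}$-coordinates only and picks a \emph{minimal} element $x_3$; the rectangle with opposite corners $x_1,x_3$ is empty by minimality, giving the required $R\in\mathcal{R}(\widehat{x},\widehat{u_1})$. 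This ``locate a second $x$-coordinate inside the maximal rectangle'' step is precisely the idea missing from your sketch.
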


\begin{proof} Since $D$ is not a trivial domain, assume $n_{x_1}(D)\ne
0$.   Furthermore  since   $\partial   (\partial  D_{|\alpha})=\widehat{y}-\widehat{x}$,   the
coefficient of $D$  at either the top-right square  or the bottom-left
square of $x_1$  must be non-zero. Assume after a rotation $R(\pi)$ if
necessary,
it  is the top-right one. Now if $D$ contains the width one horizontal or
vertical annulus through this top-right square, then let $x_2$ be the
$x$-coordinate at the other boundary of the annulus. Then $D$ contains
the rectangle $r$ with $x_1$ and $x_2$ as the bottom-left and
top-right corners, and $D\setminus r$ has index $1$ less, and we are
done.

So now assume $D$ does not contain any such annulus. Consider all  $p$, points  of intersection
between $\alpha$  and $\beta$  circles, such that  $p\ne x_1$  and the
rectangle  with  $x_1$  as  the  bottom-left corner  and  $p$  as  the
top-right  corner is  contained  in $D$.  The  set of  such points  is
non-empty  by  assumption. Put  a  partial  order  on such  points  by
declaring a point $p$  to be smaller than or equal to  a point $q$, if
the  rectangle corresponding  to  $q$  contains $p$.  Let  $p_0$ be  a
maximal element under this order. Such a maximal element exists since
$D$ does not contain any of the above described annuli.

Now consider the rectangle $r$ with $x_1$ and $p_0$ as the bottom-left
and the top-right corners respectively. We first want to show that $r$
must  contain  an $x$-coordinate  other  than  $x_1$.  Assume $D$  has
non-zero coefficient  at the  square to the  top-left of  $p_0$. Since
$p_0$ is  a maximal  element, $D$ must  have zero coefficient  at some
square above the top horizontal line  of $r$. So we start at $p_0$ and
proceed left along this horizontal line until we reach the first point
$p_1$, such  $D$ has  non-zero coefficient at  the top-right  square of
$p_1$, but has zero coefficient  at the top-left square of $p_1$. Then
it is easy to see that  $p_1$ must be an $x$-coordinate. Similarly, if
$D$ has non-zero coefficient at the bottom-right square of $p_0$, then
also $r$ contains  an $x$-coordinate other than $x_1$.  Finally if the
coefficient of $D$  is zero at both the  top-left and the bottom-right
square of $p_0$, then $p_0$ itself is an $x$-coordinate.

Thus $D$ contains a rectangle, with two $x$-coordinates, say $x_1$ and
$x_2$  being   the  bottom-left   corner  and  the   top-right  corner
respectively.  Now consider  the partial  order on  points  other than
$x_1$,  that   we  defined  earlier,  but  restrict only to  the
$x$-coordinates.  Again  the poset  is  non-empty,  since it  contains
$x_2$. Take a minimal element, say $x_3$. Then the rectangle $r'$ with
$x_1$  and  $x_3$ being  the  bottom-left  and  the top-right  corners
respectively, is an index $1$  domain connecting $\widehat{x}$ to some generator
$\widehat{u_1}$. The  positive domain  $D\setminus r'$ has  index $1$  less
(alternatively has a smaller sum of coefficients as $2$-chains), and
hence an induction finishes the proof.
\end{proof}

From now on, until the rest of the section, we only consider the case
for knots.
There       is       a       combinatorial       sign       assignment
$s:\{(\widehat{x},\widehat{y},D)|\widehat{x},\widehat{y}\in\widehat{\mathcal{G}},
D\in\mathcal{R}(\widehat{x},\widehat{y})\}\rightarrow\{-1,1\}$,
satisfying the following properties. If $D_1+D_2$ is a horizontal
(resp. vertical) annulus and all is well-defined, then
$s(\widehat{x},\widehat{y},D_1)  s(\widehat{y},\widehat{x},D_2)$ is $1$
(resp. $-1$). Otherwise, if $D_1+D_2=D_3+D_4$, $\widehat{y}\ne\widehat{w}$ and all is 
well-defined,
$s(\widehat{x},\widehat{y},D_1)s(\widehat{y},\widehat{z},D_2)=
-s(\widehat{x},\widehat{w},D_3)s(\widehat{w},\widehat{z},D_4)$.  

Two such sign assignments  are said to
be equivalent  if one  can be obtained  from another by a sequence of
moves, such that at each move we fix a generator $\widehat{x}$ and we switch the
sign of  every triple of the form $(\widehat{x},\widehat{y},D)$ and $(\widehat{y},\widehat{x},D)$. 

The partial order in $\widehat{\mathcal{G}}$ (resp. $\mathcal{G}^{-}$)
is defined as $\widehat{y}\preceq \widehat{x}$ (resp. $y\preceq x$) if there exists a positive domain in
$\mathcal{D}^{0,0}(\widehat{x},\widehat{y})$
(resp. $\mathcal{D}^0(x,y)$). It is clear in both cases that the elements
in different 
Alexander gradings are not comparable. Also the covering relations are
indexed by elements of $\mathcal{R}^{0,0}(\widehat{x}, \widehat{y})$ and $\mathcal{R}^0(x,y)$. It is
routine to prove the following.

\begin{lem}\cite{CMPOZSzDT}
For knots, with sign assignment as defined above, and the grading assignment
being the Maslov grading, for each $m$, $\widehat{\mathcal{G}_m}$ and
$\mathcal{G}^{-}_m$ are well-defined, finite, graded and signed posets.
\end{lem}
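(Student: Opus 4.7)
My plan is to verify in turn: (i) $\preceq$ is a genuine partial order, (ii) each Alexander-graded piece is finite, (iii) the Maslov grading is a legitimate grading assignment, and (iv) the combinatorial sign $s$ defined above satisfies the sign-assignment axiom on every length-$3$ closed interval.

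For (i), reflexivity is witnessed by the zero domain, and transitivity is immediate because the sum of two positive domains lying in $\mathcal{D}^{0,0}$ (resp.\ $\mathcal{D}^0$) is again positive and in the same class. Antisymmetry in $\widehat{\mathcal{G}}$ uses the earlier lemma that every periodic domain is a sum of horizontal and vertical annuli; since each such annulus contains both an $X$ and an $O$ marking, the only periodic domain lying in $\mathcal{D}^{0,0}$ is $0$, forcing $\widehat{x}=\widehat{y}$ whenever $\widehat{x}\preceq\widehat{y}$ and $\widehat{y}\preceq\widehat{x}$. In $\mathcal{G}^{-}$ the same argument, together with the constraint $n_{O_i}(D)=l_i-k_i$, pins down the $U_i$-exponents as well. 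The remark from the excerpt that elements of different Alexander gradings are incomparable then guarantees that restricting to $\widehat{\mathcal{G}_m}$ or $\mathcal{G}^{-}_m$ loses none of the relations.

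For (ii), $\widehat{\mathcal{G}}$ already has at most $N!$ elements, so $\widehat{\mathcal{G}_m}$ is finite. For $\mathcal{G}^{-}_m$, any element $\widehat{x}\prod_i U_i^{k_i}$ of Alexander grading $m$ satisfies $\sum_i k_i = A(\widehat{x})-m$; since $A$ is bounded on the finite set $\widehat{\mathcal{G}}$, only finitely many exponent tuples are admissible. For (iii), every covering relation corresponds to an empty rectangle $D$ with $\mu(D)=1$: in $\widehat{\mathcal{G}}$ we have $D\in\mathcal{R}^{0,0}$ with $n_{O_i}(D)=0$, giving $M(\widehat{x})-M(\widehat{y})=\mu(D)-2\sum_i n_{O_i}(D)=1$; in $\mathcal{G}^{-}$ the $U_i$-exponents absorb the $O$-contribution, and the same short calculation again produces $M(x)-M(y)=1$ along each covering.

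The main obstacle is (iv). I would show that every length-$3$ closed interval $[\widehat{z},\widehat{x}]$ contains exactly two maximal chains and that the product of the four edge signs equals $-1$. Maximal chains correspond to decompositions of the positive index-$2$ connecting domain $D$ as a sum of two empty rectangles $D_1+D_2$, with the intermediate generator determined by the splitting. The combinatorial heart is a case-by-case enumeration of how two rectangles can sum to such a $D$ --- disjoint rectangles, rectangles sharing a single corner, and nested or overlapping configurations --- each of which admits exactly two decompositions. Crucially, the annulus case cannot arise in either setting, since every horizontal or vertical annulus contains an $X$ marking and hence lies in neither $\mathcal{D}^{0,0}$ nor $\mathcal{D}^0$; thus the ``otherwise'' clause of the sign assignment always applies. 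Once two decompositions $D=D_1+D_2=D_3+D_4$ with distinct intermediates $\widehat{y}\neq\widehat{w}$ are secured, the defining identity $s(\widehat{x},\widehat{y},D_1)s(\widehat{y},\widehat{z},D_2)=-s(\widehat{x},\widehat{w},D_3)s(\widehat{w},\widehat{z},D_4)$ is exactly the required sign condition. The rectangle-pair enumeration is the only step demanding honest combinatorial work, but it is mechanical given the earlier classification of positive domains.
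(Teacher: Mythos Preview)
The paper does not actually prove this lemma: immediately before the statement it writes ``It is routine to prove the following'' and cites \cite{CMPOZSzDT}, giving no argument whatsoever. Your proposal is a correct and reasonable fleshing-out of what ``routine'' means here, following the standard line from \cite{CMPOZSzDT}.

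One small wrinkle worth tightening: in your antisymmetry step you assert that ``the only periodic domain lying in $\mathcal{D}^{0,0}$ is $0$'' directly from the fact that each annulus carries an $X$ and an $O$. That inference is not immediate, since integer combinations of annuli could in principle cancel at the markings. What actually makes the step go through is either (a) positivity of $D_1+D_2$ together with the annulus description, or (b) the knot hypothesis, which the paper uses just after this lemma to prove that $\mathcal{D}^{0,0}(\widehat{x},\widehat{x})$ consists only of the trivial domain. Either route closes the gap with one extra sentence. Your handling of (ii), (iii), and (iv) is accurate; in particular your observation that the annulus clause of the sign assignment never fires (every annulus contains an $X$) is exactly the point needed to reduce the sign check to the defining identity.
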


In Section \ref{sec:gss}, we will see that the closed intervals in each of these posets are
also shellable, and hence they will be GSS posets. However just being
graded and signed is enough for us to associate a chain complex to
each of them. Let $\mathcal{C}^{-}$ and $\widehat{\mathcal{C}}$ be the
associated chain complexes. Their homology is bigraded, with the
Maslov grading being the homological grading, and the Alexander
grading being an extra grading.

\begin{thm} \cite{CMPOZSzDT}There is a bigraded abelian group $HFK^{-}(L)$ which
  depends only on the knot $L$, which is isomorphic (as bigraded abelian
  groups) to the homology of $\mathcal{C}^{-}$.
\end{thm}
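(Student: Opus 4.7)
The plan is to identify $\mc{C}^{-}$ with the grid chain complex of \cite{CMPOZSzDT} and then prove invariance under the moves that relate any two grid diagrams of the same knot. First I would observe that, by the lemma characterizing positive index-one domains as empty rectangles, the boundary $\del x = \sum_{y} s(\wh{x},\wh{y}, r)\, \wh{y}\prod_i U_i^{n_{O_i}(r)}$ (the sum taken over $r \in \mc{R}^0(x,y)$) is precisely the grid boundary map. Alternatively, since a grid diagram is a nice pointed Heegaard diagram, the results of Chapter 2 identify this boundary with the count of holomorphic Whitney disks in the cylindrical reformulation. This already shows, over $\F_2$, that the homology agrees with $HFK^{-}(L,\F_2)$; the content of the theorem is the sign-refined statement.

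By Cromwell's theorem (cited above), any two grid diagrams for the same knot $L$ are related by a finite sequence of commutation and stabilization moves. So it suffices to exhibit, for each such move, a quasi-isomorphism between the associated chain complexes that preserves the bigrading. For a commutation relating $G$ and $G'$ via the commutation diagram $(G_c,\rho)$, I would define a chain map $\Phi \colon \mc{C}^{-}(G) \to \mc{C}^{-}(G')$ by counting empty pentagons in $G_c$ with one corner at $\rho$, weighted by the combinatorial sign assignment extended to pentagons and by the appropriate monomial $\prod_i U_i^{n_{O_i}}$. To verify $\del\Phi + \Phi\del = 0$, I would analyze the ends of the moduli of index-one hexagonal domains in $G_c$: each such family degenerates either as rectangle-then-pentagon or pentagon-then-rectangle, and these contributions pair up in sign to give the chain map relation. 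Reversing the role of $\rho$ produces a candidate inverse $\Psi$, and a count of heptagons in a combined diagram provides the chain homotopy $\Phi\Psi \simeq \mathrm{id}$.

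For a stabilization $(G',\rho)$ of $G$, the generators of $\mc{C}^{-}(G')$ split according to whether the point $\rho$ appears as a coordinate. Using this splitting, I would exhibit a filtration whose associated graded identifies $\mc{C}^{-}(G')$ with a mapping cone of the form $\mathrm{Cone}\bigl(U_i - U_{N+1} \colon \mc{C}^{-}(G)[U_{N+1}] \to \mc{C}^{-}(G)[U_{N+1}]\bigr)$, where $U_{N+1}$ is the new variable introduced by the extra $O$-marking and $U_i$ corresponds to the $O$-marking that sits in the stabilization region. Since multiplication by $U_i - U_{N+1}$ is a quasi-isomorphism on $\mc{C}^{-}(G)[U_{N+1}]$ after inverting one of the variables (or is an injection with cokernel a resolution of $\mc{C}^{-}(G)$), the cone is quasi-isomorphic to $\mc{C}^{-}(G)$. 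The bigrading shifts work out because the two new generators produced by stabilization have $(M,A)$ bigradings differing by $(1,0)$, matching the cone construction.

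The main obstacle will be verifying that everything is compatible with the sign assignment. Extending $s$ from rectangles to pentagons, hexagons, and heptagons in a consistent way, and then checking that boundary degenerations of higher polygon families cancel in pairs up to sign, is the core technical content of \cite{CMPOZSzDT}; in the $\F_2$ setting this reduces to the rectangle-count arguments already implicit in Chapter 2, but over $\Z$ one must prove existence and uniqueness (up to equivalence) of a polygonal sign assignment extending the rectangle sign assignment. Once this algebraic bookkeeping is in place, defining $HFK^{-}(L)$ as the common value of $H_*(\mc{C}^{-}(G))$ over all grid diagrams for $L$ finishes the proof.
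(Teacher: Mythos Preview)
The paper does not prove this theorem: it is stated with a citation to \cite{CMPOZSzDT} and no proof is given. So there is nothing in the present paper to compare your proposal against. What you have written is a reasonable outline of the argument that actually appears in \cite{CMPOZSzDT}: pentagon counts for commutation (with hexagon counts for the chain map identity and heptagon counts for the homotopy), and a mapping-cone analysis for stabilization via a filtration and the identification of the associated graded with a cone on $U_0-U_1$.

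A few small points on your sketch. For commutation you want $\del'\Phi = \Phi\del$, i.e.\ $\del'\Phi - \Phi\del = 0$ (a chain map), not $\del\Phi + \Phi\del = 0$; the index-two domains that witness this are juxtapositions of a rectangle and a pentagon, and the hexagon count enters only when showing $\Psi\Phi \simeq \mathrm{id}$ and $\Phi\Psi \simeq \mathrm{id}$. For stabilization, your parenthetical ``after inverting one of the variables'' is not the right mechanism: one does not localize, but rather observes that the cone on multiplication by $U_0-U_1$ on $C[U_0]$ is quasi-isomorphic to $C$ because $U_0-U_1$ is a non-zerodivisor with quotient $C$. Finally, the sign-assignment extension to pentagons and higher polygons is indeed the technical heart of \cite{CMPOZSzDT}, and you are right to flag it; but note that the present paper only \emph{uses} the existence and uniqueness of the sign assignment on rectangles (stated just before the theorem) and defers everything else to the citation.
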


\begin{thm} \cite{CMPOZSzDT}There is
a  bigraded  abelian group  $\widehat{HFK}(L)$  which depends
only  on  the knot $L$,  such  that  the  homology of  $\widehat{\mathcal{C}}$ is
isomorphic    (as    bigraded    abelian    groups)    to
$\widehat{HFK}(L)\otimes^{N-1} \mathbb{Z}^2$, where the $(M,A)$
bigrading of the two generators in $\mathbb{Z}^2$ are $(0,0)$ and $(-1,-1)$.
\end{thm}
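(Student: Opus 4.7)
The plan is to identify the combinatorial complex $\widehat{\mathcal{C}}$ with the hat knot Floer chain complex of the grid, viewed as a genus-one link Heegaard diagram for $L \subset S^3$, and then invoke the knot-Floer analogue of Theorem~\ref{thm:fthf1}. First, I would observe that a grid diagram with grid number $N$ is literally a link Heegaard diagram with $k = N$ pairs of basepoints: the $X$-points play the role of the $z$-basepoints and the $O$-points play the role of the $w$-basepoints, as already noted in Section~\ref{sec:griddiagrams}. Moreover it is manifestly a nice pointed Heegaard diagram in the sense of Definition~\ref{defn:nicediagram}, because every region in the complement of the $\alpha$ and $\beta$ curves is a square and each such square contains at most one marking.

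Second, I would apply Theorems~\ref{thm:holo=>embedded} and \ref{thm:embedded=>holo}, adapted to the knot case, to conclude that with $\F_2$ coefficients the hat Floer differential $\wh{\del}$ counts precisely the empty embedded bigons and empty embedded squares $D$ with $n_{X_i}(D) = n_{O_i}(D) = 0$ for every $i$. A grid diagram has no bigons, and an empty embedded square avoiding all markings in $\pi_2^0(\widehat{x},\widehat{y})$ is exactly an element of $\mathcal{R}^{0,0}(\widehat{x}, \widehat{y})$. Hence $\widehat{\mathcal{C}}\otimes\F_2$ is literally the $\F_2$ hat knot Floer chain complex of the grid diagram.

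Third, I would check that the bigradings agree. For an empty rectangle connecting $\widehat{x}$ to $\widehat{y}$, Lipshitz's formula $\mu(D) = e(D) + n_{\widehat{x}}(D) + n_{\widehat{y}}(D)$ gives $\mu(D) = 1$, matching the combinatorial definition; the relative Maslov grading $M(\widehat{x}, \widehat{y}) = \mu(D) - 2\sum_i n_{O_i}(D)$ and the relative Alexander grading $A(\widehat{x}, \widehat{y}) = \sum_i(n_{X_i}(D) - n_{O_i}(D))$ are defined identically in the combinatorial and the Heegaard Floer setup, and the absolute lifts are pinned down by the same normalizations ($\wh{HF}(S^3)=\Z$ in Maslov grading $0$ and the parity condition on $A$). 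To upgrade the $\F_2$ identification to $\Z$, I would use the fact that the combinatorial sign assignment $s$ described in Section~\ref{sec:griddiagrams} is unique up to equivalence, and identify it with a coherent orientation system on the moduli spaces of holomorphic rectangles; equivalence of sign assignments does not alter chain-homotopy type, so $\widehat{\mathcal{C}}$ and the sign-refined Floer complex agree as bigraded chain complexes over $\Z$.

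Finally, invoking the knot-Floer analogue of Theorem~\ref{thm:fthf1} (whose hypothesis is exactly a link Heegaard diagram with $k$ pairs of basepoints and whose conclusion is that the homology is $\wh{HFK}(L)\otimes^{k-1}\Z^2$ with the two generators in each $\Z^2$ factor in bigradings $(0,0)$ and $(-1,-1)$) and setting $k = N$ yields the stated theorem. The main obstacle in this program is the sign-matching step: showing that the purely combinatorial sign assignment on empty rectangles coincides, up to the allowed equivalence, with a coherent system of orientations on the holomorphic moduli spaces in the Floer picture. Everything else is a direct application of theorems already stated in the text, so the heart of the argument lies in this sign-theoretic identification.
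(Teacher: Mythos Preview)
The paper does not prove this theorem; it is cited from \cite{CMPOZSzDT}, whose argument is purely combinatorial: one shows directly that the homology of $\widehat{\mathcal{C}}$ changes in the prescribed way under commutation and stabilization, and hence factors as claimed with $\widehat{HFK}(L)$ defined to be the resulting knot invariant. No holomorphic curves enter that proof at all.

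Your approach is genuinely different: you propose to identify $\widehat{\mathcal{C}}$ with the hat knot Floer complex of the grid viewed as a nice Heegaard diagram, and then invoke the Ozsv\'ath--Szab\'o invariance theorem. Over $\F_2$ this is fine and is exactly the content of Theorems~\ref{thm:holo=>embedded} and~\ref{thm:embedded=>holo} specialized to grids. But over $\Z$ there is a real gap, and the paper flags it explicitly in the paragraph immediately following the theorem: ``with coefficients in $\Z$, the groups $\widehat{HFK}(L)$ and $HFK^{-}(L)$ do not have to be the hat and the minus version of the link Floer homology. This is because there could be a different sign convention on the grid poset whose homology is the knot Floer homology.'' The combinatorial sign assignment of \cite{CMPOZSzDT} is pinned down by the annulus conditions (product $+1$ for horizontal, $-1$ for vertical), and uniqueness holds only under that extra normalization. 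You would need to show that a coherent orientation system on the holomorphic moduli spaces satisfies the \emph{same} annulus normalization, and that is not established anywhere in the text---indeed the paper is careful to leave it open. So your ``main obstacle'' is not a routine verification but an unresolved question; without it, your argument proves only the $\F_2$ statement, while the theorem is over $\Z$. The combinatorial route in \cite{CMPOZSzDT} sidesteps this entirely by never comparing to the holomorphic theory.
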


If everything is computed with coefficients in $\F_2$, then these
groups have to the hat version and the minus version of the knot Floer
homology respectively.  However with coefficients in $\Z$, the groups
$\widehat{HFK}(L)$ and $HFK^{-}(L)$ do not have to be the hat and the
minus version of the link Floer homology. This is because there could
be a different sign convention on the grid poset whose homology is the
knot Floer homology. (The sign convention is unique only after
assuming that the product of signs corresponding to each width one
vertical annulus is the same).

The following is a crucial piece of observation.

\begin{lem}   If   the   grid   diagram  represents   a   knot,   then
$\mathcal{D}^{0,0}(\widehat{x},\widehat{x})$  consists
  of  only  the   trivial  domain.   In
particular,      for      any      pair
$\widehat{x},\widehat{y}\in\widehat{\mathcal{G}}$ (resp. $x,y\in\mathcal{G}^{-}$),
$\#|\mathcal{D}^{0,0}(\widehat{x},\widehat{y})|\leq 1$
(resp. $\#|\mathcal{D}^0(x,y)|\leq 1)$.
\end{lem}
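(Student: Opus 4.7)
The plan is to combine the structural description of periodic domains recorded just above with the fact that a knot has a single component, in order to force any $D\in\mathcal{D}^{0,0}(\widehat{x},\widehat{x})$ to be the zero $2$-chain. First I would use the preceding lemma to write $D=\sum_i a_i A_i+\sum_j b_j B_j$, where $A_1,\ldots,A_N$ are the horizontal annuli and $B_1,\ldots,B_N$ the vertical annuli. Each marking $M$ (either an $X$ or an $O$) lies at the intersection of exactly one $A_{i(M)}$ and one $B_{j(M)}$, so $n_M(D)=a_{i(M)}+b_{j(M)}$; the hypothesis $D\in\mathcal{D}^{0,0}$ thus becomes the system $a_{i(M)}+b_{j(M)}=0$ indexed by the $2N$ markings.

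The heart of the argument is to convert this system into the statement that all $a_i$'s are equal to a single constant $c$ and all $b_j$'s equal $-c$, and this is where I use that the link is a knot. I would trace the knot around: start at any $X$ marking, follow the horizontal arc to the $O$ in the same horizontal annulus, then the vertical arc to the next $X$, and so on until returning to the starting point. Because the diagram represents a knot, this traversal visits every marking in a single cycle, and hence every horizontal and every vertical annulus is visited. A horizontal step from $M$ to $M'$ forces $i(M)=i(M')$, and subtracting the two equations $a_{i(M)}+b_{j(M)}=0$ and $a_{i(M')}+b_{j(M')}=0$ yields $b_{j(M)}=b_{j(M')}$; a vertical step similarly yields $a_{i(M)}=a_{i(M')}$. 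Iterating all the way around pins every $a_i$ to one value $c$ and every $b_j$ to $-c$, so $D=c\sum_i A_i-c\sum_j B_j=cT-cT=0$ as $2$-chains, since the horizontal annuli and the vertical annuli each tile the torus.

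The ``in particular'' clause then falls out by subtraction. If $D_1,D_2\in\mathcal{D}^{0,0}(\widehat{x},\widehat{y})$, their difference is a $2$-chain with $\partial(\partial(D_1-D_2)_{|\alpha})=0$ and vanishing coefficient at every $X$ and $O$ marking, so it lies in $\mathcal{D}^{0,0}(\widehat{x},\widehat{x})=\{0\}$, forcing $D_1=D_2$; the identical argument works in $\mathcal{G}^{-}$, because for $x=\widehat{x}\prod_i U_i^{k_i}$ and $y=\widehat{y}\prod_i U_i^{l_i}$ any two elements of $\mathcal{D}^0(x,y)$ have the same $n_{O_i}$, so their difference sits in $\mathcal{D}^{0,0}(\widehat{x},\widehat{x})$. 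The main obstacle is the knot-traversal step: it is precisely where the single-component hypothesis enters, and it is exactly what would fail for a proper link (where honest non-trivial domains in $\mathcal{D}^{0,0}(\widehat{x},\widehat{x})$ genuinely exist).
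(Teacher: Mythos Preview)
Your proof is correct and follows essentially the same approach as the paper: write the periodic domain as a combination of horizontal and vertical annuli, use the single-component knot condition to cycle through all markings, and deduce from the vanishing at each marking that all horizontal coefficients agree and all vertical coefficients agree (with opposite sign), forcing $D=0$. The paper encodes the traversal by a cyclic numbering of the $O$'s (so that $A_i$ and $B_{i+1}$ meet at an $X$), while you describe the same traversal explicitly; the deduction and the ``in particular'' by subtraction are identical.
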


\begin{proof}  Number  the  $O$  points  (modulo $N$)  such  that  the
horizontal  annulus through  $O_i$  and the  vertical annulus  through
$O_{i+1}$ intersect in an $X$ point. Since the grid diagram represents
a knot, such a numbering can be done.

Now let $A_i$ (resp. $B_i$) be the horizontal (resp. vertical) annulus
through  $O_i$. Let  $D\in\mathcal{D}^0(\widehat{x},\widehat{x})$ with  $D=\sum_i  n_i A_i
+\sum_j  m_j   B_j$.  Since  $n_{O_i}(D)=n_{O_{i+1}}(D)=0$,   we  have
$m_i=-n_i=m_{i+1}$. This  implies all the  $m_i$'s are equal,  and all
the $n_j$'s are equal, and they  are opposite of one another. Thus $D$
is the trivial domain.
\end{proof}

\subsection{Commutation diagram}
Many of the above results are true if we work with a commutation
diagram instead of a grid diagram. We define new posets
$\widehat{\mathcal{G}_c}$ and $\mathcal{G}_c^{-}$ corresponding
to the commutation. If
$\widehat{\mathcal{G}}$ and $\widehat{\mathcal{G}^{\prime}}$ are the
generators of $G$ and $G'$, then
$\widehat{\mathcal{G}_c}
=\widehat{\mathcal{G}}\cup\widehat{\mathcal{G}^{\prime}}$ and
$\mathcal{G}_c^{-} = \mathcal{G}^{-}\cup (\mathcal{G}^{\prime})^{-}$.
For $\widehat{x},\widehat{y}\in\widehat{\mathcal{G}_c}$, let $x_c$ and $y_c$
be the coordinates of $\widehat{x}$ and $\widehat{y}$ on $\alpha_c$ or
$\alpha_c^{\prime}$. If both $\widehat{x}$ and $\widehat{y}$ are in $\widehat{\mathcal{G}}$
(resp. $\widehat{\mathcal{G}^{\prime}}$) a domain joining $\widehat{x}$
to $\widehat{y}$ is a $2$-chain $D$ generated by components of
$T\setminus(\alpha\cup\alpha^{\prime}\cup\beta)$,  such that
$\partial(\partial D_{|(\alpha\cup\alpha')})=\widehat{y}-\widehat{x}$ and $\partial
D_{|\alpha_c^{\prime}}=0$ (resp. $\partial D_{|\alpha_c}=0$). For
$\widehat{x}\in\widehat{\mathcal{G}}$ and
$\widehat{y}\in\widehat{\mathcal{G}^{\prime}}$, a domain joining
joining $\widehat{x}$
to $\widehat{y}$ is a $2$-chain $D$ with $\partial(\partial
D_{|(\alpha\cup\alpha')})=\widehat{y}-\widehat{x}$ and $\partial(\partial
D_{|\alpha_c})=\rho-x_i$ and $\partial(\partial
D_{|\alpha_c^{\prime}})=y_i-\rho$. (We are not interested in domains
that join points in $\widehat{\mathcal{G}^{\prime}}$ to points in
$\widehat{\mathcal{G}}$). The set of all such domains is
denoted by $\mathcal{D}(\widehat{x},\widehat{y})$, and $\mathcal{D}^0(\widehat{x},\widehat{y})$
(resp. $\mathcal{D}^{0,0}(\widehat{x},\widehat{y})$) is the subset
which has coefficients $0$ at every $X$ marking (resp. every $X$ or
$O$ marking). For $x=\widehat{x}\prod_i U_i^{k_i}$ and
$y=\widehat{y}\prod_i U_i^{l_i}$ in $\mathcal{G}_c^{-}$, we define
$\mathcal{D}^0(x,y)$ as the subset of
$\mathcal{D}^0(\widehat{x},\widehat{y})$ with $n_{O_i}=l_i-k_i$. We call a domain to be
positive if it has non-negative coefficients everywhere. A $2$-chain
$D$ is said to be periodic if $\partial D$ is a collection of whole
copies of $\alpha$ and $\beta$ circles. Note that this is different
from $\mathcal{D}(\widehat{x},\widehat{x})$.

The Alexander gradings of points in $\widehat{\mathcal{G}_c}$ are the
ones induced from the Alexander gradings in $\widehat{\mathcal{G}}$
and $\widehat{\mathcal{G}^{\prime}}$. The Maslov grading for points in
$\widehat{\mathcal{G}_c}$ is defined using the the Maslov grading
induced from $\widehat{\mathcal{G}}$ and the Maslov grading induced
from $\widehat{\mathcal{G}^{\prime}}$ shifted by $-1$. The Maslov
grading shifted by $c$, $M_c$ is defined similarly as $M+c$. The
$(M,A)$ bigrading of each $U_i$ is still $(-2,-1)$. Given a domain
$D\in\mathcal{D}(\widehat{x},\widehat{y})$, we define the Maslov index
$\mu(D)=M(\widehat{x})-M(\widehat{y})+2\sum_i n_{O_i}(D)$. Note that
this is different from the standard way of defining Maslov index. We
will soon encounter objects called empty pentagons, and according to
our definition they have index $1$, but according to the standard
definition they have index $0$. There is actually an alternative way
to define our version of the Maslov index, analogous to the case for
grid diagrams, as follows. For $\widehat{x},\widehat{y}$ both in
$\widehat{\mathcal{G}}$ or $\widehat{\mathcal{G}^{\prime}}$, we define
$\mu(D)=n_{\widehat{x}}(D)+n_{\widehat{y}}(D)$. For
$\widehat{x}\in\widehat{\mathcal{G}}$ and
$\widehat{y}\in\widehat{\mathcal{G}^{\prime}}$, we define
$\mu(D)=\frac{1}{4}
+n_{\widehat{x}}(D)+n_{\widehat{y}}(D)-(\partial{D}_{|\alpha})\cdot
(\partial{D}_{|\alpha^{\prime}})$. However we will stick to our first
definition for the time being and leave the proof of equivalence of
the two definitions to the interested reader.

The partial orders are defined similarly. In
$\widehat{\mathcal{G}_c}$ (resp. $(\mathcal{G}^{\prime})^{-}$), we
define $\widehat{y}\preceq \widehat{x}$ (resp. $y\preceq x$) if there is a
positive domain in $\mathcal{D}^{0,0}(\widehat{x},\widehat{y})$
(resp. $\mathcal{D}^0(x,y)$). There exists a sign
assignment for covering relations with properties analogous to the
case for the grid diagrams. We define $\widehat{(\mathcal{G}_c)_m}$
(resp. $(\mathcal{G}_c^{-})_m$) to be the subset of $\widehat{\mathcal{G}_c}$
(resp. $\mathcal{G}_c^{-}$) with Alexander grading $m$.

The following is a list of lemmas, analogous to the case for grid
diagrams. Most of the following are mere verifications. We provide
details of the proofs for some of the trickier cases.

\begin{lem}
Periodic domains are generated by annuli. For horizontal annuli, we
consider both the annuli coming from $G$ and the annuli coming from
$G'$. Thus periodic domains are generated by annuli in $G$ and the
special domain $D_c$ as shown in Figure \ref{fig:specialdomain}.
\end{lem}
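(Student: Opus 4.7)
The plan is to reduce to the grid-diagram case by using the special domain $D_c$. Given the way $D_c$ is drawn in Figure \ref{fig:specialdomain}, its boundary is $\del D_c = \alpha_c' - \alpha_c$ (up to sign), involving no $\beta$-circles. So for a periodic domain $D$ in the commutation diagram I can write
\[\del D = n_c' \alpha_c' + \sum_i n_i \alpha_i + \sum_j m_j \beta_j,\]
where $\alpha_i$ ranges over the $\alpha$-circles of $G$ (with $\alpha_c$ among them), and consider $D' := D - n_c' D_c$. By construction $\del D'$ has no $\alpha_c'$ component.

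Next I would argue that $D'$, which a priori is a $2$-chain generated by components of $T\setminus(\alpha\cup\alpha_c'\cup\beta)$, in fact descends to a $2$-chain generated by components of $T\setminus(\alpha\cup\beta)$, i.e. is a periodic domain in $G$. Along any open arc of $\alpha_c'$ disjoint from the intersection points $\rho, \rho'$, the vanishing of the $\alpha_c'$-component of $\del D'$ forces the coefficients of $D'$ on the two sides of that arc to agree. Thus away from $\rho$ and $\rho'$ the chain $D'$ is constant across $\alpha_c'$ and so visibly descends.

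Once $D'$ is established as a periodic domain in the grid diagram $G$, the analogous lemma for grid diagrams (already proved earlier in this section) applies directly, and $D'$ is a $\Z$-linear combination of horizontal and vertical annuli of $G$. Adding back $n_c' D_c$ gives the desired expression of $D$ as a combination of annuli in $G$ together with $D_c$.

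The main obstacle I expect is the verification at the two points $\rho$ and $\rho'$, where the local picture is a bowtie of two small triangles wedged between two pentagons and the simple "constant across an open arc" argument no longer applies. This has to be checked by inspecting the coefficients of $D_c$ on the four regions meeting each of $\rho, \rho'$ and confirming that subtracting $n_c' D_c$ really does equalize the coefficients of $D'$ across the bowtie; once this local matching is done, the rest of the argument is purely formal and follows the grid-diagram proof.
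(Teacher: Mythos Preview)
The paper does not actually give a proof of this lemma; it is one of the results the author lists as a ``mere verification'' and leaves to the reader. Your approach---subtract off $n_c' D_c$ to kill the $\alpha_c'$-component of the boundary, observe that the resulting $2$-chain descends to a periodic domain in $G$, and then invoke the grid-diagram lemma---is exactly the natural reduction and is correct.

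Your worry about $\rho$ and $\rho'$ is unfounded, though, and you should not flag it as an obstacle. The argument ``$\partial D'|_{\alpha_c'}=0$ forces the coefficients of $D'$ to agree across every open arc of $\alpha_c'$'' already settles the matter globally. Each region $R$ of $T\setminus(\alpha\cup\beta)$ is connected; the arcs of $\alpha_c'$ that lie in $R$ cut it into finitely many pieces, and adjacent pieces share an $\alpha_c'$-arc across which $D'$ is constant. Hence all pieces of $R$ carry the same $D'$-coefficient and $D'$ descends. This applies equally to the squares of $G$ that contain $\rho$ or $\rho'$ (which are cut into a triangle and a pentagon rather than two squares): there is nothing special to check there beyond the arc-by-arc constancy you already have. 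So you can simply delete the final paragraph and the qualifier ``away from $\rho$ and $\rho'$'' in the preceding one.
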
 

\begin{figure}[ht]
\psfrag{a}{$\alpha_c$}
\psfrag{b}{$\alpha_c^{\prime}$}
\psfrag{r}{$\rho$}
\psfrag{on}{$1$}
\psfrag{mon}{$-1$}
\psfrag{z1}{$0$}
\psfrag{z2}{$0$}
\psfrag{z3}{$0$}
\psfrag{z4}{$0$}
\psfrag{z5}{$0$}
\begin{center} \includegraphics[width=200pt]{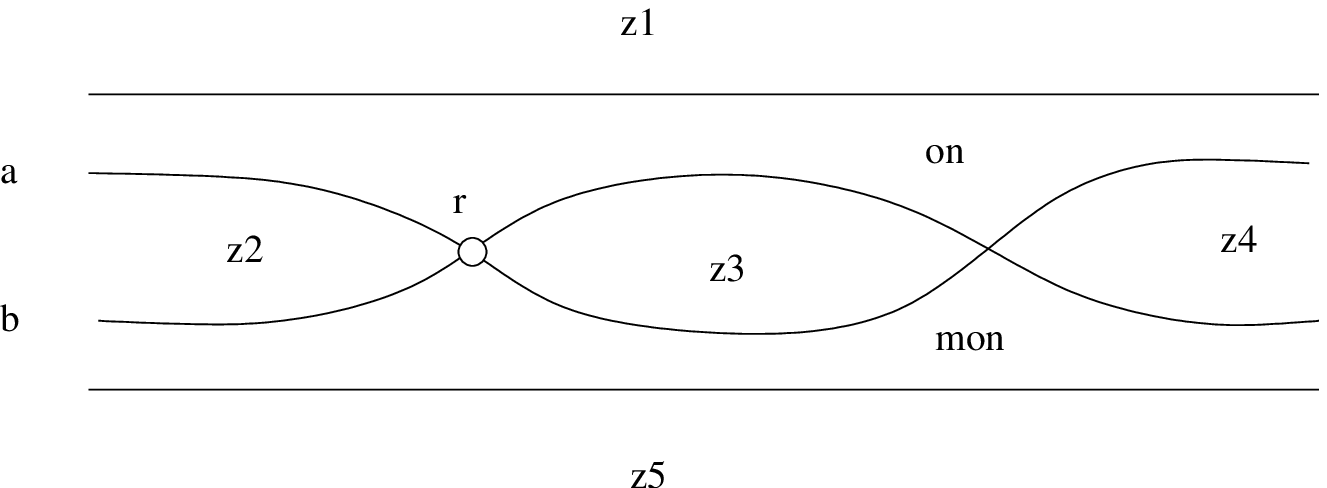}
\end{center}
\caption{Coefficients of the special domain}\label{fig:specialdomain}
\end{figure}

\begin{lem}\label{lem:rho1}
For any positive domain $D$ in $\mathcal{D}^0(\widehat{x},\widehat{y})$, each of the
  coefficients of $D$ in the four regions around $\rho$ is at most $1$.
\end{lem}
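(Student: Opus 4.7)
The plan is a local boundary analysis at $\rho$, combined with the placement of the $X$-marking and a decomposition argument.

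First I would set up notation: label the coefficients of $D$ in the four regions around $\rho$ as $p_1, b_1, p_2, b_2$, where $b_1, b_2$ sit in the two bigon-triangle quadrants (the small wedges lying inside the lens-bigons cobounded by $\alpha_c$ and $\alpha_c'$) and $p_1, p_2$ sit in the two pentagon quadrants (the large wedges). Using the boundary conditions $\partial(\partial D_{|\alpha_c}) = \rho - x_c$ and $\partial(\partial D_{|\alpha_c'}) = y_c - \rho$, I would orient $\alpha_c$ and $\alpha_c'$ and read off the coefficient of $\partial D$ on each of the four short arcs meeting at $\rho$ as a signed difference of the $p_i$'s and $b_j$'s. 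Summing those arc-coefficient contributions at $\rho$ gives a single linear relation
\[
p_1 + p_2 = b_1 + b_2 + 1,
\]
the $\alpha_c$ and $\alpha_c'$ conditions producing the same equation because the local topology forces them to be equivalent.

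By the definition of the commutation diagram in the previous section, one of the two bigon-triangle quadrants (say the $b_1$-one) contains an $X$-marking; since $D \in \mathcal{D}^0(\widehat{x},\widehat{y})$, this gives $b_1 = 0$, so the relation reduces to $b_2 = p_1 + p_2 - 1$. Combined with positivity this already forces $p_1 + p_2 \geq 1$, and the remaining task is to show $p_1 \leq 1$ and $p_2 \leq 1$; the bound $b_2 \leq 1$ then follows automatically.

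To finish, I would establish a decomposition analogous to Lemma \ref{lem:grading}: any positive $D \in \mathcal{D}^0(\widehat{x},\widehat{y})$ with $\widehat{x}\in\widehat{\mathcal{G}}$ and $\widehat{y}\in\widehat{\mathcal{G}'}$ may be written as the sum of a single empty pentagon at $\rho$ and a collection of empty rectangles in $G$ or $G'$. Because $\rho \notin \alpha \cap \beta$, no single empty rectangle can contribute more than $1$ to any one quadrant at $\rho$. The empty pentagon contributes $1$ to one of $p_1, p_2$ and $0$ to the other three quadrants. To exclude $p_i \geq 2$ for $i = 1, 2$, I would argue by contradiction: two rectangles contributing to the same pentagon quadrant would necessarily cover overlapping portions of the horizontal (or vertical) annulus through $\rho$, and since such annuli contain a unique $X$-marking, positivity plus $D \in \mathcal{D}^0$ forces one of these rectangles to double-cover that $X$-marking, contradicting $n_{X_i}(D)=0$.

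The hard part will be establishing the decomposition rigorously and verifying that the cancellation argument accounts for the possible presence of the special periodic domain $D_c$ of Figure \ref{fig:specialdomain}; this piece may shift coefficients inside the bigons but, being supported only in the bigons, cannot generate extra coverage of the pentagon quadrants.
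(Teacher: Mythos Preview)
Your first two steps are sound: the local boundary relation $p_1+p_2=b_1+b_2+1$ is correct, and using the $X$-marked triangle to set $b_1=0$ is exactly the right move. Invoking the decomposition of Lemma~\ref{lem:classification} is also legitimate, since (as the paper notes) its proof does not rely on the present lemma, and ``exactly one pentagon'' does follow because pentagons are the only pieces that pass from $\widehat{\mathcal{G}}$ to $\widehat{\mathcal{G}'}$.

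The gap is in your final step. Your claim that two rectangles contributing to the same pentagon quadrant ``would necessarily cover overlapping portions of the horizontal (or vertical) annulus through $\rho$, and \ldots\ forces one of these rectangles to double-cover that $X$-marking'' does not hold. Every rectangle in the decomposition lies in $\mathcal{R}^0$ and hence individually avoids all $X$-markings; a sum of such rectangles still has coefficient~$0$ at every $X$. Concretely, the rectangles in $G'$ that contribute to the top pentagon quadrant are those with $\alpha_c'$ on their bottom edge spanning $\rho$'s column; two such rectangles overlap near $\rho$, but that overlap need not contain the $X$ in the adjacent annulus (which can sit in a different column entirely). You would also need to rule out one rectangle plus the pentagon contributing $2$ to the same $p_i$, which your argument does not address. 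A correct decomposition argument would have to track how the $\alpha_c$- and $\alpha_c'$-coordinates of the intermediate generators move relative to $\rho$ along the chain, and show that any such coordinate crossing $\rho$ twice forces an intermediate rectangle to cover the $X$-marked triangle --- this is considerably more delicate than what you have sketched.

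The paper's proof bypasses the decomposition entirely. It works directly with the $1$-chain $\partial D_{|\alpha_c\cup\alpha_c'}$, realized as a path from $x_c$ to $\rho$ along $\alpha_c$ and then from $\rho$ to $y_c$ along $\alpha_c'$. If this path hits $\rho$ more than once, the segment between two consecutive hits is a full copy of $\alpha_c$ or of $\alpha_c'$; either circle has an $X$-marking immediately to one side, so $D$ would then have positive coefficient at that $X$, contradicting $D\in\mathcal{D}^0$. Hitting $\rho$ exactly once means only two of the four arcs at $\rho$ carry nonzero boundary, which combined with $b_1=0$ immediately bounds the remaining three coefficients by~$1$. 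This is both shorter and avoids the bookkeeping your approach requires.
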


\begin{proof}
  Recall one of the $4$ regions around $\rho$ is an $X$ marking, and
  hence the coefficient of $D$ at that region is $0$. After the
  rotation $R(\pi)$ if necessary, we can assume that region is to the
  right of $\rho$. If $D$ is a domain in either $G$ or $G'$, then it
  is easy to see that $n_{\rho}(D)$ is either $0$ or $\frac{1}{2}$. So
  let us assume $\widehat{x}\in\widehat{\mathcal{G}}$ and
  $\widehat{y}\in\widehat{\mathcal{G}^{\prime}}$. If $x_c$ (resp.
  $y_c$) is the coordinate of $\widehat{x}$ (resp $\widehat{y}$) on
  $\alpha_c$ (resp. $\alpha_c'$), then $\partial(\partial
  D_{|\alpha_c})=\rho-x_c$ and $\partial(\partial
  D_{|\alpha^{\prime}_c})=y_c-\rho$. Thus there is a path which goes
  from $x_c$ to $\rho$ along $\alpha_c$ and then from $\rho$ to $y_c$
  along $\alpha_c'$ which coincides with $\partial
  D_{|(\alpha_c\cup\alpha_c')}$ as $1$-chains. Furthermore we can also
  ensure that the path does not enter $\rho$ through top-left and then
  leave through bottom-left. The way we construct this path is by
  starting at $x_c$ and then proceeding so as to keep the above
  conditions satisfied. It is easy to check that any such attempt
  always leads to a path with the required properties. We can also
  easily ensure that we never have to make an $180^\circ$ turn along
  our path. Now we will prove that such a path hits $\rho$ exactly
  once. Note that will be enough to prove the lemma.

Assume if possible the curve hits $\rho$ at least twice. Then look at
the part of the path between the first hit and the second hit. This
part has to one copy of either $\alpha_c$ or $\alpha_c'$, and neither
is allowed since both have some $X$ marking immediately on their left.
\end{proof}

\begin{lem}
For a knot, a periodic domain $D$ with $n_{X_i}(D)=n_{O_i}(D)=0$ for all $i$,
is generated by the special domain $D_c$. That implies that, given
$x,y\in\mathcal{G}_c^{-}$ there can
be at most $2$ positive domains in $\mathcal{D}^0(x,y)$.
\end{lem}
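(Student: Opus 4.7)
My plan is to reduce via the preceding lemma to the corresponding grid-diagram statement, which is already known for knots. By that lemma, every periodic domain in the commutation diagram may be written as $D = D_G + kD_c$, where $D_G$ is a $\mathbb{Z}$-linear combination of the horizontal and vertical annuli of $G$ and $k \in \mathbb{Z}$. The entire task is therefore to show that the hypothesis $n_{X_i}(D) = n_{O_i}(D) = 0$ for all $i$ forces $D_G = 0$, after which $D = kD_c$ follows immediately.

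First I would read off from Figure \ref{fig:specialdomain} that the special domain itself satisfies $n_{X_i}(D_c) = n_{O_i}(D_c) = 0$ for every $i$: its only nonzero coefficients are the $\pm 1$'s in the two regions adjacent to $\rho$, and by construction of the commutation diagram these two regions contain no $X$ or $O$ marking. Consequently the vanishing of the $X$- and $O$-multiplicities transfers from $D$ to $D_G$. Viewing $D_G$ as a periodic domain on the unadorned grid $G$ and invoking the earlier lemma for grid diagrams of knots (the only periodic domain with every $n_{X_i}$ and $n_{O_i}$ zero is the trivial one), I conclude $D_G = 0$, hence $D = kD_c$.

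For the consequence, suppose $D_1, D_2 \in \mathcal{D}^0(x,y)$ are both positive, with $x = \widehat{x}\prod_i U_i^{k_i}$ and $y = \widehat{y}\prod_i U_i^{l_i}$. Their difference is a periodic domain with $n_{X_i} = 0$ (from the $\mathcal{D}^0$ condition) and $n_{O_i} = 0$ (since both share $n_{O_i} = l_i - k_i$). By what was just proved, $D_1 - D_2 = k D_c$ for some $k \in \mathbb{Z}$. To cap $|k|$, I would invoke Lemma \ref{lem:rho1}: the two coefficients of each $D_j$ in the bigon regions near $\rho$ lie in $\{0,1\}$, so the corresponding coefficients $(k,-k)$ of $kD_c$ must lie in $\{-1,0,1\}$, forcing $k \in \{-1,0,1\}$. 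A brief check of which pairs of $\{0,1\}$-valued coefficients can be connected by $\pm D_c$ then shows that $\mathcal{D}^0(x,y)$ contains at most two positive domains (namely the ones with bigon coefficients $(0,1)$ and $(1,0)$).

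The main obstacle I anticipate is the very first step: justifying that $D_c$ really does have zero multiplicity at every marking. This is a local geometric statement about how the two bigon regions cut out by $\alpha_c \cup \alpha_c'$ sit relative to the $X$ and $O$ markings in the commuted annuli, and it ultimately rests on the unlinking condition that makes the horizontal commutation legal in the first place.
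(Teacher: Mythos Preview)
Your approach is exactly the intended verification (the paper gives no proof, listing this among the routine lemmas analogous to the grid case): decompose $D = D_G + kD_c$ via the preceding lemma, transfer the vanishing of all $n_{X_i}, n_{O_i}$ to $D_G$, kill $D_G$ with the knot-grid result, and then bound $k$ via positivity together with Lemma~\ref{lem:rho1}.

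One imprecision worth flagging in your justification of the key step $n_{X_i}(D_c)=n_{O_i}(D_c)=0$. Your reading of Figure~\ref{fig:specialdomain} as showing $D_c$ supported on only two regions cannot be literally right: a nonzero periodic domain with boundary $\pm\alpha_c\pm\alpha_c'$ has support running through entire strips between or outside those two curves, not just two cells near $\rho$. More to the point, the paper's own setup of the commutation diagram deliberately places an $X$ marking in one of the \emph{triangles} at $\rho$ (this is exactly what is invoked in the proof of Lemma~\ref{lem:rho1}), so you cannot simply assert that the regions adjacent to $\rho$ are unmarked. The conclusion $n_{X_i}(D_c)=n_{O_i}(D_c)=0$ does hold, but establishing it requires checking that the \emph{full} support of $D_c$ misses all $2N$ markings --- which, as you correctly anticipate in your final paragraph, is precisely where the unlinking hypothesis on the two commuted annuli does its work.
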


\begin{lem}\cite{CMPOZSzDT}
$A(x)$ and $M(x)$ are well-defined and they take integral values for a knot.
\end{lem}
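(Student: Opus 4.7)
The strategy is to reduce both claims to the corresponding lemma already proved for grid diagrams, observing that the commutation diagram splits cleanly into two grid-diagram pieces.

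First I would observe that $\widehat{\mathcal{G}_c}=\widehat{\mathcal{G}}\sqcup\widehat{\mathcal{G}^{\prime}}$ as sets. Indeed, every element of $\widehat{\mathcal{G}}$ has exactly one coordinate on the circle $\alpha_c$ and none on $\alpha_c^{\prime}$, while every element of $\widehat{\mathcal{G}^{\prime}}$ has exactly one coordinate on $\alpha_c^{\prime}$ and none on $\alpha_c$, and these two circles are distinct. Consequently the piecewise prescription "use the $G$-grading on $\widehat{\mathcal{G}}$ and the $G^{\prime}$-grading (shifted by $-1$ in the case of $M$) on $\widehat{\mathcal{G}^{\prime}}$" assigns an unambiguous value to each element of $\widehat{\mathcal{G}_c}$. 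Well-definedness of $A$ and $M$ therefore reduces on each piece to the grid-diagram lemma of Manolescu--Ozsv\'ath--Szab\'o--Thurston already invoked earlier, applied separately to the grid diagrams $G$ and $G^{\prime}$ (which represent the same knot $L$ since commutation does not alter the link type).

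For integrality, the grid-diagram version gives that $A(\widehat{x})$ and $M(\widehat{x})$ are integers for every $\widehat{x}\in\widehat{\mathcal{G}}$, and similarly for $\widehat{\mathcal{G}^{\prime}}$. Shifting $M$ by $-1$ on $\widehat{\mathcal{G}^{\prime}}$ clearly preserves integer values, so the combined function $M$ on $\widehat{\mathcal{G}_c}$ is integer valued for a knot, as is $A$. The extension from $\widehat{\mathcal{G}_c}$ to $\mathcal{G}_c^{-}$ is immediate from the formulas $M(\widehat{x}\prod_i U_i^{k_i})=M(\widehat{x})-2\sum_i k_i$ and $A(\widehat{x}\prod_i U_i^{k_i})=A(\widehat{x})-\sum_i k_i$, since these preserve integrality.

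The only place where some care is needed, and the item I would treat as the main obstacle, is checking that the value $-1$ is the correct shift, in the sense that the resulting $M$ is compatible with the combinatorial Maslov index $\mu(D)$ whenever $D\in\mathcal{D}(\widehat{x},\widehat{y})$ connects generators on opposite sides of the commutation. I would verify this by exhibiting a single reference pentagon: pick $\widehat{x}\in\widehat{\mathcal{G}}$ and $\widehat{y}\in\widehat{\mathcal{G}^{\prime}}$ that agree on all coordinates except the one on $\alpha_c$ (resp.~$\alpha_c^{\prime}$), connected by an embedded pentagon $P$ at $\rho$ containing no $X$ or $O$ markings, and compute both $M_G(\widehat{x})-M_{G^{\prime}}(\widehat{y})$ using the $J$-function formula from the grid-diagram section and $\mu(P)$ using the alternative formula $\tfrac{1}{4}+n_{\widehat{x}}(P)+n_{\widehat{y}}(P)-(\partial P_{|\alpha})\cdot(\partial P_{|\alpha^{\prime}})$. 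The two computations differ by exactly $1$, which is absorbed by the $-1$ shift; any other domain between generators on opposite sides differs from $P$ (up to swapping $\widehat{x}$ or $\widehat{y}$ along a rectangle inside a single grid diagram) by a periodic domain, which by the periodic-domain lemma is generated by annuli of $G$ and $G^{\prime}$ together with the special domain $D_c$, and each of these generators contributes zero to the relevant difference. Hence consistency is reduced to this single calculation.
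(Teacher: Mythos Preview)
Your first two paragraphs are correct and constitute exactly the ``mere verification'' the paper alludes to; the paper itself gives no proof beyond the citation to \cite{CMPOZSzDT}, so there is nothing further to compare against.

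Your third paragraph, however, is doing work that is not part of this lemma. In the commutation setting the paper \emph{defines} the Maslov index by $\mu(D)=M(\widehat{x})-M(\widehat{y})+2\sum_i n_{O_i}(D)$, so compatibility of $M$ with $\mu$ is automatic and the value of the shift is simply a convention, not something to be verified for well-definedness or integrality. What you are really checking there is the equivalence with the alternative point-measure formula $\tfrac{1}{4}+n_{\widehat{x}}(D)+n_{\widehat{y}}(D)-(\partial D_{|\alpha})\cdot(\partial D_{|\alpha'})$, which the paper explicitly leaves to the reader as a separate exercise. It is fine material, but it belongs to that other statement rather than to this lemma.
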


\begin{lem}
Let $D\in\mathcal{D}(\widehat{x},\widehat{y})$ be a positive domain with $\mu(D)=1$. Then
either $D$ is an empty rectangle in $G$ or $G'$, or
$\widehat{x}\in\widehat{\mathcal{G}}$ and $\widehat{y}\in\widehat{\mathcal{G}^{\prime}}$,
and they differ in exactly two coordinates. Furthermore, $D$ has
coefficients $0$ or $1$ everywhere, and the closure of the regions
where $D$ has coefficient $1$ forms a pentagon which does not contain
any $x$-coordinate or any $y$-coordinate in its interior. 
\end{lem}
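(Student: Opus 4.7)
The plan is to split on where the two generators lie. The two same-grid cases---$\widehat{x},\widehat{y}\in\widehat{\mathcal{G}}$ and $\widehat{x},\widehat{y}\in\widehat{\mathcal{G}^{\prime}}$---both reduce directly to the empty-rectangle lemma for grid diagrams proved immediately above, while the genuinely new content is the cross-grid case $\widehat{x}\in\widehat{\mathcal{G}}$, $\widehat{y}\in\widehat{\mathcal{G}^{\prime}}$, which produces the pentagon. (Domains from $\widehat{\mathcal{G}^{\prime}}$ to $\widehat{\mathcal{G}}$ are not considered in the setup, so no fourth case arises.)

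For a same-grid case, say $\widehat{x},\widehat{y}\in\widehat{\mathcal{G}}$, the defining boundary condition forces $\partial D_{|\alpha_c^{\prime}}=0$, so $D$ does not cross $\alpha_c^{\prime}$ and descends to a positive $2$-chain $\overline{D}\in\mathcal{D}(\widehat{x},\widehat{y})$ on the grid diagram $G$. The Maslov index is unchanged on the nose, since the shift of $-1$ in the definition is applied only to elements of $\widehat{\mathcal{G}^{\prime}}$; hence $\mu_G(\overline{D})=1$, and the preceding grid-diagram lemma shows $\overline{D}$ is an empty rectangle in $G$, which lifts to the same empty rectangle in the commutation diagram. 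The case $\widehat{x},\widehat{y}\in\widehat{\mathcal{G}^{\prime}}$ is entirely analogous.

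For the cross-grid case, I would first pin down the local structure at $\rho$. The boundary condition forces $\partial D$ to traverse $\alpha_c$ from $x_c$ to $\rho$ and then $\alpha_c^{\prime}$ from $\rho$ to $y_c$, so $\rho$ is necessarily a corner of $D$. Lemma \ref{lem:rho1} bounds the coefficients in the four regions around $\rho$ by $1$, and the path-chasing argument from that proof in fact pins the local picture down so that exactly one of those regions has coefficient $1$, giving $n_\rho(D)=1/4$. I would then invoke the alternative combinatorial Maslov formula $\mu(D)=\frac{1}{4}+n_{\widehat{x}}(D)+n_{\widehat{y}}(D)-(\partial D_{|\alpha})\cdot(\partial D_{|\alpha^{\prime}})$ advertised in the text. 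Setting $\mu(D)=1$, positivity forces every coordinate in the symmetric difference of $\widehat{x}$ and $\widehat{y}$ to contribute exactly $1/4$ to $n_{\widehat{x}}+n_{\widehat{y}}$, so $\widehat{x}$ and $\widehat{y}$ differ in exactly two coordinates, namely at $(x_c,y_c)$ and at one more pair $(x_i,y_i)$. This in turn forces all coefficients of $D$ to lie in $\{0,1\}$, and the closure of the $1$-region has precisely five corners: $x_c, y_c, \rho, x_i, y_i$.

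The final step is to verify that this pentagonal support is embedded and contains no coordinate in its interior, which I would handle by a combinatorial path-chasing argument in the spirit of the embedding portion of Theorem \ref{thm:holo=>embedded}: were the boundary of the $1$-region not embedded, one could produce a shorter positive subdomain of strictly smaller index $1$, contradicting minimality. The principal obstacles I anticipate are the justification of the alternative Maslov formula (left as an exercise by the author) and the bookkeeping of the dot product near $\rho$; once those are in hand, the analogy with the rectangle case is tight---an interior angle of $3\pi/4$ at $\rho$ precisely replaces the "missing quarter" that would otherwise close the pentagon into a rectangle---so the same mechanism that produced empty rectangles in the grid lemma now produces a single empty pentagon as the only positive index-$1$ cross-grid domain.
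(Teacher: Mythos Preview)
Your approach differs substantially from the paper's. The paper's proof is a one-line forward reference: it observes that the lemma is a direct corollary of Lemma~\ref{lem:classification} (the decomposition lemma two statements later), whose proof is independent of this one. That lemma shows every positive domain in the commutation diagram decomposes as $D=\sum_{i=1}^k D_i$ with each $D_i$ an empty rectangle or pentagon; since the Maslov index here is defined as $M(\widehat{x})-M(\widehat{y})+2\sum_i n_{O_i}(D)$, it telescopes over such a decomposition, each $D_i$ has index $1$, and $\mu(D)=1$ forces $k=1$. Your same-grid reduction is fine and is in the same spirit, but the cross-grid analysis is where the two routes diverge.

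Your direct attack on the cross-grid case has a real gap. Lemma~\ref{lem:rho1} is stated only for $\mathcal{D}^0$---its proof uses that one of the four regions around $\rho$ carries an $X$ marking and hence has coefficient $0$---whereas the present lemma concerns all of $\mathcal{D}$. Even granting it, the conclusion is only that each local coefficient is at most $1$; the path argument shows the $\alpha_c\cup\alpha_c'$ portion of $\partial D$ passes through $\rho$ once, but that does not by itself yield $n_\rho(D)=\tfrac14$ (the paper itself later analyses positive domains with $n_\rho=\tfrac34$ in the shellability section). To pin down $n_\rho=\tfrac14$ you need the index hypothesis, which you propose to feed through the alternative Maslov formula; but that formula is explicitly left unproven in the text, and the correction term $(\partial D_{|\alpha_c})\cdot(\partial D_{|\alpha_c'})$ can take either sign, so the inference ``positivity forces every coordinate in the symmetric difference to contribute exactly $\tfrac14$'' does not go through without first computing that term. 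These are exactly the obstacles you flag, and they are genuine: completing your route amounts to proving the alternative index formula and then redoing the point-measure bookkeeping of the grid-rectangle lemma with an extra corner at $\rho$, which is considerably more work than the paper's decomposition argument.
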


\begin{proof}
The proof is actually a direct corollary of Lemma \ref{lem:classification}, the proof of
which does not in any way require this theorem. 
\end{proof}

Such positive index $1$ domains are called empty rectangles or empty
pentagons depending on their shape, and their sets are denoted by
$\mathcal{R}(\widehat{x},\widehat{y})$ and
$\mathcal{P}(\widehat{x},\widehat{y})$. For
$\widehat{x},\widehat{y}\in\widehat{\mathcal{G}_c}$, (resp. $x,y\in\mathcal{G}_c^{-}$),
$\mathcal{R}^0$, $\mathcal{R}^{0,0}$, $\mathcal{P}^0$ and
$\mathcal{P}^{0,0}$ (resp. $\mathcal{R}^0$ and $\mathcal{P}^0$) are
defined naturally.

\begin{lem}
Empty rectangles and empty pentagons have index $1$.
\end{lem}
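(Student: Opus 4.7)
The plan is to split the verification into the two shape cases coming from the preceding classification lemma and, in each case, to read off $\mu(D)$ from a point-measure formula.

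For an empty rectangle $D$, both endpoints lie in the same grid diagram ($G$ or $G'$) and $D$ is supported on square regions of the commutation diagram, so the boundary condition on the "other" $\alpha$ circle is vacuous and the Maslov-grading shift by $-1$ on $\widehat{\mathcal{G}'}$ cancels symmetrically from $M_c(\widehat{x})-M_c(\widehat{y})$. Consequently the commutation-diagram index restricts to the grid-diagram index $\mu(D) = n_{\widehat{x}}(D) + n_{\widehat{y}}(D)$. By the classification, the four corners of the rectangle are exactly the two distinct $x$-coordinates and the two distinct $y$-coordinates along which $\widehat{x}$ and $\widehat{y}$ differ, each contributing $\tfrac14$, and every other coordinate lies outside $D$. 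Hence $\mu(D) = 4\cdot\tfrac14 = 1$.

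For an empty pentagon $D$ with $\widehat{x}\in\widehat{\mathcal{G}}$ and $\widehat{y}\in\widehat{\mathcal{G}'}$, I would instead invoke the alternative expression
\[
\mu(D) \;=\; \tfrac14 \;+\; n_{\widehat{x}}(D) + n_{\widehat{y}}(D) \;-\; (\partial D_{|\alpha})\cdot(\partial D_{|\alpha'}).
\]
Four of the five corners of the pentagon are coordinate points (two from each of $\widehat{x}$ and $\widehat{y}$), and the fifth is $\rho$; this gives $n_{\widehat{x}}(D)+n_{\widehat{y}}(D) = 1$. Next, $\partial D_{|\alpha'}$ lies on $\alpha_c'$, and the only $\alpha$ circle that meets $\alpha_c'$ is $\alpha_c$, which crosses $\alpha_c'$ only at $\rho$ and $\rho'$; since the pentagon does not reach $\rho'$, the entire dot product localizes at the shared corner $\rho$, where an arc of $\partial D$ on $\alpha_c$ terminates and an arc on $\alpha_c'$ originates. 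Averaging over the four translation directions used to define the dot product, a local computation at $\rho$ yields exactly $\tfrac14$. Combining, $\mu(D) = \tfrac14 + 1 - \tfrac14 = 1$.

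The main obstacle will be the local computation at $\rho$: one must track the orientation induced on $\partial D$ by the positivity of $D$, together with the "$\alpha$ top-left" convention that distinguishes $\rho$ from $\rho'$, to pin down the sign as $+\tfrac14$ rather than $-\tfrac14$. This reduces to inspecting a handful of local pictures around $\rho$. A subsidiary step is to confirm that the alternative formula agrees with the primary definition $\mu(D) = M_c(\widehat{x})-M_c(\widehat{y})+2\sum_i n_{O_i}(D)$, which the text leaves as an exercise; this can be pinned down by checking equality on a single small example and then propagating via additivity of $\mu$ under concatenation of domains (since any two positive index-$1$ domains between the same generators differ by a periodic domain).
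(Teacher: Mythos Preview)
The paper does not actually give a proof of this lemma: it is one of the items in the list prefaced by ``most of the following are mere verifications,'' and the intended verification is via the \emph{primary} definition $\mu(D)=M(\widehat{x})-M(\widehat{y})+2\sum_i n_{O_i}(D)$. For rectangles this reduces to the grid case, and for pentagons it is exactly the Maslov-grading computation for the pentagon map carried out in \cite{CMPOZSzDT}, which the paper cites for the well-definedness of the gradings on the commutation poset.

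Your approach is a genuine alternative: you bypass the $J$-formula gradings entirely and compute $\mu$ directly from the point-measure expression, including the $\alpha$-$\alpha'$ dot-product term. This is more self-contained and more in the spirit of Lipshitz' formula, but it costs you two things the paper deliberately avoids. First, you must establish the equivalence of the alternative formula with the primary definition, which the paper explicitly leaves as an exercise; your proposed verification (check one example, propagate by additivity) is the right idea but needs the additivity of the alternative expression under concatenation, which itself requires the same $\partial\cdot\partial$ manipulation used in the grid case. Second, the local computation at $\rho$ is sensitive to the convention that $\rho$ is the intersection point ``with $\alpha$ on its top-left,'' and you will need to check both orientations of the pentagon (i.e., whether the $\alpha_c$-arc approaches $\rho$ from the left or the right) to confirm the sign is uniformly $+\tfrac14$; you have correctly flagged this as the main obstacle. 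In short, your route is sound and arguably more illuminating, but it does more work than the paper intends, since the paper is content to import the grading shift from \cite{CMPOZSzDT}.
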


\begin{lem}\label{lem:classification}
Let $D\in\mathcal{D}(\widehat{x},\widehat{y})$ be a positive domain. Then    there    exists    generators
$\widehat{u_0},\widehat{u_1},\cdots,\widehat{u_k}\in\widehat{\mathcal{G}_c}$
with   $\widehat{u_0}=\widehat{x}$  and  $\widehat{u_k}=\widehat{y}$,
and domains $D_i\in(\mathcal{R}(\widehat{u_{i-1}},\widehat{u_i})\cup\mathcal{P}(\widehat{u_{i-1}},\widehat{u_i}))$
such that $D=\sum_i D_i$. This implies positive domains have non-negative index.
\end{lem}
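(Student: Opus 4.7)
The strategy is induction on the total area $|D| = \sum_i a_i$, where $D = \sum_i a_i D_i$ with the $D_i$ the elementary regions of $T\setminus(\alpha\cup\alpha_c^{\prime}\cup\beta)$. At each inductive step, I aim to locate an empty rectangle or empty pentagon $D_1 \leq D$ (as $2$-chains) joining $\widehat{x}$ to some intermediate generator $\widehat{u_1}$, so that $D - D_1$ is again a positive domain of strictly smaller area joining $\widehat{u_1}$ to $\widehat{y}$. Applying induction then finishes the decomposition, and the non-negative index claim follows at once, since each empty rectangle and each empty pentagon has index $1$ and the index is additive.

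If $\widehat{x}$ and $\widehat{y}$ both lie in $\widehat{\mathcal{G}}$ (the case of both lying in $\widehat{\mathcal{G}^{\prime}}$ being symmetric), then by definition $\partial D_{|\alpha_c^{\prime}} = 0$, so $D$ descends to a positive $2$-chain on the underlying grid diagram $G$. Here I would invoke the grid-diagram decomposition already proved in Lemma \ref{lem:grading} to peel off an empty rectangle in $G$. That rectangle may happen to be cut by $\alpha_c^{\prime}$ in the commutation diagram, but its boundary along $\alpha_c^{\prime}$ cancels, so it is still an empty rectangle in the sense defined for the commutation diagram.

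The genuinely new case is $\widehat{x} \in \widehat{\mathcal{G}}$ with $\widehat{y} \in \widehat{\mathcal{G}^{\prime}}$, where $\partial D$ traces an honest path from $x_c$ to $\rho$ along $\alpha_c$ followed by a path from $\rho$ to $y_c$ along $\alpha_c^{\prime}$. I would imitate the minimal-rectangle argument of Lemma \ref{lem:grading}: pick an $x$-coordinate $x_i$ of $\widehat{x}$ with a nonzero coefficient of $D$ in an adjacent region, find a maximal corner $p_0$ whose enclosed box is contained in $D$, and then descend to a minimal $x$-labelled subcorner. If the extraction stays clear of $\rho$, it yields an empty rectangle in $G$ joining $\widehat{x}$ to some $\widehat{u_1} \in \widehat{\mathcal{G}}$; if it runs into $\rho$, the enclosed shape is an empty pentagon joining $\widehat{x}$ directly to some $\widehat{u_1} \in \widehat{\mathcal{G}^{\prime}}$, after which $D - D_1$ joins a generator in $\widehat{\mathcal{G}^{\prime}}$ to $\widehat{y}$ and is handled by the first case applied inside $G'$.

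The main obstacle is making the pentagon extraction rigorous: one must confirm that the maximal enclosed shape around $\rho$ really has the pentagonal form described above and does not wrap around $\rho$ pathologically (for instance, by both sheets near $\rho$ being forced to have multiplicity $\geq 1$ because of a wraparound). The key ingredient will be Lemma \ref{lem:rho1}, which bounds each of the four coefficients of $D$ around $\rho$ by $1$ and so prevents multiple layers of $D$ from stacking at $\rho$; this is precisely the input that lets one match the enclosed shape against the list of possible positive index-$1$ domains and conclude it is a single empty pentagon.
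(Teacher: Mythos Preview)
Your inductive strategy (peel off an empty rectangle or pentagon and induct on total area) is exactly the paper's, and reducing the same-grid case to Lemma~\ref{lem:grading} is fine. The gap is in the mixed case. You propose Lemma~\ref{lem:rho1} as the key ingredient controlling the geometry near $\rho$, but that lemma is stated and proved only for domains in $\mathcal{D}^0(\widehat{x},\widehat{y})$ --- its proof opens by using that one of the four regions around $\rho$ carries an $X$ marking and therefore has coefficient zero. The present lemma is about arbitrary positive $D\in\mathcal{D}(\widehat{x},\widehat{y})$, and the bound genuinely fails there: adding a full copy of the torus to any pentagon yields a positive domain in $\mathcal{D}(\widehat{x},\widehat{y})$ with coefficients $\ge 2$ near $\rho$. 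So the input you plan to lean on is unavailable, and your pentagon extraction is not yet justified.

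The paper never bounds the coefficients near $\rho$. Instead it makes the technical choice of selecting the starting coordinate $x_1$ \emph{off} $\alpha_c$; then any pentagon grown upward from $x_1$ has $\rho$ on its top side with the maximal corner $p_0$ sitting on $\alpha_c'$, and the one genuinely new subcase beyond Lemma~\ref{lem:grading} is when the square to the upper-left of $p_0$ has nonzero coefficient. That case is handled by walking left along $\alpha_c'$ from $p_0$ toward $\rho$ and invoking only the maximality of $p_0$ to locate a second $x$-coordinate. The paper also isolates a new annulus configuration your outline does not address: when $x_1$ lies on the $\alpha$ circle just below $\alpha_c$, the width-one horizontal annulus through the top-right region can be tiled by squares together with two pentagons, and one must check separately that an empty rectangle or pentagon can still be split off.
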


\begin{proof}
We only prove the first part of the lemma. The second part is a
trivial implication.

If $D$ is non-trivial, choose an $x$-coordinate $x_1$ with $n_{x_1}\ne
0$ such that $x_1$ does not lie on $\alpha_c$. Either the top-right
square (or pentagon) or the bottom-left square to $x_1$
must have non-zero coefficient in $D$. Assume after a rotation
$R(\pi)$ if necessary that it is the top right square (or pentagon). Very similar to
the case for the grid diagram, we can assume that $D$ does not contain
any width one horizontal or vertical annulus through this top-right
square (or pentagon). There is a special case which needs extra
attention. If $x_1$ is on the $\alpha$ circle just below $\alpha_c$,
it is possible for $D$ to contain a width one annulus, which is tiled
by squares and $2$ pentagons. Even in this case, it is easy to see
that $D$ contains $r$, an empty rectangle or an empty pentagon
joining $\widehat{x}$ to some $\widehat{u_1}$, and
thus $D\setminus r$ has smaller sum of coefficients than $D$, and we
can proceed by induction.

So now assume there are no such annuli. We consider all the intersection points
$p$ between $\alpha$ and $\beta$ circles other than $x_1$, such that
$D$ contains the rectangle or the pentagon with $x_1$ as the bottom
left corner and $p$ as the top-right corner. Since we are only dealing
with horizontal commutations, the only type of pentagons that can
appear, will have $\rho$ in the top part of the pentagon. We call such
a rectangle or a pentagon to be the domain corresponding to $p$ (if
there is both a rectangle and a pentagon corresponding to $p$, we let
domain be the rectangle). With the
partial order being defined by inclusion of corresponding domains, let $p_0$ be
a maximal element. Now, we are looking for an $x$-coordinate other
than $x_1$ in $r_0$, the domain corresponding to $p_0$.

We proceed case by case as in the proof for Lemma
\ref{lem:grading}. All the cases are similar, except the following
one. The domain corresponding to $p_0$ is a pentagon, and the top
left-left square to $p_0$ has non-zero coefficient in $D$. Then we
start at $p_0$ and walk left towards $\rho$ along $\alpha_c'$, until
we first encounter a point $p_1$ such that the top-right square to
$p_1$ has non-zero coefficient, but the top-left square (or triangle)
to $p_1$ has zero coefficient in $D$. Such a point $p_1$ exists since
$p_0$ was a maximal element. It is easy to see that such a point $p_1$
is an $x$-coordinate.

So now we take the partial order, and restrict it to only
$x$-coordinates. If $x_2$ is a minimal element, then the domain
corresponding to $x_2$ is an index $1$ domain $r$, and $D\setminus r$
has smaller sum of coefficients than $D$, thus completing the induction.  
\end{proof}

\begin{lem}\cite{CMPOZSzDT}
For knots, with sign assignment described in the beginning of this
subsection, and with the grading being the Maslov grading, each of the
posets $\widehat{(\mathcal{G}_c)_m}$ and $(\mathcal{G}_c)^{-}_m$ are
well-defined, finite, signed and graded.
\end{lem}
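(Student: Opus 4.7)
The plan is to verify each of the four assertions (well-defined, finite, signed, graded) in turn, each being essentially a bookkeeping consequence of the structural lemmas established earlier in the subsection. I would start with well-definedness, since this is where the periodic-domain lemma does real work.

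For well-definedness, transitivity of $\preceq$ is immediate: concatenating positive domains gives a positive domain, and the marking constraints ($n_{X_i}=0$ in $\mathcal{D}^{0,0}$, and the $O_i$-counts tracked by $U_i$-powers in $\mathcal{D}^0$) are additive. For antisymmetry in $\widehat{(\mathcal{G}_c)_m}$, suppose $\widehat{y}\preceq\widehat{x}$ and $\widehat{x}\preceq\widehat{y}$. Gluing the two positive domains yields a periodic $2$-chain $D$ with $n_{X_i}(D)=n_{O_i}(D)=0$; by the knot periodic-domain lemma such a $D$ is a multiple of the special domain $D_c$. But $D_c$ takes both signs, so the positivity of each summand forces both to be trivial, whence $\widehat{x}=\widehat{y}$. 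For $(\mathcal{G}_c^{-})_m$, the same argument plus the relation $n_{O_i}(D_1)=k_i-l_i=-n_{O_i}(D_2)$ forces $k_i=l_i$, giving $x=y$.

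For finiteness, $\widehat{(\mathcal{G}_c)_m}$ is a subset of $\widehat{\mathcal{G}}\cup\widehat{\mathcal{G}'}$, which is manifestly finite. For $(\mathcal{G}_c^{-})_m$, the identity $A(\widehat{x}\prod_i U_i^{k_i})=A(\widehat{x})-\sum_i k_i=m$ bounds $\sum_i k_i$ in terms of the finitely many values taken by $A(\widehat{x})$, and positivity of $k_i$ then leaves finitely many options. For the graded assertion, I would invoke Lemma \ref{lem:classification}: every positive domain decomposes as a sum of empty rectangles and empty pentagons. Restricting to positive domains in $\mathcal{D}^{0,0}$ (respectively $\mathcal{D}^0$), positivity forces each summand also to lie in $\mathcal{D}^{0,0}$ (respectively $\mathcal{D}^0$), so every relation $\widehat{y}\preceq\widehat{x}$ factors through a chain whose consecutive links are empty rectangles or pentagons. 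In particular each covering relation is represented by such a domain $D$, and for it $\mu(D)=1$ and $n_{O_i}(D)=0$; from the commutation-diagram definition $\mu(D)=M(\widehat{x})-M(\widehat{y})+2\sum_i n_{O_i}(D)$ we read off $M(\widehat{x})-M(\widehat{y})=1$, so $M$ is indeed a grading assignment.

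Finally, for signedness, the existence of the sign function $s$ on triples $(\widehat{x},\widehat{y},D)$ with $D$ an empty rectangle or pentagon was already asserted earlier in the subsection, together with the key compatibility that whenever $D_1+D_2=D_3+D_4$ with $\widehat{y}\neq\widehat{w}$ the product of signs changes by $-1$. Since covering relations in $\widehat{(\mathcal{G}_c)_m}$ and $(\mathcal{G}_c^{-})_m$ correspond bijectively to such empty rectangles/pentagons (by the paragraph above), $s$ descends to a sign assignment on covering relations; the length-$3$ interval condition from Section \ref{sec:posets} (product of four signs around the two maximal chains equals $-1$) is precisely the $D_1+D_2=D_3+D_4$ compatibility. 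The main care needed in the argument is bookkeeping around the point $\rho$ and the pentagon case, especially since our convention for $\mu$ on pentagons differs from the standard one by $\tfrac{1}{4}$; Lemma \ref{lem:rho1}, which bounds the coefficients of positive domains near $\rho$, is what prevents pathological domains from appearing and is the likely source of any remaining subtlety.
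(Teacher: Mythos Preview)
Your proposal is correct and is exactly the kind of routine verification the paper has in mind: the paper does not actually prove this lemma, grouping it among results that ``are mere verifications'' and citing \cite{CMPOZSzDT}. So there is no paper proof to compare against; you have simply supplied the details the author elected to suppress.

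Two small points of care. First, your claim that covering relations correspond \emph{bijectively} to empty rectangles/pentagons needs the observation that for fixed $x,y$ with $y\leftarrow x$ there is at most one positive index-$1$ domain in $\mathcal{D}^{0,0}(\widehat{x},\widehat{y})$ (resp.\ $\mathcal{D}^0(x,y)$): two such would differ by a multiple of the special domain $D_c$, which has both positive and negative coefficients, contradicting that both are embedded rectangles/pentagons with coefficients in $\{0,1\}$. Second, the definition of ``sign assignment'' in Section~\ref{sec:posets} also requires that every length-$3$ closed interval has \emph{exactly} two maximal chains; you implicitly use this when invoking the $D_1+D_2=D_3+D_4$ compatibility, and it follows from the classification of positive index-$2$ domains (they decompose into two index-$1$ pieces in exactly two ways), but it is worth stating. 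Also, a minor slip: the paper's convention for $\mu$ on pentagons differs from the standard one by $1$ (pentagons have $\mu=1$ here versus $\mu=0$ standardly), not by $\tfrac14$; the $\tfrac14$ appears only in the alternate point-measure formula for $\mu$.
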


In Section \ref{sec:gss}, we will see that the closed  intervals  in commutation posets
are also shellable, and so like grid posets they will also be GSS posets.

\subsection{Stabilization diagram}

Now we repeat the whole process for the stabilization diagram. We only
consider the case $(a)$ of Figure \ref{fig:stablecases}. The other
cases are obtained by different rotations. The reversal of partial
order that might happen does not pose a problem here or in Section
\ref{sec:gss}. However in Section \ref{sec:gridhomotopy}, it deserves
some special attention, and hence we will also deal with case $(b)$ there.  

Let $H$ be the grid diagram before stabilization and let $G$ be the
diagram after. Let $\alpha_s$ and $\beta_s$ be the extra circles, and
let $\rho$ be their intersection point. Let $G_s=(G,\rho)$ be the
stabilization diagram. Let $\widehat{\mathcal{I}}$
(resp. $\mathcal{I}_s^{-}$) be the set
of all intersection points in $\widehat{\mathcal{G}}$
(resp. $\mathcal{G}^{-}$) which contain
$\rho$ as one of its coordinates. Let
$\widehat{\mathcal{NI}}=\widehat{\mathcal{G}}\setminus
\widehat{\mathcal{I}}$ and let
$\mathcal{NI}^{-}=\mathcal{G}^{-}\setminus \mathcal{I}^{-}$.

Let us number the $X$ and $O$ marking in $G$ as $X_0,X_1,\ldots,X_N$
and $O_0,O_1,\ldots,O_N$ such that the neighborhood of $\rho$ contains
the points $O_0,X_0,X_1$ with $O_0$ directly above $X_0$, and $O_1$
lies in the same horizontal annulus as $X_0$. Thus $H$ is
obtained from $G$ by deleting $\alpha_s,\beta_s,O_0$ and $X_0$, and
the rest of the points being numbered the same.

Note that there is a natural bijection $\widehat{f}$ from $\widehat{\mathcal{I}}$ to
$\widehat{\mathcal{H}}$, and we will always identify them in this
subsection using this bijection. This bijection actually induces a map $f^{-}$
from $\mathcal{I}^{-}$ to $\mathcal{H}^{-}$ given by
$f^{-}(\widehat{x}\prod_{i=0}^N U_i^{n_i})=f(\widehat{x})U_1^{n_0+n_1}\prod_{i=2}^N U_i^{n_i}$.

We define $\widehat{\mathcal{G}_s}$ (resp. $\mathcal{G}_s^{-}$) as a disjoint union of
$\widehat{\mathcal{G}}$ (resp. $\mathcal{G}^{-}$) and two copies $\widehat{\mathcal{H}}$
and $\widehat{\mathcal{H}'}$ of $\widehat{\mathcal{H}}$ (resp. one
copy of $\mathcal{H}^{-}$). In $\widehat{\mathcal{G}_s}$, the $(M,A)$
grading is obtained from the one induced from $\widehat{\mathcal{G}}$,
the one induced from $\widehat{\mathcal{H}}$ shifted by $(-1,0)$ and
the one induced from $\widehat{\mathcal{H}'}$ shifted by $(-2,-1)$. In
$\mathcal{G}_s^{-}$, the Alexander grading is the one induced from
$\mathcal{G}^{-}$ and $\mathcal{H}^{-}$, and the Maslov grading is
obtained from the one induced from $\mathcal{G}^{-}$ and the one
induced from $\mathcal{H}^{-}$ shifted by $-1$.

For $\widehat{x},\widehat{y}$ both in $\widehat{\mathcal{G}}$ or $\widehat{\mathcal{H}}$
or $\widehat{\mathcal{H}'}$, $\mathcal{D}(\widehat{x},\widehat{y})$ is defined like in the
subsection for grid diagrams. For $\widehat{x}\in\widehat{\mathcal{G}}$ and
$\widehat{y}\in\widehat{\mathcal{H}}$ or $\widehat{y}\in\widehat{\mathcal{H}'}$, we define
$\mathcal{D}(\widehat{x},\widehat{y})=\mathcal{D}(\widehat{x},\widehat{f}^{-1}(\widehat{y}))$. Domains
$\mathcal{D}^0$, $\mathcal{D}^{0,0}$, Maslov index $\mu$, empty
rectangles $\mathcal{R}$, $\mathcal{R}^0$ and $\mathcal{R}^{0,0}$ are
all defined analogously. However there is one minor change. For
$\widehat{x}\in\widehat{\mathcal{G}}$ and $\widehat{y}\in\widehat{\mathcal{H}}$ (but not
$\widehat{\mathcal{H}'}$), while defining
$\mathcal{D}^0(\widehat{x},\widehat{y}),\mathcal{D}^{0,0}(\widehat{x},\widehat{y}),
\mathcal{R}^0(\widehat{x},\widehat{y})$ and 
$\mathcal{R}^{0,0}(\widehat{x},\widehat{y})$, we require all the domains to have
$n_{X_0}=1$ (instead of the usual $0$).

In $\widehat{\mathcal{G}_s}$ the partial order is given by $\widehat{y}\preceq
\widehat{x}$ if there is a positive domain in $\mathcal{D}^{0,0}(\widehat{x},\widehat{y})$. Note,
for $\widehat{x}\in\widehat{\mathcal{G}}$ and $f(\widehat{x})\in\widehat{\mathcal{H}'}$,
the trivial domain is a positive domain in $\mathcal{D}^{0,0}(\widehat{x},f(\widehat{x}))$
and hence $f(\widehat{x})\prec \widehat{x}$ (indeed $f(\widehat{x})\leftarrow \widehat{x}$). However for
$\widehat{x}\in\widehat{\mathcal{G}}$ and $\widehat{y}\in\widehat{\mathcal{H}}$, partial
order does not come from trivial domains, due to the $n_{X_0}=1$
condition. 

For $\mathcal{G}_s^{-}$, if $x,y$ both in $\mathcal{G}^{-}$ or
$\mathcal{H}^{-}$, the partial order is the usual one given by
positive domains in $\mathcal{D}^0(x,y)$. For $\widehat{x}\in\widehat{\mathcal{G}}$
and $\widehat{y}\in\widehat{\mathcal{H}}$, we declare the partial order to be given by
$\widehat{y} U_1^{n_0+n_1+k_0+k_1}\prod_{i>1} U_i^{n_i+k_i}\preceq \widehat{x}
U_0^{n_0}U_1^{n_1}\prod_{i>1}U_i^{n_i}$ if there is a positive domain
in $\mathcal{D}^0(\widehat{x},\widehat{y})$ which has
$n_{O_i}=k_i$. Again note that we require all such domains to have
$n_{X_0}=1$ and hence
trivial domains do not contribute to the partial order.

The sign assignment is the one for the grid diagram $G$, and the
grading assignment is the Maslov grading. Most of the results proved in
the subsection for grid diagrams are true here with some minor
modifications. We just mention the few results that are slightly
different.

\begin{lem}
For $\widehat{x},\widehat{y}\in\widehat{\mathcal{G}_s}$ (resp. $x,y\in\mathcal{G}_s^{-}$), and $D$
a positive domain in $\mathcal{D}^0(\widehat{x},\widehat{y})$ (resp. $\mathcal{D}^0(x,y)$), at most two regions around
$\rho$ have non-zero coefficients, and each coefficient is at most $1$.
\end{lem}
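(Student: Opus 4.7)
By the four-fold symmetry among the cases of Figure~\ref{fig:stablecases}, it suffices to treat case~$(a)$. Label the four regions of the $2\times 2$ block surrounding $\rho$ counterclockwise from the northeast as $R_1,R_2,R_3,R_4$; in case~$(a)$ these contain the markings $X_1$, $O_0$, $X_0$, and no marking respectively. Write $a_i$ for the coefficient of $D$ in $R_i$.

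The $\mc{D}^0$-condition pins two of the four coefficients immediately: we always have $a_1=n_{X_1}(D)=0$, and $a_3=n_{X_0}(D)=0$ except in the single subcase where $\wh{x}\in\wh{\mc{G}}$ and $\wh{y}\in\wh{\mc{H}}$, in which case $a_3=1$. Thus the coefficients at the two $X$-regions already lie in $\{0,1\}$, and the remaining task is to bound $a_2$ and $a_4$.

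The bound on $a_2$ and $a_4$ comes from studying $\partial D$ along the four arcs of $\alpha_s\cup\beta_s$ emanating from $\rho$. The crucial structural point is that a generator of $\wh{\mc{G}_s}$ (hence of $\mc{G}_s^{-}$) has $\rho$ as either both its $\alpha_s$- and $\beta_s$-coordinate or as neither, since $\rho$ is the unique intersection of $\alpha_s$ and $\beta_s$. Consequently the jumps $\partial(\partial D_{|\alpha_s})(\rho)$ and $\partial(\partial D_{|\beta_s})(\rho)$ both lie in $\{-1,0,1\}$ and must be compatible, and writing these jumps in terms of the four coefficients $a_i$ yields a single balance equation
\[
(a_2+a_4)-(a_1+a_3)\;=\;\varepsilon,\qquad \varepsilon\in\{-1,0,+1\},
\]
where $\varepsilon$ records the contribution of $\rho$ to $\wh{y}-\wh{x}$.

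Combining this balance equation with $a_i\geq 0$ and the values of $a_1,a_3$ from the first paragraph yields the conclusion in every subcase, except for the ``dangerous'' one in which $a_3=1$ and $\varepsilon=+1$, which would naively allow $a_2+a_4=2$. This configuration arises precisely when $\wh{x}\in\wh{\mc{G}}$ does not contain $\rho$ while $\wh{y}\in\wh{\mc{H}}$ does, and it is ruled out by the following local argument: positivity of $D$, together with $n_{X_i}(D)=0$ for $i\geq 1$ and $n_{X_0}(D)=1$, forces the coefficient of $D$ to drop to $0$ on the outer boundary of the width-one vertical strip immediately to the left of $\beta_s$ and on the outer boundary of the horizontal strip immediately below $\alpha_s$. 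Tracking the coefficient inward from those strips then bounds each of the four coefficients in the $2\times 2$ block by $1$, and the balance equation forces one of $a_2,a_4$ to vanish.

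The chief technical hurdle is making this last local positivity-plus-marking argument precise in the dangerous subcase. Once this is established, the statement for $x,y\in\mc{G}_s^{-}$ follows from the one for $\wh{\mc{G}_s}$, as the defining conditions on $\mc{D}^0(x,y)$ impose the same constraints on $n_{X_i}(D)$ and the same balance equation on the four coefficients around $\rho$.
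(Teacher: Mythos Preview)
Your balance-equation approach is the right idea and is essentially how the (unwritten) proof should go, but you have misplaced the markings around $\rho$, and this error is fatal rather than cosmetic.

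In case~$(a)$ the two $X$--markings lie on the NW--SE diagonal, not the NE--SW diagonal: the correct picture is $O_0$ in $R_1$, $X_1$ in $R_2$, the unmarked square in $R_3$, and $X_0$ in $R_4$. You can read this off from the paper's later analysis of the stabilization map on the associated graded object: the assertion that the map $\widehat{\mathcal G}\to\widehat{\mathcal H}$ sends the special generator $y=\{p,\tau,\ldots\}$ to $x=\{\rho,\sigma,\ldots\}$ is realized by the single square $R_4$ (corners $\rho,p,\sigma,\tau$), which therefore must carry $X_0$.

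With the correct placement the proof is clean and needs no ``dangerous subcase.'' One gets $a_2=0$ and $a_4\in\{0,1\}$, hence $a_2+a_4\le 1$, and your balance equation gives $a_1+a_3=(a_2+a_4)-\varepsilon$. The only way to reach $a_1+a_3=2$ would be $a_4=1$ together with $\varepsilon=-1$; but $a_4=n_{X_0}(D)=1$ forces $\widehat y\in\widehat{\mathcal H}$, so $\widehat f^{-1}(\widehat y)$ contains $\rho$ and $\varepsilon\ge 0$. Thus $a_1+a_3\le 1$ in every case, and the lemma follows.

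Under your labeling the ``dangerous subcase'' is not merely hard to eliminate --- it is genuinely realized, so your sketched local argument cannot be completed. Take $D=R_2\cup R_3\cup R_4$. In your labeling this has $n_{X_1}=0$ (at $R_1$) and $n_{X_0}=1$ (at $R_3$); a direct computation of $\partial(\partial D_{|\alpha})$ shows it connects $\widehat x=\{q,p,\mathrm{SW}',\ldots\}\in\widehat{\mathcal G}$ (which does not contain $\rho$) to $\widehat f^{-1}(\widehat y)=\{\mathrm{NW}',\rho,\mathrm{SE}',\ldots\}$ with $\widehat y\in\widehat{\mathcal H}$. Yet three of the four regions around $\rho$ carry coefficient~$1$. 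Note too that your proposed conclusion (``each $a_i\le 1$ and the balance equation forces one of $a_2,a_4$ to vanish'') is internally inconsistent: in this subcase the balance equation gives $a_2+a_4=2$, so if both are $\le 1$ then both equal $1$ and neither vanishes.
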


\begin{lem}
A periodic domain $D$ for the grid $G$, with $n_{X_i}=0\forall i$,
$n_{O_i}=0\forall i>1$, and $n_{O_0}+n_{O_1}=0$ is generated by the
special domain $D_s$ which is the vertical annulus through $X_0$ minus the
horizontal annulus through $X_0$.
\end{lem}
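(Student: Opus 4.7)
The plan is to reduce the statement to the earlier lemma which says that, for a grid diagram of a knot, the only element of $\mathcal{D}^{0,0}(\widehat{x},\widehat{x})$ is the trivial domain. Note that since stabilization does not change the underlying link, the grid $G$ still represents a knot (the same knot as $H$), so that earlier result is available for $G$.

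First I would verify that $D_s$ itself satisfies the hypotheses. Because $O_0$ lies directly above $X_0$ by construction, the vertical annulus through $X_0$ contains exactly the two markings $X_0$ and $O_0$; because $O_1$ lies in the same horizontal annulus as $X_0$, the horizontal annulus through $X_0$ contains exactly $X_0$ and $O_1$. Subtracting yields $n_{X_0}(D_s)=0$, $n_{O_0}(D_s)=+1$, $n_{O_1}(D_s)=-1$, and zero at every other $X$- or $O$-marking, so $D_s$ indeed satisfies $n_{X_i}(D_s)=0$ for all $i$, $n_{O_i}(D_s)=0$ for $i>1$, and $n_{O_0}(D_s)+n_{O_1}(D_s)=0$.

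Next, let $D$ be an arbitrary periodic domain satisfying the stated hypotheses, and set $k:=n_{O_0}(D)$; the hypothesis forces $n_{O_1}(D)=-k$. I form $D':=D-kD_s$, which is a periodic domain by linearity. A short bookkeeping check using the coefficients of $D_s$ computed above shows that $n_{X_i}(D')=0$ for every $i$ and $n_{O_j}(D')=0$ for every $j$, so $D'$ lies in $\mathcal{D}^{0,0}(\widehat{x},\widehat{x})$ for any generator $\widehat{x}$. Applying the earlier knot-triviality lemma to $G$ gives $D'=0$, hence $D=kD_s$, which is the desired conclusion.

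The only nontrivial ingredient is the prior lemma for knot grids; the rest is a routine coefficient computation, so I do not foresee a genuine obstacle. The one point worth stating explicitly is the observation that stabilization preserves link type, which is what licenses the invocation of the knot lemma for the stabilized grid $G$.
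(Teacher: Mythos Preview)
Your argument is correct. The paper does not actually supply a proof of this lemma; it is one of several results listed in the stabilization subsection as ``slightly different'' from the grid-diagram case and left to the reader as a minor modification. Your reduction to the earlier lemma characterizing $\mathcal{D}^{0,0}(\widehat{x},\widehat{x})$ for knot grids is precisely the intended routine verification.
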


\begin{lem}
For $x\in\mathcal{G}^{-}$ and $y\in\mathcal{H}^{-}$, there are at most
two positive domains in $\mathcal{D}^0(x,y)$. For any
other combination of $x,y$ in $\mathcal{G}_s^{-}$ or
$\widehat{x},\widehat{y}$ in $\widehat{\mathcal{G}_s}$, there is at most
one such positive domain.
\end{lem}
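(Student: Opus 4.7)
The strategy is to extract all non-uniqueness from the single periodic domain $D_s$ identified in the previous lemma, and then to use positivity to bound the multiplicity in the one case that actually needs care.

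Given two positive domains $D_1,D_2$ in the same $\mathcal{D}^0(x,y)$ (or $\mathcal{D}^{0,0}(\widehat{x},\widehat{y})$ in the hat setting), I would form their difference $D_1-D_2$ and observe that it is a periodic domain in $G$ satisfying precisely the hypotheses of the preceding lemma: $n_{X_i}(D_1-D_2)=0$ for every $i$, $n_{O_i}(D_1-D_2)=0$ for every $i>1$, and $n_{O_0}(D_1-D_2)+n_{O_1}(D_1-D_2)=0$. The preceding lemma then forces $D_1-D_2=kD_s$ for some integer $k$.

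Most of the combinations collapse immediately to $k=0$. In the hat case $\widehat{x},\widehat{y}\in\widehat{\mathcal{G}_s}$, the definition of $\mathcal{D}^{0,0}$ requires $n_{O_0}(D_j)=n_{O_1}(D_j)=0$, and since $n_{O_0}(D_s)=1$ we must have $k=0$. When $x,y$ both lie in $\mathcal{G}^{-}$, the individual $U_0$-exponents of $x$ and $y$ pin down $n_{O_0}(D_j)$ separately, giving $k=0$ again. When $x,y$ both lie in $\mathcal{H}^{-}$, the partial order is the one inherited from the smaller pre-stabilized grid $H$, in which $D_s$ does not even appear, and the uniqueness already shown for grid diagrams applies directly.

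The only genuinely new case is $x\in\mathcal{G}^{-}$, $y\in\mathcal{H}^{-}$, in which the partial order records only $n_{O_0}+n_{O_1}$, so $D_1+kD_s$ stays in $\mathcal{D}^0(x,y)$ for every $k\in\mathbb{Z}$. To bound the number of positive shifts by two, I would examine the four regions surrounding $\rho$, which by hypothesis carry coefficients $a=n_{O_0}(D_1)$, $0=n_{X_1}(D_1)$, $c$ (the empty square of the stabilization block), and $1=n_{X_0}(D_1)$; the local coefficients of $D_s$ on the same four regions are $(+1,0,-1,0)$. Positivity of $D_1+kD_s$ on these four regions immediately forces $-a\le k\le c$, so the question reduces to bounding $a+c$ by $1$. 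This is the main obstacle: I expect it to come from chasing the boundary arcs $\partial D_1|_{\alpha_s}$ and $\partial D_1|_{\beta_s}$, each of which runs from a coordinate of $\widehat{x}$ to $\rho$ and therefore approaches $\rho$ from a side dictated by case $(a)$ of Figure \ref{fig:stablecases}; combined with the jump conditions at $\rho$ coming from the fact that $\rho$ is a coordinate of $\widehat{y}$ but not of $\widehat{x}$, this should pin down $a+c\le 1$. Once that inequality is in place, at most two integer values of $k$ yield a positive $D_1+kD_s$, and the lemma follows.
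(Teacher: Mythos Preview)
Your reduction is exactly right and is what the paper has in mind: the paper states this lemma without proof, treating it as a routine consequence of the two lemmas immediately preceding it. You correctly use the second of those (the $D_s$ lemma) to write $D_1-D_2=kD_s$, and you correctly dispatch all the ``easy'' combinations.

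The one place you work harder than necessary is the final inequality $a+c\le 1$. You propose to extract it by chasing $\partial D_1|_{\alpha_s}$ and $\partial D_1|_{\beta_s}$, but you never actually close that argument, and you do not need to: the \emph{first} of the two preceding lemmas already says that for any positive domain in $\mathcal{D}^0(x,y)$ with $x,y\in\mathcal{G}_s^{-}$, at most two of the four regions around $\rho$ carry a nonzero coefficient, and each such coefficient is at most $1$. Since $n_{X_0}(D_1)=1$ already occupies one of those two slots, at most one of $a=n_{O_0}(D_1)$ and $c$ can be nonzero, and that one is at most $1$. Hence $a+c\le 1$, so $-a\le k\le c$ allows at most two integers, and you are done. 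No boundary-arc analysis is required.
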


Like before, in each Alexander grading $m$, the stabilization posets
turn out to be well-defined, finite, graded and signed. In the next
section, we will prove that the closed intervals in these posets are
also shellable.

\section{GSS shellability}\label{sec:gss}

In this section we will use the posets defined in Section \ref{sec:griddiagrams} and show that they are GSS posets.

Let $G$ be
a grid with grid number $n$  drawn on a torus $T$, representing a knot
$K$. Recall that for $\widehat{x},\widehat{y}\in\widehat{\mathcal{G}}$
(resp. $x,y\in\mathcal{G}^{-}$), we have $\widehat{y}\preceq
\widehat{x}$ (resp. $y\preceq x$) if  there is a  positive domain in
$\mathcal{D}^{0,0}(\widehat{x},\widehat{y})$
(resp. $\mathcal{D}^0(x,y)$).
We now show that each closed interval  in either of these posets is
EL-shellable. For that, first note that it is enough to do it for the
case of $\mathcal{G}^{-}$. Fix a
point  $P$ in a  connected component  of $T\setminus(\alpha\cup\beta)$
containing some  marking, say  $X_1$.  Draw a  circle $l$  through $P$
which  is parallel  to  the longitude  and  is disjoint  from all  the
$\beta$ circles. We only require that our domains do not contain a
horizontal annulus through $P$.

Let $r\in\mathcal{R}^0(x,y)$ be an  empty rectangle not containing any
$X$ marking. By definition, $r$ cannot  contain the point $P$. To each such
domain $r$, we associate  a triple $(s(r),i(r),t(r))$, where $s(r)$ is
$0$  is  $D$  intersects  $l$  and is  $1$  otherwise.   If  $s(r)=0$
(resp. $s(r)=1$), $i(r)$  is the minimum number of  $\beta$ circles we
have to intersect  to reach the leftmost arc of  $r$, starting at $l$
and going  left (resp.  right)  throughout.  We always have  $t(r)$ to
denote the thickness  of a rectangle $r$.  The set  of such triples is
ordered  lexicographically, and thus  we have  a map  from the  set of
covering relations to a totally ordered set.

\begin{thm} Let  $x,y\in\mathcal{G}^{-}$. The  map which sends  a covering
relation represented  by an empty rectangle  $r$ to $(s(r),i(r),t(r))$
induces an EL-shelling on the interval $[y,x]$.
\end{thm}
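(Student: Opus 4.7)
For knots, the uniqueness lemma (asserting $\#|\mathcal{D}^0(x,y)|\leq 1$) shows that each closed subinterval $[u,v]\subseteq[y,x]$ of $\mathcal{G}^-$ is determined by a single positive domain $D\in\mathcal{D}^0(v,u)$, and by Lemma \ref{lem:grading} the maximal chains in $[u,v]$ correspond bijectively to the ways of writing $D$ as an \emph{ordered} sum $D=r_1+\cdots+r_k$ of empty rectangles in which each partial sum is positive. The plan is to produce, in every such interval, a canonical chain whose labels $(s(r_j),i(r_j),t(r_j))$ are strictly increasing, and then to show both that it is the unique strictly increasing maximal chain and that its label sequence is lex-minimum among all maximal chains.

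The candidate chain is built greedily. Having chosen $r_1,\ldots,r_j$, let $r_{j+1}$ be the empty rectangle summand of the positive domain $D-r_1-\cdots-r_j$ whose label $(s,i,t)$ is lex-smallest; such a summand exists by Lemma \ref{lem:grading}. Geometrically, the triple $(s,i,t)$ performs a horizontal sweep from $l$: it picks first the rectangles crossing the reference circle $l$, then those to the right of $l$ in order of distance from $l$, with rectangle thickness breaking any remaining ties. The greedy step therefore peels off the rectangle whose leftmost $\beta$-arc sits leftmost on the torus as seen from $l$, and among ties the thinnest one.

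The main step is to verify strict monotonicity $\ell(r_{j+1})>\ell(r_j)$. Suppose toward contradiction that $r_{j+1}$ has lex-smaller or equal label than $r_j$. A case analysis on how $r_j$ and $r_{j+1}$ can sit inside $D_{j-1}=D-r_1-\cdots-r_{j-1}$---either they are disjoint, share an edge, or overlap at a corner---shows that $r_{j+1}$ would already have been an empty-rectangle summand of $D_{j-1}$ with label lex-smaller than $r_j$, contradicting the greedy choice of $r_j$. The crucial geometric content is that removing a leftmost rectangle cannot \emph{create} new empty rectangles to its left: any empty rectangle whose label is lex-smaller than $r_j$ was already extractable from $D_{j-1}$. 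I expect this case analysis, which relies on $D$ being positive with zero $X$-coefficients and empty rectangles containing no interior $x$- or $y$-coordinates, to be the main technical obstacle.

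Given monotonicity, uniqueness and lex-minimality follow formally. In any strictly increasing maximal chain $r_1'<r_2'<\cdots<r_k'$, the first rectangle $r_1'$ must realize the minimum label among all empty-rectangle summands of $D$ (otherwise a smaller-labeled rectangle would have to appear at a later position, violating monotonicity); this forces $r_1'=r_1$, and induction on the remaining subinterval pins down the rest of the chain. For lex-minimality, the first label of the greedy chain is by construction the minimum over all empty-rectangle summands of $D$, so it is at most the first label of any other maximal chain; an induction removing equal prefixes, together with the fact that every subinterval inherits the same combinatorial structure, completes the comparison.
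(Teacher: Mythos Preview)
Your greedy construction of the candidate chain and the verification that it is lex-minimum are sound, and your monotonicity step is close to the paper's. There is, however, one imprecision and one genuine gap.

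The imprecision is in the hexagon case of monotonicity. When $r_j$ and $r_{j+1}$ share a corner (the three-$\beta$-circle case), the rectangle $r_{j+1}$ is \emph{not} extractable from $z_{j-1}$: one of its corners is a $z_j$-coordinate that is no longer a $z_{j-1}$-coordinate. So your claim that ``$r_{j+1}$ would already have been an empty-rectangle summand of $D_{j-1}$'' fails there. What is true---and what the paper verifies by a case check over the possible hexagon shapes relative to $l$---is that the \emph{other} decomposition of $r_j+r_{j+1}$ begins with some rectangle $r_j'\neq r_j$ of strictly smaller label; that is enough to contradict the greedy choice of $r_j$. This is easily repaired, but your phrasing as stated is wrong in that case.

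The genuine gap is the uniqueness step. Your argument ``a smaller-labeled rectangle would have to appear at a later position, violating monotonicity'' does not work: different maximal chains in $[y,x]$ may use pairwise disjoint collections of rectangles, and the multiset of labels is \emph{not} an invariant of the domain $D$ (already for a single hexagon the two decompositions involve four distinct rectangles with four distinct labels). So the existence of some $r$ at $y$ with $\ell(r)<\ell(r_1')$ places no constraint whatsoever on the chain $r_1',r_2',\ldots$. This is exactly where the paper does real work: it proves directly, by separate geometric inductions on $s$, then $i$, then $t$, that the first label of \emph{any} increasing chain starting at $z$ is determined by the residual domain in $\mathcal{D}^0(x,z)$. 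For example, fixing $t$ requires showing (by induction on the index) that any increasing chain must eventually use a rectangle with $i=i_0$ and $t\le t_0$, whence the increasing condition forces this to be the first rectangle. None of this follows formally from the greedy construction; it relies on the specific geometry of how rectangles sit relative to the reference line $l$.
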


Note that the  interval $[y,x]$ is non-empty if  and only if $y\preceq
x$. From now on, we only consider that case. Also note that given a
generator $z\in\mathcal{G}^{-}$, and  a triple $(s,i,t)$, there is  at most one generator
$z'$ covering $z$, such that the covering relation corresponds to that
triple.    Thus  each   maximal  chain   in  $[y,x]$   has   a  unique
labeling. Thus there is a unique minimum chain $c$. The following two
lemmas will prove the above theorem.

\begin{lem} The unique minimum chain $c$ is increasing.
\end{lem}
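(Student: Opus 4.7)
The plan is to argue by contradiction. Let $c:z_0\leftarrow z_1\leftarrow\cdots\leftarrow z_k$ denote the lex\nobreakdash-minimum maximal chain in $[y,x]$, with associated empty rectangles $r_1,\ldots,r_k$ and labels $\ell_i=(s(r_i),i(r_i),t(r_i))$, and suppose that at some index $j$ one has $\ell_{j+1}\leq \ell_j$ in the lex order. The strategy is to produce an alternate pair of empty rectangles $(r_j',r_{j+1}')$ passing through a different intermediate $z_j'\neq z_j$ in the length-two interval $[z_{j-1},z_{j+1}]$, whose label pair in slots $j,j+1$ is lex-smaller than $(\ell_j,\ell_{j+1})$. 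Splicing this pair into $c$ yields a lex\nobreakdash-smaller chain, contradicting minimality.

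The construction of the alternate pair splits on the geometric interaction of $r_j$ and $r_{j+1}$. Exactly two cases arise: (i)~the two empty rectangles share no corner, and (ii)~they share exactly one corner at a coordinate of $z_j$. The remaining possibility, that they share two corners, would force $r_j=r_{j+1}$ and return the chain to $z_{j-1}$, so it does not occur. In case (i), $r_{j+1}$'s corners are already coordinates of $z_{j-1}$ (being untouched by $r_j$), so $r_{j+1}$ is itself an empty rectangle at $z_{j-1}$; applying it first and $r_j$ second reaches $z_{j+1}$ via some $z_j'$. The new rectangles are literally $r_{j+1}$ and $r_j$ as subsets of $T$, hence they carry the labels $\ell_{j+1}$ and $\ell_j$, and the swapped pair $(\ell_{j+1},\ell_j)$ is strictly lex\nobreakdash-smaller than $(\ell_j,\ell_{j+1})$. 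The degenerate possibility $\ell_{j+1}=\ell_j$ is excluded because at any fixed generator, at most one outgoing covering relation carries any given triple $(s,i,t)$.

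In case (ii), the union $r_j\cup r_{j+1}$ is either a larger rectangle or an L-shape, and the sum $r_j+r_{j+1}$ admits a unique alternative decomposition as a composable pair of empty rectangles $(r_j',r_{j+1}')$ through some $z_j'\neq z_j$, obtained by splitting at the ``opposite'' corner of the union; this is the standard two\nobreakdash-decomposition phenomenon underlying $\partial^2=0$ in grid Floer homology. The task is to show $\ell(r_j')<\ell_j$ in lex order. The guiding principle is that $r_j'$ inherits its leftmost $\beta$-arc (and its position relative to $l$) from the ``inner'' of the two original rectangles, which under the hypothesis $\ell_{j+1}\leq \ell_j$ is $r_{j+1}$: if $s(r_{j+1})<s(r_j)$, then already $s(r_j')=s(r_{j+1})<s(r_j)$; otherwise $s$-values agree, and one reads off $i(r_j')\leq i(r_{j+1})\leq i(r_j)$, breaking ties by the thickness $t$.

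The main obstacle will be the bookkeeping in case (ii): verifying across the finitely many sub-configurations of corner-sharing (shared corner being top-left, top-right, bottom-left, or bottom-right of $r_j$, with union an L-shape versus a larger rectangle) that the transformation of the triple $(s,i,t)$ under the alternative decomposition genuinely forces $\ell(r_j')<\ell_j$ whenever $\ell_{j+1}\leq \ell_j$. Once these cases are dispatched, splicing $(r_j',r_{j+1}')$ into $c$ at position $j$ yields a maximal chain whose label sequence is strictly lex\nobreakdash-smaller than that of $c$, contradicting the minimality of $c$. Hence no such index $j$ exists, and $c$ is strictly increasing.
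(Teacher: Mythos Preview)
Your approach is essentially the same as the paper's: argue by contradiction at the first non-increasing step, then produce a smaller chain by either swapping the two rectangles (when they involve four distinct $\beta$ circles) or by taking the alternate decomposition of the resulting hexagon (when they involve three). The paper organizes the case split by the number of $\beta$ circles hit by $\partial(r_j+r_{j+1})$ rather than by ``shared corner,'' but these are equivalent dichotomies, and both treatments defer the actual verification in the hexagon case to a finite configuration check (you call it ``the main obstacle''; the paper points to a figure).

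Two small corrections. First, in your case (ii) the union is never a ``larger rectangle'': when the two empty rectangles share exactly one $z_j$-coordinate corner, the sum $r_j+r_{j+1}$ is always an L-shaped hexagon. Second, your dismissal of the two-shared-corners case (``would force $r_j=r_{j+1}$'') is not the right reason. What actually happens is that sharing both $z_j$-corners forces $r_j$ and $r_{j+1}$ to lie on the same pair of $\alpha$ and $\beta$ circles, so $r_{j+1}$ is either $r_j$ itself (impossible in a chain) or its torus-complement, in which case $r_j+r_{j+1}$ is a full annulus and hence contains an $X$ marking---contradicting $r_j,r_{j+1}\in\mathcal{R}^0$. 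This is exactly the paper's ``at least three $\beta$ circles'' observation. Neither point affects your overall strategy.
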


\begin{proof} Assume not. Let $m\leftarrow n\leftarrow p$ be the first
place in $c$ where the labeling decreases. Let $r_1$ and $r_2$ be the
two rectangles  involved for the  two covering relations.   Since each
vertical  and each  horizontal annulus  has at  least one  $X$ marking, so
$\partial  (r_1+r_2)$  must be  non-zero  on  at  least three  $\beta$
circles (and clearly on at most four $\beta$ circles).

If it is  non-zero on exactly four $\beta$  circles, then switch $r_1$
and $r_2$,  and thus  we have  produced a new  maximal chain  which is
smaller than  $c$ and thus  contradicting the assumption that  $c$ was
the minimum. If on the other hand, $\partial (r_1+r_2)$ is non-zero on
exactly   three  $\beta$   circles,  then   $r_1+r_2$  looks   like  a
hexagon. Depending on the shape of the hexagon and the position of the
line $l$ only the  cases as shown in  Figure \ref{fig:cuthexagon} can occur. In each of the cases, the
lexicographically best way to divide the hexagon is shown, and in each
case, that  happens to  be the increasing  one.  This proves  that the
minimum chain $c$ is increasing.

\begin{figure}[ht] \center{\includegraphics[width=330pt]{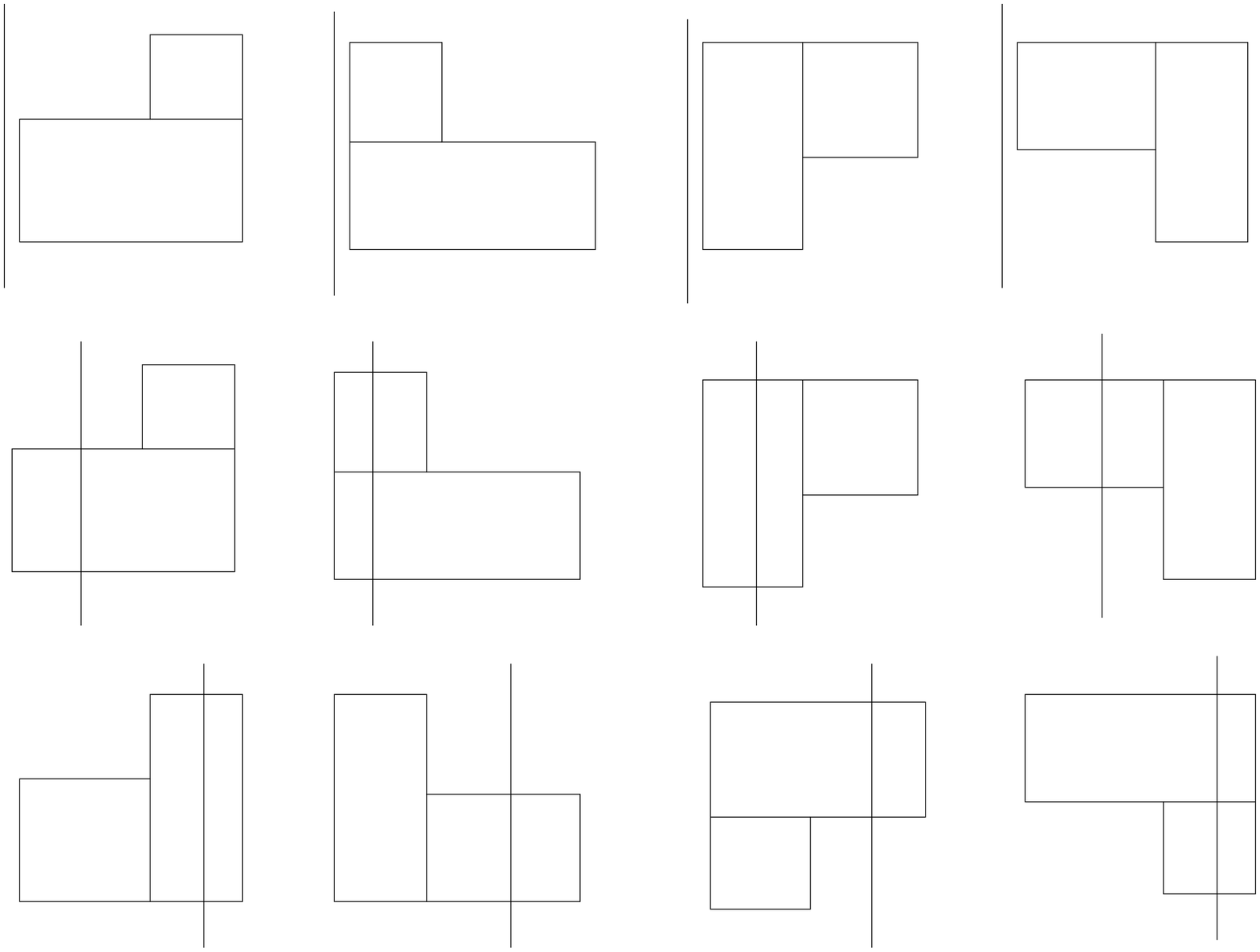}}
\caption{The lexicographically best way to cut a hexagon}\label{fig:cuthexagon}
\end{figure}
\end{proof}

\begin{lem} \label{lem:mainproof}
The minimum chain is the only increasing chain.
\end{lem}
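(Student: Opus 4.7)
The plan is to show that any increasing maximal chain in $[y,x]$ coincides with the lex-minimum chain $c_{\min}$ supplied by the previous lemma. I assume for contradiction that $c$ is an increasing chain distinct from $c_{\min}$, and let $j$ be the first index at which they differ. Because a covering relation at a fixed generator is uniquely determined by its label $(s,i,t)$, the rectangles at step $j$ of $c$ and $c_{\min}$ must carry different labels; lex-minimality of $c_{\min}$ forces $\ell_j^{\min}<\ell_j^c$. Let $j'$ be the next index at which the two chains meet. The restrictions $c|_{[z_{j-1},z_{j'}]}$ and $c_{\min}|_{[z_{j-1},z_{j'}]}$ are both increasing, share endpoints, and differ at every interior vertex.

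Next I would reduce to the case $j'=j+1$. In the graded poset $[z_{j-1},z_{j'}]$, any two maximal chains with the same endpoints are connected by a sequence of local swaps, each replacing consecutive covers $r_i,r_{i+1}$ by a different pair $r'_i,r'_{i+1}$ with $r_i+r_{i+1}=r'_i+r'_{i+1}$; every such swap is confined to a length-$2$ sub-interval. By choosing a judicious sequence of swaps one sees that the existence of two distinct increasing chains with the same endpoints forces the existence of two distinct increasing chains on some length-$2$ sub-interval; it therefore suffices to rule out this local configuration.

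In the length-$2$ case the domain $r_j^c+r_{j+1}^c=r_j^{\min}+r_{j+1}^{\min}$ is a positive index-$2$ domain avoiding all $X$- and $O$-markings, and the boundary-count argument carried out in the proof of the previous lemma shows that it is either a disjoint union of two empty rectangles or a hexagon. In the disjoint case the two chains are the two orderings of the same unordered pair $\{r_1,r_2\}$, and exactly one ordering (the one with $\ell(r_1)<\ell(r_2)$) is increasing. In the hexagonal case, Figure \ref{fig:cuthexagon} enumerates the six configurations relative to the line $l$, and in each the displayed cutting produces labels in strictly increasing order while the other possible cutting yields labels in strictly decreasing order. Either way there is a unique increasing length-$2$ chain, contradicting the assumption that $c\neq c_{\min}$.

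The chief obstacle I anticipate is making the reduction of the second paragraph rigorous: one must verify that some sequence of local swaps genuinely isolates an obstructing length-$2$ sub-interval on which both chains remain distinct and both increasing, rather than merely an arbitrary length-$2$ sub-interval. Once that is in hand, the remainder reduces to direct inspection of the finite case list already assembled in Figure \ref{fig:cuthexagon}.
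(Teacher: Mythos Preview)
Your reduction to length-$2$ sub-intervals is the real problem, and you are right to flag it. Knowing that $c$ and $c_{\min}$ are connected by a sequence of length-$2$ swaps does not by itself produce a length-$2$ window on which \emph{both} chains restrict to distinct increasing maximal chains: the intermediate chains created by swapping need not be increasing, and the two original chains pass through disjoint interior vertices on $[z_{j-1},z_{j'}]$, so their own length-$2$ pieces never share endpoints. I do not see a way to make the ``judicious sequence of swaps'' argument rigorous without essentially reproving the lemma by other means. Your length-$2$ analysis (disjoint rectangles and the hexagon cases of Figure~\ref{fig:cuthexagon}) is fine, but it sits as a base case with no working induction above it.

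The paper avoids this reduction entirely. At the first point of divergence $z$, it argues directly that the label $(s,i,t)$ of the next cover is \emph{forced} by the remaining positive domain $D\in\mathcal{D}^0(x,z)$. First, $s$ is determined by whether $D$ meets the line $l$ (if $D$ meets $l$ then some rectangle in any decomposition has $s=0$, so an increasing chain must start with $s=0$; otherwise $s=1$ throughout). Given $s$, one shows by an induction on the length of $[z,x]$ that any increasing chain must at some stage use a rectangle with the smallest admissible value $i_0$ of $i$, hence must \emph{start} with $i=i_0$; a second, parallel induction then forces $t$ to the minimal admissible thickness $t_0$. These inductions work by peeling off one rectangle and observing that the leftover domain still witnesses the same minimal $(i_0,t_0)$. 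Since the triple $(s,i,t)$ determines the cover at $z$, both chains take the same first step after $z$, contradicting the assumed divergence. No swap combinatorics is needed, and the hexagon picture is only used (implicitly) in the length-$2$ instances of the induction.
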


\begin{proof}  Now we  are  trying to  prove  that there  is a  unique
increasing chain.  If possible, let there be two increasing chains $c$
and $c'$. Starting  at $y$, let us assume they  agree up to a generator
$z$.     Let    $D$    be    the    unique    positive    domain    in
$\mathcal{D}^0(x,z)$. Let  $c_1=c\cap[z,x]$ and $c_2=c'\cap[z,x]$. Let
$r$  and $r'$  be the  rectangles  corresponding to  the two  covering
relations on $z$ coming from the  two chains $c_1$ and $c_2$.  We will
show  that  $(s(r),i(r),t(r))=(s(r'),i(r'),t(r'))$  which would  imply
that  $r=r'$, and  thus  $c$ and  $c'$  agree for  at  least one  more
generator, thus concluding the proof.

Now if $D$  does not intersect $l$,  then $s$ is forced to  be $1$. On
the other  hand, if  $D$ does intersect  $l$, then eventually  in both
$c_1$  and  $c_2$  some covering relation  will  have  $s=0$,  and since  both  are
increasing chains, so they both must  start with $s=0$. So we see that
$s$ is fixed.

First we analyze the case when $s=1$. So assume the whole domain $D$
lies to the right of $l$, and let $i_0$ be the minimum number of
$\beta$ circles we have to cross to reach $D$ from $l$ going right
throughout.  Clearly $i$, the second coordinate in the triple
$(s,i,t)$, can never be smaller than $i_0$. Also since the whole
domain $D$ has to be used up in both the chains $c_1$ and $c_2$, so at
some point $i$ will be equal to $i_0$. Since both $c_1$ and $c_2$ are
increasing, we see that this fixes $i=i_0$.

To see that $t$ is also fixed, we need an induction statement. Look at
all $p$ of  the form $z\leftarrow p\preceq x$,  such that the covering
relation $z\leftarrow p$ is by  a rectangle with $i=i_0$. Let $r_0$ be
the thinnest  rectangle among them and  let $t_0$ be  the thickness of
$r_0$. Our  induction claim is  that, at some  point in the  chain, we
have to use a rectangle with $i=i_0$ and $t\leq t_0$. The induction is
done on the length of  the interval $[z,x]$.  Clearly when this length
is $2$,  the statement is  true.  Let us  assume that we do  not start
with  the  thinnest  rectangle,  but  rather start  with  a  rectangle
$\widetilde{r_0}$.  Since  both $r_0$ and  $\widetilde{r_0}$ are index
$1$  domains, they  do  not contain  any  coordinate of  $z$ in  their
interior, and hence the local diagram must look like Figure \ref{fig:thinrect1}.

\begin{figure}[ht]
\psfrag{l}{$l$}
\psfrag{i0}{$i_0$}
\psfrag{t0}{$t_0$}
\psfrag{r0}{$r_0$}
\psfrag{tr0}{$\widetilde{r}_0$}
\psfrag{z1}{$z_1$}
\psfrag{z2}{$z_2$}
\psfrag{z3}{$z_3$}
\begin{center} \includegraphics[width=170pt]{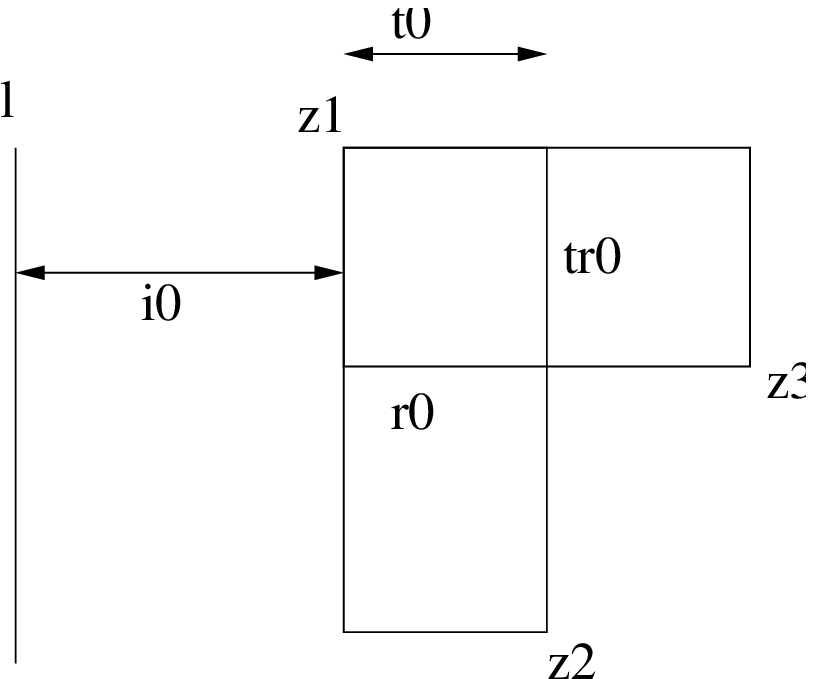}
\end{center}
\caption{Fixing the thickness of the starting rectangle when $s=1$}\label{fig:thinrect1}
\end{figure}

Since $D\setminus \widetilde{r_0}$ has Maslov index $1$ lower than $D$
and has a starting rectangle with $(i,t)=(i_0,t_0)$, so induction
applies finishing the proof. Thus in both the chains $c_1$ and $c_2$,
at some point we have to use a rectangle with $i=i_0$ and $t\leq t_0$.
But since $c_1$ and $c_2$ are increasing, and $(i_0,t_0)$ is the
smallest value of $(i,t)$ that we can start with, we have to start
with $t=t_0$. Thus this fixes $t$.

Now let us assume $s=0$. We need an induction statement to show that
$i$ is fixed.  For each coordinate $z_i$ of $z$, consider the
horizontal line $h_i$ lying on some $\alpha$ curve, which starts at
$z_i$ and ends at $l$ and goes right throughout. We call $z_i$ to be
admissible if every point just below the line $h_i$ belongs to $D$.
Since the starting rectangles in the chains $c_1$ and $c_2$ have
$s=0$, so there is at least one admissible coordinate. Among all the
admissible coordinates, let $z_1$ be the one with $h_i$ having the
smallest length. Let $i_0$ be the smallest length, measured by number
of intersections with $\beta$ curves. Our induction claim is that at
some point in any increasing chain we have to use a rectangle with
$s=0$ and $i\leq i_0$.  The induction is done on the length of
$[z,x]$. Clearly when the length is $2$, the claim is true.  Let us
assume we start with a rectangle $r_0$ with $s=0$ and $i>i_0$. Since
$r_0$ has index one, so it cannot contain any $z$ coordinate in its
interior, and it also cannot contain any horizontal annulus. Thus it
is easy to see that $r_0$ has to be disjoint from $h_1$, and thus
$D\setminus r_0$ has Maslov index one lower than $D$ and still
intersects $l$ and has an admissible coordinate with $h=i_0$. Thus
induction applies, and proves our claim.

Now it is easy to see that the starting rectangles in the chains $c_1$
and $c_2$ must have $s=0$  and $i\geq i_0$.  Since both are increasing
chains, so we must start with a rectangle with $(s,i)=(0,i_0)$. Now we
want to show that $t$ is also fixed. This is also by an induction very
similar to the ones above. Consider all $p$ with $z\leftarrow p\preceq
x$,   such   that   the   covering  relation   $x\leftarrow   p$   has
$(s,i)=(0,i_0)$. Let  $r_0$ be the  thinnest rectangle among  all such
covering  relation, and  let $t_0$  be  the thickness  of $r_0$.   The
induction claim is that at some point in any increasing chain, we have
to  use a  rectangle with  $(s,i)=(0,i_0)$  and $t\leq  t_0$, and  the
induction is done  on the length of $[z,x]$. Again  it is trivial when
the length is $2$. Assume  we start with a rectangle $\widetilde{r_0}$
with   $(s,i)=(0,i_0)$   and    $t>t_0$.    Since   both   $r_0$   and
$\widetilde{r_0}$ have index one, they must look like Figure \ref{fig:thinrect0}.

\begin{figure}[ht]
\psfrag{l}{$l$}
\psfrag{i0}{$i_0$}
\psfrag{t0}{$t_0$}
\psfrag{r0}{$r_0$}
\psfrag{tr0}{$\widetilde{r}_0$}
\psfrag{z1}{$z_1$}
\psfrag{z2}{$z_2$}
\psfrag{z3}{$z_3$}
\begin{center} \includegraphics[width=170pt]{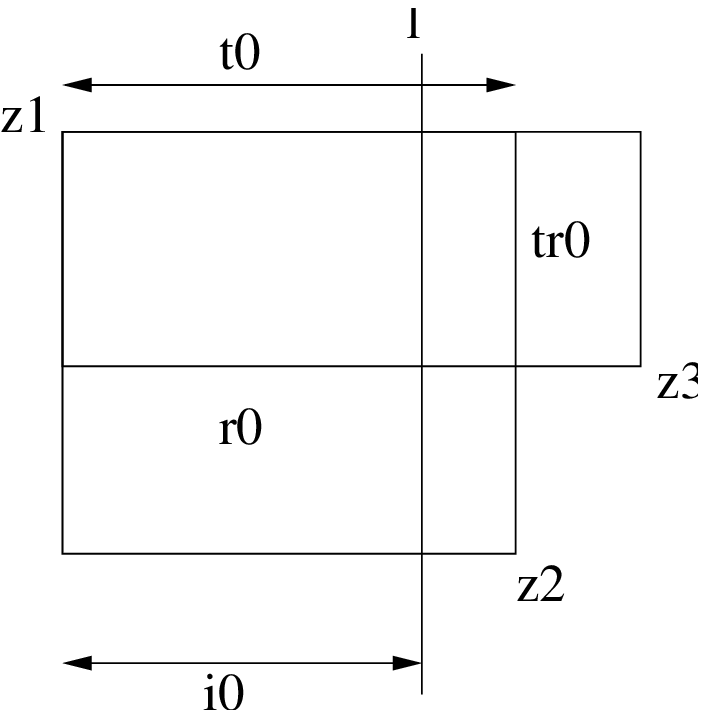}
\end{center}
\caption{Fixing the thickness of the starting rectangle when $s=0$}\label{fig:thinrect0}
\end{figure}

Note that $D\setminus\widetilde{r_0}$ has index one lower than $D$ and
it still  intersects $l$,  and it still  has an  admissible coordinate
with $h=i_0$. Thus  induction applies. Since $c_1$ and  $c_2$ are both
increasing, this  implies that they  both must start with  a rectangle
with $(s,i,t)=(0,i_0,t_0)$. Thus we see that the thickness is fixed.

As explained earlier, this finishes the proof.
\end{proof}

Using the theorems from  section \ref{sec:posets}, this implies the
following.

\begin{thm}  Each subinterval  of an  interval  in the  grid poset  is
shellable. For intervals  of the form $(y,x)$, the  order complex is a
sphere, and for intervals of the form $[y,x]$, $[y,x)$ or $(y,x]$, the
order complex is a ball.
\end{thm}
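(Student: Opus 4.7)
The plan is to combine the EL-shellability established in the previous theorem with the sphere/ball criterion from \cite{GDVK}. Since each closed interval $[y,x]$ in the grid poset is EL-shellable (previous theorem), Theorem \ref{thm:el} immediately yields shellability of $[y,x]$, and the first lemma of Section \ref{sec:posets} extends shellability to each subinterval $(y,x)$, $[y,x)$ and $(y,x]$. So the first clause of the theorem is essentially free; all the real work goes into identifying the homeomorphism type of the order complex, for which I must check the appropriate thinness hypothesis in each case.

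The key combinatorial input I would extract from the preceding proof is that every length-two closed interval $[z,w]$ in the grid poset contains exactly two maximal chains. By Lemma \ref{lem:grading} and the knot hypothesis, the unique positive domain $D\in\mathcal{D}^0(w,z)$ with $\mu(D)=2$ decomposes as $D=r_1+r_2$ with $r_i\in\mathcal{R}^0$ in precisely two ways: either $r_1$ and $r_2$ are disjoint rectangles (so $\partial(r_1+r_2)$ meets four distinct $\beta$ circles and the two rectangles can be applied in either order), or $r_1\cup r_2$ is an L-shaped hexagon meeting three $\beta$ circles, which admits exactly two decompositions, namely a horizontal cut and a vertical cut (cf.\ the hexagon figure used in the EL-shelling argument). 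This case analysis is already present in the preceding proof, so I would invoke it directly rather than redo it.

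With this in hand, an interior submaximal chain of $[y,x]$ (one obtained by deleting an element strictly between $y$ and $x$ from a maximal chain) is covered by exactly two maximal chains, while a submaximal chain obtained by deleting the endpoint $y$ or $x$ admits only a single completion (re-adjoining that endpoint). Hence $[y,x]$ is subthin and shellable, and its order complex is a ball. In the open interval $(y,x)$, every submaximal chain corresponds to deleting an interior element of a maximal chain of $[y,x]$, so every such chain is covered by exactly two maximal chains; thus $(y,x)$ is thin and the order complex is a sphere. The half-open intervals $[y,x)$ and $(y,x]$ retain one endpoint, so only the submaximal chains deleting that endpoint are covered once and the rest are covered twice, making the poset subthin and the order complex a ball. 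In all three cases the \cite{GDVK} theorem completes the argument.

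The main obstacle is the clean enumeration ``exactly two maximal chains per interior length-two interval.'' In the previous EL-shelling proof this was only needed implicitly to compare two candidate orderings, whereas here it must be stated as a sharp count. I would need to verify in particular that the hexagon case is genuinely exhausted by the L-shape (no other shape arises as the sum of two empty rectangles along three $\beta$ circles) and that no additional ``exotic'' decompositions exist when two rectangles share a corner but not an edge; both of these follow from Lemma \ref{lem:rho1}-style local analyses around the shared vertices.
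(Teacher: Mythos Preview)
Your proof is correct and matches the paper's argument, which simply records the theorem as an immediate consequence of the Section~\ref{sec:posets} results without spelling out the thin/subthin verification. The exactly-two count for length-$3$ intervals that you extract from the disjoint-rectangles-or-hexagon analysis is indeed the only nontrivial ingredient left implicit (and is equivalently encoded in the sign-assignment axioms the paper imports from \cite{CMPOZSzDT}).
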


Thus using the results from Section \ref{sec:griddiagrams}, we see that
$\widehat{G}$, $\widehat{\mathcal{G}_m}$ and $\mathcal{G}_m^{-}$ (in
each Alexander grading $m$) are GSS posets.

Now we concentrate the commutation posets $\widehat{\mathcal{G}_c}$
and $\mathcal{G}_c^{-}$. Let $(G_c,\rho)$ be a commutation diagram.
We are trying to prove that closed intervals in these posets are shellable.
Once more it is enough to restrict our attention to closed intervals
in $\mathcal{G}_c^{-}$.

\begin{thm}
Closed intervals in the commutation poset are shellable.
\end{thm}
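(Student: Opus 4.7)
The plan is to prove EL-shellability of each closed interval $[y,x] \subseteq \mathcal{G}_c^-$, from which shellability of intervals in both $\mathcal{G}_c^-$ and $\widehat{\mathcal{G}_c}$ follows by Theorem~\ref{thm:el} (since the $\widehat{\mathcal{G}_c}$-posets embed naturally into $\mathcal{G}_c^-$). The overall strategy copies the one used for the grid poset. Fix a point $P$ in a region containing some $X$-marking away from $\rho$, and let $\ell$ be a longitudinal circle through $P$ disjoint from every $\beta$-circle and from $\rho$. By Lemma~\ref{lem:classification}, each covering relation in $\mathcal{G}_c^-$ is either an empty rectangle $r$ (in $G$ or $G'$) or an empty pentagon $p$. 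Assign to any empty rectangle $r$ the triple $(s(r), i(r), t(r))$ exactly as in the grid case. Assign to any empty pentagon $p$ the triple $(s(p), i(p), t(p))$ by regarding the "top side" of $p$ as the relevant $\alpha$-arc lying on $\alpha_c$ or $\alpha_c'$, with $s(p)$ recording whether $p$ meets $\ell$, $i(p)$ counting $\beta$-circles from $\ell$ to the leftmost $\beta$-arc of $p$, and $t(p)$ the $\beta$-width of $p$. Order triples lexicographically.

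With the labeling in place, I would run the two arguments from the grid proof in parallel. First, show that the unique chain whose label sequence is lexicographically minimum is increasing: in any two-step subchain $m \leftarrow n \leftarrow p$, the combined $2$-chain is either a hexagon in $G$ or $G'$ (handled by Figure~\ref{fig:cuthexagon}) or — the new cases — a polygon containing $\rho$, arising from a rectangle adjacent to the unique pentagon in the chain. In each such configuration, the local picture near $\rho$ is determined up to finitely many cases and I would verify by direct inspection that the lexicographically best decomposition coincides with the increasing one. Second, for uniqueness of the increasing chain, I would redo the induction of Lemma~\ref{lem:mainproof} on the length of $[y,x]$. Here the crucial structural input is that by Lemma~\ref{lem:classification}, a chain from a $\widehat{\mathcal{G}}$-element to a $\widehat{\mathcal{G}'}$-element uses exactly one pentagon, so the pentagon is forced to appear at a single position in any increasing chain; the admissible-coordinate and thickness-induction steps then go through with pentagons treated as distinguished "atomic" shapes.

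When both $x$ and $y$ lie in the same copy (both in $\widehat{\mathcal{G}}$ or both in $\widehat{\mathcal{G}'}$), no pentagons appear and the grid argument applies verbatim — the extra $\alpha$-circle cuts some regions but contributes nothing to the label analysis. The main obstacle is the geometric case analysis near $\rho$: one must classify the shapes of $r + p$ for an empty rectangle $r$ adjacent to the empty pentagon $p$, confirm the admissible swap exists, and show that the increasing swap has strictly smaller label. Because the local geometry around the two intersection points of $\alpha_c$ and $\alpha_c'$ is completely explicit (triangles and pentagons as described in Section~\ref{sec:griddiagrams}), this enumeration is finite and tractable, after which EL-shellability, hence shellability of the closed interval, follows as in Theorem~\ref{thm:el}.
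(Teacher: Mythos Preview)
Your plan is to prove EL-shellability, but the paper explicitly states ``We do not know if the closed intervals are always EL-shellable'' and instead proves shellability directly via a three-case analysis. Two concrete obstructions stand in the way of your approach.

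First, you have overlooked the two-domain case. For $x\in\mathcal{G}^{-}$ and $y\in(\mathcal{G}')^{-}$ there can be \emph{two} positive domains $D,D'\in\mathcal{D}^0(x,y)$, differing by the special periodic domain $D_c$. Maximal chains in $[y,x]$ then come from decompositions of $D$ \emph{or} of $D'$, and a single $(s,i,t)$-labeling gives no mechanism guaranteeing a unique increasing chain across both families. The paper (Case~3) handles this by rotating the diagram, placing $\ell$ immediately adjacent to $\alpha_c,\alpha_c'$, and \emph{declaring} every chain from $D'$ smaller than every chain from $D$, then exhibiting an explicit switch taking the minimal $D$-chain into a $D'$-chain. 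That is a shellability argument, not an EL one.

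Second, even when the domain is unique but $n_{\rho}(D)=\frac{3}{4}$, the paper (Case~2) isolates special index-$2$ heptagonal subdomains (Figure~\ref{fig:commspdomain}) for which the switch between the two decompositions does \emph{not} produce a lexicographically smaller chain. Your promise to ``verify by direct inspection that the lexicographically best decomposition coincides with the increasing one'' would fail exactly here. The paper circumvents this by enlarging the labeling to a $4$-tuple $(s,i,t,p)$, introducing \emph{quasi-increasing} chains (increasing except possibly at one such heptagon), and proving that the quasi-increasing chain is unique---again a direct shellability argument rather than EL. Your sketch does not anticipate these exceptional domains, and the admissible-coordinate/thickness inductions from Lemma~\ref{lem:mainproof} do not go through for them without this modification.
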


\begin{proof}
We do not know if the closed intervals are always EL-shellable. We shall
only prove that the closed intervals are shellable. For
$\widehat{x},\widehat{y}\in\widehat{\mathcal{G}_c}$, let
$D\in\mathcal{D}^0(\widehat{x},\widehat{y})$ be a
positive domain with $n_{O_i}(D)=k_i$. If $x=\widehat{x}$ and
$y=\widehat{y}\prod U_i^{k_i}$, we will prove that the closed
interval $[y,x]$ is shellable. Note $n_{\rho}(D)<1$. So
we prove this by taking cases.

\textbf{Case 1}: $D$ is the unique positive domain joining $x$ to $y$ and
$n_{\rho}(D)\neq \frac{3}{4}$.

We can choose any vertical line $l$ disjoint from all $\beta$ circles
(indeed we can choose a vertical line through $\rho$) and define
$(s,i,t)$ as in the proof of the previous theorem. Essentially the
same proof shows that this provides an EL-shelling. It is important to
note that we can also apply the rotation $R(\frac{\pi}{2})$ (such that
the horizontal commutation becomes a vertical commutation), and then
take a vertical line $l$ (this time disjoint from all the $\alpha$
circles), and then define $(s,i,t)$ which still induces an
EL-shelling. The line $l$ has to be disjoint from $\alpha_c$ and
$\alpha_c^{\prime}$ (which are now vertical circles), and we stipulate
(for defining $i$ and $t$) that both of them are equidistant from $l$.

\textbf{Case 2}: $n_{\rho}=\frac{3}{4}$.

In this case, using Lemma \ref{lem:rho1}, $D$ is the unique positive domain
joining $x$ to $y$. 
Choose a vertical line $l$ passing through $\rho$, the chosen intersection
point between $\alpha_c$ and $\alpha_c^{\prime}$. To each covering relation,
associate a $4$-tuple $(s,i,t,p)$, where $s$, $i$ and $t$ are defined
similarly and $p=1$ if the covering relation corresponds to a
pentagon, and is $0$ otherwise. Thus given $y$, and a $4$-tuple
$(s,i,t,p)$, there is at most one $x$ with $y\leftarrow x$
corresponding to that $4$-tuple. The tuples are ordered
lexicographically, and thus all maximal chains in $[y,x]$ have their
edges labeled by a totally ordered set, and hence themselves get an
induced total ordering. We claim this ordering gives the required
shelling.

We follow the general outline of the proof of Theorem \ref{thm:el}.  Let
$\mathfrak{m}_1$ and $\mathfrak{m}_2$ be two maximal chains, with
$\mathfrak{m}_1<\mathfrak{m}_2$. Let $\mathfrak{m}_1$ and
$\mathfrak{m}_2$ agree from $y$ to $y_1$ and then start to disagree,
and then agree once more at $x_1$ (and then maybe disagree
again). Thus we can restrict our attention on the interval $[y_1,
x_1]$, which has a smaller length. Hence by induction, we will be
done. Thus we can assume $y_1=y$ and $x_1=x$, i.e. $\mathfrak{m}_1$
and $\mathfrak{m}_2$ never agree. The domain $D$ corresponding to
$[y,x]$ might now have $n_{\rho}(D)\ne \frac{3}{4}$. But note $D$ is
still the unique positive domain joining $x$ to $y$, and hence if
$n_{\rho}(D)\ne \frac{3}{4}$, then we have reduced this case to the
previous case. Hence assume $D$ still has $n_{\rho}=\frac{3}{4}$.

If $\mathfrak{m}_2$ has a subchain $y_{k-1}\leftarrow y_k\leftarrow
y_{k+1}$, where the $4$-tuples corresponding to the two covering
relations decrease, and the domain corresponding to
$[y_{k-1},y_{k+1}]$ does not look like any of the two domains in Figure \ref{fig:commspdomain}$(a)$, then we can
change $\mathfrak{m}_2$ by replacing
$y_k$ with $y_k^{\prime}$ with $y_{k-1}\leftarrow
y_k^{\prime}\leftarrow y_{k+1}$. Call such an operation a switching
operation. A case by case analysis shows that the new maximal chain
obtained after a switching operation is smaller than the
original. Call the operation of changing one element of a maximal
chain to get a smaller maximal chain, a generalized switching
operation. Thus a switching operation is a generalized switching operation.

\begin{figure}[ht] 
\psfrag{a}{$(a)$}
\psfrag{b}{$(b)$}
\center{\includegraphics[width=200pt]{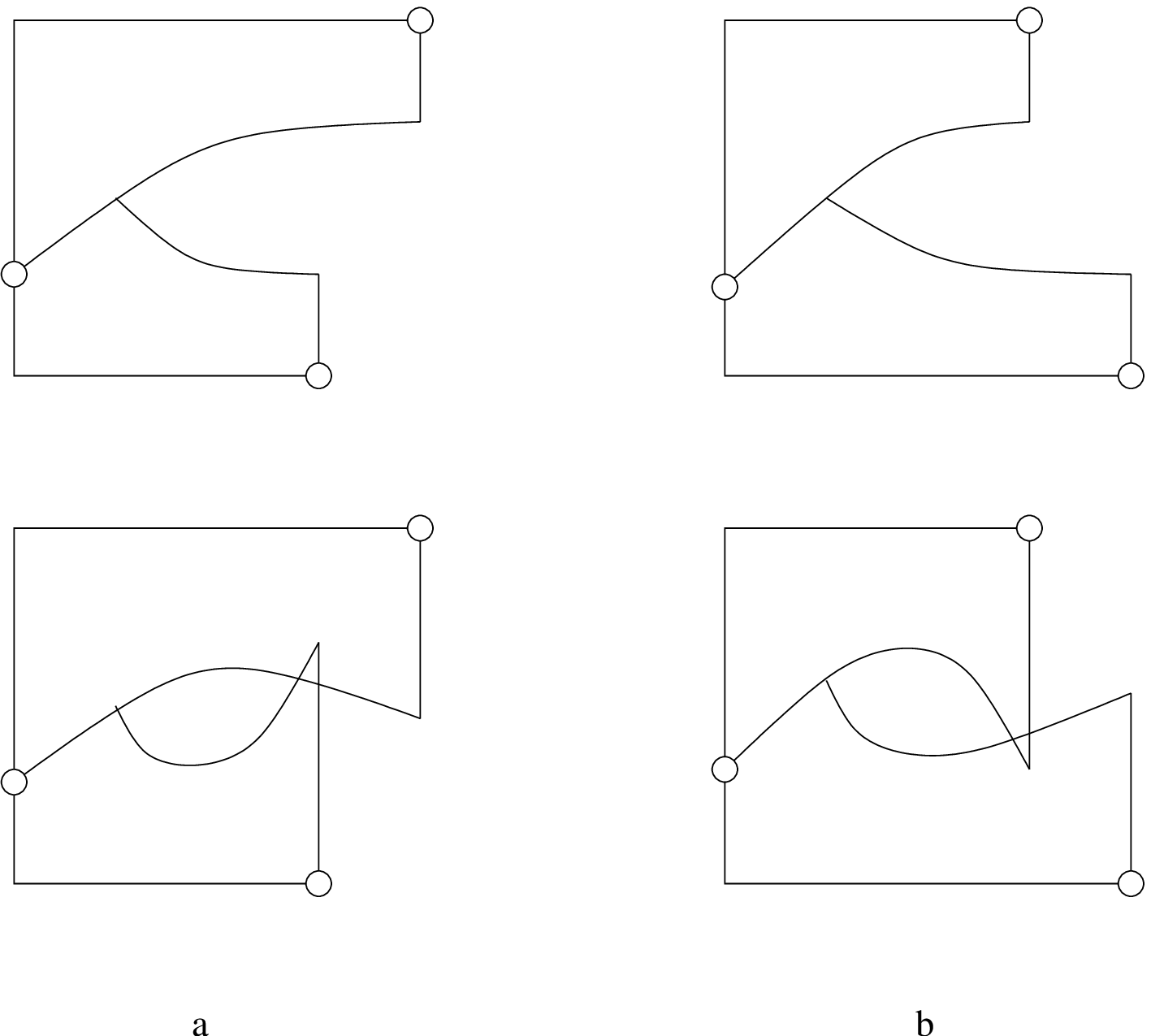}}
\caption{The special index $2$ domain}\label{fig:commspdomain}
\end{figure}

Since we are trying to prove shellability, hence we can assume that
$\mathfrak{m}_2$ does not admit any generalized switching operation.
In that case there is an element $z$ in $\mathfrak{m}_2$,
such that $\mathfrak{m}_2\cap [y,z]$ is an increasing chain in
$(\mathcal{G}')^{-}$ and $\mathfrak{m}_2\cap [z,x]$ is an increasing chain
which starts with an empty pentagon but there is no $x'\in[z,x]$ such
that the domain corresponding to $[z,x']$ looks any of the two domains
like Figure \ref{fig:commspdomain}$(b)$. We call such a maximal chain to be
quasi-increasing. Thus in a quasi-increasing chain, there exists
$y'\in[y,z]$ and $x'\in[z,x]$ such that $y'\leftarrow z\leftarrow x'$
and the index $2$ domain corresponding to $[y',x']$ is one of domains
shown in Figure \ref{fig:commspdomain}. In all the cases, the $z$-coordinates are marked.

Now we want to show that $\mathfrak{m}_2$ is the smallest
chain. This will rule out the possibility of having a chain
$\mathfrak{m}_1$ with $\mathfrak{m}_1<\mathfrak{m}_2$, and thus
finishing the proof. Thus, if possible, let
$\mathfrak{m}_1<\mathfrak{m}_2$. We can do the generalized switching operations as
described above, on $\mathfrak{m}_1$, such that $\mathfrak{m}_1$ also
becomes quasi-increasing. Now if we
show $\mathfrak{m}_1=\mathfrak{m}_2$, we will have the required
contradiction.

Thus we only need to show that there is a unique quasi-increasing
chain. The proof is essentially the same as the proof of uniqueness of
increasing chain in the previous theorem. Thus in this case, we are done.

\textbf{Case 3}: There are exactly two positive domains $D$ and $D'$
joining $x$ to $y$.

By assumption, note that both $D$ and $D'$ have $n_{O_i}=k_i$. Also
both $D$ and $D'$ must have $n_{\rho}=\frac{1}{4}$. For simplicity, we apply the
rotation $R(\frac{\pi}{2})$. After rotation, all the $\alpha$ circles
(incl. $\alpha_c$ and $\alpha_c'$) become vertical circles. Let $D$ be
the domain which has non-zero coefficient in the region immediately to
the left of $\rho$. We choose the vertical circle $l$ to be line
immediately to the left of $\alpha_c$ and $\alpha_c'$. We define
$(s,i,t)$ as in the proof of the previous
theorem. Note that we assume
both $\alpha_c$ and $\alpha_c'$ be to distance $1$ to the right of
$l$. Note that given $y$ and a triple $(s,i,t)$ there is at most one
$x$ with $y\leftarrow x$ corresponding to that triple. Thus each
maximal chain gets a unique labeling. We use this labeling to
totally order all maximal chains that come from $D$, and also all
maximal chains that come from $D'$. We then declare all maximal chains
that come from $D'$ to be smaller than all maximal chains that come
from $D$. We claim that this ordering is a shelling.

Again following the general outline of the proof of Theorem
\ref{thm:el}, let $\mathfrak{m}_1$ and $\mathfrak{m}_2$ be two maximal
chains with $\mathfrak{m}_1<\mathfrak{m}_2$. By restricting to smaller
chains if necessary, we can assume that the two maximal chains are
disjoint. After restricting to smaller chains, we can still assume
that $D$ and $D'$ are two distinct domains joining $x$ to $y$, or else we have
reduced this to an earlier case.

Now we can assume that $\mathfrak{m}_2$ is a non-decreasing chain,
since otherwise we can do a switching operation to make it
smaller. But each domain has a unique non-decreasing chain, which in
addition is the smallest chain among all maximal chains coming from
that domain. Since $\mathfrak{m}_1$ is a maximal chain which
is smaller than $\mathfrak{m}_2$, hence $\mathfrak{m}_2$ must be the
unique non-decreasing chain coming from $D$.

By assumption, the line $l$ lies entirely inside $D$, and hence the
first two covering relations in $\mathfrak{m}_2$ starting at $y$ must
have $(s,i,t)$ as $(0,1,1)$. Thus we can do a switch, where this index
$2$ domain can be replaced another index $2$ domain, which is this
domain minus $D_s$, where $D_s$ is the special domain from Figure
\ref{fig:specialdomain}. After the switch, the new maximal chain comes
from $D'$, and hence is smaller than $\mathfrak{m}_2$. This completes
the proof of shellability.
\end{proof}

Since the commutation poset was already a graded and signed poset,
this completes the proof that it is a GSS poset. We now prove that the
stabilization poset is also shellable.

\begin{thm}
The stabilization poset is shellable.
\end{thm}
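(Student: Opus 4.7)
The plan is to mimic the proof just given for the commutation poset, working one Alexander grading at a time and showing every closed interval in $\mathcal{G}_s^{-}$ is shellable. The hat-version posets $\widehat{(\mathcal{G}_s)_m}$ would then follow from Lemmas \ref{lem:shell1} and \ref{lem:shell2}, since the extra copy $\widehat{\mathcal{H}'}$ is glued to $\widehat{\mathcal{G}}$ only by the trivial-domain covering relations $\widehat{f}(\widehat{x})\leftarrow \widehat{x}$, and similar glueing describes how $\widehat{\mathcal{H}}$ attaches.

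I would first split an arbitrary closed interval $[y,x]$ according to which of $\mathcal{G}^{-}$ or $\mathcal{H}^{-}$ contains the endpoints. If $x$ and $y$ both lie in $\mathcal{G}^{-}$ then $[y,x]$ coincides with a closed interval in the grid poset for $G$ and is shellable by the grid theorem proved earlier in this section; symmetrically when both lie in $\mathcal{H}^{-}$ one applies the grid theorem for $H$. The new case is $x\in\mathcal{G}^{-}$, $y\in\mathcal{H}^{-}$, where every positive domain $D$ realizing $y\preceq x$ is required to satisfy $n_{X_0}(D)=1$, and by the stabilization lemma there are at most two such domains, differing (when two exist) by the special domain $D_s$ built from the vertical annulus through $X_0$ minus the horizontal annulus through $X_0$.

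In the subcase of a unique positive domain $D$ joining $x$ to $y$, I would choose a vertical line $l$ disjoint from all $\beta$ circles (including $\beta_s$) and label each covering relation by the triple $(s,i,t)$ exactly as in the grid proof. The two lemmas preceding the main grid theorem then carry over essentially unchanged: the constraint $n_{X_0}(D)=1$ merely forces every maximal chain to pass through the unique empty rectangle covering the $X_0$ region exactly once, and the uniqueness argument of Lemma \ref{lem:mainproof} shows the lexicographically minimum chain is increasing and is the only increasing chain. In the subcase where two positive domains $D$ and $D'$ both join $x$ to $y$, I would imitate Case 3 of the commutation argument: order the maximal chains inside each of $D$ and $D'$ by the $(s,i,t)$-labeling, and declare all chains coming from whichever of $D, D'$ has non-zero coefficient in a chosen distinguished region around $\rho$ to be larger than all chains coming from the other; the switching move exchanging the first two covering relations of a non-decreasing chain in $D$ with the corresponding ones arising from $D' = D - D_s$ then provides the required shelling step.

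The main obstacle will be the bookkeeping around $\rho$ in the two-domain subcase. Because $D_s$ is a combination of two annuli meeting at $X_0$, one must verify that the generalized switching move produces a strictly smaller maximal chain satisfying the intersection condition $\mathfrak{m}_i\cap\mathfrak{m}_j\subseteq\mathfrak{m}_k\cap\mathfrak{m}_j=\mathfrak{m}_j\setminus\{z\}$, and in particular that the non-decreasing chain inside each of $D$ and $D'$ is unique. This in turn requires redoing the induction of Lemma \ref{lem:mainproof} with the constraint $n_{X_0}=1$ and with the special position of $\alpha_s$, $\beta_s$ taken into account, but the argument is structurally identical to the one already given, so no genuinely new idea should be needed beyond the care demanded by the geometry near $\rho$.
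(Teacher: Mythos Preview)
Your approach to the minus version is essentially the paper's: reduce most intervals to the grid EL-shelling, and in the two-domain subcase imitate Case~3 of the commutation proof by ordering chains from $D'$ before chains from $D$ and exhibiting a switch from the minimal chain of $D$ into $D'$. One refinement you will discover when you carry out the bookkeeping: unlike the commutation Case~3, where a single line $l$ (after one rotation) served both domains, here the paper chooses \emph{different} reference lines for $D$ and $D'$. For $D$ (the one with $n_{O_0}=1$) it takes $l$ vertical through $X_0$; for $D'$ it first applies $R(\frac{\pi}{2})$ and then takes $l$ through $X_0$. This asymmetry is what forces the first two covering relations of the minimal chain in $D$ to carry label $(0,1,1)$, so that the switch replacing that index-$2$ piece by the one differing by $D_s$ lands in $D'$.

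There is, however, a genuine gap in your reduction of the hat version to the minus version. Lemmas~\ref{lem:shell1} and~\ref{lem:shell2} adjoin a \emph{single} element to a shellable poset; they do not let you glue on an entire copy of $\widehat{\mathcal{H}'}$. The intervals $[\widehat y,\widehat x]$ with $\widehat x\in\widehat{\mathcal{G}}$ and $\widehat y\in\widehat{\mathcal{H}'}$ are not intervals in $\mathcal{G}_s^{-}$ at all, so their shellability does not follow from anything you prove for the minus poset. The paper handles this case directly: since the only covering relations between $\widehat{\mathcal{G}}$ and $\widehat{\mathcal{H}'}$ are the trivial-domain ones $\widehat f(\widehat z)\leftarrow \widehat z$, each such interval is the Cartesian product $[\widehat f^{-1}(\widehat y),\widehat x]\times I$, and the product of shellable posets is shellable. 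Equivalently, one can extend the $(s,i,t)$-labeling by assigning $(-1,0,0)$ to every trivial-domain edge and check that this still gives an EL-shelling. Either argument is short, but it is separate from the minus-version proof and cannot be absorbed into Lemmas~\ref{lem:shell1}--\ref{lem:shell2}.
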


\begin{proof}
In both $\widehat{\mathcal{G}_s}$ and $\mathcal{G}_s^{-}$, even
if we are allowed to pass through $X_0$, the proof of shellability
(indeed EL-shellability) follows directly from the proof of
EL-shellability of intervals in the grid poset. There are only two
cases which are slightly different.

The first case that is  slightly different is  when
$\widehat{x}\in\widehat{\mathcal{G}}$ and
$\widehat{y}\in\widehat{\mathcal{H}^{\prime}}$. Here in any maximal
chain, there will be exactly one covering relation corresponding to
the trivial domain. Let us assign the $(s,i,t)$-label to each of those
covering relations as $(-1,0,0)$. It is easy to see that this
labeling still induces an EL-shelling of the interval
$[\widehat{y},\widehat{x}]$. In fact this interval is the Cartesian
product of the  posets $[\widehat{f}^{-1}(\widehat{y}),\widehat{x}]$
and $I$, where $I$ is a chain of length $2$, and since each of the
posets is shellable, their Cartesian product is shellable.

The other case that
presents some difficulties is when $x\in\mathcal{G}^{-}$ and
$y\in\mathcal{H}^{-}$, and there are exactly two domains $D$ and $D'$ joining $x$
to $y$.

In this case, we proceed like the last case in the previous
theorem. If $n_{O_0}(D)=1$,
then we declare maximal chains coming from $D'$ to be smaller than
those coming from $D$. For maximal chains coming from $D$, we choose
$l$ to be the vertical line passing through $X_0$. We define $(s,i,t)$
in the standard way, and this induces a total ordering among all
maximal chains coming from $D$. For maximal chains coming from $D'$,
we apply the rotation $R(\frac{\pi}{2})$, and then choose $l$ to be
the vertical line through $X_0$, and then define $(s,i,t)$ to induce a
total ordering among all maximal chains coming from $D'$.

We choose two maximal chains $\mathfrak{m}_1$ and $\mathfrak{m}_2$
with $\mathfrak{m}_1<\mathfrak{m}_2$. We can assume that they are
disjoint. If both of them come from either $D$ or $D'$, the proof is
very similar to the proof of shellability of intervals of grid
posets. Thus we can assume $\mathfrak{m}_1$ comes from $D'$ and
$\mathfrak{m}_2$ comes from $D$. We can also assume that $\mathfrak{m}_2$ is the unique
non-decreasing chain coming from $D$. Thus the first two covering
relations in $\mathfrak{m}_2$ must have $(s,i,t)$ as $(0,1,1)$ as
their labeling, and
hence we can modify that index $2$ domain by subtracting the vertical annulus
through $X_0$ and adding the horizontal annulus through
$X_0$. After this switch, we get a maximal chain coming from $D'$,
thus completing the proof of shellability.
\end{proof}

\section{Applications}\label{sec:applications}

Given a grid diagram for a knot, the above theorems allow us to define
some CW complexes. The constructions work for any GSS poset, but we
only do it for the grid poset $\mathcal{G}$. (Here $\mathcal{G}$ could
be $\widehat{\mathcal{G}}$, $\widehat{\mathcal{G}_m}$ or
$\mathcal{G}_m^{-}$ for any Alexander grading  $m$). We start with the easiest construction.

\subsection{Order Complex} We  can give a CW complex  structure on the
order  complex where  the  $k$-cells are  closed  intervals of  length
$(k+1)$.   The boundary  map maps  to all  the closed  subintervals of
length $k$. The boundary map is well defined because the union of such
subintervals forms a sphere of the right dimension.

\begin{thm}  The above  defined  CW  complex is  well  defined and  is
homeomorphic to the order complex.
\end{thm}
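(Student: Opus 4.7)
The plan is to verify that the proposed cell decomposition actually gives a valid CW structure on the order complex. For each closed interval $[a,b]$ of length $k+1$, I will show that $\Delta([a,b])$ is a closed $k$-ball whose boundary decomposes in exactly the prescribed way, and then match the resulting CW complex with the order complex cell by cell.

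First, I would show that each closed interval $[a,b]$ of length $k+1$ is finite, shellable, and subthin. Shellability is the content of Section \ref{sec:gss}. Subthinness follows from the existence of a sign assignment: by definition every length-$3$ closed interval has exactly $2$ maximal chains, so every submaximal chain of $[a,b]$ that omits an interior element extends in exactly $2$ ways, while submaximal chains that omit the unique minimum $a$ or unique maximum $b$ extend uniquely. Hence by the Danaraj--Klee theorem \cite{GDVK}, $\Delta([a,b])$ is a PL $k$-ball; this is the proposed $k$-cell.

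Next I would identify the boundary sphere with the prescribed union of lower-dimensional cells. Since $a$ is the unique minimum of $[a,b]$, we have the cone decomposition
\[\Delta([a,b]) = \{a\}*\Delta((a,b]),\]
and $\Delta((a,b])$ is itself a $(k-1)$-ball by the earlier theorem applied to the shellable, subthin interval $(a,b]$. The topological boundary of this cone is
\[\Delta((a,b])\;\cup\;\bigl(\{a\}*\partial\Delta((a,b])\bigr) \;=\; \Delta((a,b])\cup\Delta([a,b)),\]
where one uses the analogous cone $\Delta((a,b])=\{b\}*\Delta((a,b))$ to see $\partial\Delta((a,b])=\Delta((a,b))$, and then $\{a\}*\Delta((a,b))=\Delta([a,b))$. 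Finally, each chain in $(a,b]$ has its minimum above some atom of $[a,b]$, so
\[\Delta((a,b])=\bigcup_{a\leftarrow a'}\Delta([a',b]),\qquad \Delta([a,b))=\bigcup_{b'\leftarrow b}\Delta([a,b']),\]
by the dual argument. Thus $\partial\Delta([a,b])$ is precisely the union of the lower-dimensional cells indexed by the intervals $[a',b]$ with $a\leftarrow a'$ and $[a,b']$ with $b'\leftarrow b$. The attaching map of the $k$-cell is the inclusion of this subcomplex, so the CW structure is well-defined.

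For the homeomorphism with the order complex, I observe that the open cell associated to $[c,d]$ consists of those chains in $\Delta(\mathcal{G})$ whose minimum is $c$ and whose maximum is $d$. Since every chain $C$ of $\Delta(\mathcal{G})$ has a unique minimum and maximum, it lies in the open cell indexed by $[\min C,\max C]$, and these open cells are pairwise disjoint; moreover $\Delta(\mathcal{G})=\bigcup_{[c,d]}\Delta([c,d])$. Hence the proposed CW structure and the order complex are just two cell decompositions of the same underlying space. The main technical content of the argument is the boundary identification $\partial\Delta([a,b])=\Delta((a,b])\cup\Delta([a,b))$; once that is obtained from the iterated cone structure, the remaining steps amount to bookkeeping about chains in the poset.
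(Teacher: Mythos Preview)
Your proposal is correct and follows essentially the same approach as the paper: both identify the boundary of the ball $\Delta([a,b])$ as $\Delta([a,b))\cup\Delta((a,b])$, observe that these two $(k-1)$-balls meet along the $(k-2)$-sphere $\Delta((a,b))$, and conclude that the attaching sphere is exactly the union of the cells indexed by the length-$k$ closed subintervals. Your version is somewhat more explicit in deriving the boundary decomposition from the iterated cone structure, whereas the paper appeals directly to the characterization of boundary faces as submaximal chains covered by exactly one maximal chain, but the content is the same.
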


\begin{proof} Recall that the order  complex of an interval $[y,x]$ of
length $(k+1)$  is a ball of  dimension $k$. The order  complex of the
whole poset is just the union of  all such balls, thus we only need to
understand  the boundary  map. The  boundary of  the order  complex of
$[y,x]$ consists of all submaximal  chains that are covered by exactly
one  chain,  or  in  other   words,  maximal  chains  in  $[y,x)$  and
$(y,x]$. But  the order complex  of each of  $[y,x)$ and $(y,x]$  is a
ball of  dimension $(k-1)$ with  common boundary the order  complex of
$(y,x)$  which  is a  sphere  of  dimension  $(k-2)$. Thus  the  order
complexes of  $[y,x)$ and $(y,x]$ glue  to form a  sphere of dimension
$(k-1)$, and  is the boundary of  the order complex  $[y,x]$. Thus the
boundary map in  the order complex is the same as  the boundary map in
our CW complex. This shows that  the CW complex is well defined and is
same as the order complex.
\end{proof}

\subsection{Fake    moduli   space}    Given    a   positive    domain
$D\in\mathcal{D}^0(x,y)$  with  $\mu(D)=k$, we  construct  a CW  complex
which has many properties of what the actual moduli space should have,
although it is not clear whether the real moduli space will always be
homeomorphic to this space. The
$0$-cells will  correspond to the  the maximal chains in  $[y,x]$, the
$1$-cells  will  correspond  to   the  submaximal  chains  in  $[y,x]$
containing  both  the  endpoints,  and  in general  an  $r$-cell  will
correspond to a chain in $[y,x]$ containing $(k-r+1)$ points including
both the endpoints, and the unique $(k-1)$-cell corresponds to the $2$
element chain $\{y,x\}$. The boundary map is injective and is given by co-inclusion.

\begin{thm}  The above  defined  CW  complex is  well  defined. It  is
homeomorphic to a ball, and  its boundary is homeomorphic to the order
complex of $(y,x)$.
\end{thm}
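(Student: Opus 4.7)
The plan is to recognize our CW complex as the dual cell decomposition of the order complex $\Delta(y,x)$ of the open interval, enlarged by a single top $(k-1)$-cell glued on to fill it in. Since $\mc{G}$ is a GSS poset, every closed subinterval $[a,b]$ of length $\ell$ is shellable, so by the theorem of Danaraj-Klee quoted in Section \ref{sec:posets} the order complex of the corresponding open interval $(a,b)$ is an $(\ell-2)$-sphere.

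A chain $C\subseteq[y,x]$ containing both endpoints with $m$ elements corresponds to the chain $C\setminus\{y,x\}$ in $(y,x)$, which is an $(m-3)$-simplex of $\Delta(y,x)$ (taking the empty simplex when $m=2$). The indexing $r=k-m+1$ therefore makes our $r$-cells dual to $(k-2-r)$-simplices of the $(k-2)$-sphere $\Delta(y,x)$, with chain refinement matching reverse inclusion of simplices---exactly the combinatorics of a dual cell decomposition. To check that the attaching maps are well-defined, I would show that the boundary subcomplex of each $r$-cell is an $(r-1)$-sphere. For the cell indexed by $y=z_0\prec z_1\prec\cdots\prec z_{k-r}=x$, proper refinements are obtained by independently choosing a (possibly empty) chain in each open gap $(z_i,z_{i+1})$, so the boundary subcomplex is naturally the iterated join
$$\Delta(z_0,z_1)*\Delta(z_1,z_2)*\cdots*\Delta(z_{k-r-1},z_{k-r}).$$
Each factor is an $(\ell_i-2)$-sphere where $\ell_i=\mathrm{length}[z_i,z_{i+1}]$, and since $\sum_i\ell_i=k$ over $k-r$ gaps, the join has dimension $\sum_i(\ell_i-2)+(k-r)-1=r-1$, as required.

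Applying this with $r=k-1$ to the unique top cell (indexed by $\{y,x\}$ with its single open gap $(y,x)$), the boundary subcomplex is $\Delta(y,x)$ itself, a $(k-2)$-sphere; attaching the top cell by the identity then yields a $(k-1)$-ball whose boundary is exactly the order complex of $(y,x)$, proving well-definedness, the ball conclusion, and the boundary identification all at once. The main obstacle will be rigorously identifying the combinatorial boundary subcomplex with the topological iterated join, not merely set-theoretically: one must verify that the CW structure indexed by chain refinements agrees with the product-type cell structure coming from the join of the $\Delta(z_i,z_{i+1})$. This should follow from the unique decomposition of a chain refinement as a tuple of independent refinements in each gap, together with the standard fact that the join of CW spheres is a CW sphere of the expected dimension.
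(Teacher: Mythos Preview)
Your approach is correct and closely related to the paper's, but you make one step more explicit. Both arguments rest on recognizing the complex as the dual cell decomposition of the sphere $\Delta(y,x)$, then coning off with the top $(k-1)$-cell. The paper argues by induction on $k$: assuming the result for smaller Maslov index, it declares the sub-top skeleton $M$ to be a $(k-2)$-manifold, observes that its cells and incidences are exactly those of the dual triangulation of $\Delta(y,x)$, and concludes $M\cong\Delta(y,x)$. You instead show directly that the boundary of each individual $r$-cell is an $(r-1)$-sphere, via the join identity
\[
\partial(\text{cell for }z_0\prec\cdots\prec z_{k-r})\;\cong\;\Delta(z_0,z_1)*\cdots*\Delta(z_{k-r-1},z_{k-r}),
\]
each factor being a sphere by Danaraj--Klee.

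What each buys: the paper's dual-triangulation sentence gives, in one stroke, the global identification of the entire boundary with $\Delta(y,x)$, but is terse about why the intermediate attaching maps are well-defined (the induction is doing real work that is not spelled out). Your join decomposition handles well-definedness cell by cell and makes transparent why no choices arise, but you then need the extra observation---which you correctly flag as the ``main obstacle''---that these local joins assemble into the dual triangulation of $\Delta(y,x)$ rather than merely a homology sphere. That obstacle is genuine but routine: the join of CW spheres carries the product cell structure indexed exactly by tuples of gap-refinements, and this matches your combinatorial boundary on the nose. One small note: your use of ``length'' for $\ell_i$ is the grading difference $M(z_{i+1})-M(z_i)$, which is one less than the paper's convention (cardinality of a maximal chain); with that reading your dimension count $\sum(\ell_i-2)+(k-r)-1=r-1$ is correct.
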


\begin{proof} Let  us prove  this by induction  on $k$, so  assume the
theorem holds  for $\mu(D)\leq k-1$. By  boundary of our  CW complex, we
mean  everything  except  the  top dimensional  $(k-1)$-cell.   So  by
induction,  the boundary of  our CW  complex is  a $(k-2)$-dimensional
manifold $M$.  All  we need to show is that  $M$ is PL-homeomorphic to
the order  complex of  $(y,x)$. Once we  have proved that,  both being
spheres of dimension $(k-2)$, the attaching map of the $(k-1)$-cell is
forced, thus completing the induction.

Consider the order complex of $(y,x)$. Its $r$-simplices correspond to
chains of length $r$ in $(y,x)$. On the other hand $M$ is a CW complex
whose  $r$-cells   correspond  to   chains  of  length   $(k-1-r)$  in
$(y,x)$. The boundary map of $(y,x)$  is same as the coboundary map of
$M$, which is  given by inclusion. Since the  order complex of $(y,x)$
is a  manifold (in fact a  sphere) of dimension $(k-1)$,  hence $M$ is
just  the dual  triangulation of  the order  complex of  $(y,x)$. This
completes the proof.
\end{proof}

\subsection{Grid spectral  sequence} We try to  construct CW complexes
whose  boundary maps correspond  to the  grid homology  boundary. This
will ensure that the homology of  the CW complex is the grid homology.
We  start with  a  very  simple example.  Consider  the order  complex of
$(y,\infty)$.  It has  a CW complex structure where  the $r$-cells are
elements $z\in\mathcal{G}$  with $y\prec z$ and  $M(z,y)=r+1$, and the
boundary maps correspond to covering relations.

\begin{thm}\label{thm:yinfty} The above  CW complex is well defined  and is homeomorphic
to the order complex of $(y,\infty)$.
\end{thm}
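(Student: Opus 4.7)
\emph{Proof plan.} The plan is to exhibit, for each $z\in\mc{G}$ with $y\prec z$, a concrete identification of the asserted $r$-cell (where $r=M(z,y)-1$) with a subspace of the order complex of $(y,\infty)$, and to check that these subspaces glue together to give the whole order complex. Specifically, for $z$ with $M(z,y)=r+1$, I would define the candidate open $r$-cell $e_z$ to be the union of the open simplices in the order complex of $(y,\infty)$ whose unique largest element is $z$. Equivalently, the closure $\overline{e_z}$ is the order complex of the half-open interval $(y,z]$.

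The key observation is that the order complex of $(y,z]$ is canonically the (geometric) cone on the order complex of $(y,z)$ with apex the vertex $z$: every chain in $(y,z]$ is either a chain in $(y,z)$, or is obtained from such a chain by appending $z$ on top. The closed interval $[y,z]$ has length $r+1$, and since it lies in the GSS poset $\mc{G}$ it is subthin and shellable, so by the theorem of Bj\"orner--Wachs cited as Theorem in Section \ref{sec:posets} the order complex of $(y,z)$ is a sphere of dimension $r-1$. Hence $\overline{e_z}$ is a closed $r$-ball and $e_z$ is an open $r$-ball, confirming that $z$ sensibly serves as an $r$-cell.

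To verify the CW axioms, I would next check that the attaching map is well-defined: the boundary of $\overline{e_z}$ is precisely the order complex of $(y,z)$, and for every $z'\in(y,z)$ we have $M(z',y)<M(z,y)$, so $z'$ indexes a cell of strictly smaller dimension. Thus the topological boundary of $e_z$ lies in the $(r-1)$-skeleton of the proposed CW structure, and the attaching map is simply the natural inclusion of order complexes. That this CW complex recovers the order complex of $(y,\infty)$ is then immediate from the disjoint union decomposition: every open simplex of the order complex corresponds to a finite chain in $(y,\infty)$, which has a unique maximal element $z$, placing that simplex in exactly one $e_z$. Finally, the phrase ``boundary maps correspond to covering relations'' refers to the cellular boundary: the only $(r-1)$-cells meeting $\partial \overline{e_z}$ in top dimension are those $e_{z'}$ with $z'\leftarrow z$, because a codimension-one face of the cone $\overline{e_z}=(y,z]$ is a maximal simplex of $(y,z)$ with $z$ appended, whose maximum in $(y,z)$ is some such $z'$; the signed incidence is then computed from the GSS sign assignment.

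The genuinely substantive input is the shellability/thinness theorem from Section \ref{sec:gss}, which turns each open interval into a sphere of the correct dimension; once that is in hand, the remainder of the argument is a matter of bookkeeping. The only point requiring a little care is checking that the cone decomposition is compatible at the level of attaching maps as $z$ varies, but this follows because the cone identification $\overline{e_z}=(y,z)\ast\{z\}$ is natural in $z$ and respects the inclusion of order complexes $(y,z')\hookrightarrow (y,z)$ whenever $z'\preceq z$. I expect no real obstacle beyond invoking these structural facts correctly.
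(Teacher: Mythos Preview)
Your proposal is correct and follows essentially the same route as the paper: identify the closed cell for $z$ with the order complex of $(y,z]$, recognize that as the cone on the order complex of $(y,z)$ (an $(r-1)$-sphere by shellability), and read off that the boundary sphere decomposes into the cells indexed by the $z'$ with $z'\leftarrow z$. One small wording fix: it is the \emph{open} interval $(y,z)$ that is thin (hence has spherical order complex), whereas the closed interval $[y,z]$ is subthin; your sentence conflates the two, though your conclusion is the right one.
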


\begin{proof} For any $z$ with $y\prec z$ and $M(z,y)=r+1$, the order
  complex of $(y,z]$ is a ball of dimension $r$, or in other words an
  $r$-cell. The union of such cells make the order complex, thus we
  only need to show that the boundary maps are the same for the order
  complex and the CW complex.  The boundary in the order complex
  corresponds precisely to the maximal chains in $(y,z)$, or in other
  words maximal chains of $(y,p]$ where $p$ is covered by $z$. Since
  $p$ being covered by $z$ in the grid poset is equivalent to saying
  that $p$ appears in $\partial z$ in the grid homology boundary map,
  we conclude that the boundary maps for the order complex are same as
  the ones for the CW complex.
\end{proof}

In the later sections, we will constantly be dealing with pointed CW complexes,
so now is as good a time as any to introduce them. spaces. In a
pointed CW complex $X$, the
$(-1)$-skeleton  $X^{-1}$ is  a  point, which  is  the basepoint,  but
itself is not  considered as a cell. If there  are $k$ $0$-cells, then
the $0$-skeleton  is a  discrete union of $(k+1)$  points.  There
are no attaching maps for the $0$-cells.  The construction of the rest
of the CW complex is standard. We  define a CW complex to be finite if
it has finite number of cells.   A finite CW complex is clearly finite
dimensional.

We define a pointed CW complex to be nice if the following properties hold. 

\begin{itemize}
\item There is a unique $0$-cell (such
that the $0$-skeleton is a discrete union of $2$ points)

\item The
attaching maps for all the other cells are injective. 

\item We  define a partial  order on the  cells of the CW  complex and
  the basepoint, by declaring $a\prec b$ if $a\subseteq  \partial
  b$. This poset is a GSS poset, with the
  grading being the dimension of the cell and the sign being the
  homological sign of the boundary map.
\end{itemize}

We can extend the above theorem and construct a pointed CW
complex whose $(k+r)$-cells correspond to elements $z\in[y,\infty)$
with $M(z,y)=r$ and whose CW complex boundary maps correspond to
covering relations in $[y,\infty)$.

\begin{thm} \label{thm:construction1}For every $k\geq 0$, there is a
  well-defined pointed CW complex $S_y(k)$, such that the cells
  correspond to the elements of $[y,\infty)$, the boundary maps
  correspond to the boundary maps of the chain complex induced from
  $[y,\infty)$ and agrees with any given sign convention on it, the
  cell corresponding to $y$ has dimension $k$, and the boundary map
  every other cell is injective (which implies that $S_y(0)$ is nice).
  We furthermore have $S_y(k)=S_y(0)\wedge^k S^1$.
\end{thm}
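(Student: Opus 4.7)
The plan is to construct $S_y(0)$ explicitly and define $S_y(k) := S_y(0) \wedge^k S^1$ for $k \geq 1$. Since iterated reduced suspension shifts every cell dimension up by one, preserves injectivity of characteristic maps on interiors, and preserves the cellular chain complex up to the appropriate grading shift, every property demanded of $S_y(k)$ will follow from the corresponding property of $S_y(0)$. Thus the whole argument reduces to the case $k=0$.

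To build $S_y(0)$ I proceed inductively on cell dimension. The $(-1)$-skeleton is the basepoint $*$, and the $0$-skeleton is $*\sqcup\{y\}$. Suppose the $(r-1)$-skeleton $X_{r-1}$ has been produced, with one cell $e_w$ of dimension $M(w,y)$ for each $w\in[y,\infty)$ with $M(w,y)\leq r-1$, and with cellular chain complex agreeing, signs and all, with the truncation of the chain complex of $[y,\infty)$ below grading $r$. For each $z$ with $M(z,y)=r$, the GSS hypothesis guarantees that the order complex of $[y,z]$ is an $r$-ball $B_z$ (it is shellable, and subthin because the submaximal chains omitting $y$ or $z$ are covered by exactly one maximal chain, while the length-$3$ thinness built into the sign assignment handles those omitting an interior element), and that $\partial B_z$ is the union of the order complexes of $[y,z)$ and $(y,z]$ -- each itself an $(r-1)$-ball -- glued along the $(r-2)$-sphere order complex of $(y,z)$. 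I define the characteristic map $\Phi_z\colon B_z\to S_y(0)$ by sending the interior of $B_z$ homeomorphically onto the new open $r$-cell $e_z$; extending, on the $[y,z)$ half of $\partial B_z$ (which decomposes naturally as $\bigcup_{w<z}B_w$), the previously constructed characteristic maps $\Phi_w$; and collapsing the $(y,z]$ half of $\partial B_z$ onto the basepoint. The attaching map is then $\phi_z:=\Phi_z|_{\partial B_z}$.

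At the level of cellular chains this will produce $\partial e_z=\sum_{w\leftarrow z}\varepsilon(w,z)\,e_w$ with $\varepsilon(w,z)=\pm 1$, since the only $(r-1)$-cells of $X_{r-1}$ meeting the image of $\phi_z$ are the $e_w$ with $w\leftarrow z$, each covered exactly once, with local degree $\pm 1$, by the top-dimensional ball $B_w$ living in the $[y,z)$ half of $\partial B_z$. The main obstacle will be matching the geometrically determined sign $\varepsilon(w,z)$ with the prescribed sign $s(w,z)$ from the GSS data. Because orientations of each $B_z$ may be chosen one at a time along the induction, and reversing the orientation of $B_z$ reverses every $\varepsilon(w,z)$ for $w\leftarrow z$ simultaneously, the sign function $\varepsilon$ resulting from any consistent choice of orientations always lies in the same equivalence class of sign assignments as $s$; hence, using the equivalence move from the GSS formalism, I can arrange $\varepsilon=s$ on the nose without altering the chain homotopy type of the cellular complex, and in particular without altering $S_y(0)$ up to cellular equivalence.

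The niceness of $S_y(0)$ will then be transparent from the construction: the $0$-skeleton is $*\sqcup\{y\}$; each characteristic map $\Phi_z$ is injective on the interior of $B_z$ so that each attaching map is injective modulo the basepoint as required; and the induced partial order on cells and basepoint coincides by construction with the GSS poset $[y,\infty)$ augmented below by the basepoint, with grading and signs matching. Reduced suspension then gives $S_y(k)=\Sigma^k S_y(0)$, promoting $y$ to the unique $k$-cell, shifting every other cell up in dimension by $k$, and transporting the boundary map verbatim, as required.
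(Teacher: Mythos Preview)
Your construction lands on the correct underlying space but has two real gaps relative to what the theorem asserts.

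\textbf{Injectivity of attaching maps.} You send the entire $(y,z]$ half of $\partial B_z$ to the basepoint; for $M(z,y)\geq 2$ that half is a positive-dimensional ball, so $\phi_z$ is not injective---only, as you say, ``injective modulo the basepoint,'' which is strictly weaker than the niceness condition the theorem demands. The paper circumvents this by adjoining a new element $x_0$ (covered by exactly the elements covering $y$) together with a short tail $x_1,y_1,\dots,x_{k+1}$ below it, and taking $S_y(k)$ to be the order complex of $(x_{k+1},\infty)$ with the CW structure of Theorem~\ref{thm:yinfty}. In that model the cell for $z$ is the order complex of $(x_{k+1},z]$, and its attaching map is the literal inclusion of the sphere $|(x_{k+1},z)|$ as a subcomplex of the skeleton---hence genuinely injective. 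Your space is in fact homeomorphic to the paper's (for $k=0$ both are the unreduced suspension of the order complex of $(y,\infty)$), but the CW structure you describe is not the nice one, so the proof as written does not establish the injectivity clause.

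\textbf{Uniqueness of the sign assignment.} You assert that the geometrically produced $\varepsilon$ lies in the same equivalence class as the prescribed $s$, but this is precisely the nontrivial content: one must prove that \emph{any} two sign assignments on $[y,\infty)$ satisfying the length-$3$ relation are equivalent. The paper does this by fixing a maximal tree in the Hasse graph, normalizing all tree-edges to $+1$, and then showing---via shellability of each closed interval $[y,x]$---that every cycle in the graph is a sum of the $4$-cycles coming from length-$3$ intervals, so the sign on every off-tree edge is forced. Without this argument you cannot conclude $\varepsilon\sim s$, and the same argument is what underwrites the ``well-defined'' clause of the theorem.
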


\begin{proof}
  We extend the shellable poset $[y,\infty)$ by
  attaching elements $x_0,x_1,y_1,\ldots,x_k,y_k,x_{k+1}$, such that
  $x_0$ is covered by precisely the elements that cover $y$ and with
  the same sign for each covering relation, and each of $x_i$ and
  $y_i$ is covered $x_{i-1}$ and $y_{i-1}$ with positive and negative
  signs respectively.  Using Lemmas \ref{lem:shell1} and
  \ref{lem:shell2}, we see that this new poset is also shellable.  Let
  $P_0$ be the poset defined as $P_0=(x_{k+1},y]\cup(x_{k+1},x_0]$.

Now consider  the order  complex of $(x_{k+1},\infty)$.  It has  a CW
complex structure whose cells correspond to the elements of $(x_{k+1},
\infty)$, and  the boundary maps  represent the covering
relations. But $P_0$ is  a thin shellable  poset, and 
hence the order complex of $P_0$ is a sphere of dimension $k$. Thus we
can treat the order complex of  $P_0$ as the cell corresponding to $y$
in our pointed  CW complex. The order complex  of $(x_{k+1},\infty)$
then has  a pointed  CW complex structure,  whose cells  correspond to
elements of  $[y,\infty)$ and whose  boundary maps correspond  to the
chain complex boundary maps.

Recall that a sign  convention $s$ assigns
$1$ or $-1$ to each  covering relation in the poset $\mathcal{G}$. Two
sign conventions are said to be equivalent if one can be obtained from
another by reversing the orientation of all the the covering relations
$z\leftarrow x$,  where exactly one of  $z$ and $x$  belong some fixed
subset of  generators. A property  that sign conventions must  have is
that  the grid  homology  boundary  map must  actually  be a  boundary
map. This means if $z\leftarrow \{p,q\}\leftarrow x$ is an interval of
length  three, then  the product  of the  signs of  the  four covering
relations is $-1$.

Note that  the boundary maps in  the CW complex  $[y,\infty)$ also has
this  property and this  equivalence. The  equivalence is  obtained by
reversing  the orientation  of the  cells corresponding  to  the fixed
subset  of generators. To  see that  it also  has the  above mentioned
property,  let $z\leftarrow  \{p,q\}\leftarrow  x$ be  an interval  of
length three. The generator $x$  will correspond to an $r$-cell, whose
boundary  will  contain two  $(r-1)$-cells  corresponding  to $p$  and
$q$. These two  cells have a common $(r-2)$-cell  on their boundaries,
coming from $z$. Thus it is easy  to see that the product of the signs
of  the four  boundary maps  has  to be  negative. 

Now we will show that this  equivalence and this property is enough to
determine the  sign in $[y,\infty)$. Fix  a maximal tree  in the graph
$[y,\infty)$. Using the
equivalence, we  can ensure  that all the  edges in this  maximal tree
have positive  sign. Now, we need  to show that the  property will fix
the sign of every other edge. Whenever  we add an edge, we get a cycle
in the  graph $[y,\infty)$,  consisting of that  edge and a  few edges
from the maximal  tree. If we can show that any  cycle is generated by
$4$-cycles coming from intervals of length three, then we are done.

Consider  two maximal  chains  in  $[y,x]$.  They  combine  to form  a
cycle. Call  such cycles to be simple  cycles. It is easy  to see that
any cycle in $[y,\infty)$ is a  sum of simple cycles.  So we only need
to  show that  any simple  cycle is  a sum  of $4$-cycles  coming from
length three  intervals. Let $\mathfrak{m}_1$  and $\mathfrak{m}_2$ be
two maximal chains in $[y,x]$.  Since $[y,x]$ is shellable, it follows
from  definition that  there is  some  total ordering  on the  maximal
chains, such that we can replace the bigger maximal chain by a smaller
one $\mathfrak{m}_3$  after modification by a $4$-cycle  coming from a
length three interval. This completes the proof that there is a unique
sign assignment on $[y,\infty)$ and hence we can choose the
orientations of the cells properly to ensure that the CW complex
boundary maps respect the sign conventions. 

Also note that
during the construction of the CW complex, when we were trying to
attach an $n$-dimensional cell, its boundary had to map injectively to
an $(n-1)$-sphere respecting some sign. Thus throughout there was
only one option, and hence there is only one such CW complex that can
be constructed with the above properties. This shows that the CW
complex is well-defined. Since with the obvious CW complex structure,
$S_y(0)\wedge^k S^1$ is another CW complex with the same properties,
we have $S_y(k)=S_y(0)\wedge^k S^1$.
\end{proof}

Indeed, the above  proof shows that if $y\prec x$ with $M(x,y)=r$,
then the $(k+r)$-ball has  a pointed CW  complex structure,
whose cells  are the generators in  $[y,x]$ and the  boundary maps are
the grid homology boundary maps. This has the following corollaries.

\begin{lem} For  any interval $[y,x]$ with $y\neq x$,  the the homology of  the chain
complex induced from the poset, is trivial.
\end{lem}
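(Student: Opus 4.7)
The plan is to deduce this lemma directly from the construction immediately preceding it. The key observation, already stated right after the proof of Theorem \ref{thm:construction1}, is that for $y\prec x$ with $M(x,y)=r$, there is a pointed CW complex structure on a $(k+r)$-ball whose cells are in bijection with the elements of $[y,x]$, whose boundary maps agree with the grid homology boundary maps (i.e.\ the ones coming from the sign assignment on covering relations), and whose unique $0$-cell corresponds to the basepoint rather than to an element of the poset.

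First, I would unpack what this CW structure means at the level of chain complexes. The cellular chain complex of the ball has one generator per cell, and the boundary in the cellular chain complex equals the boundary coming from the poset by the construction of Theorem \ref{thm:construction1}. Since the basepoint is the unique $0$-cell and does not correspond to an element of the poset, the chain complex induced from $[y,x]$ is precisely the \emph{reduced} cellular chain complex of the ball (with a shift in degree by $k$, which is harmless since we choose any convenient $k\geq 0$, e.g.\ $k=0$, so that $y$ corresponds to the $0$-dimensional cell above the basepoint).

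Second, I would invoke the topological fact that a ball is contractible and hence has vanishing reduced homology in every degree. The reduced cellular chain complex of a CW ball therefore has trivial homology, so the chain complex induced from $[y,x]$ has trivial homology as claimed.

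The only subtle point — and this is really where the content lies — is ensuring the identification of the combinatorial chain complex of $[y,x]$ with the reduced cellular chain complex of the ball, respecting signs. But this is exactly what Theorem \ref{thm:construction1} established: the uniqueness argument there (via the equivalence of sign conventions and the $4$-cycle relations forced by length-three intervals) guarantees that the CW boundary maps agree with the poset boundary maps on the nose, up to the equivalence of sign assignments that leaves chain homotopy type unchanged. Hence no further work is needed, and the lemma follows as a one-line corollary of the construction.
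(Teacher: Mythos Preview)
Your proposal is correct and follows essentially the same approach as the paper: identify the chain complex induced from $[y,x]$ with the reduced cellular chain complex of the pointed CW ball constructed in Theorem \ref{thm:construction1}, and then use that a ball has trivial reduced homology. The paper states this in two sentences; your extra remarks about the basepoint being the $0$-cell and about signs are accurate elaborations of the same argument.
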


\begin{proof} The  homology of the chain complex  induced from $[y,x]$
is the reduced homology of the pointed CW complex whose
cells correspond to the generators of $[y,x]$ and whose boundary maps correspond to the
chain complex boundary map. However since that CW complex is the ball,
hence the reduced homology is trivial.
\end{proof}

\begin{lem}  There are  even  number of  generators  $z$ with  $y\prec
z\prec x$.
\end{lem}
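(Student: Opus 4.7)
My plan is to deduce this from the preceding lemma (that the chain complex induced from $[y,x]$ has trivial homology when $y\neq x$) by a parity bookkeeping argument on the Maslov grading.

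First I would invoke the preceding lemma. The chain complex associated to the GSS poset $[y,x]$ is precisely the cellular chain complex of the pointed CW ball supplied by Theorem \ref{thm:construction1}; since the reduced homology of a ball vanishes, so does the homology of this chain complex. Equivalently, the (reduced) Euler characteristic of $[y,x]$ is zero:
\begin{equation*}
\sum_{z\in [y,x]}(-1)^{M(z)}\ =\ 0.
\end{equation*}

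Next I would translate this into a parity statement. Since $M$ changes by exactly one along every covering relation, the elements of $[y,x]$ are partitioned into those with $M(z)$ even and those with $M(z)$ odd. The vanishing of the alternating sum above says exactly that
\begin{equation*}
\#\{z\in [y,x]\mid M(z)\ \text{even}\}\ =\ \#\{z\in [y,x]\mid M(z)\ \text{odd}\},
\end{equation*}
so $\#[y,x]$ is twice this common value, hence even.

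Finally, since $(y,x) = [y,x]\setminus\{y,x\}$, we have $\#(y,x) = \#[y,x] - 2$, which is also even. This is the desired conclusion.

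The only place where care is needed is in the first step: one must check that in the cellular chain complex of the ball produced by Theorem \ref{thm:construction1}, the cellular dimension of the cell for $z$ and the Maslov grading $M(z)$ differ by a constant independent of $z$, so that the cellular Euler characteristic really agrees (up to an overall sign) with $\sum_{z\in[y,x]}(-1)^{M(z)}$. This is immediate from the construction, where the cell associated to $z$ has dimension $k+M(z,y)$, but it is worth recording explicitly to make the Euler-characteristic identity rigorous.
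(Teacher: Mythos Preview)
Your proposal is correct and follows essentially the same approach as the paper: invoke the preceding lemma that the chain complex on $[y,x]$ is acyclic, deduce (via the Euler characteristic) that $\#[y,x]$ is even, and subtract two to conclude $\#(y,x)$ is even. The paper states this more tersely, but your added remark about the constant shift between cellular dimension and Maslov grading is a fair point to make explicit.
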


\begin{proof} We can assume $y\prec  x$. Consider the chain complex
  induced from the poset $[y,x]$. Since it has trivial homology, there
  must be even number of generators in $[y,x]$, and hence in $(y,x)$.
\end{proof}

We  digress for a  bit to  explore some  consequences of  the previous
lemma. For the  rest of this subsection, we  work with coefficients in
$\mathbb{F}_2=\mathbb{Z}/2\mathbb{Z}$. The  grid homology boundary map
can be written succinctly as

\begin{eqnarray*}
\partial x=\sum_{y\prec x, M(x,y)=1} y
\end{eqnarray*}

Let us generalize this map to define

\begin{eqnarray*}
\partial_i x=\sum_{y\prec x, M(x,y)=i} y
\end{eqnarray*}

The    above    lemma    implies    that   for    any    $k\geq    2$,
$\sum_{i+j=k}\partial_i  \partial_j=0$. Choosing  $k=2$ tells  us that
$(\mathcal{G},\partial_1)$  is  a  chain  complex, with  homology  say
$\mathcal{G}_1$.  Choosing  $k=3$ tells  us that $\partial_2$  is well
defined  on   $\mathcal{G}_1$,  and  choosing  $k=4$   tells  us  that
$(\mathcal{G}_1,\partial_2)$  is  a chain  complex  with homology  say
$\mathcal{G}_2$.   In  general  $(\mathcal{G}_i,\partial_{i+1})$ is  a
chain complex with homology $\mathcal{G}_{i+1}$. Thus we see that this
in  fact   defines  a  spectral   sequence  starting  with   the  grid
homology. The following suggests that it is not a very exciting one.

\begin{lem} The second map $\partial_2$ is zero on $\mathcal{G}_1$.
\end{lem}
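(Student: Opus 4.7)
The aim is to show that the chain map $\partial_2:(\mathcal{G},\partial_1)\to(\mathcal{G},\partial_1)$ is null-homotopic over $\mathbb{F}_2$, since this is exactly what it means for the induced map on $\mathcal{G}_1=H_*(\mathcal{G},\partial_1)$ to vanish. (That $\partial_2$ is a chain map at all is the $k=3$ identity $\partial_1\partial_2+\partial_2\partial_1=0$ already noted.) I would construct an explicit chain homotopy $h:\mathcal{G}_*\to\mathcal{G}_{*-1}$ satisfying $\partial_1 h+h\partial_1=\partial_2$.

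I would look for $h$ of the simplest form,
\[
h(x)=\sum_{\substack{w\prec x\\ M(x,w)=1}} f(x,w)\,w,
\]
where $f$ is an $\mathbb{F}_2$-valued function on the covering relations of $\mathcal{G}$. Substituting into $\partial_1 h+h\partial_1=\partial_2$ and equating the coefficient of each $z$ with $z\prec x$, $M(x,z)=2$, the chain-homotopy equation reduces to the local diamond identity
\[
f(x,w_1)+f(x,w_2)+f(w_1,z)+f(w_2,z)=1,
\]
where $w_1,w_2$ are the two intermediate elements of the length-$2$ interval $[z,x]$. That there are exactly two such intermediates uses the subthin-plus-shellable structure of closed intervals established in the previous section, together with the earlier parity lemma that every non-trivial open interval contains evenly many elements.

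This becomes a cocycle problem on the $2$-complex $K$ whose $0$-cells are the elements of $\mathcal{G}$, whose $1$-cells are the covering relations, and whose $2$-cells are the length-$2$ intervals (each attached along its four boundary edges): I need $f\in C^1(K;\mathbb{F}_2)$ with $\delta f=\mathbf{1}$, where $\mathbf{1}\in C^2(K;\mathbb{F}_2)$ is the all-ones cochain. Such an $f$ exists iff $\mathbf{1}$ pairs trivially with every class in $H_2(K;\mathbb{F}_2)$. The essential input is a parity computation on length-$3$ intervals: each closed length-$3$ interval $[y,x]$ has a $3$-ball order complex whose acyclicity forces the counts $n_1$ and $n_2$ of intermediate elements at the two interior Maslov levels to coincide; a direct edge count, using thinness of length-$2$ sub-intervals, shows that the formal sum of the $n_1+n_2=2n_1$ length-$2$ sub-intervals of $[y,x]$ is itself a $2$-cycle in $K$ (each covering relation appears in precisely two sub-intervals), pairing trivially with $\mathbf{1}$.

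The hard part will be extending this vanishing from length-$3$ boundaries to all of $H_2(K;\mathbb{F}_2)$. My plan is to build $f$ inductively along the Maslov filtration: with $f$ already specified on covering relations below some Maslov grading $m$, each element $x$ at grading $m+1$ gives a linear system on the yet-undetermined $f(x,w)$, one equation per length-$2$ interval $[z,x]$ (as above). The length-$3$ parity statement is exactly the consistency condition for this system, since two distinct equations share a common $w$ precisely when the corresponding $z$'s and $w$ sit in a length-$3$ interval with top $x$. Carrying the induction through all gradings then produces the required $f$, hence $h$, proving that $\partial_2$ is null-homotopic and so induces the zero map on $\mathcal{G}_1$.
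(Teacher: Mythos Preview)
Your strategy---show $\partial_2$ is null-homotopic via an explicit $h$---is different from the paper's and does work in principle, but you have overlooked a shortcut and your own argument for the key step has a gap. The function $f$ you seek, satisfying $f(x,w_1)+f(x,w_2)+f(w_1,z)+f(w_2,z)=1$ on every diamond $[z,x]$, is nothing other than a sign assignment written additively (set $s=(-1)^f$; the diamond condition becomes $\prod s=-1$). Such an $s$ exists on the grid poset---it is part of the GSS data, imported from \cite{CMPOZSzDT}---so you may simply take that $f$ and be done: $h(x)=\sum_{w\leftarrow x}f(x,w)\,w$ is the required null-homotopy.

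Your own inductive construction of $f$, however, does not close. For a fixed top element $x$, the linear system in the unknowns $\{f(x,w):w\leftarrow x\}$ has one two-term equation per $z$ with $M(x,z)=2$; consistency means that around every cycle in the graph with vertices $\{w:w\leftarrow x\}$ and edges $\{z:M(x,z)=2\}$ the right-hand sides sum to zero. Each interval $[y,x]$ with $M(x,y)=3$ does contribute such a cycle with vanishing sum, as you observe, but you never show that these special cycles \emph{generate} all cycles of that graph. Your sentence about ``two distinct equations sharing a common $w$'' describes a mere incidence, not a cycle---no interval of Maslov length $3$ is forced there. Filling this in is tantamount to reproving the existence of sign assignments (which does follow from shellability, as in the paper's uniqueness argument for signs, but not in one line).

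For contrast, the paper avoids this machinery entirely. Given a $\partial_1$-cycle $\bm x$ it writes down the bounding chain directly as $\bm y'=\{\,y:y\text{ is covered by }\equiv 2\pmod 4\text{ elements of }\bm x\,\}$ and verifies $\partial_1\bm y'=\partial_2\bm x$ by a short double count, using only that each diamond has exactly two intermediates. This $\bm y'$ is nonlinear in $\bm x$, so it is not a chain homotopy in your sense, but it needs nothing beyond thinness.
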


\begin{proof}   Let    $\bm   x$   be   a    homogeneous   element   in
$\mathcal{G}_1$. Thus  $\bm x$ is  a linear combination  of generators
from $\mathcal{G}$.   Recall that we are working  with coefficients in
$\mathbb{F}_2$,  thus $\bm x$  simply corresponds  to a  collection of
generators,  all  with  the   same  grading.   We  are  also  assuming
$\partial_1 {\bm x}  =0$. Let $\bm y$ be  the collection of generators
which are covered by some element  from $\bm x$. Similarly let $\bm z$
be the set  of elements which are covered by some  element of $\bm y$.
Let ${\bm z'}=\partial_2 {\bm x}$. Note that ${\bm z'}$ is a subset of
$\bm z$.  We will show that there  exists a subset ${\bm  y'}$ of $\bm
y$, such  that ${\bm z'}=\partial_1  {\bm y'}$. That would  imply that
${\bm z'}=\partial_2 {\bm x}$ is zero in $\mathcal{G}_1$.

Since $\partial_1 {\bm x}=0$, each element of $\bm y$ is covered by an
even number  of elements from  $\bm x$. Let  ${\bm y'}$ be the  set of
elements in  $\bm y$ that are  covered by $2($mod $4)$  generators from $\bm
x$. We claim $\partial_1 {\bm y'}={\bm z'}=\partial_2 {\bm x}$. Choose
an element $z\in  {\bm z}$. We now consider  the set $\{(y,x)|y\in{\bm
y}, x\in{\bm x}, z\leftarrow y\leftarrow  x\}$. It is easy to see that
there are even number of  elements in this set, and half the
cardinality of this set has the same  parity as the number
of elements in ${\bm y'}$ that  cover $z$, and also the same parity as
the number of elements in $\bm x$ that are bigger than $z$ in the grid
poset. Thus  $z$ appears in $\partial_1  {\bm y'}$ if and  only if $z$
appears in $\partial_2 {\bm x}$. This concludes the proof.
\end{proof}

Note that  the above proof can  easily be generalized to  show that if
$\bm x$ is a homogeneous element in $\mathcal{G}$ with $\partial_1 {\bm
x}=0$ and $\partial_n {\bm x}={\bm z}$, then there exists a homogeneous
element $\bm y$ in $\mathcal{G}$ such that ${\bm z}=\partial_{n-1}{\bm
y}$.  It  is not  clear whether this  is enough  to show that  all the
higher $\partial_n$'s vanish.

\section{CW complexes}\label{sec:cwcomplexes}

In the previous section, we defined a nice CW complex to be a pointed CW
complex with a unique $0$-cell, such that the attaching maps for all
the other cells are injective, and the poset whose elements are the
cells and the basepoint is a GSS poset. Hence the order complex of any
closed interval of the poset is a ball, and the order complex of any
interval of the form $(-\infty,a]$ is also a ball.

In fact, using Theorem \ref{thm:construction1}, given a suitable GSS
poset, there is one and only one nice pointed CW complex satisfying
these properties. Furthermore, we can fix the orientation of the
$0$-cell arbitrarily, but once that orientation is fixed the
orientation of every other cell and the basepoint is fixed by the sign
convention of the GSS poset.

\begin{figure}[ht]
\psfrag{b}{$b$}
\psfrag{e01}{$e^0_1$}
\psfrag{e11}{$e^1_1$}
\psfrag{e12}{$e^1_2$}
\psfrag{e13}{$e^1_3$}
\psfrag{e21}{$e^2_1$}
\psfrag{ab}{$b$}
\psfrag{ae01}{$e^0_1$}
\psfrag{ae11}{$e^1_1$}
\psfrag{ae12}{$e^1_2$}
\psfrag{ae13}{$e^1_3$}
\psfrag{ae21}{$e^2_1$}
\begin{center} \includegraphics[width=250pt]{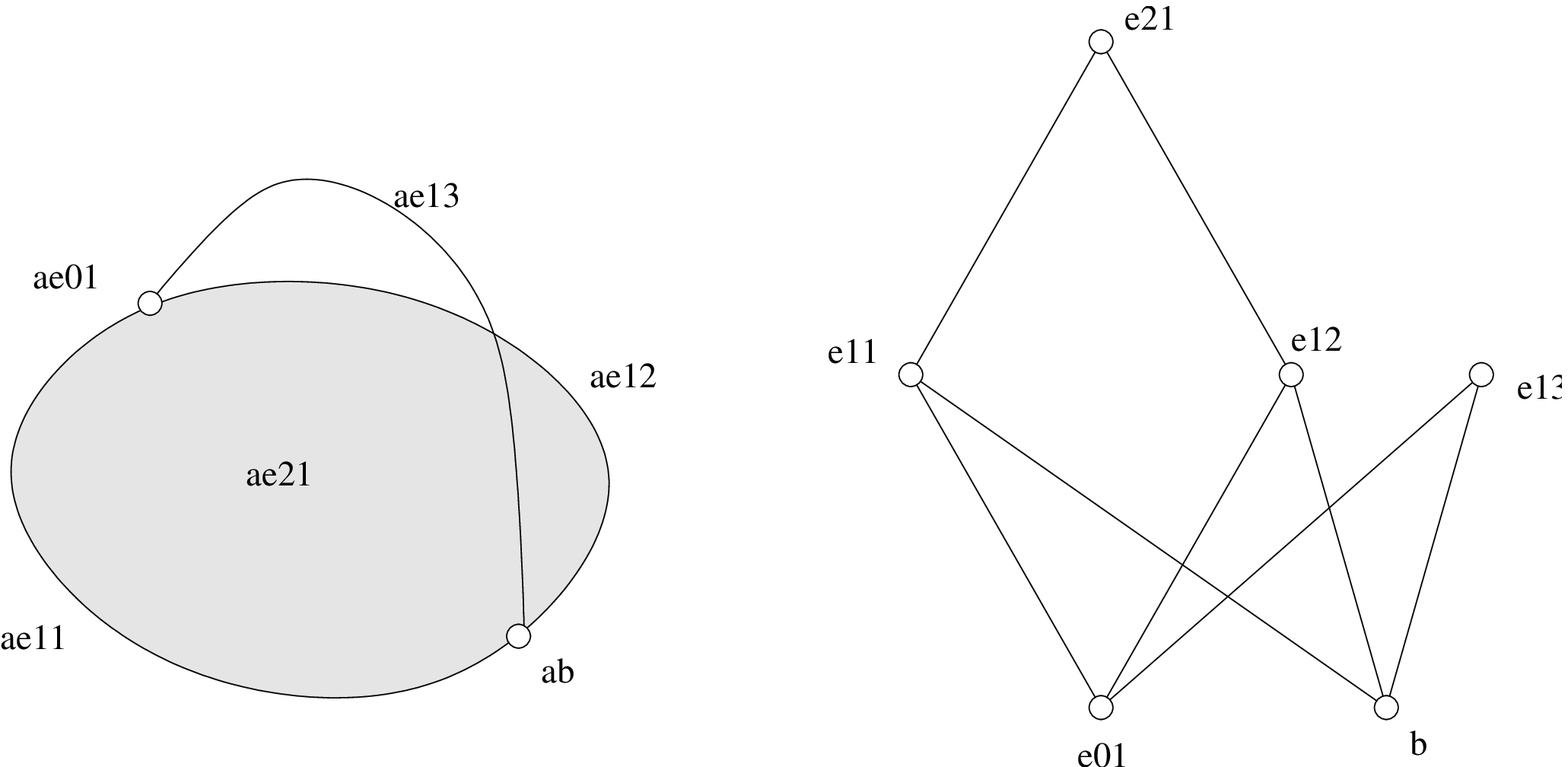}
\end{center}
\caption{A nice pointed CW complex and the poset corresponding to it}\label{fig:exampleposet}
\end{figure}

Let $X$ be a nice finite CW  complex. Let the dimension $k$ cells of
$X$ be $e^k_1,e^k_2,\ldots,e^k_{n_k}$. We define its dual in the
following way. We first fix a map $P$ from the discrete union of all cells to
$\mathbb{R}^2$, such that each cell maps to a single point in
$\mathbb{R}^2$, and different cells map to different points in
$\mathbb{R}^2$. Let the image of the cell $e^k_i$ be $p^k_i$; let
$\mu^k_i:[0,1]\rightarrow \mathbb{R}^2$ be the straight line path from
the origin to $p^k_i$ at constant speed, and let $g^k_i\subset
\mathbb{R}^2\times\mathbb{R}$ be the graph of the function $\mu^k_i$.

Given such a map $P$, we will construct a PL-embedding $f_P$ of $X$ in
$\mathbb{R}^n$, with $n\geq 3d+1$, where $d$ is the dimension of
$X$. We will embed the $(k-1)$-skeleton $X^{k-1}$ in $\mathbb{R}^{3k-2}$, and
then view $\mathbb{R}^{3k-2}$ as the subspace
$\mathbb{R}^{3k-2}\times\{0\}^3$ in $\mathbb{R}^{3k+1}=\mathbb{R}^{3k-2}\times\mathbb{R}^3$
and extend this embedding to the $k$-skeleton. Thus we will be
able to embed $X$ in $\mathbb{R}^{3d+1}$ which we view as the
subspace $\mathbb{R}^{3d+1}\times\{0\}^{n-3d-1}$ in $\mathbb{R}^n=\mathbb{R}^{3d+1}\times\mathbb{R}^{n-3d-1}$.

\begin{thm} Given a map $P$, there is a PL-embedding $f_P$ of the type as
  described in the previous paragraph.
\end{thm}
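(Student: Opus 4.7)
The plan is to build $f_P$ by induction on the skeleton, using the extra three ambient dimensions at each stage to attach the new cells via a coning construction whose apices are prescribed by the points $p^k_i \in \R^2$ (equivalently by the endpoints of the graphs $g^k_i$). For the base case, embed the $0$-skeleton in $\R^1$ by sending the basepoint and the unique $0$-cell to distinct points. Assume inductively that $f_P$ has been defined as a PL-embedding $X^{k-1} \hookrightarrow \R^{3k-2}$, which we regard as sitting inside $\R^{3k+1} = \R^{3k-2} \times \R^2 \times \R$ as $\R^{3k-2} \times \{0\}^3$.

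For each $k$-cell $e^k_i$, niceness gives an injective attaching map $\varphi_i : S^{k-1} \to X^{k-1}$, and the cell poset structure (GSS, hence every closed interval is shellable) together with Danaraj--Klee show that the open interval $(b, e^k_i)$, being thin and shellable, has order complex a PL $(k-1)$-sphere; this identifies $\varphi_i(S^{k-1})$ as an embedded PL $(k-1)$-sphere in $\R^{3k-2}$. Parametrize the closed $k$-cell as the cone $c(S^{k-1})$ by $(x,t)$ with $t \in [0,1]$, and extend $f_P$ to $e^k_i$ by
\[
f_P(x,t) \;=\; \bigl((1-t)\,(f_P\circ \varphi_i)(x),\; t\,p^k_i,\; t\bigr) \;\in\; \R^{3k-2} \times \R^2 \times \R.
\]
At $t=0$ this agrees with the previously defined embedding of $\del e^k_i$, and at $t=1$ it sends the apex to the distinguished point $(0, p^k_i, 1)$, which is the far endpoint of the graph $g^k_i$ regarded as lying in the last $\R^3$ factor; linear interpolation in $t$ traces out $g^k_i$ as the image of the cone axis.

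It remains to verify injectivity of the extended map. Within a single $k$-cell, the third group of coordinates records $t$, so collisions can only occur on a single slice $\{t = \mathrm{const}\}$; for $t < 1$ the slice map is $(1-t)\,(f_P\circ\varphi_i)$, which is injective because $f_P\circ\varphi_i$ is, and for $t = 1$ the slice is the single apex. Between two different $k$-cells $e^k_i$ and $e^k_j$, equality of images first forces the $t$-coordinates to coincide; if $t > 0$, the $\R^2$-coordinate forces $p^k_i = p^k_j$, contradicting the injectivity of $P$, while $t = 0$ lands in $X^{k-1}$, where the two cells meet consistently through the inductively defined embedding. Between new $k$-cells and the old $X^{k-1}$, the $t$-coordinate of any interior point is positive, whereas $X^{k-1}$ sits inside the hyperplane $t = 0$. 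This completes the inductive step; after $d$ stages the image lies in $\R^{3d+1}$, and we post-compose with the standard inclusion $\R^{3d+1} \hookrightarrow \R^n$.

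The principal obstacle is the shape of the attaching sphere: the coning formula above only yields a PL-embedded disk if $\varphi_i(S^{k-1})$ is already a PL $(k-1)$-sphere in $\R^{3k-2}$, as opposed to an arbitrary PL image of $S^{k-1}$ with self-intersections. Both inputs to this, namely injectivity of the attaching map and the sphere structure on the poset boundary of a cell, come from the definition of a \emph{nice} CW complex; verifying the sphere structure requires invoking the finite thin shellable order complex theorem from Section~\ref{sec:posets} applied to the open interval $(b, e^k_i)$ inside the GSS cell poset, which is the technical heart of the argument.
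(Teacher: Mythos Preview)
Your argument is correct but takes a genuinely different route from the paper. You embed each $k$-cell as the straight-line cone from its boundary sphere $s^{k-1}_i \subset \R^{3k-2}\times\{0\}^3$ to the apex $(0,p^k_i,1)$; the distinctness of the $p^k_i$ separates the cones, and the last coordinate separates them from $X^{k-1}$. The paper instead embeds $e^k_i$ as the union of a cylinder $s^{k-1}_i \times g^k_i$ and a cap $d^k_i \times \{p^k_i\}\times\{1\}$, where $d^k_i$ is a disk in $\R^{3k-2}$ bounding $s^{k-1}_i$. Producing that disk is the bulk of the paper's proof: it is realized explicitly as an embedding of the order complex of $[e^0_1,e^k_i]$, written out simplex by simplex. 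Your coning construction sidesteps this entirely and is substantially shorter. What the paper's construction buys, however, is that the resulting $f_P$ is simultaneously a PL embedding of the \emph{order complex} of the cell poset (stated immediately after the proof), and this extra structure is exactly what the subsequent definition of the right-handed disks $r^k_{i,j}$ and the dual $\overline{X}_n$ relies on. So your proof establishes the theorem as stated, but not the refinement the paper needs downstream.

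One minor point: your final paragraph overstates the role of the Danaraj--Klee sphere recognition. For the cone formula to give a PL embedding you need $f_P\circ\varphi_i$ to be a PL injection of $S^{k-1}$, which follows already from the inductive hypothesis together with the fact that the nice CW complex carries an underlying simplicial structure (via Theorem~\ref{thm:construction1}, where cells are order complexes of intervals and attaching maps are simplicial inclusions). That the image happens to be a PL sphere is a consequence, not an input, of your coning argument.
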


\begin{proof} 
For clarity, we explicitly write down the embedding of $X^k$ for a few
small values of $k$. We embed the $0$-skeleton in $\mathbb{R}$ by mapping the basepoint to
the origin and the $0$-cell to $1$. 

\begin{figure}[ht]
\psfrag{ab}[][][0.75]{$b$}
\psfrag{ae01}[][][0.75]{$e^0_1$}
\psfrag{ae11}[][][0.75]{$e^1_1$}
\psfrag{ae12}[][][0.75]{$e^1_2$}
\psfrag{ae13}[][][0.75]{$e^1_3$}
\psfrag{a1}{The embedding of the $1$-skeleton}
\psfrag{b1}{The disk $d^2_1$ is constructed as an}
\psfrag{b2}{embedding of the order complex of $[e^0_1,e^2_1]$}
\psfrag{be01}[][][0.75]{$e^0_1$}
\psfrag{be11}[][][0.75]{$e^1_1$}
\psfrag{be12}[][][0.75]{$e^1_2$}
\psfrag{be21}[][][0.75]{$e^2_1$}
\psfrag{be01e21}[][][0.75]{$e^0_1\prec e^2_1$}
\psfrag{be01e11}[][][0.75]{$e^0_1\prec e^1_1$}
\psfrag{be01e12}[][][0.75]{$e^0_1\prec e^1_2$}
\psfrag{be11e21}[][][0.75]{$e^1_1\prec e^2_1$}
\psfrag{be12e21}[][][0.75]{$e^1_2\prec e^2_1$}
\psfrag{be01e11e21}[][][0.75]{$e^0_1\prec e^1_1\prec e^2_1$}
\psfrag{be01e12e21}[][][0.75]{$e^0_1\prec e^1_2\prec e^2_1$}
\psfrag{cb}[][][0.75]{$b$}
\psfrag{ce01}[][][0.75]{$e^0_1$}
\psfrag{ce11}[][][0.75]{$e^1_1$}
\psfrag{ce12}[][][0.75]{$e^1_2$}
\psfrag{ce13}[][][0.75]{$e^1_3$}
\psfrag{ce21}[][][0.75]{$e^2_1$}
\psfrag{c1}{The embedding of the $2$-skeleton}
\psfrag{c2}{with $e^k_i$ embedded as a $k$-cell}
\psfrag{db}{$b$}
\psfrag{d1}{The same embedding as previous figure}
\psfrag{d2}{viewed as an embedding of the order complex}
\psfrag{de01}[][][0.75]{$e^0_1$}
\psfrag{de11}[][][0.75]{$e^1_1$}
\psfrag{de12}[][][0.75]{$e^1_2$}
\psfrag{de13}[][][0.75]{$e^1_3$}
\psfrag{de21}[][][0.75]{$e^2_1$}
\psfrag{de01e21}[][][0.75]{$e^0_1\prec e^2_1$}
\psfrag{de01e11}[][][0.75]{$e^0_1\prec e^1_1$}
\psfrag{de01e12}[][][0.75]{$e^0_1\prec e^1_2$}
\psfrag{de01e13}[][][0.75]{$e^0_1\prec e^1_3$}
\psfrag{de11e21}[][][0.75]{$e^1_1\prec e^2_1$}
\psfrag{de12e21}[][][0.75]{$e^1_2\prec e^2_1$}
\psfrag{de01e11e21}[r][][0.75]{$e^0_1\prec e^1_1\prec e^2_1$}
\psfrag{de01e12e21}[][][0.75]{$e^0_1\prec e^1_2\prec e^2_1$}
\begin{center} \includegraphics[width=0.9\textwidth]{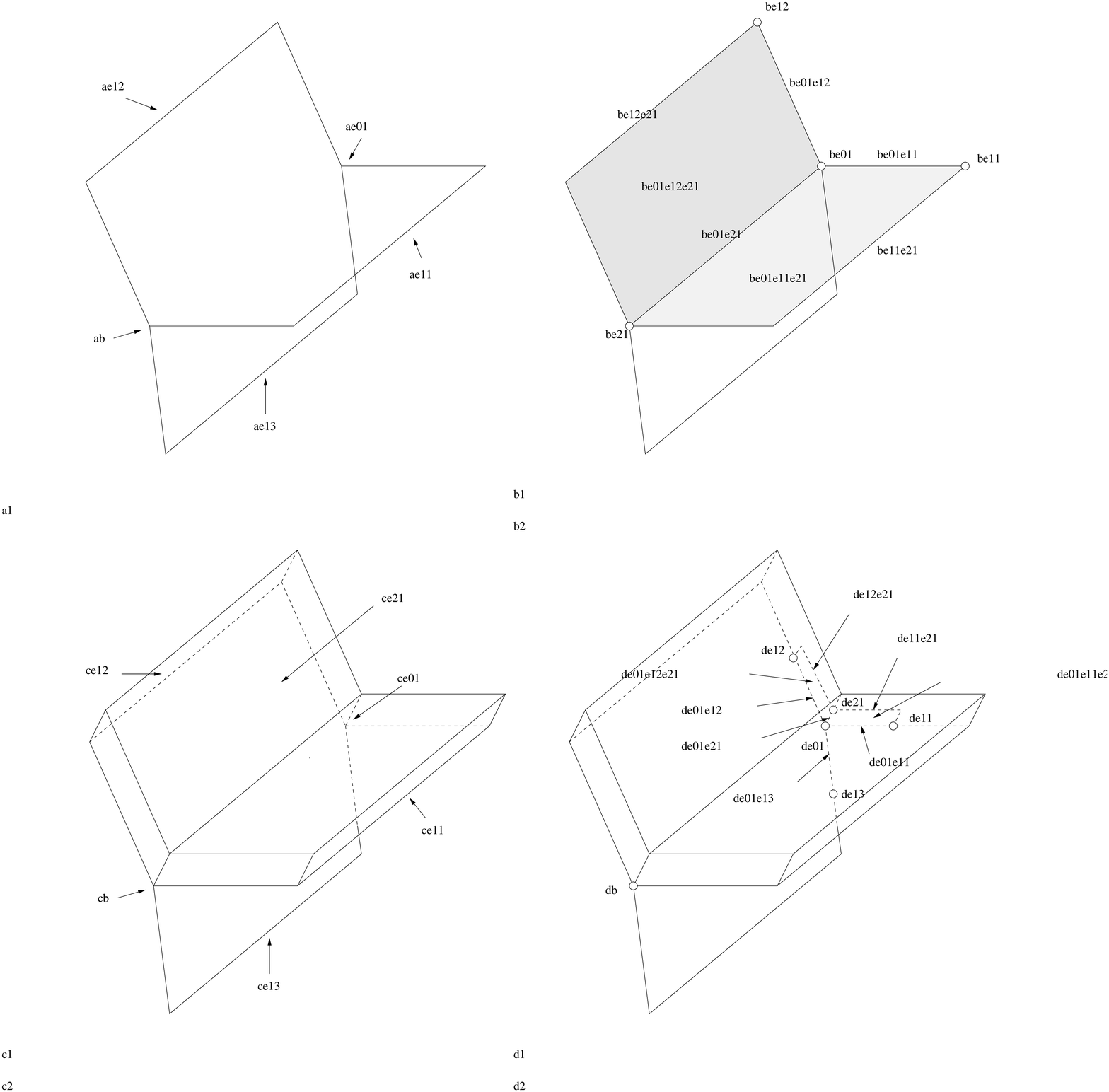}
\end{center}
\caption{Extending embedding of the $1$-skeleton of the CW complex of Figure
  \ref{fig:exampleposet} to the $2$-skeleton. This happens in
  $\mathbb{R}^7$, but the ambient space has been flattened out.}
\end{figure}

The $1$-cells are
$e^1_1,e^1_2,\ldots,e^1_{k_1}$. We view $\mathbb{R}^4$ as
$\mathbb{R}\times\mathbb{R}^3$, and embed $e^1_i$ as an union of
$\partial e^1_i \times \{\mu^1_i(t)\}\times \{t\}$ for $t\in[0,1]$,
and $[0,1]\times \{p^1_i\}\times\{1\}$. Note that since $p^1_i$'s are
distinct, this is indeed an embedding.

There is a different way of viewing the above process. For each
$1$-cell $e^1_i$, its boundary is a $0$-sphere $s^0_i$ in $\mathbb{R}$, and bounds a
disk $d^1_i$ in $\mathbb{R}$ (which in our case always happens to be
the unit interval $I$). We then embed the $1$-cell $e^1_i$ as a union
of an annulus $S^0\times I$ embedded in $\mathbb{R}\times\mathbb{R}^3$
as $s^0_i \times g^1_i$, and a
disk $D^1$ embedded in $\mathbb{R}\times \mathbb{R}^3$ as
$d^1_i\times\{p^1_i\}\times\{1\}$.

Now to embed $X^k$ in $\mathbb{R}^{3k+1}$, we proceed inductively. We
assume $X^{k-1}$ is already embedded in $\mathbb{R}^{3k-2}$, and we
view $\mathbb{R}^{3k+1}=\mathbb{R}^{3k-2}\times\mathbb{R}^3$. For
each $k$-cell $e^k_i$, its boundary is a $(k-1)$-sphere $s^{k-1}_i$
embedded in $\mathbb{R}^{3k-2}$. If that sphere $s^{k-1}_i$ bounds a
disk $d^k_i$ in $\mathbb{R}^{3k-2}$, then we can embed the $k$-cell
$e^k_i$ as an union of an annulus $S^{k-1}\times I$ embedded in
$\mathbb{R}^{3k+1}$ as $s^{k-1}_i\times g^k_i$, and a disk $D^k$ embedded in $\mathbb{R}^{3k+1}$ as
$d^k_i\times\{p^1_i\}\times\{1\}$. Note that since $p^k_i$'s are
distinct points in $\mathbb{R}^2$, this is still an embedding.

Thus to show that there is a well-defined embedding depending only on
the choice of the map $P$, we need to produce a disk $d^k_i$ bounding
$s^{k-1}_i$, which does not depend on anything other than the choice of
the map $P$. Without loss of generality let $i=1$. Let $s^{k-1}_1$ be the boundary of a $k$-cell
$e^k_1$. Note that the order complex of $[e^0_1,e^k_1]$ is a disk of
the same dimension as $d^k_1$. So we will produce an embedding of this
order complex with the proper boundary.

To present a clearer picture, let us explicitly describe how we define
the embeddings of the vertices and edges of this order complex.
We embed $e^0_1$ and $e^k_1$ as
$\{1\}\times\{0\}^{3k-3}$ and $\{0\}^{3k-2}$ respectively. For
$1\leq l\leq k-1$, we embed $e^l_i$ as
$\{1\}\times\{0\}^{3l-3}\times\{p^l_i\}\times\{1\}\times\{0\}^{3k-3l-3}$.
The edge joining $e^k_1$ to $e^0_1$ is $I\times\{0\}^{3k-3}$, the
edge joining $e^l_i$ to $e^0_1$ is
$\{1\}\times\{0\}^{3l-3}\times\ g^l_i \times\{0\}^{3k-3l-3}$, the edge joining
$e^k_1$ (resp. $e^{l'}_{i'}$) to $e^l_i$ is $\{0\}^{3l-2}\times
g^l_i\times\{0\}^{3k-3l-3}$ followed by
$I\times\{0\}^{3l-3}\times\{p^l_i\}\times\{1\}\times\{0\}^{3k-3l-3}$
(resp. $\{1\}\times\{0\}^{3l-3}\times
g^l_i\times\{0\}^{3l'-3l-3}\times\{p^{l'}_{i'}\}\times\{1\}\times\{0\}^{3k-3l'-3}$ followed by
$\{1\}\times\{0\}^{3l-3}\times\{p^l_i\}\times\{1\}\times\{0\}^{3l'-3l-3}\times
g^{l'}_{i'}\times\{0\}^{3k-3l'-3}$).

Now let us describe in general how a simplex of this order complex
coming from a chain $e^0_1\prec e^{l_1}_{i_1}\prec\cdots
\prec e^{l_m}_{i_m}$ is embedded in $\mathbb{R}^{3k-2}$ with $l_m <k$. We
embed this as the disk $\{1\}\times\{0\}^{3l_1-3}\times
g^{l_1}_{i_1}\times \{0\}^{3l_2-3l_1-3}\times g^{l_2}_{i_2}\times\cdots
\times g^{l_m}_{i_m}\times\{0\}^{3k-3l_m-3}$. For the rest of this
paragraph, let us call this subspace as $\{1\}\times A$, where $A$ is
a subspace of $\mathbb{R}^{3k-3}$. The simplex of the order complex coming from the chain $e^{l_1}_{i_1}\prec\cdots
\prec e^{l_m}_{i_m}$ is a suitable part of the boundary of the above
order complex, and again for the rest of this paragraph, let us denote
that subspace to be $\{1\}\times B$, where $B$ is a subspace of
$\partial A$. Then the simplex of the order complex coming from a chain $e^{l_1}_{i_1}\prec\cdots
\prec e^{l_m}_{i_m}\prec e^k_1$ is embedded as the union of
$\{0\}\times A$ followed by $I\times B$ and the simplex of the order
complex coming from a chain $e^0_1\prec e^{l_1}_{i_1}\prec\cdots\prec
e^{l_m}_{i_m}\prec e^k_1$ is embedded as $I\times A$.

Thus we have embedded the order complex of $[e^0_1,e^k_1]$ in
$\mathbb{R}^{3k-2}$, and this is the required disk $d^k_1$ bounding
$s^k_1$. Using such disks $d^k_i$'s, we can then embed $X^k$ in
$\mathbb{R}^{3k+1}$, thus completing the proof.
\end{proof}

There are a few observations that we should make now. The only choice
we made in defining the embedding is the choice of the map $P$. But we
can connect any two such maps $P$ and $P'$ by an isotopy of $\mb{R}^2$, and this
induces an isotopy in $\mathbb{R}^n$ connecting the embeddings $f_P$
and $f_{P'}$.

Furthermore, this embedding is also an embedding of the order complex
of the whole poset coming from the CW complex. The basepoint is
embedded as the origin, the $0$-cell is embedded as
$\{1\}\times\{0\}^{n-1}$, and the vertex corresponding to the cell
$e^k_i$ is embedded as
$\{1\}\times\{0\}^{3k-3}\times\{\frac{1}{2} p^k_i\}\times\{\frac{1}{2}\}\times
\{0\}^{n-3k-1}$. A simplex of this order complex
coming from a chain $e^0_1\prec e^{l_1}_{i_1}\prec\cdots
\prec e^{l_m}_{i_m}$ is embedded in $\mathbb{R}^n$, as the disk $\{1\}\times\{0\}^{3l_1-3}\times
\frac{1}{2} g^{l_1}_{i_1}\times \{0\}^{3l_2-3l_1-3}\times \frac{1}{2} g^{l_2}_{i_2}\times\cdots
\times \frac{1}{2} g^{l_m}_{i_m}\times\{0\}^{n-3l_m-1}$. Once more for the rest of this
paragraph, let us call this subspace as $\{1\}\times A$, where $A$ is
a subspace of $\mathbb{R}^{n-1}$. The simplex of the order complex
coming from the chain $e^{l_1}_{i_1}\prec\cdots
\prec e^{l_m}_{i_m}$ is a suitable part of the boundary of the above
order complex, and again for the rest of this paragraph, let us denote
that subspace to be $\{1\}\times B$, where $B$ is a subspace of
$\partial A$. Then the simplex of the order complex coming from a
chain $b\prec e^{l_1}_{i_1}\prec\cdots
\prec e^{l_m}_{i_m}$, where $b$ is the basepoint, is embedded as the
union of the closure of $(\{1\}\times 2A)\setminus (\{1\}\times A)$, followed by $I\times 2B$ followed
by $\{0\}\times 2A$.
Note that this embedding of the order complex is slightly different
from the one that we used in the previous proof.

Thus the closure of a regular neighborhood  of $X$ in  $\mathbb{R}^n$ will  give an
$n$-dimensional manifold  $N$ (with boundary) with  same homotopy type
as that  of $X$. We construct $N$ in the following way. Let $N_k$ be
the set of all points with $L^2$ distance less than or equal to $\epsilon_k$ from
$X^k$. We assume $\epsilon_k$'s are decreasing in $k$ and we choose positive $\epsilon_0$ to be small
enough such that the interior of $(\cup N_k)$ is a regular
neighborhood of $X$. For each $k>1$ (resp. $k=1$), after we have already chosen $\epsilon_0,\ldots,\epsilon_{k-1}$ we
choose positive $\epsilon_k$ to be sufficiently small such that
$N_k\cap\partial (\cup_{j=0}^{k-1}N_j)$ has exactly one component (resp. exactly two
components) for each $k$-cell $e^{k}_i$. We define $N=\cup_i
N_i$. Note that $\partial N$ is not a smooth manifold.

Let  $b$ be  the image of  the basepoint $X^{-1}$  in the
embedding, and  let $B$ be the small neighborhood of $b$, lying in
the interior of $N_0$. Let  us view $W=N\setminus B$ as a  cobordism from
$\partial  B$ to  $\partial N$. Note that this cobordism is obtained
by starting with $\partial B$, adding disks corresponding to the
embeddings of the order complexes of $(-\infty,e^k_i]$, and then
taking a regular neighborhood. Now let us assume that there is a Morse
function and a corresponding gradient-like flow for this cobordism,
such that the flow is transverse to $\partial N$ and $\partial B$,
the only index $k$ critical points are the images of the vertices in the order
complex  corresponding to $e^k_i$ and the left-handed disks are the
embeddings of the order complexes corresponding to
$(-\infty,e^k_i]$. Then the original pointed CW complex
$X$ can be recovered from this gradient-like flow in the following
way. Quotient out $\partial B$ to the basepoint, and the cells for the
CW complex are the left-handed disks with the attaching map being
given by the flow. We construct the dual of $X$ in a very similar
way. We look at the right-handed disks, and regard the cobordism as
obtained from  $\partial N$ by adding those disks and then taking a
regular neighborhood. Thus to construct the pointed CW complex dual to $X$,
we should  quotient out $\partial N$ to the basepoint, and have cells
correspond to right-handed disks with attaching maps given by the
flow. However to define the dual in this way, we first need to find a
Morse function and a corresponding gradient-like flow satisfying the
above conditions. The dual then might depend on the choice of the
Morse function and the gradient-like flow and also on the map $P$. We
will bypass  the construction of the Morse function and the
gradient-like flow, and define the right-handed disks directly,
depending only on the choice of the map $P$.

We will define the right-handed disk $r^k_i$ corresponding to the
critical point coming from the vertex $e^k_i$ of the order complex in
several stages. Recall that the regular neighborhood $N$ is
constructed as a union $\cup_j N_j$. Let $r^k_{i,j}=N_j\cap r^k_i$. We
will define $r^k_{i,j}$ starting at $j=0$, then gradually
extending the definition to $j=1,2\ldots$, and finally define
$r^k_i=\cup_j r^k_{i,j}$.

Furthermore, note that $r^k_{i,j}=\varnothing$ for $j<k$. So for $j=0$,
we only need to define $r^0_{1,0}$. We define $r^0_{1,0}$ as the connected
component of $N_0$ not containing $\partial B$. For $j=1$, define
$r^1_{i,1}$ as the intersection of $N_1$ with the hyperplane
$\mathbb{R}^3\times\{\frac{1}{2}\}\times\mathbb{R}^{n-4}$ and extend
$r^0_{1,0}$ to $r^0_{1,1}$ as the set of all points in $N_1$ whose
$L^{\infty}$ distance from $e^0_1$ (embedded as
$\{1\}\times\{0\}^{n-1}$) is at most $\frac{1}{2}$. It is easy to see
that $\partial r^0_{1,1}$ lies in the union of $\partial N_1$ and
$r^1_{i,1}$ and each right-handed disk is still a ball of the correct
dimension.  The way we extended the definition of $r^0_{1,0}$ to that
of $r^0_{1,1}$ can also be described as follows. Since $r^0_{1,0}$ is one of
the components of $N_0$, $N_1\cap \partial r^0_{1,0}$ is a disjoint
union of $(n-1)$-dimensional balls, one for each $e^1_i$. We then take
the ball corresponding to $e^1_i$ and extend it like a horn in the direction of
$e^1_i$ until we reach the vertex corresponding to $e^1_i$. Since
different balls on $\partial r^0_{1,0}$ corresponding to different
$e^1_i$'s are disjoint, after extending the horns, $r^0_{1,1}$ is
still a ball of dimension $n$. Suitable parts of $\partial r^0_{1,1}$
are defined as $r^1_{i,1}$.

Now to define $r^k_{i,j}$, by induction, let us assume, we have
defined $r^k_{i,j'}$ for all $j'<j$. We define $r^j_{i,j}$ as the
intersection of $N_j$ with the plane
$\mathbb{R}\times(\mathbb{R}^2\times\{0\})^{j-1}\times\mathbb{R}^2\times\{\frac{1}{2}\}\times\mathbb{R}^{n-3j-1}$.
For $k<j$, by induction $r^k_{i,j-1}$ is already
defined. $N_j\cap\partial r^k_{i,j-1}$ is a disjoint union of
$(n-k-1)$-dimensional balls, one for each $e^j_{i'}$ with $e^k_i\prec
e^j_{i'}$. We extend the ball corresponding to $e^j_{i'}$ in the
direction given by embedding of the order complex of
$[e^k_i,e^j_{i'}]$ until we reach the boundary of the order
complex. We define $r^k_{i,j}$ as $r^k_{i,j-1}$ after these
extensions. Since $r^k_{i,j-1}$ was a $(n-k)$-dimensional ball, and
we extended along disks starting at different portions of $\partial
r^k_{i,j-1}$, $r^k_{i,j}$ is still a ball of the correct
dimension. Note that $r^0_{1,j}$ is still the set of all points in
$N_j$ whose $L^{\infty}$ distance from $e^0_1$ is at most
$\frac{1}{2}$, and thus it is particularly easy to see that
$r^0_{1,j}$ is an $n$-dimensional ball, since $N_j$ is an
$n$-dimensional manifold. Finally, we define $r^k_i=\cup_j
r^k_{i,j}$. Note that $r^{k'}_{i'}$ lies in the boundary of $\partial
r^k_i$ if and only if $e^k_i\prec e^{k'}_{i'}$ and in that case, it is
actually embedded. 

\begin{figure}[ht]
\psfrag{r010}{$r^0_{1,0}$}
\psfrag{r011}{$r^0_{1,1}$}
\psfrag{r121}{$r^1_{2,1}$}
\psfrag{r111}{$r^1_{1,1}$}
\psfrag{r131}{$r^1_{3,1}$}
\psfrag{r01}{$r^0_1$}
\psfrag{r12}{$r^1_2$}
\psfrag{r11}{$r^1_1$}
\psfrag{r13}{$r^1_3$}
\psfrag{r21}{$r^2_1$}
\psfrag{e01}{$e^0_1$}
\psfrag{e11}{$e^1_1$}
\psfrag{e12}{$e^1_2$}
\psfrag{e13}{$e^1_3$}
\psfrag{e21}{$e^2_1$}
\begin{center}
\includegraphics[width=200pt]{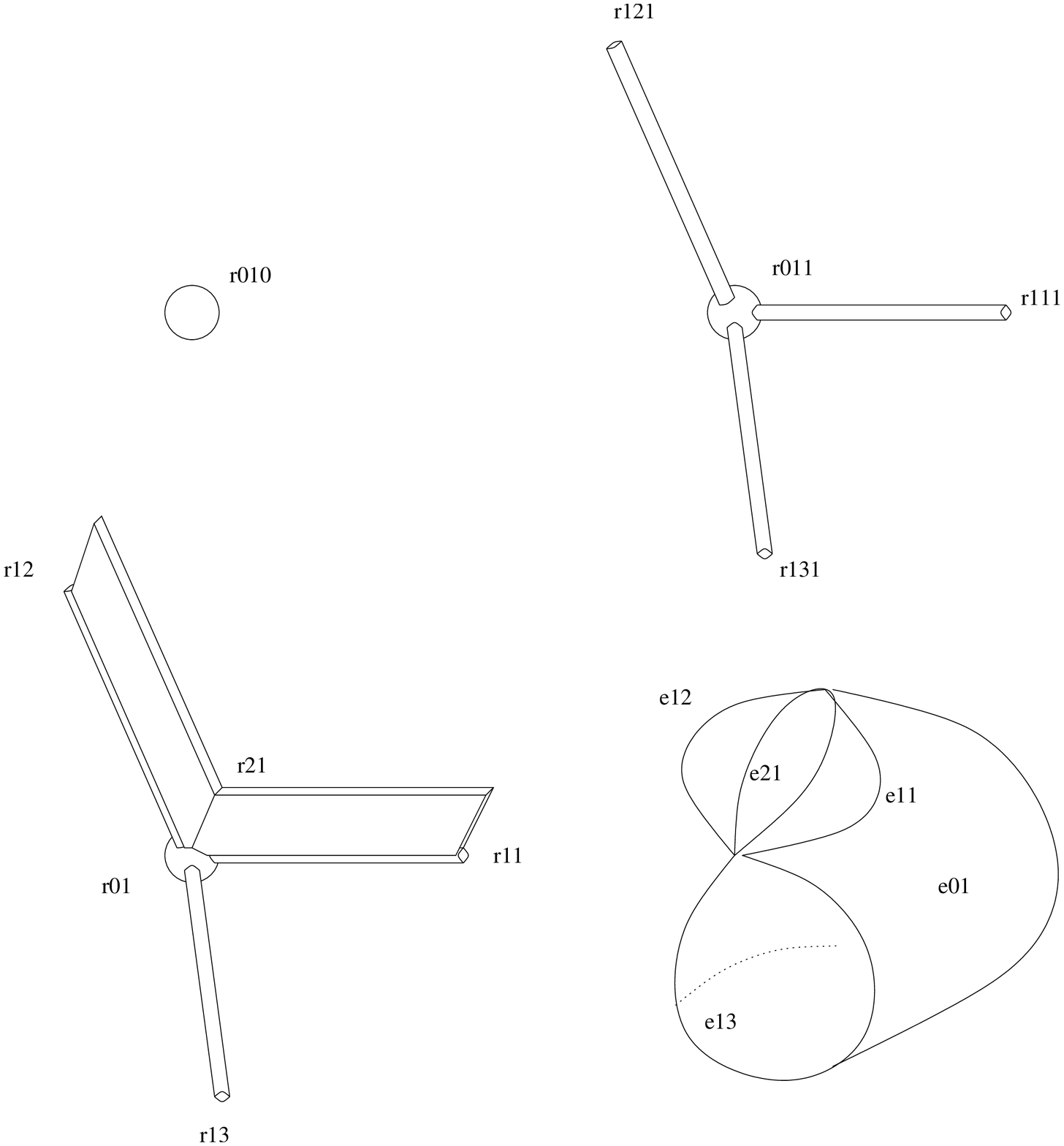}
\end{center}
\caption{The dual of the CW complex from Figure \ref{fig:exampleposet}}
\end{figure}

We then quotient out $\partial N$ to the basepoint, and define the
dual CW complex using the right-handed disks as described above. Note
that the union of all the right-handed disks is simply $r^0_1$, and
thus if $\widetilde{X_n}=(r^0_1,\partial N\cap r^0_1)$, we can also
construct the dual by starting with $\widetilde{X_n}$ and then
quotienting out $\partial N\cap r^0_1$ to the basepoint. This
construction might a priori depend on the map $P$, but we can connect
any two such maps $P$ and $P'$ by an isotopy of $\mathbb{R}^2$. During
the isotopy, for each $k>1$ (resp. $k=1$) we can make the
$\epsilon_k$'s used in the definition of $N_k$'s sufficiently small
such that the condition about $N_k\cap\partial (\cup_{j=0}^{k-1}N_j)$
having exactly one component (resp. exactly two components) for each
$k$-cell $e^{k}_i$ holds. Then this induces an isotopy joining the two
$\widetilde{X_n}$'s, and hence induces a homeomorphism between the two
duals.  Thus the dual of a nice pointed CW complex $X$ does not depend
on the map $P$ and depends only on the ambient dimension $n$. Let us
denote this dual by $\overline{X}_n$.

Before we prove any other properties of the dual, we need to
understand the dependence of $\overline{X}_n$ on $n$. The following
result makes this precise.

\begin{thm}\label{thm:smash}
For a nice pointed CW complex $X$, we have
$\overline{X}_{n+1}=\overline{X}_n\wedge S^1$, where $\wedge$ denotes
the smash product.
\end{thm}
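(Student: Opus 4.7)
The plan is to exploit the observation that the embedding $f_P$ of $X$ uses only the first $3d+1$ coordinates (where $d = \dim X$), and so for any $n \geq 3d+1$ the embedding into $\R^{n+1}$ can be taken to be the composition of the embedding into $\R^n$ with the inclusion $\R^n = \R^n \times \{0\} \subset \R^{n+1}$. Concretely, I would fix one map $P$ and use the corresponding embedding $f_P$ for both ambient dimensions, so that the image of $X$ is literally the same subspace of $\R^n \times \{0\} \subset \R^{n+1}$.

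With this setup, the key claim is that the regular neighborhood $N^{n+1}$ can be taken to be $N^n \times [-\epsilon,\epsilon]$ for small $\epsilon>0$, and similarly $B^{n+1} = B^n \times [-\epsilon,\epsilon]$. Since the dual $\overline{X}_n$ was shown to be independent of the (admissible) parameters used in its construction, I am free to choose the $\epsilon_k$'s for the $(n+1)$-dimensional construction small enough that $N^{n+1}_k = N^n_k \times [-\epsilon,\epsilon]$ and that the condition on the number of components of $N^{n+1}_k \cap \del(\cup_{j<k}N^{n+1}_j)$ continues to hold. The main inductive step is then to check that the top-dimensional right-handed disk satisfies $r^{0,n+1}_1 = r^{0,n}_1 \times [-\epsilon,\epsilon]$. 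Starting from $r^{0,n+1}_{1,0} = r^{0,n}_{1,0} \times [-\epsilon,\epsilon]$ (the component of $N^{n+1}_0$ not containing $\del B^{n+1}$), I would extend through higher skeleta exactly as in the construction of $\overline{X}_n$. Since every cell $e^k_i$ lies in $\R^n \times \{0\}$ and each horn extension is taken in the direction of the embedded order complex, which lies in $\R^n \times \{0\}$, each stage preserves the product structure and yields $r^{0,n+1}_{1,j} = r^{0,n}_{1,j} \times [-\epsilon,\epsilon]$.

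Write $A = r^{0,n}_1$ and $C = \del N^n \cap r^{0,n}_1$, and $I = [-\epsilon,\epsilon]$. Then the boundary intersection decomposes as
\[
\del N^{n+1} \cap r^{0,n+1}_1 = (C \times I) \cup (A \times \del I),
\]
so that $\overline{X}_{n+1} = (A \times I) / ((C \times I) \cup (A \times \del I))$, while $\overline{X}_n = A/C$. To finish, I would identify the right-hand side with the reduced suspension $\Sigma(A/C) = (A/C) \wedge S^1$ by the standard manipulation: the composite
\[
A \times I \longrightarrow (A/C) \times I \longrightarrow \Sigma(A/C)
\]
collapses both $C \times I$ and $A \times \del I$ to the basepoint, and so descends to a continuous bijection $\overline{X}_{n+1} \to \Sigma(A/C)$. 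Since $X$ is a finite CW complex, both sides are compact Hausdorff, so this bijection is automatically a homeomorphism.

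The main obstacle is the inductive identification $r^{0,n+1}_1 = r^{0,n}_1 \times I$, which requires verifying that the explicit geometric construction of the right-handed disks by horn extensions along embedded order complexes is compatible with adding a trivial coordinate direction. Once this geometric fact is in hand, the remainder of the proof is a routine quotient-space computation identifying the two-sided mapping cone with the reduced suspension.
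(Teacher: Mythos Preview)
Your proposal is correct and follows essentially the same approach as the paper: fix the embedding $f_P$, use the product structure $A_{n+1}\cong A_n\times[-\epsilon,\epsilon]$ on the top right-handed disk with $B_{n+1}\cong (A_n\times\{\pm\epsilon\})\cup(B_n\times[-\epsilon,\epsilon])$, and read off $\overline{X}_{n+1}=(A_n/B_n)\wedge S^1$. The paper simply asserts the product homeomorphism and the final quotient identity, whereas you spell out the inductive compatibility of the horn extensions with the extra coordinate and the compact Hausdorff argument for the bijection; these additions are sound but do not constitute a different route.
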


\begin{proof}
After fixing a map $P$, we can construct an embedding of $X$ in
$\mathbb{R}^n$ in a well-defined way, and we extend this embedding to
an embedding into $\mathbb{R}^{n+1}$ by embedding  $\mathbb{R}^n$ into
$\mathbb{R}^{n+1}$ as $\mathbb{R}^n\times\{0\}$. After fixing an
embedding to $\mathbb{R}^m$, we define $\widetilde{X_m}$ as a pair
$(A_m,B_m)$ with $B_m$ lying in $\partial A_m$, and we define
$\overline{X}_m$ as a quotient of $\widetilde{X_m}$ obtained by
quotienting out $B_m$ to the basepoint.

However $A_{n+1}$ is homeomorphic to $A_n\times[-\epsilon,\epsilon]$
and $B_{n+1}$ is homeomorphic to $(A_n\times\{\pm
\epsilon\})\cup(B_n\times[-\epsilon,\epsilon])$. Since
$[-\epsilon,\epsilon]/\{\pm \epsilon\}$ is the circle $S^1$, hence
$\overline{X}_{n+1}=A_{n+1}/B_{n+1}=(A_n/B_n)\wedge
S^1=\overline{X}_n\wedge S^1$.
\end{proof}

Now we are in a position to state and prove the following important
properties of duals. Let $X$ be a nice pointed CW complex, and let $Y$
be a subcomplex. $Y$ is also clearly nice and pointed. We can thus
define the duals $\overline{X}_n$ and $\overline{Y}_n$ for $n$
sufficiently large (in fact $n$ simply has to be larger than $3d$
where $d$ is the dimension of $X$). Then the following holds,

\begin{thm}\label{thm:quotient}For $Y$ a subcomplex of a nice CW complex $X$, the dual
  $\overline{Y}_n$ can be obtained from $\overline{X}_n$ by
  quotienting out the cells corresponding to the cells in $Y$ that are
  not in $X$.
\end{thm}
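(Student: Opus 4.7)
The plan is to build compatible embeddings of $Y\subset X$ in $\mathbb{R}^n$, identify a natural CW subcomplex of $\overline{X}_n$, and recognize the quotient as $\overline{Y}_n$. Note that as written the statement is vacuous (every cell of $Y\subset X$ is already a cell of $X$), so I would read it as: quotient $\overline{X}_n$ by the subcomplex made up of the dual cells corresponding to cells of $X\setminus Y$.

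First I would fix a map $P$ for $X$ as in the construction preceding Theorem \ref{thm:smash}, and use the restriction $P|_Y$ to construct the embedding of $Y$. Since $\overline{Y}_n$ is independent of the map used, I am free to make this choice, and with it the embedding of $Y$ sits literally inside the embedding of $X$ in $\mathbb{R}^n$. By picking the radii $\epsilon_k^Y$ used to thicken the $k$-skeleton of $Y$ small enough relative to the $\epsilon_k^X$, the regular neighborhood $N_Y$ becomes a genuine subset of $N_X$ whose local structure near each cell of $Y$ is a radial rescaling (in the transverse directions) of the corresponding part of $N_X$.

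Second, I would identify the subcomplex $Z\subset\overline{X}_n$ made of the right-handed disks $r^k_i$ with $e^k_i\in X\setminus Y$, together with the basepoint. Because $Y$ is a subcomplex of $X$, the set $X\setminus Y$ is up-closed in the face poset of $X$ (if $e^k_i\in X\setminus Y$ were a face of some $e^{k'}_{i'}\in Y$, closure under faces would force $e^k_i\in Y$). The dual construction reverses this partial order, so $Z$ is down-closed in the face poset of $\overline{X}_n$, i.e.\ an honest CW subcomplex.

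Third, I would compare $\overline{X}_n/Z$ with $\overline{Y}_n$ through the iterative definition of right-handed disks. The crucial observation is that for each $e^k_i\in Y$, the disk $r^k_{i,X}$ is built from $r^k_{i,Y}$ by successively attaching horns that run through precisely the cells of $X\setminus Y$ lying above $e^k_i$ in the poset; these horns are the parts of $Z$ meeting $r^k_{i,X}$, and they touch $r^0_{1,Y}$ only along $(\partial N_X\setminus\partial N_Y)\cap r^0_{1,X}$. Hence collapsing $Z$ in $\overline{X}_n$ collapses exactly these horns together with the extra piece of $\partial N_X$, leaving $r^0_{1,Y}/(\partial N_Y\cap r^0_{1,Y})=\overline{Y}_n$. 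A cleaner formal finish is to note that $\overline{X}_n/Z$ is a pointed CW complex whose cells, attaching maps, and signs are exactly those determined by the sub-GSS-poset of cells of $Y$ in the reversed poset of $X$, and this data coincides with that used to build $\overline{Y}_n$; uniqueness of a nice pointed CW complex attached to a given GSS poset (as discussed after Theorem \ref{thm:construction1}) then identifies the two.

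The main obstacle will be making the geometric collapsing step rigorous: one must verify inductively, up the skeletal filtration of $X$, that each horn attached in the construction of $r^0_{1,X}$ meets $r^0_{1,Y}$ only along a disk of $\partial N_X\cap r^0_{1,X}$ that the quotient collapses anyway, so that the map $r^0_{1,Y}\hookrightarrow r^0_{1,X}\twoheadrightarrow\overline{X}_n/Z$ is injective on interiors and realizes the claimed homeomorphism.
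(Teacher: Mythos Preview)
Your reading of the statement (quotient by the dual cells corresponding to $X\setminus Y$) is correct; the paper has the obvious typo. Your geometric approach is essentially the paper's, but the paper makes one simplification you omit: it reduces immediately to the case where $X\setminus Y$ consists of a single cell $e^k_1$, and then argues via a deformation retract rather than by tracking horns. With $N$ the regular neighbourhood of $X$, $M$ that of $Y$, and $R$ a small neighbourhood of the right-handed disk $r^k_1$, the paper observes that $N\setminus\mathring{R}$ deformation retracts to $M$; restricting the right-handed disks of $X$ to $N\setminus\mathring{R}$ yields exactly the quotient of $\overline{X}_n$ by the cell $r^k_1$, and the retract carries this to $\overline{Y}_n$. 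Doing one cell at a time is what makes this clean and sidesteps the inductive verification you flag as the main obstacle.

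Your ``cleaner formal finish'' via uniqueness has a genuine gap. The uniqueness discussed after Theorem~\ref{thm:construction1} applies to the nice pointed CW complex $P_x$ built directly from a GSS poset, not to its dual $\overline{(P_x)}_n$; the dual is defined by the explicit embedding-and-right-handed-disk construction, not characterised abstractly by its face poset. Two CW complexes with the same face poset and the same homological boundary maps need not be homeomorphic, since attaching maps can differ homotopically. So knowing that $\overline{X}_n/Z$ and $\overline{Y}_n$ share a cell poset does not identify them; you really do need the geometric comparison, and the paper's one-cell-at-a-time deformation retract is the efficient way to carry it out.
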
 

\begin{proof}
Note that it is enough to prove the case when there is exactly one
cell $e^k_1$ that is in $Y$ but not in $X$. Thus to embed $Y$
in $\mathbb{R}^n$, we embed $X$ in $\mathbb{R}^n$ and then delete the
cell $e^k_1$ (which was embedded as an embedding of
the order complex of $(-\infty,e^k_1]$). Another way to see this is the
following. Take the embedding of $X$, view it as an embedding of the
order complex, and delete the vertex corresponding to $e^k_1$. Then
the new space deform retracts to the embedding of $Y$. Let $M$ and $N$ be regular
neighborhoods of $X$ and $Y$ respectively, as defined earlier in this
section. Let $R$ be a small neighborhood of the right-handed disk
$r^k_1$ of $e^k_1$ in the embedding of $X$. Then $N\setminus\mathring{R}$ deform retracts to $M$. 

The right-handed disks required for defining the dual $\overline{Y}_n$
come from the manifold $M$. The right-handed disks required for
defining the dual $\overline{X}_n$ come from the manifold $N$, and
when these right-handed disks are restricted to
$N\setminus\mathring{R}$, they define the quotient complex of
$\overline{X}_n$ obtained by quotienting out $r^k_1$, the cell
corresponding to $e^k_1$. A properly chosen deformation retract of
$N\setminus \mathring{R}$ to $M$ gives the required homeomorphism
between this quotient complex and $\overline{Y}_n$.
\end{proof}

A very similar property holds for quotient complexes. However if $X$
is a nice pointed CW complex, quotient complexes of $X$ in general
will not be nice. Let $e^1_1$ be an $1$-cell of $X$, and consider the
quotient complex $Z$ of $X$ obtained by keeping only the cells $e^k_i$
with $e^1_1\preceq e^k_i$ and quotienting out everything else. Let us
assume that there is a nice pointed CW complex $Y$, such that $Y\wedge
S^1$ with the natural CW complex structure is the same CW complex as
$Z$ (in fact using Theorem \ref{thm:construction1}, we can always
assume this). If $d$ is the dimension of $X$, then for $n>3d$, we can
define the duals $\overline{X}_n$ and $\overline{Y}_n$. Then the
following is true.

\begin{thm}\label{thm:sub}
The dual $\overline{Y}_{n-1}$ is homeomorphic to the subcomplex of
$\overline{X}_n$ obtained by considering only the 
cells corresponding to the ones present in $Z$. 
\end{thm}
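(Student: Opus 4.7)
The plan is to identify the subcomplex of $\overline{X}_n$ in question as a transverse slice neighborhood of the $1$-cell $e^1_1$ in the embedding of $X$, and then to match this slice directly with an embedding of $Y$ in $\mathbb{R}^{n-1}$. Let $W \subseteq \overline{X}_n$ denote the subcomplex consisting of the right-handed disks $r^k_i$ for those $e^k_i$ with $e^1_1 \preceq e^k_i$. Using the property recorded in the construction that $r^{k'}_{i'} \subset \partial r^k_i$ precisely when $e^k_i \prec e^{k'}_{i'}$, I would first observe that every cell $r^k_i$ with $e^1_1 \prec e^k_i$ lies on the boundary of the $(n-1)$-ball $r^1_1$. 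Consequently, as a subspace of $\overline{X}_n$, $W$ is nothing but $r^1_1$ with the portion of $\partial r^1_1$ lying on $\partial N$ collapsed to the basepoint (since that is precisely what the quotient defining $\overline{X}_n$ does to the rest of $\partial r^1_1$).

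Next I would produce an embedding of $Y$ in $\mathbb{R}^{n-1}$ by slicing the embedding of $X$. Fix an interior point $p$ of the embedded arc $e^1_1$; near $p$, the regular neighborhood $N$ of $X$ has the local structure of a product $D^{n-1}_\epsilon \times I$, with $I$ parametrizing $e^1_1$ near $p$. I would choose the map $P_Y$ for embedding $Y$ in $\mathbb{R}^{n-1}$ to be induced from $P$ (restricted to the cells of $Z$, with a uniform index shift to reflect the shift in dimensions coming from $Z = Y \wedge S^1$). With such compatible choices, the transverse slice $D^{n-1}_\epsilon \times \{p\}$ intersects $N$ in an $(n-1)$-ball that, after a PL identification, agrees with the embedded regular neighborhood $N_Y \subset \mathbb{R}^{n-1}$; moreover each cell $e^k_i$ of $X$ with $e^1_1 \preceq e^k_i$ slices to the embedded $(k-1)$-cell of $Y$ corresponding to it.

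Now I would compare the inductive constructions of right-handed disks on the two sides. The right-handed disk $r^1_1 \subset N$ can be arranged, by an appropriate choice of the $\epsilon_j$'s in the definition of the $N_j$'s, to coincide with the transverse slice just described. Likewise, for each $e^k_i$ with $e^1_1 \prec e^k_i$, the disk $r^k_i$ inside $r^1_1$ is built by the same inductive horn-extension procedure as the right-handed disk $\tilde r^{k-1}_i$ for the corresponding $(k-1)$-cell of $Y$, carried out inside the slice. Hence the correspondence sends $r^k_i \cap \partial r^1_1$ to $\tilde r^{k-1}_i$ and sends $\partial r^1_1 \cap \partial N$ to $\partial N_Y$. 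Passing to the quotients that define $W$ and $\overline{Y}_{n-1}$ respectively yields the required homeomorphism $W \cong \overline{Y}_{n-1}$.

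The main obstacle is this last step: verifying that, after a suitable choice of compatible maps $P$ and $P_Y$ and of the $\epsilon_j$'s, the inductive horn-extensions producing the $r^k_i$ inside $r^1_1$ really restrict to the independently defined horn-extensions producing the $\tilde r^{k-1}_i$ in $\partial N_Y$. This is essentially a bookkeeping exercise in the explicit embedding formulas of the previous section, using Theorem \ref{thm:smash} to reconcile the ambient-dimension shift between $\overline{X}_n$ and $\overline{Y}_{n-1}$; conceptually clear but where all of the technical care is concentrated.
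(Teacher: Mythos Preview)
Your approach is essentially the same as the paper's: both identify the subcomplex $W$ with the right-handed disk $r^1_1$ (with $\partial r^1_1\cap\partial N$ collapsed), view it as a transverse slice to the embedded arc $e^1_1$, and then match that slice with the standard embedding of $Y$ in a hyperplane $\mathbb{R}^{n-1}\subset\mathbb{R}^n$. Where you gesture at a ``bookkeeping exercise'' and invoke Theorem~\ref{thm:smash}, the paper simply writes down the explicit one-parameter family: at time $t$ it shifts the second through fourth coordinates of all relevant simplices from $\{0\}^3$ to $\{\tfrac{t}{2}p^1_1\}\times\{\tfrac{t}{2}\}$, and truncates the simplices containing $e^1_1$ accordingly; at $t=1$ one lands in the hyperplane $\mathbb{R}^3\times\{\tfrac12\}\times\mathbb{R}^{n-4}$ with exactly the standard embedding of the order complex of $Y$. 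The right-handed disks are then tracked by the same shift/truncation, which is precisely the verification you flagged as the technical crux. So your outline is correct; the paper just supplies the concrete isotopy in place of your appeal to compatible choices of $P_Y$ and $\epsilon_j$.
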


\begin{proof}
First observe that the order complex of the poset coming from $X$ restricted to the cells
of $Z$, can also be obtained from the order complex of $Y$ by removing
the element corresponding to the basepoint. Now choose an
embedding of $X$ (which is also an embedding of the order complex
of the poset coming from $X$) to $\mathbb{R}^n$. Let us restrict to the order
complex of $Z\cup\{b\}$, where $b$ is the basepoint in $X$, and delete all
the simplices which use the edge coming from $b\leftarrow e^1_1$. This
is same as the order complex coming from $Y$. Thus an embedding of
$X$ in $\mathbb{R}^n$ gives an embedding of this order complex in
$\mathbb{R}^n$. We will now modify this embedding such that it agrees
with a standard embedding of $Y$ in $\mathbb{R}^{n-1}$. Observing how
the right-handed disks change under this modification will complete
the proof.

At time $t$ for $t\in[0,1]$, $e^1_1$ is embedded as
$\{1\}\times\{\frac{1}{2}p^1_1\}\times\{\frac{1}{2}\}\times\{0\}^{n-4}$,
the basepoint $b$ is embedded as
$\{0\}\times\{\frac{t}{2}p^1_1\}\times\{\frac{t}{2}\}\times\{0\}^{n-4}$
and a vertex $e^k_i$ for $k>1$ is embedded as
$\{1\}\times\{\frac{t}{2}p^1_1\}\times\{\frac{t}{2}\}\times\{0\}^{3k-6}\times\frac{1}{2}g^k_i
\times\{0\}^{n-3k-1}$. The simplex coming from a chain that does not
involve $e^1_1$ is a shifted version of the original, with the second, third and
fourth coordinate being changed from $\{0\}^3$ to
$\{\frac{t}{2}p^1_1\}\times\{\frac{t}{2}\}$. The simplex coming from a
chain that involves $e^1_1$ is a truncated version of the original,
where we delete the part that intersects with
$\mathbb{R}\times\{\frac{t}{2}p^1_1\}\times\{\frac{t}{2}\}\times\mathbb{R}^{n-4}$.
Note that at $t=0$, this is an embedding of the order complex of $Y$
as induced from an embedding of $X$. At $t=1$, this is the standard embedding of
the order complex of $Y$ in
$\mathbb{R}^3\times\{\frac{1}{2}\}\times\mathbb{R}^{n-4}=\mathbb{R}^{n-1}$. To complete
the proof, we should observe how the right-handed disks change during
this isotopy. At time $t$, we can define the right-handed disk of
$e^1_1$ as a truncated version of the original right-handed disk by
deleting the part that intersects with
$\mathbb{R}\times\{\frac{t}{2}p^1_1\}\times\{\frac{t}{2}\}\times\mathbb{R}^{n-4}$
and the right-handed disk of $e^k_i$ for $k>1$ as a shifted version of
the original right-handed disk with the second, third and fourth
coordinate shifted from $\{0\}^3$ to
$\{\frac{t}{2}p^1_1\}\times\{\frac{t}{2}\}$. This gives an explicit isotopy
connecting the subcomplex of $\overline{X}_n$ coming from the cells
corresponding to those in $Z$, to $\overline{Y}_{n-1}$
\end{proof}

Thus given a GSS poset with one minimum, by Theorem
\ref{thm:construction1}, we can construct a nice CW complex
corresponding to the poset, and then construct its dual. We can assign
an orientation to the top dimensional cell (the one corresponding to
the unique minimum in the poset) arbitrarily, but once that is fixed
the orientation of the rest of the cells is determined by the sign
convention on the GSS poset. This extra information coming from the
orientation of the top-dimensional cell allows us to strengthen
Theorem \ref{thm:sub}. In that theorem, we showed that there is an
isomorphism between $\overline{Y}_{n-1}$ and
a subcomplex of $\overline{X}_n$, but there might be more than one
such isomorphism. However after we orient the top-dimensional cells in
both $\overline{X}_n$ and $\overline{Y}_{n-1}$ (and hence using the
sign convention on the poset of $X$, orient every cell in these two CW
complexes), we choose the isomorphism that matches the
orientations. Thus for oriented CW complexes, there is a well-defined
isomorphism between $\overline{Y}_{n-1}$ and a subcomplex of
$\overline{X}_n$. This will be of use to us in Section \ref{sec:gridhomotopy}.

Before concluding this section, we should note that our explicit
construction of a dual actually agrees with the Alexander dual, which
is obtained by embedding the space $X$ in the sphere $S^n$, and  then
taking the homotopy type of the complement. Thus the Alexander dual is
homotopic to $A(X)=S^n\setminus N$. The way to see this is as
follows. Let us embed $X$ as described above into $\mathbb{R}^n$ and
let $S^n$ be viewed as the one point compactification of that
$\mathbb{R}^n$ with that extra point being denoted by $*$. Let
$\overline{b}$ be the basepoint in the dual $\overline{X}_n$ and let
$A(X)=S^n\setminus N$ be the Alexander dual. If $\sim$
denotes the homotopy equivalence of pairs of spaces, we have

$$(\overline{X}_n, \overline{b})\sim(N\setminus B,\partial
N)\sim(S^n\setminus B, (S^n\setminus N))\sim(S^n\setminus\{b\},A(X))$$

However we have an exact sequence of spaces

$$\xymatrix{(A(X),*)\ar@{^{(}->}[r] &(S^n\setminus\{b\},*)\ar@{->>}[r]
  &(S^n\setminus\{b\},A(X))\sim(\overline{X}_n,\overline{b})}$$

This induces the Puppe map from  $\overline{X}_n$ to $A(X)\wedge S^1$,
and since $H_*(S^n\setminus\{b\},*)=0$, the map induces isomorphism in
$H_*$ and hence induces a homotopy equivalence.

\section{Grid homotopy}\label{sec:gridhomotopy}

Let $P$ be a GSS poset. For most of the time, $P$ will be a grid
poset, a commutation poset or a stabilization poset.

If we take the poset $P$, and reverse the partial order, observe that
each closed interval in the new poset is still shellable. This follows
from the definition of shellability.  Thus the applications of Section
\ref{sec:applications} can all be constructed. In particular, we can
construct a pointed CW complex corresponding to the interval
$(-\infty,x]$ in the old poset, whose cells are elements of
$(-\infty,x]$ and the attaching maps correspond to the coboundary maps
in the chain complex induced from the poset.

\begin{thm} \label{thm:construction}There is a well-defined nice CW complex $P_x$,
  such that the cells correspond to the elements of $(-\infty,x]$, the
  boundary maps correspond to the coboundary map of the chain complex
  induced from $(-\infty, x]$ and agrees with any given sign convention on
  it, the cell corresponding to $x$ has dimension $0$, and the
  boundary map every other cell is injective.
\end{thm}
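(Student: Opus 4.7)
The plan is to reduce the statement to Theorem \ref{thm:construction1} by reversing the partial order on the relevant subposet. Consider the poset $P^{\mathrm{op}}_x$ obtained from $P$ by restricting to $(-\infty,x]$ and reversing the partial order. In $P^{\mathrm{op}}_x$, the element $x$ becomes the unique minimum, and a covering relation $a\leftarrow b$ in $P^{\mathrm{op}}_x$ is the same data as a covering relation $b\leftarrow a$ in $P$, read in the opposite direction. The first thing I would check is that $P^{\mathrm{op}}_x$ is still a GSS poset. Shellability of every closed interval $[a,b]$ in $P^{\mathrm{op}}_x$ is immediate, because that interval is the interval $[b,a]$ in $P$ equipped with the reversed ordering, and the definition of shellability (a total order $<$ on maximal chains with the stated combinatorial condition) is symmetric under reversing every chain. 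The sign assignment transfers verbatim: the defining property (product of the four signs in every length-$3$ interval is $-1$) is unchanged by reversing edges, and two sign conventions remain equivalent. For the grading assignment, define $M^{\mathrm{op}}(z)=M(x)-M(z)$ on $(-\infty,x]$; this is a non-negative integer with $M^{\mathrm{op}}(x)=0$, and if $a\leftarrow b$ in $P^{\mathrm{op}}_x$ then $M^{\mathrm{op}}(b)=M^{\mathrm{op}}(a)+1$ as required.

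Having established that $P^{\mathrm{op}}_x$ is a GSS poset with unique minimum $x$, I would apply Theorem \ref{thm:construction1} to $P^{\mathrm{op}}_x$ with $y=x$ and $k=0$. This directly produces a nice pointed CW complex, which I rename $P_x$, whose cells are in bijection with the elements of $(-\infty,x]$, such that the cell corresponding to $z$ has dimension $M^{\mathrm{op}}(z)-M^{\mathrm{op}}(x)=M(x)-M(z)$; in particular the cell for $x$ is the unique $0$-cell, and every other attaching map is injective by Theorem \ref{thm:construction1}. The CW boundary map of the cell for $z$ is a signed sum over those $w$ that cover $z$ in $P^{\mathrm{op}}_x$, i.e.\ over those $w\in(-\infty,x]$ with $z\leftarrow w$ in $P$. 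This is precisely the formula for the coboundary map on $z$ in the chain complex associated to the interval $(-\infty,x]$, so the boundary maps of $P_x$ realize the coboundary maps of this induced chain complex, as required.

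The freedom to match any prescribed sign convention, and the uniqueness of the CW complex up to orientation of the top-dimensional cell, both follow from the corresponding statements in Theorem \ref{thm:construction1}: once the orientation of the $0$-cell for $x$ is fixed, the EL-type argument used in that theorem shows that every other orientation is determined by the sign convention on the poset, and conversely any sign convention on $(-\infty,x]$ can be realized by choosing orientations on the cells. The main thing to verify carefully is the first paragraph's claim that reversal preserves the GSS structure; beyond that the construction is an essentially formal application of the earlier theorem. Thus $P_x$ is well defined, nice, has $x$ as its unique $0$-cell, and its cellular boundary matches the coboundary map of the chain complex of $(-\infty,x]$.
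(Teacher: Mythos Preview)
Your proposal is correct and follows essentially the same approach as the paper: reverse the partial order on $(-\infty,x]$ and apply Theorem~\ref{thm:construction1} with $k=0$ to obtain $S_x(0)=P_x$. The paper's proof consists of exactly these two sentences; you have simply fleshed out the verifications (preservation of shellability, sign, and grading under reversal) that the paper states just before the theorem and otherwise leaves implicit.
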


\begin{proof}
We reverse the partial order of the poset $(-\infty,x]$ and construct
the pointed CW complex $S_x(0)$ as described in Theorem
\ref{thm:construction1}. This is the required pointed CW complex $P_x$.
\end{proof}

We now state and prove the main result of this section.

\begin{thm}
Given a GSS poset $P$, for sufficiently large $n$, there is a well-defined
CW complex $X_P(n)$ whose $k$-cells correspond to the elements of $P$
of grading $(k+n)$ and whose boundary maps correspond to the covering
relations in $P$ even up to sign.
\end{thm}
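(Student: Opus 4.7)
The plan is to build $X_P(n)$ inductively in order of increasing grading, attaching one cell for each element of $P$, with each attaching map forced by the GSS poset structure and its existence verified via Theorem \ref{thm:construction1} applied to sub-intervals. Enumerate the elements of $P$ so that the gradings are non-decreasing; start with a basepoint; attach a $(g_0-n)$-cell for each minimum element, with trivial attaching map; and inductively, having attached cells for all $y\in P$ with $g(y)<g(x)$, attach a $(g(x)-n)$-cell $e_x$ whose attaching map represents the cellular cycle $\sum_{y\leftarrow x} s(y,x)\,e_y$ in the previously constructed $(g(x)-n-1)$-skeleton.

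The fact that the prescribed chain $\sum_{y\leftarrow x} s(y,x)\,e_y$ is a cellular cycle is an immediate consequence of the $\partial^2=0$ property of the chain complex associated to a GSS poset, which in turn follows from the length-three product rule in the sign convention. The key geometric fact enabling each attachment is that Theorem \ref{thm:construction1}, applied to the sub-interval $[y_0,x]$ of $P$ for any minimum $y_0\preceq x$, produces a nice pointed CW complex PL-homeomorphic to a ball of dimension $g(x)-g(y_0)$; the argument applies verbatim because $[y_0,x]$ is itself a GSS poset with a unique minimum. After the global dimension shift by $n$, this ball sits as a subcomplex of dimension $g(x)-n$ in the skeleton, with boundary an embedded $(g(x)-n-1)$-sphere whose top-dimensional cells are precisely the $e_y$ with $y\leftarrow x$ weighted by the signs $s(y,x)$. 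This sphere furnishes the image of the required attaching map for $e_x$.

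The condition "for sufficiently large $n$" corresponds to the requirement that $g(x)-n\geq 0$ for every element appearing as a cell (equivalently, $n\leq g_0$ where $g_0$ is the minimum relevant grading); the complexes at different allowed $n$ are then related by suspension, $X_P(n-1)=X_P(n)\wedge S^1$, via the identity $S_y(k+1)=S_y(k)\wedge S^1$ from Theorem \ref{thm:construction1}. Well-definedness of $X_P(n)$, namely independence from the choices of $y_0$ at each step and from the order in which cells of equal grading are attached, follows from the sign-uniqueness argument used within the proof of Theorem \ref{thm:construction1}: once the sign convention on $P$ is fixed and the orientation of a single cell is chosen, the orientations of all other cells are forced, so two executions of the construction produce CW complexes related by a unique cellular homeomorphism.

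The main obstacle is proving coherence between the different sub-intervals $[y_0,x]$ and $[y_0',x]$ used for attaching the same cell $e_x$: each yields a CW ball whose boundary provides a candidate attaching sphere lying in the skeleton, and one must check that these spheres give the same homotopy class of attaching map. This reduces to verifying that the two balls agree on their intersection (the cells for $[y_0,x]\cap[y_0',x]$, together with any lower-dimensional cells that connect them) and that the signed boundary chain over this shared sphere is identical, which is exactly the statement that the chain complex boundary map is globally well-defined and that the sign convention is consistent throughout $P$.
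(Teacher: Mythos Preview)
Your inductive approach has a real gap. You propose to attach $e_x$ using the ball produced by Theorem \ref{thm:construction1} applied to $[y_0,x]$ for some minimum $y_0\preceq x$. But the boundary sphere of that ball carries top-dimensional cells only for those $y$ with $y\leftarrow x$ \emph{and} $y_0\preceq y$. A general GSS poset need not have a unique minimum below $x$: take the three-element poset $\{a,b,c\}$ with $a\leftarrow c$ and $b\leftarrow c$. Here $[a,c]$ is a $1$-ball whose boundary sphere meets only $e_a$ (and the basepoint), so the attaching map you obtain represents $\pm e_a$, not the required $\pm e_a\pm e_b$. Your ``main obstacle'' paragraph misdiagnoses the difficulty: the issue is not that different choices of $y_0$ might give inequivalent realizations of the same map, but that no single $[y_0,x]$ gives the correct cellular boundary at all when $(-\infty,x]$ has several minimal elements. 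Your reading of ``sufficiently large $n$'' as $n\leq g_0$ is a symptom of the same misconception.

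The paper's route is genuinely different and is designed precisely to avoid this. For each $x$ it forms the nice CW complex $P_x$ associated to $(-\infty,x]$ with the order \emph{reversed} (Theorem \ref{thm:construction}); in that reversed order $x$ is the unique minimum, so Theorem \ref{thm:construction1} applies without any choice of $y_0$. It then takes the Alexander dual $\overline{(P_x)}_{g(x)+n}$ in a high-dimensional Euclidean space: dualizing flips the boundary maps back to the original order of $P$, so in the dual the top cell corresponds to $x$ and its boundary involves \emph{all} $y\leftarrow x$. Theorem \ref{thm:sub} then supplies canonical orientation-preserving inclusions $\overline{(P_y)}_{g(y)+n}\hookrightarrow\overline{(P_x)}_{g(x)+n}$ for $y\prec x$, and $X_P(n)$ is obtained by gluing all the $\overline{(P_x)}$ along these maps. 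The large-$n$ hypothesis $n>2M_1-3M_2$ is what guarantees enough ambient dimension for the embeddings and duals; it is not a lower bound on cell dimensions in a direct construction.
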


\begin{proof}
If $M_1$ and $M_2$ are the maximum and the minimum gradings in the poset,
then we choose $n>2M_1-3M_2$. For each $x\in P$, we construct $P_x$ as
in the Theorem \ref{thm:construction}. Each of these CW complexes is a
nice pointed CW complex, and hence we can construct their duals
$\overline{(P_x)}_{g(x)+n}$ where $g(x)$ is the grading of $x$. In each of these CW complexes, we orient
the top-dimensional cell arbitrarily, and that fixes an orientation of
every cell. For $y\prec x$, $(-\infty, y]$ is a subcomplex of
$(-\infty,x]$. A repeated application of Theorem \ref{thm:sub} allows us
to construct a well-defined injection of $\overline{(P_y)}_{g(y)+n}$ to
$\overline{(P_x)}_{g(x)+n}$ which matches the orientations. Thus we have
a space for each $x\in P$ and a map for each pair $x,y\in P$ with
$y\prec x$. We take the discrete union of all these spaces and glue
them together using these maps and call it $X_P(n)$. It is easy to see
that $X_P(n)$ is well-defined and satisfies the conditions of the theorem.
\end{proof}

However note that the same poset $P$ can carry two different
non-equivalent sign conventions. Figure \ref{fig:diffposets} demonstrates
that such posets can indeed give rise to different spaces. In the
diagram we have significantly reduced the dimensions of the spaces.

\begin{figure}[ht]
\psfrag{aa}{$a$}
\psfrag{ab}{$b$}
\psfrag{ax}{$x$}
\psfrag{ay}{$y$}
\psfrag{az}{$z$}
\psfrag{aw}{$w$}
\psfrag{ax1}{$x$}
\psfrag{ay1}{$y$}
\psfrag{az1}{$z$}
\psfrag{aw1}{$w$}
\psfrag{aon1}{$1$}
\psfrag{aon2}{$1$}
\psfrag{amon1}{$-1$}
\psfrag{amon2}{$-1$}
\psfrag{a1}{$=$ the torus}
\psfrag{a2}{with the circle $a$ pinched}
\psfrag{ba}{$a$}
\psfrag{bb}{$b$}
\psfrag{bx}{$x$}
\psfrag{by}{$y$}
\psfrag{bz}{$z$}
\psfrag{bw}{$w$}
\psfrag{bx1}{$x$}
\psfrag{by1}{$y$}
\psfrag{bz1}{$z$}
\psfrag{bw1}{$w$}
\psfrag{bon1}{$1$}
\psfrag{bon2}{$1$}
\psfrag{bmon1}{$-1$}
\psfrag{bmon2}{$-1$}
\psfrag{b1}{$=$ the Klein bottle}
\psfrag{b2}{with the circle $a$ pinched}
\begin{center}
\includegraphics[width=250pt]{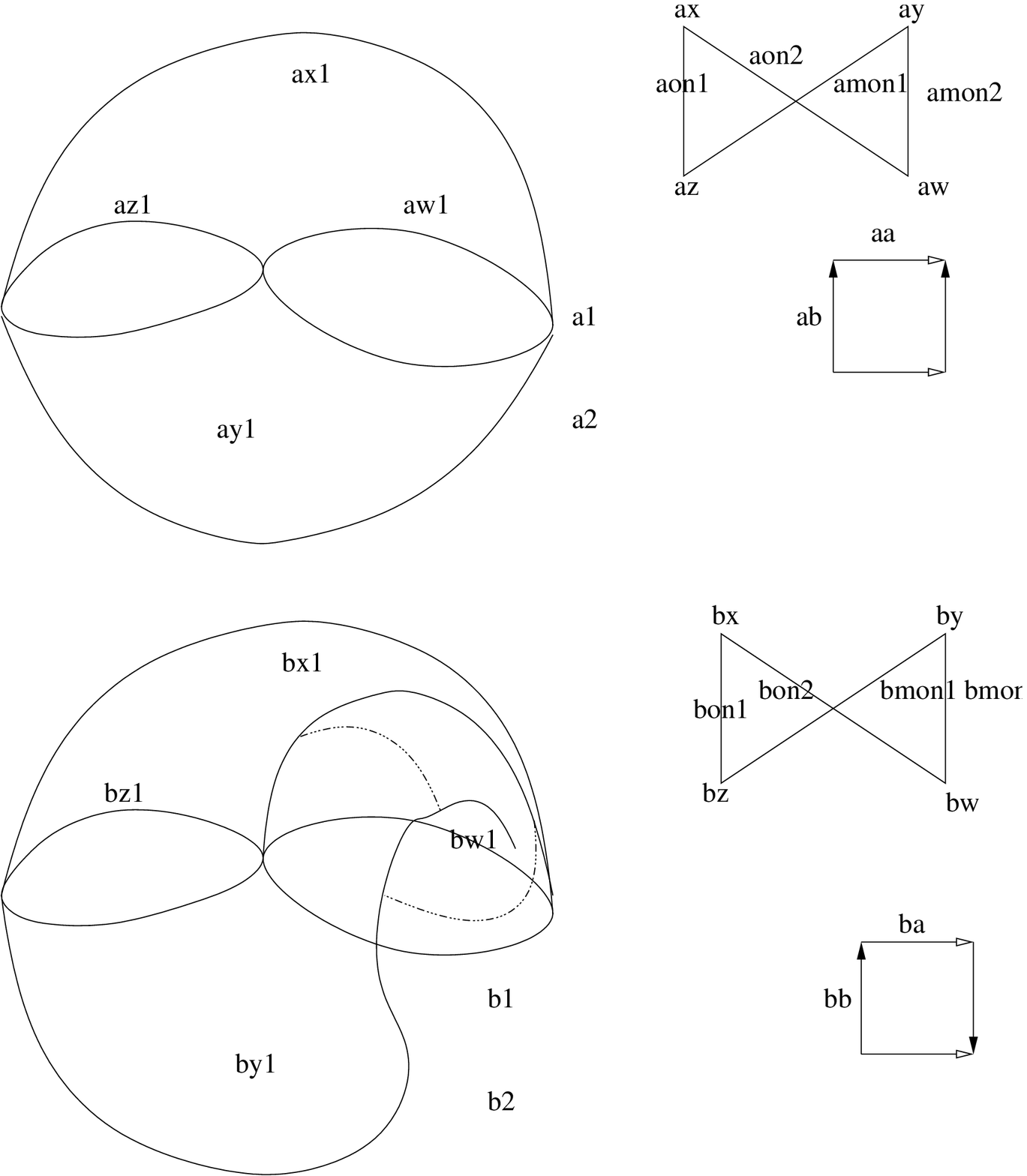}
\end{center}
\caption{Different spaces coming from the same poset}\label{fig:diffposets}
\end{figure}

Now we want to state and prove certain properties of this space
$X_P(n)$.

\begin{thm}\label{thm:subquotient}
If $P$ is a GSS poset, $Q$ is a subposet and $R$ is a quotient poset,
then for $n$ sufficiently large, the following are true.

$X_P(n+1)=X_P(n)\wedge S^1$.

$X_Q(n)$ is a subcomplex of $X_P(n)$ containing only the cells
corresponding to the elements in $Q$.

$X_R(n)$ is a quotient complex of $X_P(n)$ containing only the cells
corresponding to the elements in $R$.
\end{thm}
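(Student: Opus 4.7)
The proof splits into three independent claims; my plan would tackle each one separately, but all three ultimately reduce to compatibility statements between the dual construction of Section \ref{sec:cwcomplexes} and the gluing maps used to build $X_P(n)$.

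For the smash claim $X_P(n+1)=X_P(n)\wedge S^1$, the plan is to apply Theorem \ref{thm:smash} piece by piece. Recall that $X_P(n)$ is formed by gluing the duals $\overline{(P_x)}_{g(x)+n}$ along the inclusions furnished by a repeated use of Theorem \ref{thm:sub}. By Theorem \ref{thm:smash} each piece satisfies $\overline{(P_x)}_{g(x)+n+1}\cong \overline{(P_x)}_{g(x)+n}\wedge S^1$, and the identification is natural because in the proof of Theorem \ref{thm:smash} the $S^1$ factor comes from enlarging the ambient Euclidean space by one coordinate, an operation that commutes with the embedding of any subcomplex. Since smashing with $S^1$ distributes over colimits of well-pointed spaces and the transition maps $\overline{(P_y)}_{g(y)+n}\hookrightarrow \overline{(P_x)}_{g(x)+n}$ are cofibrations, the identifications assemble into $X_P(n+1)=X_P(n)\wedge S^1$.

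For the subcomplex claim, I would interpret a subposet $Q\subseteq P$ as a downward-closed subset that inherits its sign, grading, and (automatically) shellability from $P$---intervals in $Q$ coincide with the corresponding intervals in $P$, so shellability transfers without work. Under this reading, for every $x\in Q$ one has $(-\infty,x]_Q=(-\infty,x]_P$, hence $P_x^Q=P_x^P$ and the two duals agree on the nose. The gluing maps defining $X_Q(n)$ are then the restriction of the gluings defining $X_P(n)$ to pairs $y\preceq x$ in $Q$, so $X_Q(n)$ embeds as the subcomplex of $X_P(n)$ whose cells are indexed by $Q$. Orientation-compatibility, required so that this inclusion is canonical rather than ambiguous up to sign, follows from the fact that the sign conventions on covering relations restrict from $P$ to $Q$ verbatim.

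For the quotient claim, I would view the quotient poset $R$ as the complement $P\setminus Q$ of a downward-closed subposet $Q$, so that $R$ is upward-closed and again inherits a GSS structure (intervals in $R$ coincide with intervals in $P$). The aim is to identify $X_R(n)$ with $X_P(n)/X_Q(n)$. Fix $x\in R$: the interval $(-\infty,x]_P$ splits as $(-\infty,x]_R\sqcup (Q\cap(-\infty,x])$, and the latter piece forms a subcomplex of $P_x^P$ whose CW-quotient is $P_x^R$. By Theorem \ref{thm:quotient} applied in the reverse direction---using that the dual of a subcomplex is a quotient of the dual by the corresponding dual cells---collapsing this subcomplex inside $\overline{(P_x^P)}$ yields $\overline{(P_x^R)}$. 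Assembling these quotients over all $x\in R$ and checking compatibility with the gluing maps gives $X_R(n)=X_P(n)/X_Q(n)$. The hardest part throughout will be bookkeeping coherence: verifying that the local identifications (smash, inclusion, quotient) fit into commutative diagrams with the transition maps used in defining $X_P(n)$; each step is conceptually clear from the explicit coordinate construction of the dual, but turning the pointwise statements into statements for the full colimit requires careful tracking of these diagrams.
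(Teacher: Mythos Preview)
Your approach is essentially the paper's: reduce all three claims to the piecewise statements Theorems \ref{thm:smash}, \ref{thm:quotient}, and \ref{thm:sub} about the building blocks $\overline{(P_x)}_{g(x)+n}$, then glue. The paper's proof is a single sentence to that effect; you supply the surrounding coherence discussion.

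One bookkeeping slip in your quotient paragraph: recall that in $P_x^P$ the cell for $z\in(-\infty,x]$ has dimension $g(x)-g(z)$ and its boundary consists of the cells $w$ with $z\leftarrow w$. Hence a CW subcomplex of $P_x^P$ is an \emph{upward}-closed subset of $(-\infty,x]$, not a downward-closed one. So it is $(-\infty,x]_R=R\cap(-\infty,x]$ that forms the subcomplex of $P_x^P$ (and equals $P_x^R$), while $Q\cap(-\infty,x]$ is the complementary quotient. With this correction Theorem \ref{thm:quotient} applies directly---no ``reverse direction'' needed---to give $\overline{(P_x^R)}$ as the quotient of $\overline{(P_x^P)}$ by the cells indexed by $Q$, and the rest of your assembly argument goes through unchanged.
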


\begin{proof}
The space $X_P(n)$ is constructed as a union of spaces of the form
$\overline{(P_x)}_{g(x)+n}$, and the proof follows after observing that
each of these spaces has the three above mentioned properties as
proved in Theorems \ref{thm:smash}, \ref{thm:quotient} and \ref{thm:sub}.
\end{proof}

Thus by taking $P$ to be $\widehat{\mathcal{G}}$,
$\widehat{\mathcal{G}_m}$ or $\mathcal{G}_m^{-}$ (for any Alexander
grading $m$), and for $n$ 
sufficiently large, we can
construct CW complexes $X_P(n)$. In fact for $n$ sufficiently large,
$X_{\widehat{\mathcal{G}}}(n)= \vee_{m=-\infty}^{\infty}
X_{\widehat{\mathcal{G}_m}}(n)$, where $\vee$ is the wedge sum.

Since $X_P(n+1)=X_P(n)\wedge S^1$ we can associate
finite spectra $\mathcal{S}(P)$ to each GSS poset $P$, whose $n$-th
space is $X_P(n)$. The previous note implies that
$\mathcal{S}(\widehat{\mathcal{G}}) =\vee_m
\mathcal{S}(\widehat{\mathcal{G}_m})$. We can also define a spectrum
$\mathcal{S}(\mathcal{G}^{-})$ corresponding to $\mathcal{G}^{-}$ by defining it to be
$\vee_{m=-\infty}^{\infty} \mathcal{G}_m^{-}$. 

Now we want  to show that some of these objects that we associate to grid diagrams
of knots are actually knot invariants. First note  that any two grid diagrams for the
same   knot  are   related   by  a   sequence   of  commutations   and
stabilizations. We will consider each of the cases in great detail.

\subsection{Commutation} 

\begin{thm} If two grid diagrams $G$ and $G'$ differ by a commutation,
  then for any Alexander grading $m$, and with $n$ sufficiently large
  $X_{\widehat{\mathcal{G}_m}}(n)$ (resp.  $X_{\mathcal{G}_m^{-}}(n)$)
    and $X_{\widehat{\mathcal{G}_m^{\prime}}}(n)$ (resp.  $X_{(\mathcal{G}_m^{\prime})^{-}}(n)$) are homotopic.
\end{thm}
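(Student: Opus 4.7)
The plan is to exploit the commutation diagram $(G_c,\rho)$ together with its associated posets $\widehat{(\mathcal{G}_c)_m}$ and $(\mathcal{G}_c^{-})_m$, which simultaneously encode the two grid diagrams $G$ and $G'$. Since pentagon coverings always go from $\widehat{\mathcal{G}}$ to $\widehat{\mathcal{G}^{\prime}}$ and never the reverse, $\widehat{\mathcal{G}^{\prime}_m}$ is a downward-closed subposet of $\widehat{(\mathcal{G}_c)_m}$, and the corresponding quotient poset is precisely $\widehat{\mathcal{G}_m}$ with only its rectangle coverings. Taking into account the Maslov shift of $-1$ built into $\widehat{\mathcal{G}^{\prime}_m}$ inside $\widehat{(\mathcal{G}_c)_m}$, Theorem \ref{thm:subquotient} then produces, for $n$ sufficiently large, a cofibration sequence
$$X_{\widehat{\mathcal{G}^{\prime}_m}}(n-1)\;\hookrightarrow\;X_{\widehat{(\mathcal{G}_c)_m}}(n)\;\twoheadrightarrow\;X_{\widehat{\mathcal{G}_m}}(n).$$
Once the middle term is shown to be contractible, the Puppe sequence yields
$$X_{\widehat{\mathcal{G}_m}}(n)\;\simeq\;\Sigma X_{\widehat{\mathcal{G}^{\prime}_m}}(n-1)\;=\;X_{\widehat{\mathcal{G}^{\prime}_m}}(n-1)\wedge S^1\;=\;X_{\widehat{\mathcal{G}^{\prime}_m}}(n),$$
using the suspension identity of Theorem \ref{thm:subquotient}. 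The identical argument with $(\mathcal{G}_c^{-})_m$ replacing $\widehat{(\mathcal{G}_c)_m}$ handles the minus case.

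The whole proof thus reduces to showing that $X_{\widehat{(\mathcal{G}_c)_m}}(n)$ is contractible. My strategy is to construct an acyclic matching (in the sense of discrete Morse theory) on $\widehat{(\mathcal{G}_c)_m}$ that pairs each $\widehat{x}\in\widehat{\mathcal{G}_m}$ with a distinguished partner $\widehat{\phi(x)}\in\widehat{\mathcal{G}^{\prime}_m}$ via the small pentagon at $\rho$. Explicitly, $\widehat{\phi(x)}$ agrees with $\widehat{x}$ away from the commutation strip and places its $\alpha_c^{\prime}$ coordinate at the mirror of the $\alpha_c$ coordinate of $\widehat{x}$ across $\rho$; this is an Alexander-preserving bijection between $\widehat{\mathcal{G}_m}$ and $\widehat{\mathcal{G}^{\prime}_m}$. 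In the generic configuration the small pentagon is empty, so $(\widehat{x},\widehat{\phi(x)})$ is an honest covering relation, and the pair collapses two cells of $X_{\widehat{(\mathcal{G}_c)_m}}(n)$ along a free face. The algebraic side of the matching is guided by the sign-refined invariance proof in \cite{CMPOZSzDT}, and the shellability results of Section \ref{sec:gss} ensure that every closed interval involved has the ball-like topology needed for the collapses to make sense cell-by-cell.

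The main obstacle arises when another coordinate of $\widehat{x}$ obstructs the candidate small pentagon, so that $(\widehat{x},\widehat{\phi(x)})$ is not an honest covering in the poset. At the chain level this is precisely the situation handled in \cite{CMPOZSzDT} by a chain homotopy built out of the index-$2$ hexagonal/heptagonal domains depicted in Figure \ref{fig:commspdomain}; I plan to lift that chain homotopy to the CW level by absorbing each obstruction into the corresponding index-$2$ attaching disk already present in $X_{\widehat{(\mathcal{G}_c)_m}}(n)$. The genuine difficulty is global compatibility: the extended matching must be acyclic (no directed cycles in the modified Hasse diagram), the orientation conventions fixed in Section \ref{sec:cwcomplexes} must be respected throughout all the collapses, and the same argument must run uniformly for both the hat and the minus versions. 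This sign-and-orientation bookkeeping, rather than the underlying combinatorial picture inherited from \cite{CMPOZSzDT}, is where I expect the bulk of the work to lie.
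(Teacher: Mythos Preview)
Your cofibration sequence setup is exactly the paper's: $\mathcal{G}'$ sits as a subposet of the commutation poset $\mathcal{G}_c$ with quotient $\mathcal{G}$, and Theorem \ref{thm:subquotient} yields
\[
X_{\mathcal{G}'}(n-1)\hookrightarrow X_{\mathcal{G}_c}(n)\twoheadrightarrow X_{\mathcal{G}}(n),
\]
hence a Puppe map $X_{\mathcal{G}}(n)\to X_{\mathcal{G}'}(n-1)\wedge S^1=X_{\mathcal{G}'}(n)$.

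Where you diverge is in what you do next. You propose to prove $X_{\mathcal{G}_c}(n)$ is contractible directly, via a space-level acyclic matching built from small pentagons, and you correctly identify that non-empty pentagons obstruct this and would need to be repaired by lifting the chain homotopies of \cite{CMPOZSzDT} to the CW level. That programme may be feasible, but it is substantially harder than what is actually required, and your own proposal flags the orientation and acyclicity bookkeeping as unresolved.

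The paper avoids all of this. It simply observes that the Puppe map, read at the chain level, \emph{is} the pentagon chain map of \cite{CMPOZSzDT}, which is already known there to be a quasi-isomorphism. Since $n$ can be taken large enough that both $X_{\mathcal{G}}(n)$ and $X_{\mathcal{G}'}(n)$ are simply connected, Whitehead's theorem finishes the argument in one line. No space-level collapse is needed; the homology computation from \cite{CMPOZSzDT} is sufficient. Note that ``middle term contractible'' and ``Puppe map is a quasi-isomorphism'' are equivalent here (long exact sequence plus simple connectivity), so you are not aiming at a stronger conclusion --- you are just choosing a harder method to reach the same one. Drop the discrete Morse plan and cite \cite{CMPOZSzDT} directly.
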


\begin{proof}
For the rest of the proof, let $\mathcal{G}$ (resp. $\mathcal{G}'$)
denote $\widehat{\mathcal{G}_m}$ or $\mathcal{G}_m^{-}$
(resp. $\widehat{\mathcal{G}_m^{\prime}}$ or
$(\mathcal{G}_m^{\prime})^{-}$) as the case may be. With similar
conventions, let $\mathcal{G}_c$ be the relevant commutation poset.

Since $\mathcal{G}'$ is a subcomplex of $\mathcal{G}_c$ and
$\mathcal{G}$ is the corresponding quotient complex, we have a long
exact sequence  of spaces

$$\xymatrix{X_{\mathcal{G}'}(n-1)\ar@{^{(}->}[r] &X_{\mathcal{G}_c}(n)\ar@{->>}[r]
  &X_{\mathcal{G}}(n)}$$

This induces the Puppe map from $X_{\mathcal{G}}(n)$ to
$X_{\mathcal{G}'}(n-1)\wedge S^1=X_{\mathcal{G}'}(n)$. As proved in
\cite{CMPOZSzDT}, this map induces an isomorphism in homology, and
since we can choose $n$ large enough to ensure that both the sides are simply
connected, the map is a homotopy equivalence.
 \end{proof}

\subsection{Stabilization}
The situation for stabilization is slightly different. For the case of
$\widehat{\mathcal{G}}$ we can no longer hope 
for any sort of homotopy equivalence.

\begin{thm} If $H$ and $G$ are the grid diagrams before and after
  stabilization, then for $m$ any Alexander grading and $n$
  sufficiently large, $X_{\widehat{\mathcal{G}_m}}(n)$
  (resp. $X_{\mathcal{G}_m^{-}}(n)$) is homotopic to
  $X_{\widehat{\mathcal{G}_m}}(n)\vee X_{\widehat{\mathcal{G}_{m+1}}}(n)$ (resp. $X_{\mathcal{G}_m^{-}}(n)$).
\end{thm}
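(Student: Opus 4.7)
The strategy is to use the stabilization poset $\widehat{(\mathcal{G}_s)_m}$ (resp. $(\mathcal{G}_s^-)_m$) as a common intermediate, exhibiting the stabilized space $X_{\widehat{\mathcal{G}_m}}(n)$ as the Puppe cofiber of an inclusion of (wedges of) destabilized spaces. The GSS structure established in Section \ref{sec:gss} produces a well-defined CW complex $X_{\widehat{(\mathcal{G}_s)_m}}(n)$, and the inherent asymmetry of the stabilization poset — where $\widehat{\mathcal{H}_m}$ and $\widehat{\mathcal{H}'_m}$ sit beneath $\widehat{\mathcal{G}_m}$ via $n_{X_0}=1$ rectangles and trivial domains respectively — provides the key downward-closed subposet needed to apply Theorem \ref{thm:subquotient}.

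The first step is to take $Q = \widehat{\mathcal{H}_m} \sqcup \widehat{\mathcal{H}'_m} \subset \widehat{(\mathcal{G}_s)_m}$, which is downward-closed precisely because every covering relation between these pieces and $\widehat{\mathcal{G}_m}$ points downward. Since no covering relations exist between $\widehat{\mathcal{H}_m}$ and $\widehat{\mathcal{H}'_m}$, the resulting subcomplex splits as a wedge $X_{\widehat{\mathcal{H}_m}} \vee X_{\widehat{\mathcal{H}'_m}}$, and the bijection $\widehat{\mathcal{H}'_m} \leftrightarrow \widehat{\mathcal{H}_{m+1}}$ (induced by the identification $f$ together with the Alexander shift of $-1$) identifies the second summand with $X_{\widehat{\mathcal{H}_{m+1}}}$ up to the grading shift, which is absorbed into suspension via $X_P(n+1) = X_P(n) \wedge S^1$. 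The quotient complex is $X_{\widehat{\mathcal{G}_m}}(n)$, giving the cofiber sequence
$$
X_{\widehat{\mathcal{H}_m}}(n-1) \vee X_{\widehat{\mathcal{H}_{m+1}}}(n-1) \hookrightarrow X_{\widehat{(\mathcal{G}_s)_m}}(n) \twoheadrightarrow X_{\widehat{\mathcal{G}_m}}(n).
$$
If I can prove that the middle term $X_{\widehat{(\mathcal{G}_s)_m}}(n)$ is contractible, the resulting Puppe map will be a homotopy equivalence $X_{\widehat{\mathcal{G}_m}}(n) \simeq X_{\widehat{\mathcal{H}_m}}(n) \vee X_{\widehat{\mathcal{H}_{m+1}}}(n)$ as required.

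The hard part is precisely this contractibility, and it is where the main obstacle lies. Combinatorially, the claim reflects the well-known cancellation from \cite{CMPOZSzDT}: the trivial-domain covering relations $f(\widehat{x}) \leftarrow \widehat{x}$ for $\widehat{x} \in \widehat{\mathcal{I}_m}$, together with the canonical $n_{X_0}=1$ rectangle relations between $\widehat{\mathcal{NI}_m}$ and $\widehat{\mathcal{H}_m}$, pair up every generator of $\widehat{(\mathcal{G}_s)_m}$ so that Gaussian elimination reduces the chain complex to zero. My plan is to upgrade this pairing to a Forman-style discrete Morse matching on the CW structure of $X_{\widehat{(\mathcal{G}_s)_m}}(n)$: verify acyclicity of the matching (no cyclic chains of matched pairs) using a strictly decreasing complexity on the position of $\rho$ and the $X_0$-crossing data, and then check that the matching is compatible with the sign assignment so that each matched pair cancels as a genuine elementary collapse. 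The shellability established in Section \ref{sec:gss}, which guarantees that each closed interval is a PL-ball with a well-behaved attaching map, is exactly the input needed to realize the chain-level cancellation at the space level.

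Once contractibility is in hand, the hat-version statement follows from the cofiber sequence. The minus version proceeds along the same lines using $(\mathcal{G}_s^-)_m = (\mathcal{G}^-)_m \cup (\mathcal{H}^-)_m$: the subposet $(\mathcal{H}^-)_m$ is downward-closed, the $U$-action already absorbs the role of $\widehat{\mathcal{H}'}$ so there is no second copy, the analogous acyclic matching via $n_{X_0}=1$ rectangles makes $X_{(\mathcal{G}_s^-)_m}(n)$ contractible, and the cofiber sequence collapses to a single summand $X_{(\mathcal{G}^-)_m}(n) \simeq X_{(\mathcal{H}^-)_m}(n)$. The remaining cases (b), (c), (d) of Figure \ref{fig:stablecases} reduce to case (a) by rotations; the partial-order reversal in case (b), flagged in Section \ref{sec:gss}, is handled through the dual construction of Section \ref{sec:cwcomplexes}, where the same acyclic-pairing argument applied to the dual GSS poset yields the same conclusion after Spanier–Whitehead duality.
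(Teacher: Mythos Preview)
Your overall framework—using the stabilization poset to build a cofiber sequence and then arguing via the Puppe map—is exactly what the paper does. The divergence is in how you propose to establish the key step.

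The paper does not attempt to show that $X_{\widehat{(\mathcal{G}_s)_m}}(n)$ is contractible by a discrete Morse collapse. Instead it proves the logically equivalent statement that the Puppe map induces an isomorphism in homology, by reproducing the filtration argument from \cite{CMPOZSzDT}: filter by powers of $U_2,\ldots,U_N$, then by auxiliary markings off the $X_0$-annuli, then by powers of $U_0,U_1$. On the associated graded object the generators of $\widehat{\mathcal{G}}$ split into types $\widehat{\mathcal{N}}, x, y, I_1, I_2, J_1, J_2, R, S, T, U$, the differential becomes very simple, and the map to $\widehat{\mathcal{H}}\cup\widehat{\mathcal{H}'}$ is checked to be a quasi-isomorphism piece by piece. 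Whitehead's theorem (with $n$ large enough for simple connectivity) then upgrades this to a homotopy equivalence.

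Your proposed Morse matching has a concrete gap: it cannot exist as described. Summing over all Alexander gradings, $|\widehat{\mathcal{I}}| = |\widehat{\mathcal{H}'}| = N!$ while $|\widehat{\mathcal{NI}}| = (N+1)! - N! = N\cdot N!$ and $|\widehat{\mathcal{H}}| = N!$. So for grid number $N\geq 2$ there is no bijection between $\widehat{\mathcal{NI}}$ and $\widehat{\mathcal{H}}$, and the two types of edges you list (trivial domains $\widehat{\mathcal{I}}\to\widehat{\mathcal{H}'}$ and $n_{X_0}=1$ rectangles $\widehat{\mathcal{NI}}\to\widehat{\mathcal{H}}$) leave most generators of $\widehat{(\mathcal{G}_s)_m}$ unmatched. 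The acyclicity of the stabilization complex genuinely relies on massive internal cancellation \emph{within} $\widehat{\mathcal{G}_m}$ itself—this is precisely what the filtration argument is organizing. To salvage the discrete-Morse route you would have to build a far more elaborate acyclic matching that also cancels generators of $\widehat{\mathcal{G}}$ against one another (in effect re-encoding the filtration by hand), and then still argue that the resulting collapse is realized geometrically in the CW structure. Alternatively, you can simply cite the acyclicity of the stabilization chain complex from \cite{CMPOZSzDT} and invoke Whitehead; that is essentially what the paper does.
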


\begin{proof}
In case $(a)$ (resp. case $(b)$) of the stabilization, both
$\widehat{\mathcal{G}_s}$ and $\mathcal{G}_s^{-}$ have a subcomplex
(resp. quotient complex) corresponding to either one or two copies of the complex for $H$ and the
corresponding quotient complex (resp. subcomplex) corresponds to the
complex for $G$. Following the lines  of the previous proof, we
observe that these spaces then fit into an exact sequence. Thus the
Puppe map gives a map between the two spaces corresponding to the two
complexes. This map induces a chain map between the
two complexes. Thus if both sides are stabilized sufficiently so as to
ensure that they are simply connected, and if the Puppe map induces
isomorphism in homology, then the Puppe map would be a homotopy
equivalence. Thus we only need to show that the map induced in
homology is an isomorphism. 

Following the lines of the proof in \cite{CMPOZSzDT}, we prove that it is a
quasi-isomorphism. Note that since we then prove that this map is
induced from a homotopy equivalence of spaces, the map actually
becomes a chain homotopy equivalence.

We fix some Alexander grading $m$, and only work with generators of
that grading. On $(\mathcal{G}_s^{-})_m$ (and hence on
$\mathcal{G}^{-}_m$ and $\mathcal{H}^{-}_m$), we introduce additional
filtrations given by powers of $U_2,U_3,\ldots,U_N$. We then put
special markings on every square of $G$ other than the ones on the
vertical or the horizontal annulus through $X_0$. On the associated
graded object, obtained after the  filtration by the powers of $U_2,U_3,\ldots,U_N$, we put an
additional filtration by counting how many times a domain passes
through the extra markings. We call this filtration
$\mathcal{F}'$, and on the associated graded objects of $\mathcal{F}'$ we put an additional
filtration $\mathcal{F}$ given by sum of powers of $U_0$ and $U_1$. (Note that while
working in the hat version, the filtrations coming from the powers of
$U_i$'s are unimportant).

Restricted to the generators coming from $G$, the objects in the
associated graded object of $\mathcal{F}$ are similar in the hat
version and the minus version. For now, we concentrate on the
associated graded object in $\widehat{\mathcal{G}}$ after the
filtration $\mathcal{F}$. We only work with case $(a)$ of the
stabilization. Similar results hold true for case $(b)$ after the
rotation $R(\frac{\pi}{2})$, but some of the maps are in the opposite
direction.

Recall that $\rho$ is the intersection between $\alpha_s$ and
$\beta_s$. Let $p$ be the intersection point immediately to the right
of $\rho$. Let $\widehat{\mathcal{I}}$ (resp. $\widehat{\mathcal{J}}$) be all the points
in $\widehat{\mathcal{G}}$ whose one of the coordinates is $\rho$
(resp. $p$). 
We name the $\alpha$ (resp. $\beta$) circle just below $\alpha_s$
(resp. right of $\beta_s$) as $\alpha_o$ (resp. $\beta_o$). Let
$\widehat{\mathcal{N}}$ be all the generators which do not have any
coordinate among the $4$ points of intersection among
$\alpha_s,\alpha_o,\beta_s,\beta_o$. All the other types of generators
are shown in Figure \ref{fig:gentypes}. 

\begin{figure}[ht]
\psfrag{x}{$x$}
\psfrag{y}{$y$}
\psfrag{i1}{$I_1$}
\psfrag{i2}{$I_2$}
\psfrag{j1}{$J_1$}
\psfrag{j2}{$J_2$}
\psfrag{r}{$R$}
\psfrag{s}{$S$}
\psfrag{t}{$T$}
\psfrag{u}{$U$}
\psfrag{o0}{$O_0$}
\psfrag{o1}{$O_1$}
\psfrag{x0}{$X_0$}
\psfrag{rh}{$\rho$}
\psfrag{p}{$p$}
\psfrag{as}{$\alpha_s$}
\psfrag{ao}{$\alpha_o$}
\psfrag{bs}{$\beta_s$}
\psfrag{bo}{$\beta_o$}
\begin{center}
\includegraphics[width=350pt]{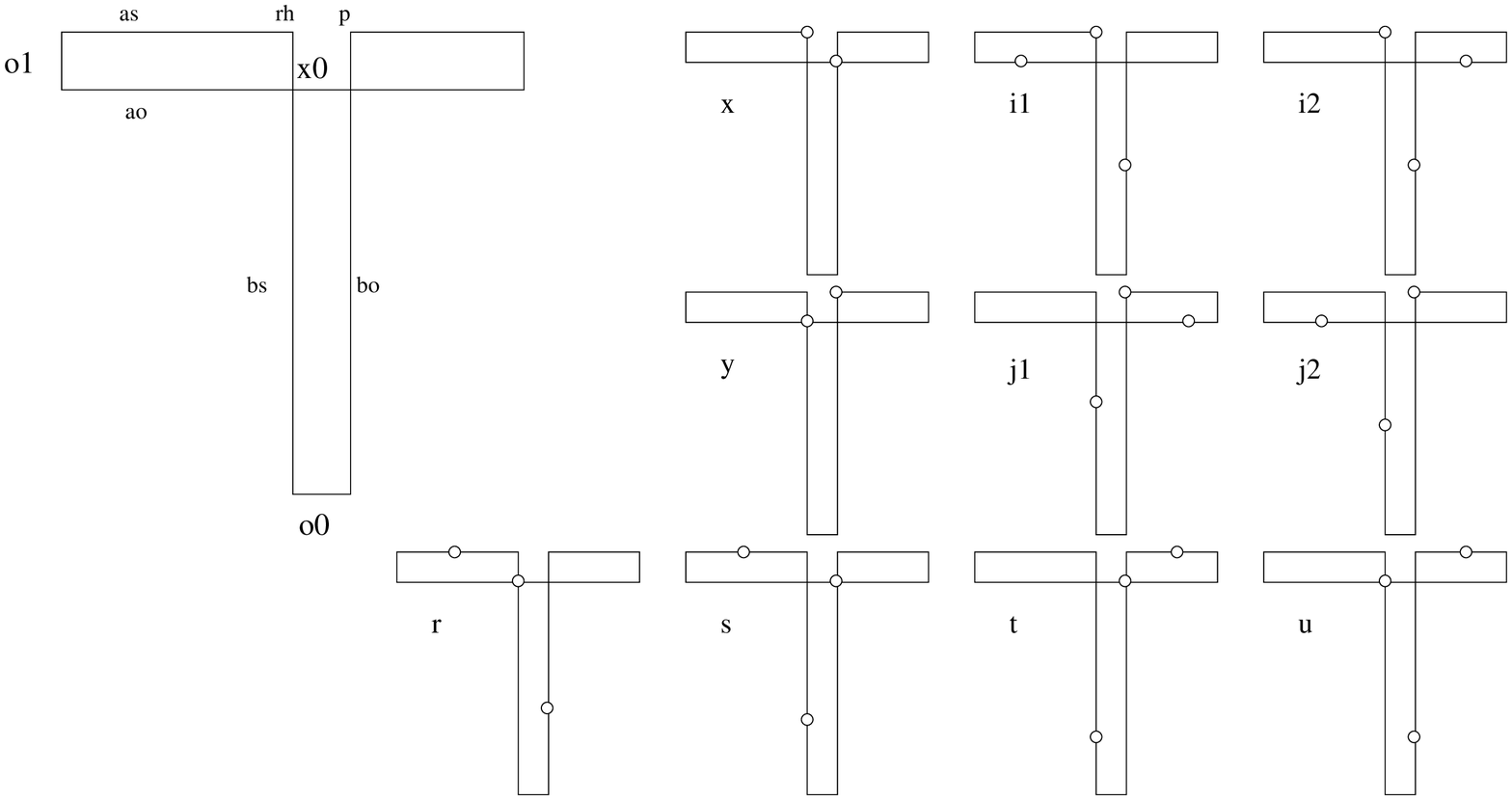}
\end{center}
\caption{Different types of generators after filtration $\mathcal{F}$}\label{fig:gentypes}
\end{figure}

Note that $\widehat{\mathcal{I}}$ (resp. $\widehat{\mathcal{J}}$) consists
of $I_1$ (resp $J_1$), $I_2$ (resp. $J_2$) and the
special generator $x$ (resp. $y$). After the filtration $\mathcal{F}$
domains have to lie in the union of the horizontal and the vertical
annulus through $X_0$ and are not allowed to pass through
$X_0,O_0,O_1$. Hence the chain
complex decomposes as a direct sum of the chain complexes
$\widehat{\mathcal{N}}$, $x$, $y$, $I_2$, $J_2$, $W_I$ and $W_J$,
where $W_I$ (resp. $W_J$) consists of the generators from $I_1,R$ and
$S$ (resp. $J_1,T$ and $U$). It is easy to see that the homology of $\widehat{\mathcal{N}}$
is zero, and there is no differential in the next $4$ summands. The
differentials in $W_I$ map one element (say $r$) of $I_1$ and one
element (say $s(r)$) of $S$
to one element of $R$, and the differentials in $W_J$ map one element
of $T$ to one element in $J_1$ (say $t$) and one element of $U$ (say $u(t)$). Thus the
homology of $W_I$ (resp. $W_J$) is freely generated by elements like $r\pm
s(r)$ (resp. $t$ or $u(t)$).

This gives us the generators for the homology of the associated graded
object of $\widehat{\mathcal{G}}$ after the filtration
$\mathcal{F}$. We want to show that the map coming from the covering
relations between elements of $\widehat{\mathcal{G}}$ and
$\widehat{\mathcal{H}}\cup\widehat{\mathcal{H}'}$ in
$\widehat{\mathcal{G}_s}$ is a quasi-isomorphism. For that it is
enough to show that the map induces isomorphism on the homology of the
associated graded object of  $\mathcal{F}$. This is easy, since the
map from $\widehat{\mathcal{G}}$ to $\widehat{\mathcal{H}}$ (resp. $\widehat{\mathcal{H}'}$) maps $y$
to $x$ and is a bijection from $J_1$ to $I_2$ and from $J_2$ to $I_1$
(resp. induces identity map on $\widehat{\mathcal{I}}$ and maps $S$ to
$0$). Note that this is independent of the sign convention chosen.

For the minus version we have to do a little bit more work.
Let us use the shorthand $U^k$ to denote terms of the form
$U_0^{k_0}U_1^{k_1}$. The domains are now allowed to pass through
$O_0$ and $O_1$, and due to the result of the previous part, we are
only interested in domains connecting elements of the form
$U^k\widehat{\mathcal{I}}$ or $U^k S$ to elements of the form
$U^k\widehat{\mathcal{J}}$ or $U^k U$.

For the special generator $x\in\widehat{\mathcal{I}}$, there are domains
connecting $U^k x$ to $U_0 U^k y$ and $U_1 U^k y$. Thus if we consider
all the generators of the form $U_0^{k_0}U_1^{k_1}x$ with $k_0+k_1=k$
and all the generators of the form $U_0^{l_0}U_1^{l_1}y$ with
$l_0+l_1=k+1$, the chain complex looks like Figure \ref{fig:tree} and
it is easy to see that the homology is carried by $U_1^{k+1}y$,
irrespective of the sign convention.

\begin{figure}[ht]
\begin{center}
\psfrag{x00}{$U_0^2 x$}
\psfrag{x01}{$U_0 U_1 x$}
\psfrag{x11}{$U_1^2 x$}
\psfrag{y000}{$U_0^3 y$}
\psfrag{y001}{$U_0^2 U_1 y$}
\psfrag{y011}{$U_0 U_1^2 y$}
\psfrag{y111}{$U_1^3 y$}
\includegraphics[width=150pt]{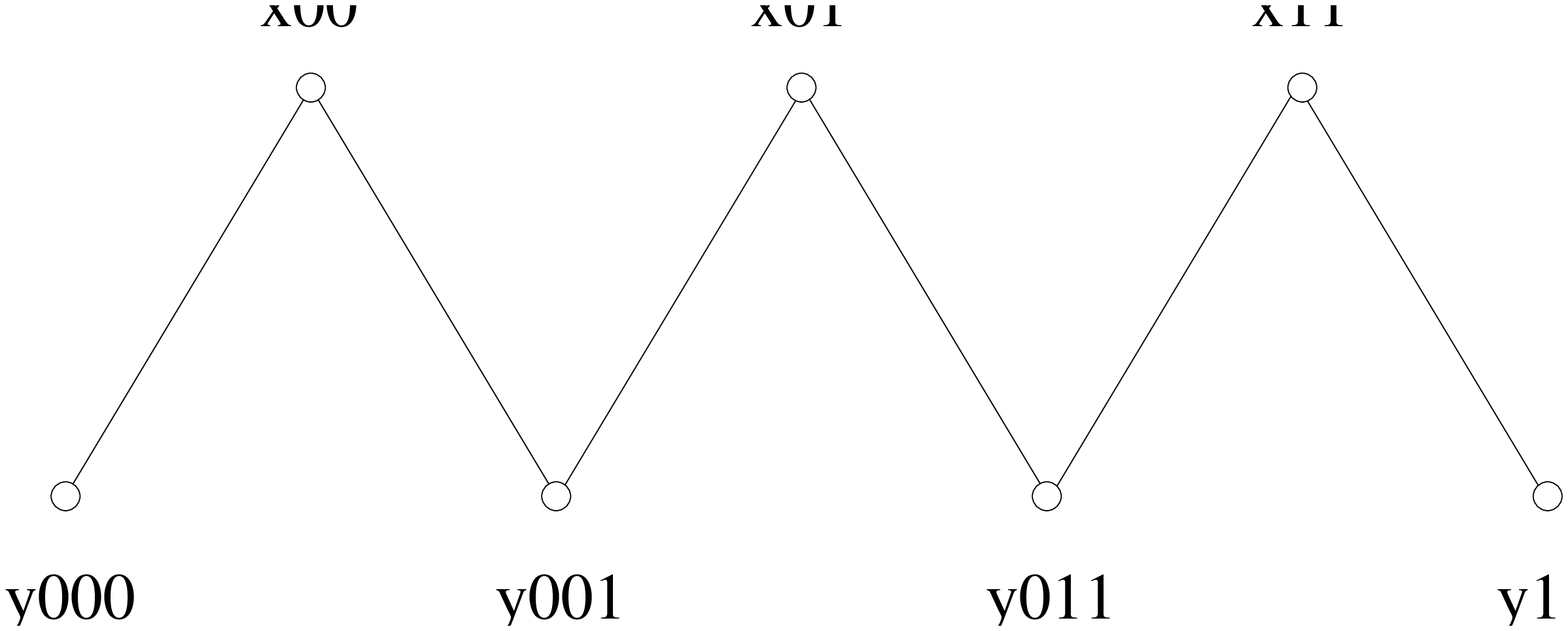}
\end{center}
\caption{The boundary maps  for the filtration $\mathcal{F}'$}\label{fig:tree}
\end{figure}

For a generator $x'\in I_2$, the domains connect $U^k x'$ to
$U_0 U^k t$ and $U_1 U^k u(t)$ for some generators $t\in J_1$ and
$u(t)\in U$. Since $u(t)$ is same as $t$ in the homology of the
associated graded object of $\mathcal{F}$, hence the diagram once more
looks like Figure \ref{fig:tree}. Thus the homology is again carried by
$U_1^{k+1} t$. 

Similarly, for a generator $r\in I_1$ (resp. $s(r)\in S$), there is
only domain connecting $U^k r$ (resp. $U^k s(r)$) to $U_0 U^k y'$
(resp. $U_1 U^k y'$) for some generator $y'\in J_2$. A similar
argument shows that the homology is once more carried by $U_1^{k+1} y'$. 

Thus the homology of the associated graded  object is freely generated by
elements of the form $U_1^k \widehat{\mathcal{J}}$. The map which we are
trying to show is a  quasi-isomorphism induces a bijection between
elements of that form and generators of $\mathcal{H}^{-}$.
This completes the proof of the fact that the relevant maps are quasi-isomorphisms, and as argued
earlier this completes the proof of the theorem.
\end{proof}

Thus to every knot $K$ and every Alexander grading $m$, we can associate an invariant spectrum
$\mathcal{S}(\mathcal{G}^{-}_m)$, and hence after taking an infinite
wedge,  the spectrum $\mathcal{S}(\mathcal{G}^{-})$. We call these
spectra $\mathcal{S}^{-}_m$ and $\mathcal{S}^{-}$ to stress the fact
that they only depend on the knot $K$, and not on the grid diagram
representing $K$. Thus any invariant of the spectrum is also a knot
invariant. The homology of the spectrum $\mathcal{S}^{-}$ is the
well-known invariant $HFK^{-}(K)$. Stable homotopy groups can
constitute an interesting collection of invariants. Another invariant
to consider is the Steenrod operations. For simplicity, let us just
consider the $Sq$ operation acting on the cohomology with
$\mathbb{F}_2$ coefficients which increases the grading. Since
cohomology of $\mathcal{G}^{-}$ is same as $HFK^{-}(r(K))$ where $r(K)$ is
the reverse of the knot (the isomorphism being obtained by applying a
rotation $R(\frac{\pi}{2})$ on a grid diagram $G$ for the knot $K$),
the Steenrod squares act on $HFK^{-}(K, \mathbb{F}_2)$ by reducing the
grading.

A very  natural question is whether $\mathcal{S}^{-}$ computes
anything new. It will interesting to find two knots $K_1$ and $K_2$,
such that $\mathcal{S}^{-}(K_1)$ and $\mathcal{S}^{-}(K_2)$ have the
same homology, but are not homotopic to one another.

For the hat version, unfortunately we do not have a knot
invariant. Given an index $N$  grid diagram $G$ for a knot we can
construct finite spectra $\mathcal{S}(\widehat{\mathcal{G}_m})$ and
their wedge $\mathcal{S}(\widehat{\mathcal{G}})$.

It is not clear whether the homotopy type of these spectra depend  only on
$K$ and $N$. However there is some partial answer to this question. Let
$g$ be the highest Alexander grading $m$ such that the homology of
$\widehat{\mathcal{G}_m}$ is non-trivial. It is easy to see that $g$
depends only only the knot $K$. Then we claim that
the homotopy type of the spectrum
$\mathcal{S}(\widehat{\mathcal{G}_g})$ also depends only on the knot $K$,
and henceforth we will denote it by $\widehat{\mathcal{S}}_g$. The way
to see this is  as follows. For sufficiently large $k$ and $m>g$, the spaces
$X_{\widehat{\mathcal{G}_m}}(k)$ are acyclic as they are simply
connected and have trivial homology. Commutation does not change the
homotopy type of $\mathcal{S}(\widehat{\mathcal{G}_g})$, and when we
stabilize to go from a grid diagram $H$ to a grid diagram $G$, for
sufficiently large $k$, we have
$X_{\widehat{\mathcal{G}_g}}(k)=X_{\widehat{\mathcal{H}_g}}(k)\vee
X_{\widehat{\mathcal{H}_{g+1}}}(k)\sim X_{\widehat{\mathcal{H}_g}}(k)$
since the second space is acyclic. 

In fact this proof shows a possible way to
answer the above question positively.  We are trying to show that the
homotopy type of $\mathcal{S}(\widehat{\mathcal{G}_m})$ depends only
on $K$, the Alexander grading $m$ and the grid number $N$. 
First note that it is enough to prove the following fact. If the
stabilizations of $G$ and $G'$ have spectra that are homotopy
equivalent, then the spectra for $G$ and $G'$ are homotopy
equivalent. We have already proved this for $m\geq g$. So by induction
assume it is true for all Alexander grading bigger than $m$. Thus for
$k$ sufficiently large, we
have $X_{\widehat{\mathcal{G}_m}}(k)\vee
X_{\widehat{\mathcal{G}_{m+1}}}(k)\sim X_{\widehat{\mathcal{G}_m^{\prime}}}(k)\vee
X_{\widehat{\mathcal{G}_{m+1}^{\prime}}}(k)$. But by induction, we
already know  $X_{\widehat{\mathcal{G}_{m+1}}}(k)\sim X_{\widehat{\mathcal{G}_{m+1}^{\prime}}}(k)$.
Thus our proof would be complete if for finite CW complexes $X$, $Y$
and $A$, $X\vee A$ being homotopic to $Y\vee A$ would imply that $X$  is stably
homotopic to $Y$.

However, irrespective of that, we can still construct certain stable
homotopy invariants from the spectra $\mathcal{S}(\widehat{\mathcal{G}})$ which
depend only on $K$ and $N$. One such example is the stable homotopy groups.

\begin{thm} The stable homotopy groups of
  $\mathcal{S}(\widehat{\mathcal{G}_m})$  depend only on $K$, $m$ and $N$.
\end{thm}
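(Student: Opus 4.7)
The plan is to reduce the theorem to a cancellation argument for finitely generated abelian groups. For each grid diagram $G$ representing $K$ and each Alexander grading $m$, set $f_G(m) := \pi^s_*(\mathcal{S}(\widehat{\mathcal{G}_m}))$. Since $X_{\widehat{\mathcal{G}_m}}(n)$ has only finitely many cells (one for each element of the finite set $\widehat{\mathcal{G}_m}$), it is a finite pointed CW complex, and so by Serre's finiteness theorem every $f_G(m)$ is a finitely generated abelian group in each degree. Moreover only finitely many values of $m$ contribute, since $\widehat{\mathcal{G}_m}$ is empty for $|m|$ large. Thus $(f_G(m))_{m \in \mathbb{Z}}$ is a finitely supported sequence of finitely generated abelian groups, independent of the ambient dimension $n$ by Theorem \ref{thm:subquotient}.

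Next I track how $f_G$ changes under the two Cromwell moves. By the commutation theorem already proved, commutation preserves each spectrum up to homotopy, hence $f_{G'}(m) \cong f_G(m)$ for all $m$. By the stabilization theorem, if $G^+$ is a stabilization of $G$, then $f_{G^+}(m) \cong f_G(m) \oplus f_G(m+1)$. Denote by $s$ the operation on isomorphism classes of finitely supported sequences of finitely generated abelian groups defined by $(s f)(m) = f(m) \oplus f(m+1)$, and by $d$ the analogous destabilization operation. The key step is to show $s$ is invertible. Suppose $sf \cong sg$; let $M$ be the largest index where $f(M)$ or $g(M)$ is nonzero. Then $f(M+1) = g(M+1) = 0$, so evaluating at $m=M$ gives $f(M) \cong g(M)$. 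Inductively, for $m < M$ the relation $f(m) \oplus f(m+1) \cong g(m) \oplus g(m+1)$ together with the already-established isomorphism $f(m+1) \cong g(m+1)$ and the cancellation theorem for finitely generated abelian groups yields $f(m) \cong g(m)$. Hence $s$ is injective on isomorphism classes, and $d$ provides an explicit two-sided inverse, so $s$ and $d$ are mutually inverse bijections.

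Finally, two grid diagrams $G$ and $G'$ of grid number $N$ representing the same knot $K$ are connected by a sequence of commutations, stabilizations, and destabilizations. On the level of isomorphism classes of sequences, commutations act as the identity, stabilizations apply $s$, and destabilizations apply $d = s^{-1}$. Any composite reduces, using $sd = ds = \mathrm{id}$, to $s^{P-Q}$, where $P$ and $Q$ are the total numbers of stabilizations and destabilizations in the path. Since $G$ and $G'$ have the same grid number, $P = Q$, so the composite is $s^0 = \mathrm{id}$ and $f_{G'}(m) \cong f_G(m)$ for every $m$. Thus the stable homotopy groups depend only on $K$, $N$, and $m$.

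The main obstacle is the invertibility of $s$, which rests on two ingredients: finite generation of each $f_G(m)$ (from finiteness of the CW complex) and cancellation for finitely generated abelian groups. It is precisely the absence of a corresponding cancellation law for stable homotopy types of finite CW complexes (the open question raised just above the theorem statement) that blocks us from upgrading this argument to invariance of the spectrum $\mathcal{S}(\widehat{\mathcal{G}_m})$ itself; passing to homotopy groups is exactly the step that lets cancellation take over.
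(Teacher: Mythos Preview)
Your proof is correct and follows the same strategy as the paper: both exploit that stable homotopy groups of finite complexes are finitely generated abelian, and both use cancellation together with downward induction on the Alexander grading to undo the stabilization operation. The paper organizes this as a lexicographic induction on pairs $(r,-m)$ (calling two diagrams $r$-equivalent if they become commutation-equivalent after $r$ stabilizations each), while you package the same induction as injectivity of your operator $s$; one small quibble is that $s$ is not literally a bijection on arbitrary sequences since it is not surjective, but your argument only ever uses injectivity, so this does not affect correctness.
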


\begin{proof} 
  We just mimic our attempted proof for showing the homotopy type of
  $\mathcal{S}(\widehat{\mathcal{G}_m})$ depends only on $K$, $m$ and
  $N$. Call two grid diagrams $G$ and $G'$ to be $r$-equivalent if
  after stabilizing both of them $r$ times, the two diagrams can be
  related by commutations. We are trying to prove that
  $\pi^s_i(\mathcal{S}(\widehat{\mathcal{G}_m}))=
  \pi^s_i(\mathcal{S}(\widehat{\mathcal{G}_m^{\prime}}))$ for two
  $r$-equivalent diagrams $\wh{\mc{G}_m}$ and
  $\wh{\mc{G}_m^{\prime}}$. This is true if either $r=0$ or the
  Alexander grading $m$ is sufficiently large. We prove this by an
  induction on the pairs $(r,-m)$ ordered lexicographically.

If two diagrams  $G$ and $G'$ are $r$-equivalent, then their
stabilizations are $(r-1)$-equivalent, and hence from the induction on
$(r,-m)$, we get $\pi^s_i(\mathcal{S}(\widehat{\mathcal{G}_m})\vee
\mathcal{S}(\widehat{\mathcal{G}_{m+1}})) = \pi^s_i(\mathcal{S}(\widehat{\mathcal{G}_m^{\prime}})\vee
\mathcal{S}(\widehat{\mathcal{G}_{m+1}}))$. However for spectra coming
from finite CW complexes, the stable homotopy groups are finitely
generated and abelian, and for wedges, they are products, and hence
(using the classification of finitely generated abelian groups) we
get $\pi^s_i(\mathcal{S}(\widehat{\mathcal{G}_m}))=
\pi^s_i(\mathcal{S}(\widehat{\mathcal{G}_m^{\prime}}))$.
\end{proof}

\section{Examples}\label{sec:examples}

In  this section we give examples of some other GSS  posets $P$, and
construct the spaces $X_P(n)$ corresponding to them. We will conclude
the section by computing the homotopy type of
$X_{\widehat{\mathcal{G}}}(n)$ for the grid diagram $G$ of
the trefoil as shown in Figure \ref{fig:trefoil1}.

\begin{figure}[ht]
\psfrag{X1}{X}
\psfrag{X2}{X}
\psfrag{X3}{X}
\psfrag{X4}{X}
\psfrag{X5}{X}
\psfrag{O1}{O}
\psfrag{O2}{O}
\psfrag{O3}{O}
\psfrag{O4}{O}
\psfrag{O5}{O}
\psfrag{a0}{$0$}
\psfrag{a1}{$1$}
\psfrag{a2}{$2$}
\psfrag{a3}{$3$}
\psfrag{a4}{$4$}
\psfrag{b0}{$0$}
\psfrag{b1}{$1$}
\psfrag{b2}{$2$}
\psfrag{b3}{$3$}
\psfrag{b4}{$4$}
\begin{center}
\includegraphics[width=120pt]{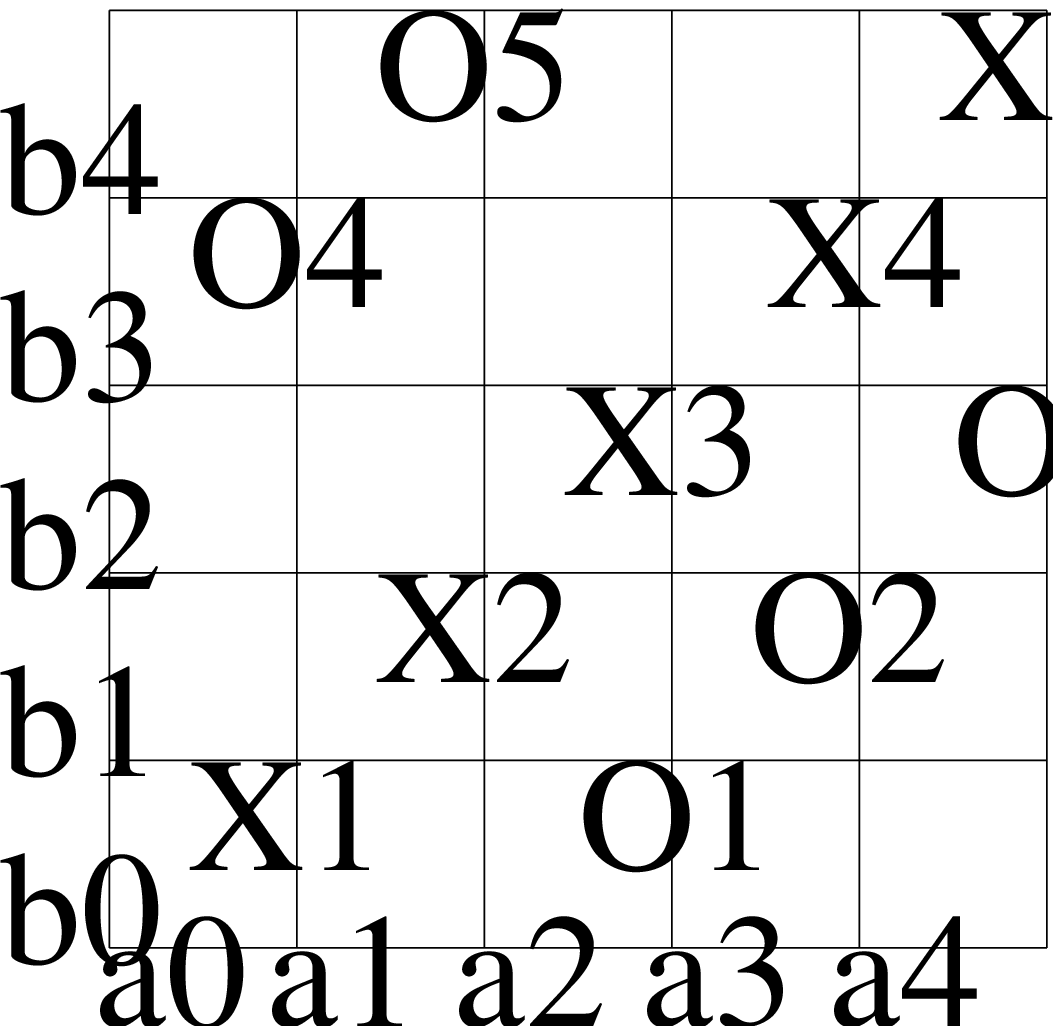}
\end{center}
\caption{Grid diagram for the trefoil}\label{fig:trefoil1}
\end{figure}

As a warm-up exercise, let us first consider the crown poset $C_n$. In this poset there is a unique
minimum $a$ (resp. unique maximum $d$), which is  covered by the elements
$b_1,b_2,\ldots,b_n$ (resp. which covers $c_1,c_2,\ldots c_n$). Furthermore,
each $b_i$ is covered by $c_i$ and $c_{i+1}$ with the counting done
modulo $n$. This can be made into a graded poset by assigning gradings
of $0,1,2$ and $3$ to $a$, $b_i$, $c_i$ and $d$ respectively.

There is also a  sign assignment which assigns $+1$ to each edge that
involves either $a$ or $d$, and to each edge of the form
$b_i\leftarrow c_i$ and assigns $-1$ to every other edge. Since  the
poset has a unique minimum, this is the unique sign assignment (this
actually follows from the fact that $C_n$ is shellable).

It is easy to check that this poset is  also shellable. Let us draw a
graph whose vertices are maximal chains, and there is an edge joining
two vertices if and only if the two maximal chains agree at exactly $3$
elements. It is clear that the graph is a $(2n)$-cycle. Let us
now delete one of the edges of this graph, and put a direction on the
remaining edges, such that there is at most one edge flowing into a
vertex and there is at most one edge flowing out of a vertex. The
shellable total order that we  put on the maximal chains is the
following. We declare a maximal chain $\mathfrak{m}_1$ to be smaller
than a maximal chain $\mathfrak{m}_2$ if we can go from
$\mathfrak{m}_2$ to $\mathfrak{m}_1$ along directed edges in the
modified  graph. It is easy to check that this ordering suffices.

Thus $C_n$ is a GSS poset and  we can associate a  pointed CW complex
$X_{C_n}(m)$ to it such that the reduced homology
$\widetilde{H}_{*}(X_{C_n}(m))$ is the homology of the chain complex
associated to $C_n$. However it is immediate that the chain complex
has trivial homology, and hence for sufficiently large $m$,
$X_{C_n}(m)$ is a simply connected space with trivial homology and
hence is homotopic to a point.

Now let us consider some other families of examples. Let $I$ be the poset of two elements $0$ and
$1$, with $0\preceq 1$. Let $I^n$ be the $n$-fold Cartesian product of
$I$ with itself. For very natural reasons, let us call this the
$n$-cube poset.

The elements of $I^n$ look like $n$-tuples $a=(a_1,a_2,\ldots, a_n)$
where each $a_i$ is $0$ or $1$. We put a grading on this poset by
declaring the grading of $a$ to be the number of $1$'s in the
$n$-tuple. We can also put a sign assignment on this poset in the
following way. Observe that if $a\leftarrow b$, then there is a unique
$k$ for which $a_k=0$ and $b_k=1$, and for every $i\neq k$,
$a_i=b_i$. We assign a sign of $(-1)^{\sum_{i=1}^k a_i}$ to this
covering relation, and it is easy to check that this is indeed a sign
assignment. Since  there is a unique minimum, this is the only sign
assignment up to equivalence.

This poset is also EL-shellable. In an edge $a\leftarrow b$, if $k$ is
the unique place where $a_k<b_k$, we label the edge by the integer
$k$. It is easy  to see that this map from the covering relations to
integers totally ordered in the standard way, is indeed an
EL-shelling. However once more since the homology of the chain complex
associated to $I^n$ is trivial, the  CW complex $X_{I^n}(m)$ is
contractible for sufficiently large $m$.

The $n$-cube poset is naturally isomorphic to the subset poset, whose
elements are subsets of $\{x_1,\ldots,x_n\}$ partially ordered by
inclusion. An element $a$ of $I^n$ corresponds to a subset $S$, such
that $x_i\in S$ if and only if $a_i=1$. 

Now consider the $(n+1)$-cube poset restricted to the elements of
positive grading. Let us reduce the grading of each element by $1$,
and then rename it as the simplex poset $\Delta_n$ since the grading
$k$ elements of this poset correspond to $k$-simplices lying inside an
$n$-simplex $\Delta^n$, with partial order being given by inclusion.
Thus $\Delta_n$ is graded with $k$-simplices having grading $k$. It
has a sign assignment obtained by restricting the sign assignment of
the subset poset, and this is the unique sign assignment, since
$\Delta_n$ has a unique maximum.  It is also shellable, since it is
isomorphic to the interval $(\varnothing,\infty)$ of the subset poset.
So $\Delta_n$ is a GSS poset.

We can also construct the reduced simplex poset
$\widetilde{\Delta}_n$, where we label one of the vertices of the
$n$-simplex $\Delta^n$ to be the basepoint $b$, and define
$\widetilde{\Delta}_n=\Delta_n\setminus\{ b\}$ with the same partial
order. This poset also has grading and sign assignments, and each
closed interval in this poset is still shellable since closed
intervals in the poset $\Delta_n$ are shellable. Thus
$\widetilde{\Delta}_n$ is also a GSS poset.

\begin{thm}\label{thm:simplex}
For $m$ large enough, there is a well-defined homeomorphism
$h_{\Delta_n,m}$ between
$X_{\Delta_n}(m)$ and $(\Delta^n\cup\{b\})\wedge S^m$, where
$\Delta^n\cup\{b\}$ is the one-point compactification of $\Delta^n$
with $b$ being the basepoint. 
\end{thm}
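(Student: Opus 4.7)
The plan is to identify both spaces as CW complexes whose cells are indexed by the face poset of $\Delta^n$, and then promote a combinatorial identification to a homeomorphism using the uniqueness built into the general construction $X_P(n)$.

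First I would pin down the CW structure of $(\Delta^n\cup\{b\})\wedge S^m$: one $0$-cell (the basepoint), together with, for each $k$-face $\sigma\subseteq\Delta^n$, a single $(k+m)$-cell obtained by smashing the characteristic map of $\sigma$ with the identity on $S^m$. Its cellular chain complex is the $m$-fold grading shift of the chain complex attached to the GSS poset $\Delta_n$, with signs given by the standard simplicial boundary. These signs agree, up to the equivalence of sign assignments, with the sign assignment on $\Delta_n$ coming from its description as the positive part of the cube poset, by the uniqueness-of-signs argument that appears in the proof of Theorem \ref{thm:construction1}: on a shellable poset with a unique maximum, the sign convention is forced up to flipping orientations of individual cells.

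The main step is an induction on $n$. The base case $n=0$ is trivial since both sides are $S^m$. For the inductive step I would use Theorem \ref{thm:subquotient} to produce the cofibration
\begin{eqnarray*}
X_{\Delta_n\setminus\{\sigma_n\}}(m)\hookrightarrow X_{\Delta_n}(m)\longrightarrow S^{n+m},
\end{eqnarray*}
where $\sigma_n$ is the unique maximum of $\Delta_n$; the quotient is $S^{n+m}$ because $\sigma_n$ contributes a single $(n+m)$-cell, and $\Delta_n\setminus\{\sigma_n\}$ is precisely the face poset of $\partial\Delta^n\cong S^{n-1}$. The parallel cofibration
\begin{eqnarray*}
(\partial\Delta^n)_+\wedge S^m\hookrightarrow \Delta^n_+\wedge S^m\longrightarrow S^{n+m}
\end{eqnarray*}
comes from smashing $\partial\Delta^n\hookrightarrow\Delta^n$ with $S^m$. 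To invoke the induction one needs a mildly strengthened statement allowing the face poset of an arbitrary simplicial complex (so that $\partial\Delta^n$, which is not itself a simplex, can be substituted back in); this strengthening follows by the same cell-by-cell construction.

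The main obstacle will be upgrading the combinatorial match of attaching maps to an \emph{on-the-nose} agreement, so as to produce a genuine homeomorphism rather than just a homotopy equivalence. This reduces to the uniqueness phenomenon behind Theorem \ref{thm:construction1}: once the lower skeleta have been identified via the inductive hypothesis in a way compatible with cell orientations, the attaching map of the top cell $\sigma_n\wedge S^m$ is a map $S^{n+m-1}\to (\partial\Delta^n)_+\wedge S^m$ representing the cellular boundary, and any two such attaching maps inducing the same sign on each $(n+m-1)$-cell are related by a cellular self-homeomorphism of the $(n+m-1)$-skeleton extending through the top cell. Concatenating these homeomorphisms over all cells yields the well-defined $h_{\Delta_n,m}$.
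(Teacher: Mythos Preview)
Your approach is genuinely different from the paper's, and the gap lies exactly where you anticipated trouble: the ``upgrade'' from matching cellular chain data to a homeomorphism. You invoke the uniqueness phenomenon from Theorem \ref{thm:construction1}, but that result applies to the nice CW complexes $S_y(k)$ (equivalently $P_x$), where each attaching map is an \emph{injection} of a sphere into a sphere; this is precisely what forces the attaching map. The space $X_{\Delta_n}(m)$ is not one of these---it is built from the geometric \emph{duals} $\overline{(P_x)}_{g(x)+n}$, and there is no reason its attaching maps are injective or land in spheres. In fact, the last theorem of Section~\ref{sec:examples} exhibits two GSS posets with identical cellular chain data whose associated spaces $X_P(m)$ are not even stably homotopy equivalent ($\mathbb{CP}^2$ versus $S^2\vee S^4$). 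So the principle ``same signs on the boundary $\Rightarrow$ related by a cellular self-homeomorphism'' fails for spaces of the form $X_P(m)$, and your inductive step does not go through. At best, a cofibre-sequence comparison would yield a stable homotopy equivalence---but since $\Delta^n$ is contractible, that statement is just $X_{\Delta_n}(m)\simeq S^m$, which is much weaker than what is needed for the subsequent commuting-square theorem.

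The paper instead argues geometrically and directly. Since $\Delta_n$ has a unique maximum $d_n$, the entire space $X_{\Delta_n}(m)$ is a single dual $\overline{(P_{d_n})}_{n+m}$, so one can unwind the explicit right-handed-disk construction from Section~\ref{sec:cwcomplexes}. The order complex of $(-\infty,d_n)$ is recognised as $S^{n-1}$ with the CW structure dual to the simplicial structure on $\partial\Delta^n$; its embedding into $S^{n+m-1}=\partial B^{n+m}$ extends to an embedding $\Delta^n\hookrightarrow B^{n+m}$, which for large $m$ lets one write $B^{n+m}\cong\Delta^n\times D^m$ with the marked regular neighbourhood equal to $\partial\Delta^n\times D^m$. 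Collapsing the unmarked part $\Delta^n\times\partial D^m$ then gives $(\Delta^n\cup\{b\})\wedge S^m$ on the nose. This bypasses any attaching-map uniqueness argument entirely, and the resulting identification is visibly compatible with face inclusions---which is what the next theorem requires.
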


\begin{proof}
Let $d_n$ be the maximum element in $\Delta_n$. Since the poset
$(-\infty, d_n)$ is shellable and thin, its order complex is
$S^{n-1}$. After reversing the partial order, if we recall the
construction from Theorem \ref{thm:yinfty}, then we see that this
partially ordered set comes from a CW complex structure, whose $k$-cells
correspond to elements of grading $n-1-k$. However $S^{n-1}$ can also
be thought of as the boundary of the $n$-simplex with the inherited
simplicial structure where $k$-cells correspond to elements of grading
$k$. It is relatively easy to check that this  is the dual
triangulation of the CW complex structure.

Now recall how $X_{\Delta_n}(m)$ is defined. We embed the order
complex of the reverse of $\Delta_n$ into $\mathbb{R}^{n+m}$ in some
standard way. We take the image of the point corresponding to $d_n$
(denoted in Section \ref{sec:cwcomplexes} as $e^0_1$),
and construct the first step of its right-handed disk $r^0_{1,0}$
which is simply an $(n+m)$-dimensional ball $B^{n+m}$. We extend
$r^0_{1,j}$ to $r^0_{1,j+1}$ by marking some  thickened  $j$-cells
lying on $\partial B^{n+m}=S^{n+m-1}$, one for
each element of grading $(n-j-1)$ in $\Delta_n$. Finally, we quotient out
everything in $S^{n+m}$ that  is not marked, to a point to
obtain $X_{\Delta_n}(m)$.

Thus we are embedding $S^{n-1}$ with the CW complex structure as
described in the first paragraph into $S^{n+m-1}=\partial B^{n+m}$, taking a
regular neighborhood  of that in $S^{n+m-1}$, and then
quotienting out its complement in $S^{n+m-1}$ to a point to obtain
$X_{\Delta_n}(m)$. But the dual triangulation of that $S^{n-1}$ is the
simplicial complex $\partial \Delta^n$, and since $m$ is sufficiently
large, this embedding of $\partial \Delta^n$ in $\partial B^{n+m}$ can
be extended in a standard way to a proper embedding of $\Delta^n$ in
$B^{n+m}$. For for $m$ large enough, we then can view $B^{n+m}$ as
$\Delta^n\times D^m$, where $D^m$ is the $m$-dimensional disk, with
the closure of the regular neighborhood of $\partial \Delta^n$ in
$S^{n+m-1}$ being $\partial\Delta^n\times D^m$. The space
$X_{\Delta_n}$ is obtained by quotienting $\Delta^n\times \partial
D^m$ to a point. This is illustrated in Figure \ref{fig:deltan} for
$n=2$ and $m=1$.

\begin{figure}[ht]
\psfrag{d}{$D^1$}
\psfrag{del}{$\Delta^2$}
\begin{center}
\includegraphics[width=120pt]{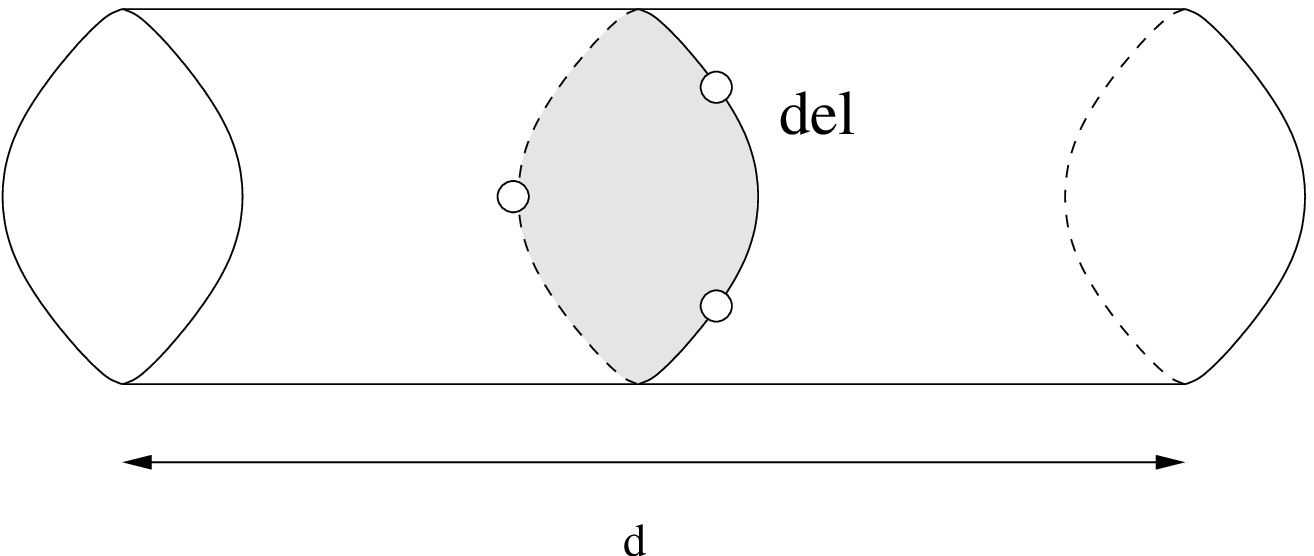}
\end{center}
\caption{Construction of $X_{\Delta_2}(1)$}\label{fig:deltan}
\end{figure}

We end the proof by noting that 

$(\Delta^n\times D^m)/(\Delta^n\times\partial D^m)=(\Delta^n\times
S^m)/(\Delta^n\times\{pt\})=(\Delta^n\cup\{b\})\wedge S^m$
\end{proof}

\begin{thm}\label{thm:redsimplex}
For $m$ large enough, there is a well-defined homeomorphism
$h_{\widetilde{\Delta}_n,m}$ between
$X_{\widetilde{\Delta}_n}(m)$ and $\Delta^n\wedge S^m$, where $b$ is
the basepoint in $\Delta^n$.
\end{thm}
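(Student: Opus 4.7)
The plan is to deduce this directly from Theorem \ref{thm:simplex} combined with the subcomplex property of Theorem \ref{thm:subquotient}, simply by identifying what happens when a single cell is removed. By construction $\widetilde{\Delta}_n = \Delta_n \setminus \{b\}$ is a subposet of $\Delta_n$, so for $m$ sufficiently large Theorem \ref{thm:subquotient} identifies $X_{\widetilde{\Delta}_n}(m)$ with the subcomplex of $X_{\Delta_n}(m)$ consisting of all cells except the one corresponding to $b$. Since $b$ has grading $0$ in $\Delta_n$ and is not covered by anything smaller, this is a single $m$-dimensional cell whose attaching map sends its entire boundary sphere to the basepoint, so removing it does produce a legitimate subcomplex.

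Next I would work with the explicit model for $X_{\Delta_n}(m)$ supplied by the proof of Theorem \ref{thm:simplex}, namely
\[
h_{\Delta_n,m}\colon X_{\Delta_n}(m) \;\xrightarrow{\ \cong\ \ } \;\bigl(\Delta^n \times D^m\bigr)\big/\bigl(\Delta^n \times \partial D^m\bigr),
\]
under which the $m$-cell indexed by a vertex $v$ of $\Delta^n$ is the image of $\{v\} \times D^m$. Removing the open $m$-cell corresponding to $b$ therefore transforms $X_{\Delta_n}(m)$ into
\[
Y \;=\; \bigl((\Delta^n \setminus \{b\}) \times D^m\bigr)\big/\bigl((\Delta^n \setminus \{b\}) \times \partial D^m\bigr).
\]

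Finally, I would identify $Y$ with $\Delta^n \wedge S^m$ (pointed at $b$) by a short quotient computation. By definition,
\[
\Delta^n \wedge S^m \;=\; \bigl(\Delta^n \times D^m\bigr)\big/\bigl(\{b\} \times D^m \;\cup\; \Delta^n \times \partial D^m\bigr).
\]
Collapsing $\{b\} \times D^m$ first kills the subset $\{b\}\times\partial D^m \subset \Delta^n \times \partial D^m$, leaving $(\Delta^n\setminus\{b\})\times D^m$ joined to the basepoint along $(\Delta^n\setminus\{b\})\times\partial D^m$; collapsing this latter set to the basepoint then gives exactly $Y$. Composing the inclusion $X_{\widetilde{\Delta}_n}(m) \hookrightarrow X_{\Delta_n}(m)$ with $h_{\Delta_n,m}$ and with this identification provides the required homeomorphism $h_{\widetilde{\Delta}_n,m}$. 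I do not anticipate any real obstacles here: all the geometric content has been packaged into Theorem \ref{thm:simplex}, and what remains is the essentially formal observation that deleting the cell over $b$ upstairs corresponds to collapsing $b$ to the basepoint downstairs.
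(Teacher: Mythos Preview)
Your approach has the right idea but reverses the roles of subcomplex and quotient complex. The element $b$ is a \emph{minimal} element of $\Delta_n$ (grading $0$), so while nothing lies below it, it \emph{is} covered by every edge of $\Delta^n$ containing $b$. Consequently the $(m+1)$-cells of $X_{\Delta_n}(m)$ corresponding to those edges carry the $m$-cell for $b$ in their boundaries, and deleting that cell does \emph{not} leave a CW subcomplex. In the language of Theorem~\ref{thm:subquotient}, $\widetilde{\Delta}_n=\Delta_n\setminus\{b\}$ is upward-closed, hence a \emph{quotient} poset of $\Delta_n$, not a subposet; correspondingly $X_{\widetilde{\Delta}_n}(m)$ is the quotient of $X_{\Delta_n}(m)$ obtained by \emph{collapsing} the cell for $b$ to the basepoint, not the subspace obtained by removing it. (Your space $Y$ is in any case noncompact, so it cannot be homeomorphic to the compact space $\Delta^n\wedge S^m$.)

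Once this is corrected your argument goes through, and it is in fact tidier than the paper's. Under $h_{\Delta_n,m}$ the cell for $b$ is the image of $\{b\}\times D^m$, so collapsing it yields
\[
(\Delta^n\times D^m)\big/\bigl((\Delta^n\times\partial D^m)\cup(\{b\}\times D^m)\bigr)\;=\;\Delta^n\wedge S^m,
\]
which is precisely the identification you wrote in your final paragraph. The paper, by contrast, does not invoke Theorem~\ref{thm:subquotient} at all: it reruns the geometric construction of Theorem~\ref{thm:simplex} from scratch for $\widetilde{\Delta}_n$, observing that the relevant order complex is now $\partial\Delta^n\setminus N(b)$ and hence one must quotient $\Delta^n\times D^m$ by $(\Delta^n\times\partial D^m)\cup(N(b)\times D^m)$ to reach the same answer. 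Your route is shorter but leans on Theorem~\ref{thm:subquotient}; the paper's is more self-contained and keeps the embedding picture explicit.
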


\begin{proof}
We construct $X_{\widetilde{\Delta}_n}(m)$ in a similar way. View
$\widetilde{\Delta}_n$ as a quotient poset of $\Delta_n$ and let $d_n$
be the maximum element of $\Delta_n$. We embed $S^{n-1}$,
the order complex of the reverse of $(-\infty,d_n)$ (which itself is
being thought of a subposet of $\Delta_n$) into $S^{n+m-1}$, the
boundary of $B^{n+m}$. We know that the dual triangulation of $S^{n-1}$ is
the simplicial structure on $\partial \Delta^n$, and for $m$ large
enough we can view $B^{n+m}$ as $\Delta^n\times D^m$, with $S^{n-1}$
being embedded as $\partial\Delta^n\times\{pt\}$, and the closure of its  regular
neighborhood being $\partial\Delta^n\times D^m$.

However, since we are working with
$\widetilde{\Delta}_n=\Delta_n\setminus\{b\}$, we actually embed the
reverse of $(-\infty, d_n)$, now thought of as a subposet of
$\widetilde{\Delta}_n$. This order complex is $S^{n-1}$ minus the
$(n-1)$-dimensional cell corresponding to the vertex $b$ in
$\Delta^n$. Thinking in terms of the dual triangulation, it is
$\partial \Delta^n\setminus N(b)$, where $N(b)$ is a small
neighborhood  of the basepoint $b\in\Delta^n$. The complement of a
regular neighborhood of this order complex in $\partial(\Delta^n\times  D^m)$ can be thought
of as $(\Delta^n\times \partial D^m)\cup(N(b)\times D^m)$. We obtain
$X_{\widetilde{\Delta}_n}(m)$ by starting with $\Delta^n\times D^m$
and then quotienting out $(\Delta^n\times
\partial D^m)\cup(N(b)\times D^m)$ to a point.

We once  more end the proof by noting

\noindent $(\Delta^n\times D^m)/((\Delta^n\times \partial D^m)\cup(N(b)\times
D^m))=(\Delta^n\times D^m)/((\Delta^n\times \partial
D^m)\cup(\{b\}\times D^m))=\Delta^n\wedge S^m$
\end{proof}

For the next theorem, let $h_{\Delta_n}$ denote either $h_{\widetilde{\Delta}_n}$
or $h_{\Delta_n}$ depending on whether $\Delta^n$ contains
a special marked  vertex $b$ or not. Similarly let $\Delta_n$ denote
either $\widetilde{\Delta}_n$ or $\Delta_n$, and correspondingly let
$S^m(\Delta_n)$ denote either $\Delta_n\wedge S^m$ or
$(\Delta_n\cup\{b\})\wedge S^m$.

\begin{thm}
Let $\Delta^{n-1}$ be a codimension-$1$ face in $\Delta^n$. There can
be three cases regarding the role of the basepoint $b$, namely,
$b\in\Delta^{n-1}$, $b\in(\Delta^n\setminus\Delta^{n-1})$ or
$b\notin\Delta^n$. In either case for sufficiently large $m$, the following diagram commutes

$$\xymatrix{X_{\Delta_{n-1}}(m)\ar[r]^{h_{\Delta_{n-1}}}\ar@{^{(}->}[d]
&S^m(\Delta_{n-1})\ar@{^{(}->}[d]\\
X_{\Delta_n}(m)\ar^{h_{\Delta_n}}[r] &S^m(\Delta_n)}$$

where the inclusion on the left is given by Theorem
\ref{thm:subquotient}, and the inclusion on the right is induced from
the inclusion of $\Delta^{n-1}$ into $\Delta^n$.
\end{thm}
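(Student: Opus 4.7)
The plan is to verify that the homeomorphisms $h_{\Delta_n}$ and $h_{\Delta_{n-1}}$ constructed in Theorems \ref{thm:simplex} and \ref{thm:redsimplex} can be chosen compatibly, so that the two vertical maps in the diagram correspond to one another. Let $d_n$ (resp.\ $d_{n-1}$) be the maximum element of $\Delta_n$ (resp.\ $\Delta_{n-1}$). Since $\Delta^{n-1}$ is a codimension-$1$ face of $\Delta^n$, the interval $(-\infty,d_{n-1}]$ sits naturally inside $(-\infty,d_n]$ as a subposet, and by Theorem \ref{thm:subquotient} the subcomplex $X_{\Delta_{n-1}}(m) \hookrightarrow X_{\Delta_n}(m)$ consists precisely of the right-handed disks indexed by elements of this subposet.

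Next I would trace through the embedding side. In the proof of Theorem \ref{thm:simplex}, we embed the order complex of the reverse of $(-\infty,d_n)$ into $S^{n+m-1}=\partial B^{n+m}$, identify its dual triangulation with $\partial\Delta^n$, and extend this to an embedding $\Delta^n \hookrightarrow B^{n+m}\cong \Delta^n\times D^m$. I would arrange this embedding so that the subcomplex coming from $(-\infty,d_{n-1})$ is identified with $\partial\Delta^{n-1}$ sitting in the standard way inside $\partial\Delta^n$, and then extended to a product embedding $\Delta^{n-1}\times D^m \hookrightarrow \Delta^n\times D^m$. This is possible for $m$ sufficiently large because any two such compatible extensions are isotopic. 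Under this arrangement, the union of the right-handed disks for cells of $\Delta_{n-1}$ inside $\Delta^n\times D^m$ is exactly $\Delta^{n-1}\times D^m$, with its boundary data being the restriction of the boundary data for $\Delta^n\times D^m$.

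Now I would take cases. When $b\notin\Delta^n$, both homeomorphisms come from Theorem \ref{thm:simplex}. Quotienting $\Delta^n\times\partial D^m$ to a point in $\Delta^n\times D^m$ restricts on the face $\Delta^{n-1}\times D^m$ to the collapse of $\Delta^{n-1}\times\partial D^m$, giving precisely the inclusion $(\Delta^{n-1}\cup\{b\})\wedge S^m \hookrightarrow (\Delta^n\cup\{b\})\wedge S^m$ induced by the face inclusion. When $b\in\Delta^{n-1}$, both homeomorphisms come from Theorem \ref{thm:redsimplex}; since we may pick $N(b)$ so that $N(b)\cap\Delta^{n-1}$ is a neighborhood of $b$ in $\Delta^{n-1}$, the additional subspace $N(b)\times D^m$ being quotiented is also compatibly structured, yielding the inclusion $\Delta^{n-1}\wedge S^m\hookrightarrow \Delta^n\wedge S^m$.

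The main obstacle is the asymmetric case $b\in\Delta^n\setminus\Delta^{n-1}$, where $h_{\Delta_{n-1}}$ comes from Theorem \ref{thm:simplex} but $h_{\Delta_n}$ comes from Theorem \ref{thm:redsimplex}. Here I would choose $N(b)$ small enough to be disjoint from $\Delta^{n-1}$. Then quotienting $(\Delta^n\times\partial D^m)\cup (N(b)\times D^m)$ to a point in $\Delta^n\times D^m$ restricts on the face $\Delta^{n-1}\times D^m$ to the collapse of only $\Delta^{n-1}\times\partial D^m$, since $N(b)\times D^m$ meets $\Delta^{n-1}\times D^m$ only where the collapse has already identified everything to the basepoint. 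This identifies the image of $\Delta^{n-1}\times D^m$ in $\Delta^n\wedge S^m$ precisely with $(\Delta^{n-1}\cup\{b\})\wedge S^m$, where the added cone point corresponds to the basepoint $b\in\Delta^n$ under the inclusion. Verifying this disjointness argument carefully — in particular, that the collapse map, restricted to $\Delta^{n-1}\times D^m$, factors through exactly the one-point compactification of $\Delta^{n-1}$ — is the key point, and once established it gives commutativity of the diagram in this last case as well.
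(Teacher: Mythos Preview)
Your proposal is correct and follows essentially the same approach as the paper: both arguments rest on the observation that under the product identification $B^{n+m}\cong\Delta^n\times D^m$ from Theorems~\ref{thm:simplex} and~\ref{thm:redsimplex}, the face inclusion $\Delta^{n-1}\hookrightarrow\Delta^n$ simultaneously induces the left vertical map (via the right-handed disks) and the right vertical map (via the smash product). The paper's proof is far terser---it only writes out the case $b\notin\Delta^n$ and leaves the other two implicit---whereas you carefully treat all three cases, including the asymmetric one where $N(b)$ must be chosen disjoint from $\Delta^{n-1}$; this extra care is a genuine improvement in exposition but not a different method.
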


\begin{proof}
Let us just do the case when $b\notin \Delta^n$. Recall that
$X_{\Delta_n}(m)$ is obtained from $\Delta^n\times D^m$ by quotienting
out $\Delta^n\times \partial D^m$. However the inclusion of
$\Delta^{n-1}$ into $\Delta^n$ induces both the inclusion on the left
and the one on the right, and hence the diagram commutes.
\end{proof}

The above theorems have a very interesting corollary which shows that
the CW  complexes $X_P(m)$ can be quite complicated.

\begin{thm}
Let $K$ be a simplicial complex with a special vertex marked as the
basepoint $b$. Then there exists a GSS poset $P$,
such that for sufficiently large $m$, $X_P(m)=K\wedge S^m$.
\end{thm}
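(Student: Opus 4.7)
The plan is to take $P$ to be the face poset of $K$ with the basepoint vertex $b$ removed (if $b$ is a vertex of $K$), partially ordered by inclusion of simplices and graded by dimension. The sign assignment is inherited cell-by-cell from the standard sign conventions on $\Delta_n$ and $\widetilde{\Delta}_n$: for each simplex $\sigma$ of $K$, the interval $(-\infty,\sigma]$ in $P$ is canonically isomorphic to $\Delta_{\dim\sigma}$ or $\widetilde{\Delta}_{\dim\sigma}$ (depending on whether $b$ lies in $\sigma$), so we use the sign assignment constructed on those posets. These signs agree on overlaps because any two incident simplices $\sigma\subseteq\sigma'$ give nested intervals and the inherited signs match. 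More generally each closed interval $[\sigma,\tau]$ in $P$ is isomorphic to the face poset of a simplex of dimension $\dim\tau-\dim\sigma-1$, which is EL-shellable (this is precisely the EL-shelling used in the cube/simplex examples above); hence $P$ is a GSS poset.

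To identify $X_P(m)$ with $K\wedge S^m$, I would induct on the number of simplices in $K$. The base case (when $K$ consists only of $b$, or is empty) is trivial. For the inductive step, pick a simplex $\sigma\in K$ that is maximal with respect to inclusion and is not equal to $\{b\}$, and let $K'=K\setminus\mathrm{int}(\sigma)$. Then $P'$, the face poset of $K'$ minus $b$, is obtained from $P$ by deleting the single (maximal) element $\sigma$, so $P'$ is a subposet of $P$. By Theorem \ref{thm:subquotient}, $X_P(m)$ is obtained from $X_{P'}(m)$ by attaching a single cell corresponding to $\sigma$, namely the cell coming from the interval $(-\infty,\sigma]\subseteq P$. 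By Theorems \ref{thm:simplex} and \ref{thm:redsimplex}, this interval-cell is homeomorphic to $(\sigma\cup\{b\})\wedge S^m$ or to $\sigma\wedge S^m$ depending on whether $b\in\sigma$, which is exactly the cell attached to $K'\wedge S^m$ to build $K\wedge S^m$. The inductive hypothesis gives $X_{P'}(m)\cong K'\wedge S^m$, so it remains only to check that the attaching map agrees.

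Matching the attaching maps is the main obstacle. The attaching map for the new cell is the inclusion of its boundary into $X_{P'}(m)$, which by construction is the image of $X_{(-\infty,\sigma)}(m)\hookrightarrow X_{P'}(m)$. On the topological side, the attaching map for $\sigma\wedge S^m\subseteq K\wedge S^m$ is the inclusion $\partial\sigma\wedge S^m\hookrightarrow K'\wedge S^m$. I would use the commutative diagram just established for face inclusions $\tau\subset\sigma$, applied simultaneously to every proper face $\tau$ of $\sigma$ and then glued over $K'$, to show that these two inclusions coincide under the inductive isomorphism $X_{P'}(m)\cong K'\wedge S^m$. Here one uses that for sufficiently large $m$ the attaching spheres are simply connected, so it suffices to see the two maps agree up to cellular isotopy on each face; orientations of the new top cell may be chosen freely (as noted after Theorem \ref{thm:sub}), which lets us match the sign conventions at each stage of the induction. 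This finishes the construction.
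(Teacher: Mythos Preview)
Your proposal is correct and follows essentially the same approach as the paper: the same poset $P$ (face poset of $K$ with $b$ deleted), the same verification that closed intervals are subset posets hence shellable, and the same reliance on Theorems~\ref{thm:simplex}, \ref{thm:redsimplex}, and the commutative diagram for face inclusions to identify the pieces and their gluings. The only cosmetic difference is that the paper assembles $X_P(m)$ directly as a union of the pieces $\overline{(P_\sigma)}_{g(\sigma)+m}\cong S^m(\Delta_{\dim\sigma})$ glued along the face-inclusion maps, whereas you reach the same conclusion by inducting on the number of simplices; the content is the same.
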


\begin{proof}
Let us construct a poset $P$ whose elements in grading $k$ are the
$k$-simplices of $K$ partially ordered by inclusion, and then delete the element corresponding to
$b$. Let us also fix an orientation on every simplex of $K$, and then
assign signs $\pm 1$ based on whether the attaching map preserves orientation
or reverses it. The closed intervals in this poset are isomorphic to the
subset poset, and hence are shellable. Thus $P$ is a GSS poset. For
large enough $m$, let us consider the pointed CW complex $X_P(m)$.

The $(m+k)$-cells of $X_P(m)$ correspond to $k$-cells in $K\setminus\{b\}$, and the boundary
maps of $X_P(m)$ correspond to the boundary maps in $K$. Observe that
$K\wedge S^m$ with its natural pointed CW complex structure also has
this property. Now recall how we construct $X_P(m)$. For each element
$x\in P$, we construct a CW complex corresponding to the poset
$(-\infty,x]$, and whenever $y\preceq x$, there is an embedding of the
CW complex corresponding to $y$ into the CW complex corresponding to
$x$. Since such an inclusion can be viewed as a composition of
inclusions coming from covering relations like $y\leftarrow x$, we can just restrict our
attention to those maps.

If $x$ corresponds to an $n$-simplex, then the poset $(-\infty,x]$ is
either $\widetilde{\Delta}_n$ or $\Delta_n$ depending on whether or
not $b$ is in $\Delta_n$. From the previous theorems, we know that the
CW complex corresponding to $x$ is either $\Delta^n\wedge S^m$ or
$(\Delta^n\cup\{b\})\wedge S^m$, and the inclusion maps coming from
$y\leftarrow x$ are induced from inclusions of simplices in $K$. Thus
$X_P(n)$ and $K\wedge S^m$ have the same CW complex structure, and
hence are homeomorphic.
\end{proof}

\begin{thm}
There exist GSS posets $P_1$ and $P_2$ with the same homology, but
with different homotopy types of their associated spectra.
\end{thm}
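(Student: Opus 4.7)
The plan is to invoke the previous theorem, which produces a GSS poset $P$ for any pointed simplicial complex $K$ with $X_P(m) \simeq K \wedge S^m$ for $m$ large, and then to exhibit two pointed simplicial complexes $K_1, K_2$ having isomorphic reduced homology but inequivalent stable homotopy types. The classical choice I would take is $K_1 = \mathbb{CP}^2$ and $K_2 = S^2 \vee S^4$, with a basepoint vertex chosen after fixing a simplicial triangulation of each (any PL triangulation of $\mathbb{CP}^2$ will do; the wedge $S^2 \vee S^4$ admits an obvious triangulation with the basepoint being the wedge vertex). Both have reduced homology $\mathbb{Z}$ in degrees $2$ and $4$ and $0$ elsewhere, so the two GSS posets $P_1$ and $P_2$ produced by the previous theorem have identical chain complex homology (up to a uniform shift by $m$, which cancels once we pass to the spectra $\mathcal{S}(P_i)$ or equivalently $K_i \wedge S^m$ for any sufficiently large $m$).

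To see that the resulting spectra have different homotopy types, I would invoke Steenrod squares: the operation $Sq^2 \colon H^2(\mathbb{CP}^2;\mathbb{F}_2) \to H^4(\mathbb{CP}^2;\mathbb{F}_2)$ is an isomorphism, while $Sq^2 \colon H^2(S^2 \vee S^4;\mathbb{F}_2) \to H^4(S^2 \vee S^4;\mathbb{F}_2)$ is zero. Since $Sq^2$ is a stable cohomology operation, it is preserved under smashing with $S^m$ (i.e.\ under suspension), so the mod-$2$ cohomologies of $X_{P_1}(m)$ and $X_{P_2}(m)$ carry distinct Steenrod module structures for all $m$. This is a stable homotopy invariant, so $\mathcal{S}(P_1)$ and $\mathcal{S}(P_2)$ are not stably homotopy equivalent despite having identical homology.

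Concretely I would: (i) fix a simplicial complex structure on each of $\mathbb{CP}^2$ and $S^2 \vee S^4$ with a distinguished basepoint vertex; (ii) apply the previous theorem verbatim to produce GSS posets $P_1, P_2$ and homeomorphisms $X_{P_i}(m) \cong K_i \wedge S^m$ for $m$ large; (iii) compute $H_*$ of the chain complexes associated to $P_1$ and $P_2$ via the isomorphism between the cellular chain complex of $X_{P_i}(m)$ and (a shift of) the chain complex of $P_i$ given by the construction in the previous section, confirming they agree; and (iv) compute $Sq^2$ on the mod-$2$ cohomology of each spectrum to conclude inequivalence.

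The only mildly delicate step is (iii): I must check that the identification $X_{P_i}(m) \simeq K_i \wedge S^m$ from the previous theorem preserves the cellular chain complex, so that $H_*(P_i)$ as defined abstractly for a GSS poset genuinely matches $\tilde H_{*+m}(K_i \wedge S^m) = \tilde H_*(K_i)$. This is essentially built into the construction, since the $(k+m)$-cells of $X_{P_i}(m)$ were defined to correspond to the grading-$k$ elements of $P_i$ and the cellular boundary maps were arranged to agree with the sign-refined covering-relation boundary of the poset; so no new argument is required. The remainder is standard algebraic topology (the $Sq^2$ calculation on $\mathbb{CP}^2$ versus a wedge of spheres), and I would not grind through it.
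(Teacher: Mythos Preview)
Your proposal is correct and follows essentially the same approach as the paper: both use $\mathbb{CP}^2$ versus $S^2\vee S^4$ and distinguish the resulting spectra via $Sq^2$. The only cosmetic difference is that the paper takes the poset for $S^2\vee S^4$ to be the trivial two-element poset (two incomparable elements in gradings $2$ and $4$), rather than invoking the previous theorem on a triangulation; your route works equally well.
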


\begin{proof}
We want to find GSS  posets $P_1$ and $P_2$ with same homology, such
that $X_{P_1}(m)$ is not homotopic to $X_{P_2}(m)$ for all $m$. We
choose $P_1$ to be a poset consisting of only two elements, which are
non-comparable and have gradings $2$ and $4$. We choose $P_2$ to be
poset coming from a simplicial complex structure on
$\mathbb{CP}^2$. Clearly both have  homology $\mathbb{Z}^2$ supported
in gradings $2$ and $4$.

Furthermore, $X_{P_1}(m)=S^{m+2}\vee S^{m+4}$ and
$X_{P_2}(m)=\mathbb{CP}^2\wedge S^m$. We want to show that these two
spaces are not homotopic for any $m$, or in other words, we want to
show that $S^2\vee S^4$ is not stably homotopic  to
$\mathbb{CP}^2$. This can be seen in several ways. If $a_2$ and $a_4$
(resp. $b_2$ and $b_4$) denote the generators in $H^2$ and $H^4$ of
$S^2\vee S^4$ (resp. $\mathbb{CP}^2$) with coefficients in
$\mathbb{F}_2$, then $Sq^2(a_2)=0$ but $Sq^2(b_2)=b_4$, where $Sq^2$
is the second Steenrod square operation. Also $\pi^s_3(S^2\vee
S^4)=\mathbb{Z}/2\mathbb{Z}$ and $\pi^s_3(\mathbb{CP}^2)=0$, where
$\pi^s_3$ is the third stable homotopy group. 
\end{proof}

Now as promised at the beginning of the section, we do the computation
for the hat version of the trefoil presented in the grid
as shown in Figure \ref{fig:trefoil1}. We use the notation $a_0 a_1 a_2 a_3
a_4$ to denote the element $a\in\widehat{\mathcal{G}}$ which contains
the points of intersection between the vertical lines marked $i$ and
the horizontal lines marked $a_i$. By components of
$\widehat{\mathcal{G}}$, we mean path connected components of the
graph that represents the partial order on $\widehat{\mathcal{G}}$. A
simple computation shows that there are $25$ components in
$\widehat{\mathcal{G}}$ of which $22$ of them contain only one
element. There are two components $C_1$ and $C_2$ with $26$ elements
each, and homology $\mathbb{Z}^6$, and there is one component $D$ with
$46$ elements and homology $\mathbb{Z}^{14}$.

The CW complex $X_{\widehat{\mathcal{G}}}$ is a wedge of the CW
complexes coming from the different components. The spaces coming from
the components with only one element are simply spheres of the right
dimension, so we can restrict our attention to $C_1,C_2$ and $D$. Let
us first consider the case of $C_i$.

Each of $C_1$ and $C_2$ has a unique element of maximum Maslov
grading (however neither of them have a unique maximum), which happens
to be $12340$ and $23401$ respectively. However these two
generators swap when we apply a rotation of $R(\pi)$ and reverse the
roles of $X$'s and $O$'s (which can be done in the hat version). This
shows that $C_1$ is isomorphic to $C_2$ as posets and hence we can
work with $C_1$. The following are the elements of $C_1$.

\begin{itemize}
\item Maslov grading $2$: $12340$
\item Maslov grading $1$: $12304$, $02341$, $21340$, $13240$, $12430$
\item Maslov grading $0$: $20134$, $12034$, $03124$, $02314$, $21304$, $41203$,
  $13204$, $01423$, $01342$, $40231$, $31240$, $03241$, $14230$, $02431$, $21430$
\item Maslov grading $-1$: $21034$, $31204$, $03214$, $04231$, $01432$
\end{itemize}

The homology $\mathbb{Z}^6$ lies entirely in grading $0$. There are
six maxima in $C_1$ which are $20134$, $03124$, $41203$, $01423$, $40231$ in
grading $0$ and $12340$ in grading $2$.  Let $C$ be the poset
$(-\infty,12340]$ which turns out to be
$C_1\setminus\{20134,03124,41203,01423,40231\}$. Since $C$ is a
subposet of $C_1$, $X_{C_1}(m)$ is obtained by adding five $m$-cells
to $X_C(m)$. However the homology of $C$ is $\mathbb{Z}$ in grading
$0$, hence $\widetilde{H}_i(X_C(m))=0$ for all $i<m$. Since we can
assume all spaces to be simply connected, we have
$\pi_{m-1}(X_C(m))=0$, and hence homotopically there is a unique way
to add the five $m$-cells. Thus we get $X_{C_1}(m)\sim X_C(m)\vee
S^m\vee S^m\vee S^m\vee S^m\vee S^m$.

Thus to find the stable homotopy type of $X_{C_1}$, we only need to
find the stable homotopy type of $X_C$. For convenience, we number the
grading $-1$ elements in $C$ as $a_1,\ldots,a_5$, the grading $0$
elements in $C$ as $b_1,\ldots, b_{10}$, the grading $1$ elements in
$C$ as $c_1,\ldots, c_5$ and the unique grading $2$ element as $d$
(with the numbering being done left to right as they appear in listing
above). The partial order is shown in Figure \ref{fig:complicated},
with the elements in each grading again being numbered from left to right.

\begin{figure}[ht]
\psfrag{a}{$\alpha$}
\psfrag{bbbb}{$B_{\beta}$}
\begin{center} \includegraphics[width=150pt]{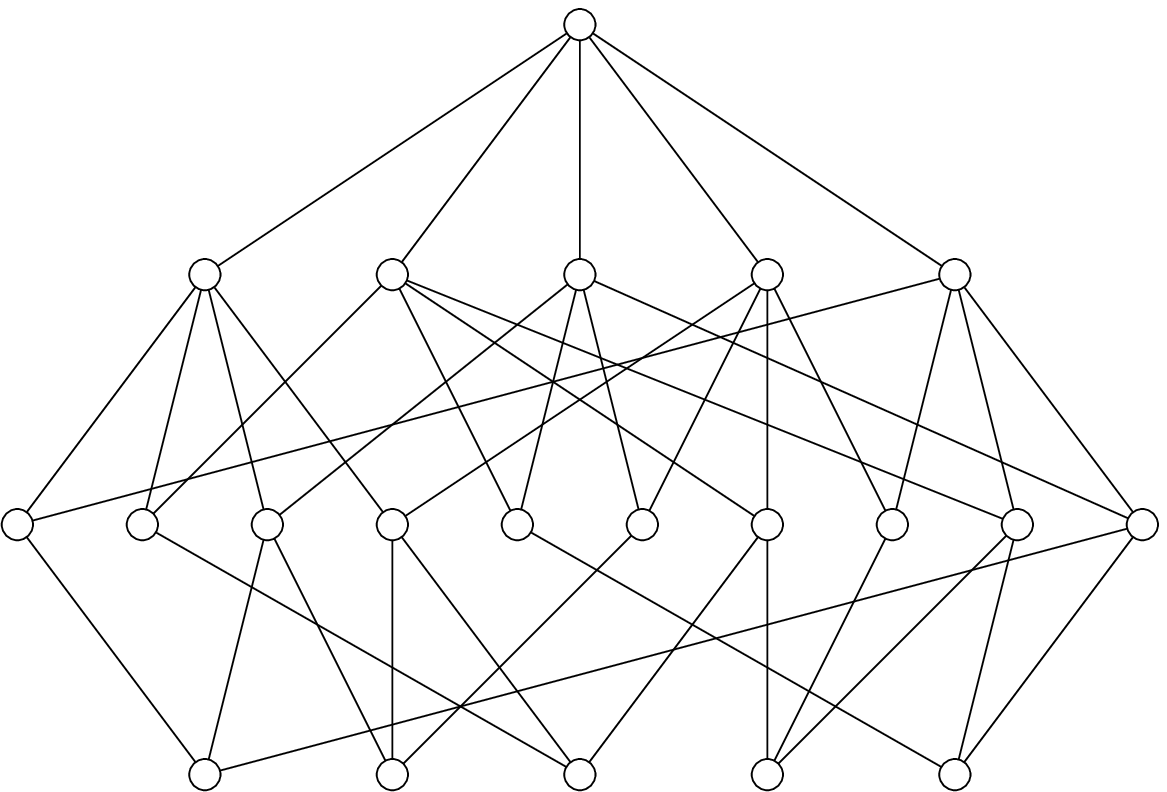}
\end{center}
\caption{The poset $C$}\label{fig:complicated}
\end{figure}

Now we associate a CW complex $P_C$ to $C$, which is closely related to  the
order complex of the reverse of $C$, and then construct $X_C(m)$ as
the Alexander dual of $P_C$. However since the Alexander dual of a
space $X$ in $S^m$ is the Alexander dual of $X\wedge S^1$ in
$S^{m+1}$, we might work with a sufficiently high suspension
$P_C\wedge S^k$ of $P_C$.

Let us now try to understand this  space
$P_C$. We start with the $0$-sphere $S^0$ corresponding to $d$, and we
attach $k$-cells for elements of grading $2-k$, such that the boundary maps
correspond to the covering relations in the reverse of $C$. Throughout the
rest of the section, $\sim$ denotes stable homotopy equivalence, instead of
the usual homotopy equivalence.

We start with $P_{\{d\}}=S^0$. If we attach the $1$-cell for $c_1$, we
get $P_{\{c_1,d\}}\sim\{pt\}$. After attaching the remaining four
$1$-cells, we get $P_{\{c,d\}}=\vee_{i=2}^5 S^1_i$, where $S^1_i$
corresponds to $c_i$. Now we will attach the $2$-cells corresponding to
the elements $b_i$. Since we can take high enough suspensions, while
attaching the $2$-cells, we only care about
$\pi^s_1(P_{\{c,d\}})=\mathbb{Z}^4$. It is easy to see that the
$2$-cells corresponding to $b_1,b_2,b_3$ and $b_4$ kill the generators
in $\pi^s_1(P_{\{c,d\}})$ corresponding to $a_5,a_2,a_3$ and $a_4$
respectively. Thus $P_{\{b_1,b_2,b_3,b_4,c,d\}}\sim\{pt\}$, and hence
after attaching the remaining six $2$-cells, we get
$P_{\{b,c,d\}}=\vee_{i=5}^{10}S^2_i$, where $S^2_i$ corresponds to
$b_i$. We now attach $3$-cells corresponding to $a_i$'s, and since
$\pi^s_2(P_{\{b,c,d\}})=\mathbb{Z}^6$,  the $3$-cells corresponding to
$a_1,a_2$ and $a_3$ kill the generators corresponding to $b_{10},b_6$
and $b_7$ respectively. Thus we have
$P_{\{a_1,a_2,a_3,b,c,d\}}=S^2_5\vee S^2_8\vee S^2_9$ where $S^2_i$ is
still a $2$-sphere corresponding to $b_i$. The $3$-cells coming from
$a_4$ and $a_5$ identify $b_9$ to $b_8$ and $b_5$ respectively, and
hence $P_C\sim S^2_9=S^2$.

Since the Alexander dual of a sphere is a sphere, we get
$X_C(m)\sim S^m$. As discussed before, this implies $X_{C_1}(m)\sim
\vee_{i=1}^6 S^m$. Also note that the construction is entirely
independent of the choice of a sign convention. In fact, $C_1$ has only
one  sign assignment up to equivalence. This is because $C$ being a GSS
poset with a unique maximum has only one sign assignment, and that
extends uniquely to $C_1$ since every element of $C_1\setminus C$
covers exactly one element in $C_1$.

In $D$, there are six elements in grading $0$, thirty elements in
grading $-1$ and ten elements in grading $-2$. Consider a subposet
$D_1$ of $D$ consisting of the elements $\{42103$, $10423$, $20143$, $43120$,
$40321$, $13024$, $20314$, $14203$, $41320$, $03142\}$ in grading $-1$ and all
the ten elements  in grading $-2$. The poset $D_1$ has ten components,
and each component is isomorphic to $I$, the chain of length
$2$. Hence $X_{D_1}(m)\sim\{pt\}$. Let $D_2$ be the subposet of $D$
consisting of all the elements in gradings $-1$ and $-2$. Since $D_1$
is a subposet of $D_2$, $X_{D_2}(m)$ is obtained by adding twenty
$(m-1)$-cells to $X_{D_1}(m)$, and there is only one way of doing that,
leading to $X_{D_2}(m)\sim\vee_{i=1}^{20} S^{m-1}$. The space $X_D(m)$
is obtained from $X_{D_2}(m)$ by attaching six $m$-cells to it, and
the choice depends on
$\pi^s_{m-1}(X_{D_1}(m))=\mathbb{Z}^{20}$. However in $D$, the six elements of
grading $0$ cover disjoint elements, and hence after attaching those
six $m$-cells, we get $X_D(m)\sim\vee_{i=1}^{14}S^{m-1}$. Notice once
more that this is entirely independent of the sign assignment.

\chapter{What lies beyond}
The purpose of this chapter is to briefly summarize what we have
talked about so far, and to outline a probable course of future
research.

In Chapter 2, we described nice Heegaard diagrams and how they can be
used to compute the hat version of the Heegaard Floer homology. It
will be an interesting exercise to prove the invariance of the hat
version of Heegaard Floer homology combinatorially using only nice
Heegaard diagrams and some collection of moves among the nice Heegaard
diagram which do not change the underlying three-manifold.

In Chapter 3, we concentrated on knots inside $S^3$, represented by
grid diagrams. A grid diagram being a nice Heegaard diagram, it
allowed us to compute all versions of knot Floer
homology. Furthermore, using a grid diagram we could also associate a
CW complex to a knot whose stable homotopy type is a knot invariant,
and whose homology (with coefficients in $\F_2$) is the knot Floer
homology (also with coefficients in $\F_2$). This leads to more
questions than it answers, some of which I would like to pursue in the
future. Two such questions are whether this result can be extended to
links, and whether the stable homotopy invariant contains any new
information in addition to the homology.

Thus ends our brief tour of my personal corner in the Heegaard Floer
homology universe. In conclusion I would like to thank Princeton
University for providing me with the financial support and the
opportunity to do this research. It had been a pleasant journey, and
one that I had enjoyed thoroughly.

\bibliographystyle{amsalpha}

\addcontentsline{toc}{chapter}{Reference}
\bibliography{concise}

\providecommand{\bysame}{\leavevmode\hbox to3em{\hrulefill}\thinspace}
\providecommand{\MR}{\relax\ifhmode\unskip\space\fi MR }
% \MRhref is called by the amsart/book/proc definition of \MR.
\providecommand{\MRhref}[2]{%
  \href{http://www.ams.org/mathscinet-getitem?mr=#1}{#2}
}
\providecommand{\href}[2]{#2}
\begin{thebibliography}{MOST07}

\bibitem[AB26]{JAGB}
James Alexander and G.B.Briggs, \emph{On types of knotted curves}, Annals of
  Mathematics \textbf{28} (1926), 562--586.

\bibitem[Ale20]{JAriem}
James Alexander, \emph{Note on {R}iemann spaces}, Bulletin of American
  Mathematical Society \textbf{26} (1920), no.~8, 370--372.

\bibitem[Ale28]{JAtop}
\bysame, \emph{Topological invariants of knots and links}, Transactions of
  American Mathematical Society (1928), no.~30, 275--306.

\bibitem[Bj\"{o}]{AB}
Anders Bj\"{o}rner, \emph{Shellable and {C}ohen-{M}acaulay partially ordered
  sets}, Transactions of the American Mathematical Society \textbf{260} (1980),
  no.~1, 159--183.

\bibitem[Cro95]{PC}
Peter Cromwell, \emph{Embedding knots and links in an open book {I}: {B}asic
  properties}, Topology and its Applications \textbf{64} (1995), no.~1, 37--58.

\bibitem[DK74]{GDVK}
Gopal Danaraj and Victor Klee, \emph{Shellings of spheres and polytopes}, Duke
  Mathematical Journal \textbf{41} (1974), no.~2, 443--451.

\bibitem[Juh08]{AJ}
Andr\'{a}s Juh\'{a}sz, \emph{Floer homology and surface decompositions},
  Geometry and Topology \textbf{12} (2008), no.~1, 299--350.

\bibitem[Kau83]{LK}
Louis Kauffman, \emph{Formal knot theory}, Princeton University Press, 1983.

\bibitem[Lip06]{RL}
Robert Lipshitz, \emph{A cylindrical reformulation of {H}eegaard {F}loer
  homology}, Geometry and Topology \textbf{10} (2006), 955--1096.

\bibitem[LMW]{RLCMJW}
Robert Lipshitz, Ciprian Manolescu, and Jiajun Wang, \emph{Combinatorial
  cobordism maps in hat {H}eegaard {F}loer theory}, Duke Mathematical Journal,
  to appear.

\bibitem[MOS09]{CMPOSS}
Ciprian Manolescu, Peter Ozsv\'{a}th, and Sucharit Sarkar, \emph{A
  combinatorial description of knot {F}loer homology}, Annals of Mathematics
  \textbf{169} (2009), no.~2, 633--660.

\bibitem[MOST07]{CMPOZSzDT}
Ciprian Manolescu, Peter Ozsv\'{a}th, Zolt\'{a}n Szab\'{o}, and Dylan Thurston,
  \emph{On combinatorial link {F}loer homology}, Geometry and Topology
  \textbf{11} (2007), 2339--2412.

\bibitem[Ni07]{YN}
Yi~Ni, \emph{Knot {F}loer homology detects fibred knots}, Inventiones
  Mathematicae \textbf{170} (2007), no.~3, 577--608.

\bibitem[OS04a]{POZSzgenusbounds}
Peter Ozsv\'{a}th and Zolt\'{a}n Szab\'{o}, \emph{Holomorphic disks and genus
  bounds}, Geometry and Topology \textbf{8} (2004), 311--334.

\bibitem[OS04b]{POZSzknotinvariants}
\bysame, \emph{Holomorphic disks and knot invariants}, Advances in Mathematics
  \textbf{186} (2004), no.~1, 58--116.

\bibitem[OS04c]{POZSzapplications}
\bysame, \emph{Holomorphic disks and three-manifold invariants: properties and
  applications}, Annals of Mathematics \textbf{159} (2004), no.~3, 1159--1245.

\bibitem[OS04d]{POZSz3manifolds}
\bysame, \emph{Holomorphic disks and topological invariants for closed
  three-manifolds}, Annals of Mathematics \textbf{159} (2004), no.~3,
  1027--1158.

\bibitem[OS08]{POZSzlinkinvariants}
\bysame, \emph{Holomorphic disks, link invariants and the multi-variable
  {A}lexander polynomial}, Algebraic and Geometric Topology \textbf{8} (2008),
  615--692.

\bibitem[Ras03]{JR}
Jacob Rasmussen, \emph{Floer homology and knot complements}, Ph.D. thesis,
  Harvard University, 2003.

\bibitem[Rei26]{KR}
Kurt Reidemeister, \emph{Elementare begr\"{u}ndung der knotentheorie},
  Abhandlungen aus dem Mathematischen Seminar der Universit\"{a}t Hamburg
  \textbf{5} (1926), 24--32.

\bibitem[Sar]{SS}
Sucharit Sarkar, \emph{Maslov index of holomorphic triangles},
  arXiv:math/0609673v2.

\bibitem[Sei35]{HS}
Herbert Seifert, \emph{\"{U}ber das geschlecht von knoten}, Mathematical Annals
  \textbf{110} (1935), no.~1, 571--592.

\bibitem[Sma62]{Smale}
Stephen Smale, \emph{On the structure of manifolds}, American Journal of
  Mathematics \textbf{84} (1962), 387--399.

\bibitem[SW]{SSJW}
Sucharit Sarkar and Jiajun Wang, \emph{An algorithm for computing some
  {H}eegaard {F}loer homologies}, Annals of Mathematics, to appear.

\end{thebibliography}

\end{document}